\definecolor{refkey}{gray}{.75}
\definecolor{labelkey}{gray}{.5}
\colorlet{DarkGreen}{green!50!black}
\colorlet{DarkGray}{gray!60!black}
\numberwithin{equation}{section}
\newcommand{\ignore}[1]{}
\renewcommand{\epsilon}{\varepsilon}
\newcommand{\one}{\mathbf{1}}
 \definecolor{refkey}{gray}{.5}
 \definecolor{labelkey}{gray}{.5}
\definecolor{light}{gray}{.9}
\newtheorem{theorem}{Theorem}[section]
\newtheorem*{theorem*}{Theorem}
\newtheorem{lemma}[theorem]{Lemma}
\newtheorem{claim}[theorem]{Claim}
\newtheorem{proposition}[theorem]{Proposition}
\newtheorem{fact}[theorem]{Fact}
\newtheorem{corollary}[theorem]{Corollary}
\theoremstyle{definition}{

\newtheorem{definition}[theorem]{Definition}

\newtheorem*{definition*}{Definition}

\newtheorem{question}[theorem]{Question}
\newtheorem{remark}[theorem]{Remark}

}
\newcommand{\Z}{\mathbb Z}
\newcommand{\cB}{\ensuremath{\mathcal B}}
\newcommand{\cC}{\ensuremath{\mathcal C}}
\newcommand{\cE}{\ensuremath{\mathcal E}}
\newcommand{\cF}{\ensuremath{\mathcal F}}
\newcommand{\cL}{\ensuremath{\mathcal L}}
\newcommand{\cP}{\ensuremath{\mathcal P}}
\newcommand{\cS}{\ensuremath{\mathcal S}}
\newcommand{\cV}{\ensuremath{\mathcal V}}
\newcommand{\llb }{\llbracket}
\newcommand{\rrb }{\rrbracket}
\newcommand{\Ext}{{\mathsf{Ext}}}
\newcommand{\Int}{{\mathsf{Int}}}
\newcommand{\elem}{{\mathsf{el}}}
\newcommand{\out}{{\mathsf{out}}}
\newcommand{\tr}{{\mathsf{tr}}}
\newcommand{\rn}{{\mathsf{rn}}}
\newcommand{\rnelem}{{\mathsf{rn.el}}}
\newcommand{\mac}{{\textrm{mac}}}
\newcommand{\mic}{{\textrm{mic}}}
\newcommand{\dlp}{\mathtt{d}^+}
\newcommand{\dlm}{\mathtt{d}^-}
\newcommand{\dl}{\mathtt{d}}
\newcommand{\gap}{\text{\tt{gap}}}
\newcommand{\tmix}{t_{\textsc{mix}}}
 \renewcommand{\epsilon}{\varepsilon}
\DeclareMathOperator{\diam}{diam}
\DeclareMathOperator{\sgn}{sgn}
\newcommand{\tv}{{\textsc{tv}}}
\newcommand{\superimpose}[2]{%
  {\ooalign{$#1\@firstoftwo#2$\cr\hfil$#1\@secondoftwo#2$\hfil\cr}}}
\newcommand{\sbullet}{%
  \hbox{\fontfamily{lmr}\fontsize{.4\dimexpr(\f@size pt)}{0}\selectfont\textbullet}}
\newcommand{\red}{\textsc{red}}
\newcommand{\blue}{\textsc{blue}}
\title{Metastability cascades and prewetting in the SOS model}
\author{Reza Gheissari}
\address{R.\ Gheissari\hfill\break
Department of Mathematics \\ Northwestern University }
\email{gheissari@northwestern.edu}
\author{Eyal Lubetzky}
\address{E.\ Lubetzky\hfill\break
Courant Institute\\ New York University\\
251 Mercer Street\\ New York, NY 10012, USA.}
\email{eyal@courant.nyu.edu}
\begin{document}

\maketitle

\begin{abstract}
We study Glauber dynamics for the low temperature $(2+1)$D Solid-On-Solid model on a box of side-length $n$ with a floor at height $0$ (inducing entropic repulsion) and a competing bulk external field $\lambda$ pointing down (the prewetting problem).
In 1996, Cesi and Martinelli showed  that if the inverse-temperature $\beta$ is large enough, then along a decreasing sequence of critical points $(\lambda_c^{(k)})_{ k=0}^{K_\beta}$ the dynamics is torpid: its inverse spectral gap is $O(1)$ when $\lambda \in (\lambda_c^{(k+1)},\lambda_c^{(k)})$ whereas it is $\exp[\Theta(n)]$ at each  $\lambda_c^{(k)}$ for each $k\leq K_\beta$, due to a coexistence of rigid phases at heights $k+1$ and $k$. Our focus is understanding (a) the onset of metastability as $\lambda_n\uparrow\lambda_c^{(k)}$; and (b) 
the effect of an unbounded number of layers, as we remove the restriction $k\le K_\beta$, and even allow for $\lambda_n\to 0$ towards the $\lambda = 0$ case which has $O(\log n)$ layers and was studied by Caputo~et~al.~(2014). 
We show that for any $k$, possibly growing with $n$, the inverse gap is $\exp[\tilde\Theta(1/|\lambda_n-\lambda_c^{(k)}|)]$ as $\lambda\uparrow \lambda_c^{(k)}$ up to distance $n^{-1+o(1)}$ from this critical point, due to a metastable layer at height $k$ on the way to forming the desired layer at height $k+1$.
By taking $\lambda_n = n^{-\alpha}$ (corresponding to $k_n\asymp \log n$), this also interpolates down to the behavior of the dynamics when $\lambda =0$. We complement this by extending the fast mixing to all $\lambda$ uniformly bounded away from $(\lambda_c^{(k)})_{k=0}^\infty$. Together, these results provide a sharp understanding of the predicted infinite sequence of dynamical phase transitions governed by the layering phenomenon.  
\end{abstract}

\section{Introduction}\label{sec:introduction}

Consider the low-temperature Solid-On-Solid (SOS) model on $\Lambda_n = \llb -\frac n2 ,\frac n2\rrb^2$, with zero boundary conditions, a floor at height zero, and an external field $\lambda = \lambda_n\ge 0$ pointing downward: The model assigns to a height function $\varphi:\Lambda_n \to \llb 0,n \rrb $ (viewed as a surface inside a cube of side length $n$) the probability 
\begin{align}\label{eq:SOS-measure}
    \mu_{n,\lambda}(\varphi) \, \propto \, \exp\Big( - \beta \sum_{u\sim v} |\varphi_u - \varphi_v| - \lambda \sum_{v} \varphi_v\Big)\,,
\end{align}
where $\beta>0$ is the inverse-temperature, we let $u\sim v$ denote nearest-neighbor adjacency between the vertices $u,v$, and $\varphi_x$ is taken to be $0$ for all $x\notin\Lambda_n$  as per the boundary conditions.
The study of this family of models in the statistical physics literature goes back to Burton, Cabrera and Frank~\cite{BCF51} in 1951 in dimension $(2+1)$ and to Temperley~\cite{Temperley52} in 1952 in dimension $(1+1)$  as models of crystal formation/growth and approximations to the low-temperature plus/minus interface in the Ising model. 

The model at $\lambda=0$ and no floor (the integer height function $\varphi$ can also take negative values) is associated with a \emph{roughening phase transition} exclusive to dimension $2+1$: for some $\beta_{\textsc r}>0$ (numerical simulations suggest $\beta_{\textsc r}\approx 0.806$)
the SOS surface is rough at $\beta\leq\beta_{\textsc r}$ (e.g.,~the variance of $\varphi_o$ diverges with~$n$) whereas it is rigid (the variance is $O(1)$) for $\beta>\beta_{\textsc r}$ (see~\cite{FrSp81a,FrSp81b} and~\cite{BW82} for $\beta\ll 1$ and $\beta\gg1$, resp., and~\cite{Lammers22} proving they form a dichotomy). When setting the surface above a floor (a hard constraint $\varphi\geq 0$) at the low temperature regime ($\beta$ large), it exhibits \emph{entropic repulsion}~\cite{BEF86}: the average height is propelled from $O(1)$ to $\Theta(\log n)$, despite the energetic cost charged along the zero boundary conditions, in order to gain entropy via downward-pointing spikes. When $\lambda = \lambda_n>0$, the field induces an additional downwards force on the interface, inducing a non-trivial competition with the entropic repulsion, known as the \emph{prewetting regime}.

We study Glauber dynamics for the SOS model from \cref{eq:SOS-measure}---a continuous-time single-site Markov chain that, on one hand, gives a simple local recipe for sampling $\mu_{n,\lambda}$, and on the other hand, serves as a natural physical model for the evolution of a random surface towards the SOS measure at equilibrium.
When the SOS surface is concentrated at height $h$ (e.g.,  $h=\Theta(\log n)$ when $\lambda=0$), understanding the rate of convergence of this dynamics to $\mu_{n,\lambda}$ is closely related to the time it takes the surface to successively create $h$ macroscopic layers (each grown out of local droplets) starting, e.g., from the all-zero configuration.

More precisely, the Glauber dynamics assigns every site in $\Lambda_n$ a rate-1 Poisson clock, and every time such a clock rings, it updates the height $\varphi_v$ at the corresponding site $v$ to one of $\{\varphi_v-1 \vee 0,\varphi_v,\varphi_v+1\wedge n\}$, weighted by the conditional probabilities induced on these by $\mu_{n,\lambda}$ given $\{\varphi_u :u\neq v\}$. Said probabilities (and by extension, the update rule at  $v$) depend only on $\{\varphi_u: u = v\mbox{ or }u\sim v\}$ due to the 
nearest-neighbor interactions in \cref{eq:SOS-measure}, and by construction, the chain is reversible w.r.t.\ its stationary distribution $\mu_{n,\lambda}$. A standard way to measure its rate of convergence to equilibrium is the spectral-gap of its generator, denoted $\gap_{n,\lambda}$, which governs the mixing time in $L^2$-distance, and its dependence on the system size $n$ as $n\to\infty$.

With no external field ($\lambda=0$), Caputo et al.~\cite{CLMST14} showed that the inverse gap of
Glauber dynamics for the $(2+1)$D SOS model above a floor satisfies $\gap_{n,\lambda}^{-1} = \exp[\Theta(n)]$, as the surface encounters a sequence of metastable states towards equilibrium: starting from the all-zero state, it takes it time doubly-exponential in~$k$---approximately $\exp[c e^{4\beta k}]$---to create a new macroscopic layer at height $k$, with the bottlenecks at the final levels of height $\sim\frac1{4\beta}\log n$ dominating the mixing time and costing $\exp[c n]$.

In the presence of a bulk external field $\lambda>0$ pointing downward, the competition between the field and the entropic repulsion may lower the typical surface height. For $\lambda>0$ fixed, this lowers the SOS surface to height $k=k(\lambda)$, where as we vary $\lambda\downarrow 0$, the preferred height $k(\lambda)$ of the surface will grow as $\frac{1}{4\beta}\log \frac{1}{\lambda}$. Due to the rigidity of the surface, this induces a sequence of critical points $\lambda_c^{(k)}\approx e^{ - 4\beta k}$ at which the two heights $k+1,k$ are both (equally) stable. (A similar infinite sequence of critical points along which the interface height diverges occurs in the \emph{wetting problem}, where in lieu of the bulk external field $\lambda$ the surface is tilted by $\lambda \sum_{v}\one_{\varphi_v=0}$, rewarding only sites that are pinned to height $0$. A detailed understanding of this was developed by Lacoin~\cite{Lacoin18,Lacoin20} confirming the predicted phenomena~\cite{Chalker}. See also~\cite{Alexander11,FY23} and the excellent survey by Ioffe and Velenik~\cite{IV18} for more on the layering phenomena associated with wetting/pinning.)
In our prewetting framework, at sufficiently low temperatures (large enough $\beta$), an extensive study of the statics and Glauber dynamics for this model\footnote{The setup of~\cite{CeMa1,CeMa2} technically considers $\varphi:\Lambda_n\to \Z_{\geq 0}$, i.e., no upper bound on the maximal height. For $\lambda$ bounded away from~$0$ uniformly in $n$, as they were considering, results with and without a ceiling readily transfer back and forth.} by Cesi and Martinelli~\cite{CeMa1,CeMa2} in 1996 established a finite number of dynamical phase transitions $(\lambda_c^{(k)})_{k=0}^{K_\beta}$.

\begin{theorem*}[\cite{CeMa2}]
Fix $\beta>0$ large enough, and consider
Glauber dynamics w.r.t.\ $\mu_{n,\lambda}$. There is a sequence of critical points $(\lambda_c^{(k)})_{k=0}^{K_\beta}$, where $K_\beta = \lfloor \exp(\frac{\beta}{20000})\rfloor$, such that for every $k=0,\ldots,K_{\beta}$:
\begin{enumerate}[1.]
    \item If $\lambda\in (\lambda_c^{(k+1)},\lambda_c^{(k)})$ then $\gap_{n,\lambda}^{-1}=\Theta(1)$ (moreover, under any boundary condition);
    \item If $\lambda=\lambda_c^{(k)}$ then $\gap_{n,\lambda}^{-1} = \exp[\Theta( n)]$ under free boundary conditions;
\end{enumerate}
where the implicit constants depends on both $\beta$ and $\lambda$.
\end{theorem*}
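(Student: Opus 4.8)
The plan is to reduce both dynamical claims to two static facts about $\mu_{n,\lambda}$ at large $\beta$, after which essentially all of the work lies in the statics. I would fix a reference height $j$, encode a surface $\varphi$ by the level lines of $\varphi-j$ together with its downward spikes, and run a low-temperature contour (cluster) expansion. The aim is to show that for each $j$ up to some $K_\beta$ there is a band of fields $\lambda$ on which the surface is \emph{rigid at height $j$}: the typical configuration is a flat interface at height $j$ dressed by a sparse gas of local contours, truncated correlations decay exponentially, and --- crucially --- this holds uniformly in the boundary condition and in $n$, i.e.\ a strong spatial mixing (SSM) estimate on all scales. The competition to monitor is between the entropic repulsion off the floor (favoring larger $j$, the free-energy gain from raising $j$ by one being of order $e^{-4\beta j}n^2$) and the field (favoring smaller $j$, of order $\lambda n^2$); balancing these produces the scale $\lambda_c^{(k)}\asymp e^{-4\beta k}$. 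Comparing the restricted partition functions $Z^{(j)}_{n,\lambda}$ as $\lambda$ varies then identifies the finite decreasing sequence $(\lambda_c^{(k)})_{k\le K_\beta}$ at which the minimizing height jumps from $k+1$ down to $k$, and shows that at each such point one has \emph{phase coexistence}: both $Z^{(k)}_{n,\lambda}$ and $Z^{(k+1)}_{n,\lambda}$ keep a bounded-below fraction of the total mass, while any configuration containing both a macroscopic region at height $\le k$ and one at height $\ge k+1$ must support a contour of length $\gtrsim n$ inside $\Lambda_n$ and hence has $\mu_{n,\lambda_c^{(k)}}$-probability at most $e^{-c_\beta n}$ (a Peierls bound together with the isoperimetry of the box).

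Granting this, item (1) is immediate: for $\lambda\in(\lambda_c^{(k+1)},\lambda_c^{(k)})$ the measure sits in the single rigid phase at height $k$ and satisfies SSM on all scales uniformly over boundary conditions, so by the standard equivalence of complete analyticity / SSM with an $n$-independent logarithmic Sobolev constant for Glauber dynamics of lattice systems (Martinelli--Olivieri--Schonmann; Cesi) one obtains $\gap_{n,\lambda}^{-1}=\Theta(1)$, uniformly in the boundary condition.

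For item (2), at $\lambda=\lambda_c^{(k)}$ with free boundary conditions I would use the bulk order parameter $N(\varphi)=|\{v\in\Lambda_n:\varphi_v\ge k+1\}|$ and the set $A=\{N\le n^2/2\}$. Coexistence gives $\mu_{n,\lambda}(A),\mu_{n,\lambda}(A^c)\ge c>0$, while $\{|N-n^2/2|\le 1\}$ consists of interpolating configurations and so has $\mu_{n,\lambda}$-measure $\le e^{-c_\beta n}$; since single-site updates change $N$ by at most one and have bounded rates, the conductance of $A$ is $\le e^{-c'n}$, whence $\gap_{n,\lambda}\le e^{-c'n}$. For the matching $\gap_{n,\lambda}\ge e^{-Cn}$ I would use a Markov-chain decomposition along $N$: within each of the halves $\{N\le n^2/2\}$ and $\{N>n^2/2\}$ the restricted dynamics relaxes in time $O(1)$ (the SSM input, now for the conditioned measure), and the associated two-state projected chain has gap $\asymp e^{-\Theta(n)}$ --- one direction is the conductance bound just given, the other because one can cross from $A$ to $A^c$ by sweeping the separating contour across $\Lambda_n$ through $\mathrm{poly}(n)$ favorable updates, each of probability bounded below. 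Combining these via the decomposition estimate (Madras--Randall; Jerrum--Son--Tetali--Vigoda) gives $\gap_{n,\lambda}^{-1}=\exp[\Theta(n)]$.

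The genuine obstacle is all in the first paragraph: carrying the contour analysis through with uniform control in the boundary conditions, and locating the coexistence points together with quantitative two-phase weight estimates. The restriction $k\le K_\beta=\lfloor e^{\beta/20000}\rfloor$ is precisely the range in which the (necessarily multiscale, and far from sharp) induction controlling these ever-finer layers still closes; extracting this statics for $k$ growing with $n$ --- and all the way down to $\lambda\to 0$ --- is what the present paper undertakes by other means.
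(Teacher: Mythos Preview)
This theorem is \emph{quoted} from Cesi--Martinelli as prior work; the present paper does not give its own proof of it, but rather extends it (removing the restriction $k\le K_\beta$ and analyzing the approach $\lambda\to\lambda_c^{(k)}$). So there is no proof in the paper to compare against directly, though the paper's machinery in Sections~3--7 does recover and sharpen the result along the way.

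Your outline of the statics and of item~(1) is in the right spirit and close to what Cesi--Martinelli (building on Dinaburg--Mazel) actually do: a cluster/contour expansion identifies the rigid phases and their free energies, the $\lambda_c^{(k)}$ arise as free-energy crossings, and SSM in each single-phase interval feeds into the Martinelli--Olivieri--Schonmann machinery to give $\gap^{-1}=O(1)$. The paper follows essentially this route for its own \cref{it-thm-1-O(1)} in \cref{subsec:SSM-upper-bound}.

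For item~(2), your \emph{lower} bound via a conductance/bottleneck argument is correct and standard (cf.\ \cref{sec:mixing-time-lower-bounds}). Your \emph{upper} bound, however, has a real gap. You claim that within each half $\{N\le n^2/2\}$, $\{N>n^2/2\}$ the restricted dynamics relaxes in time $O(1)$ by ``SSM for the conditioned measure,'' but at $\lambda=\lambda_c^{(k)}$ there is no SSM --- that is the whole point --- and conditioning on $N$ does not restore it: configurations near $N\approx n^2/2$ carry macroscopic interfaces and do not look like a single rigid phase, so the Madras--Randall/JSTV hypothesis is missing. The actual route to $\gap^{-1}\le e^{Cn}$ is much cruder and uses no decomposition or phase information at all: a canonical-paths bound (see \cref{lem:canonical-path-bound-sos} here, or Theorem~4.1 in Cesi--Martinelli) gives $\gap^{-1}\le e^{C\beta n}$ on any domain of cut-width $O(n)$, uniformly in $\lambda$ and boundary conditions.
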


Our aim here is to 
(a) understand the interpolation between the two behaviors given by~\cite{CeMa1,CeMa2}---exponentially slow mixing at $\lambda=\lambda_c^{(k)}$ as opposed to fast mixing at $\lambda\neq\lambda_c^{(k)}$, as we take $\lambda=\lambda_n\to \lambda_c^{(k)}$; and (b) obtain the complete picture of an \emph{infinite} sequence of dynamical phase transitions by removing the restriction on $K_\beta$, and en route, show the interpolation from the $\lambda$ fixed case to the $\lambda=0$ case studied in~\cite{CLMST14}. 

It ought to be noted that the analysis of the model in~\cite{CeMa1,CeMa2} is quite involved, and itself was built on (and refined) a highly nontrivial study of the model in infinite-volume by Dinaburg and Mazel~\cite{DiMa94} (where a full infinite sequence of critical values  $(\lambda_c^{(k)})_{k=0}^\infty$ for limiting free energies was derived). As mentioned in~\cite{CeMa1} as well as in the survey~\cite[\S4.1.1]{IV18}, it was unclear whether extending the results of~\cite{CeMa1,CeMa2} to $K_\beta\uparrow\infty$ (ideally up to $k= \Theta(\log n)$ where the $\lambda=0$ behavior is observed) was technical or would required different ideas. (An outline of how some of the equilibrium results in~\cite{CeMa1} may be pushed beyond the restriction $k\leq K_\beta$ was sketched by Lebowitz and Mazel~\cite{LeMa96}, albeit still for for $\lambda$ and $k$ fixed independently of $n$.)

\begin{figure}
    \centering
\begin{tikzpicture}
   \node (fig1) at (0,0) {    \includegraphics[width=.9\textwidth, height = 2.5in]{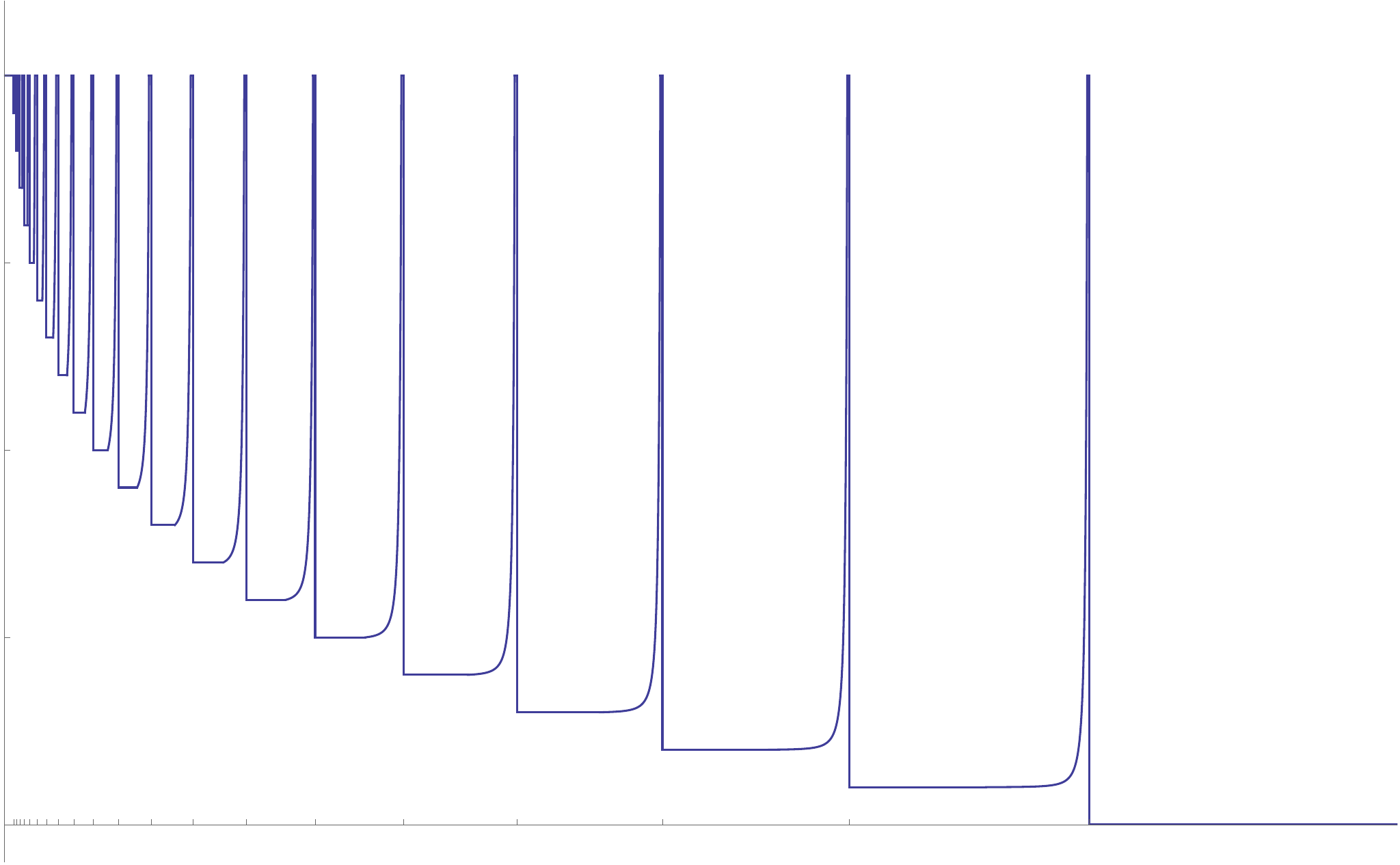}};
   \node[black,font=\small] at (-7.85,2.65) {$e^{n}$};
   % \node[black,font=\small] at (-7.85,1.3) {$e^{n^{3/4}}$};
   \node[black,font=\small] at (-7.85,0) {$e^{\sqrt n}$};
   \node[black,font=\small] at (-7.85,-1.5) {$e^{n^{\alpha}}$};

        \node[black,font=\small] at (-7.85,1.3) {$\vdots$};
        \node[black,font=\small] at (-7.85,-2) {$\vdots$};

     \node[black,font=\small] at (4.25,-3.15) {$\lambda_c^{(k)}$};
     \node[black,font=\small] at (1.7,-3.15) {$\lambda_c^{(k+1)}$};
   \node[black,font=\small] at (-0.3,-3.15) {$\cdots$};
     \node[black,font=\small] at (-1.8,-3.15) {$\cdots$};
        \node[black,font=\small] at (-5.5,-3.2) {$\cdots$};

     \node[black,font=\small] at (-3.95,-3.15) {$\lambda_c^{{\tiny{(\frac{\alpha}{4\beta}\log n)}}}$};
     \node[black,font=\small] at (-7.15,-3.15) {$\frac{1}{n}$};
     \node[black,font=\small] at (-6.45,-3.18) {$\frac{1}{\sqrt{n}}$};

   % \node[black,font=\small] at (4.25,-3.15) {$\lambda_c^{(1)}$};
   % \node[black,font=\small] at (1.7,-3.15) {$\lambda_c^{(2)}$};
   % \node[black,font=\small] at (-0.3,-3.15) {$\lambda_c^{(3)}$};
   % \node[black,font=\small] at (-1.8,-3.15) {$\lambda_c^{(4)}$};
   % \node[black,font=\small] at (-3.05,-3.15) {$\lambda_c^{(5)}$};
   % \node[black,font=\small] at (-3.95,-3.15) {$\lambda_c^{(6)}$};
   % \node[black,font=\small] at (-4.7,-3.15) {$\lambda_c^{(7)}$};
   % \node[black,font=\small] at (-5.5,-3.2) {$\dots$};
   \end{tikzpicture}
   \vspace{-0.1in}
    \caption{The inverse gap $\gap_{n,\lambda_n}^{-1}$ under zero boundary conditions plotted against~$\lambda$, demonstrating the diverging sequence of metastability cascades as $\lambda\downarrow 0$. The slowdowns near the critical points are expected to be asymmetrical due to the boundary condition.}
    \label{fig:zero-bc-expected}
\end{figure}

Our main theorem establishes the full infinite sequence of metastability windows of $\lambda$ about $(\lambda_c^{(k)})_{k=0}^\infty$, with the distance of  $\lambda_n$ to the nearest critical point determining the rate of the exponential slowdown. Namely, the rate of metastability is determined by the time it takes a surface that is predominantly at height $k$ to generate a critical droplet at the desired height $k+1$. The size needed for the droplet to be stable is in turn determined by an isoperimetric tradeoff and the difference of $\lambda$ to $\lambda_c^{(k)}$. (A similar metastability induced by the time to grow a critical droplet is seen in the dynamics of a low-temperature 2D Ising model with $(+)$-boundary and a competing small $(-)$ external field, as was studied in~\cite{SchShl-Ising-field-dynamics-2D}: see also~\cite{Sch-Ising-field-dynamics,Sch-Ising-field-dynamics-survey}). 
\begin{theorem}\label{mainthm:dynamical}
    Consider the SOS Glauber dynamics at $\beta>\beta_0$ large enough and a bulk external field $\lambda_n>0$ with zero boundary as per \cref{eq:SOS-measure}. There exists a sequence of critical points $(\lambda_c^{(k)})_{k=0}^\infty$ such that the following holds. Let \begin{equation}\label{eq:dl}
    \dlp(x) := \min_k\{ (\lambda_c^{(k)}-x): \lambda_c^{(k)}\ge x\}\,, \quad \text{and} \quad \dl(x) := 
 \min_k\{ |\lambda_c^{(k)}-x|\}\,.
\end{equation}
For every $\epsilon>0$ there exist constants $c(\epsilon),C(\beta,\epsilon)>0$ such that for all $n$,    
        \begin{enumerate}
    \item \label{it-thm-1-O(1)} If $\lambda_n$ is such that $\dl(\lambda_n)>\epsilon$ then $C^{-1}\le \gap_{n,\lambda_n}^{-1}\leq C$.
    
    \item \label{it-thm-1-O(1/f)} If $\lambda_n$ is such that $\frac{c\beta \log n}{n}\leq \dlp(\lambda_n)\leq \epsilon$ then\footnote{This restriction can in fact be replaced with $\frac{c\beta k}{n}\le \dlp(\lambda_n)\le \epsilon$ where $k= \operatorname{argmin}_k\{ (\lambda_c^{(k)}-x): \lambda_c^{(k)}>x\}$.} 
        \begin{equation}\label{eq:mainthm-gap-1/f} \frac{1}{C} \vee \frac{1}{n^2}\exp\Big[ \frac{1}{C\dlp(\lambda_n)}\Big] \leq \gap_{n,\lambda_n}^{-1} \leq \exp\Big[\frac{C(\log n)^3}{\dl(\lambda_n)}\Big]\,.
        \end{equation}
    \end{enumerate}
\end{theorem}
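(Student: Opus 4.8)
\emph{Overview and the critical-droplet heuristic.} The plan is built around a ``critical droplet'' picture that must be made rigorous \emph{uniformly in $k$} (including $k$ of order $\log n$). Fix the index $k$ at which $\dlp(\lambda_n)$ is attained and write $\delta=\dlp(\lambda_n)=\lambda_c^{(k)}-\lambda_n$. Just below $\lambda_c^{(k)}$ the measure $\mu_{n,\lambda_n}$ concentrates on surfaces rigid at height $k+1$ in the bulk, while the flat surface at height $k$ is only \emph{metastable}: the per-site free energy of the height-$k$ ensemble exceeds that of the height-$(k+1)$ ensemble by $\Theta(\delta)$, so the metastable ensemble has total weight $e^{-\Theta(n^2\delta)}$. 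To pass from the height-$k$ plateau to the height-$(k+1)$ plateau the dynamics must nucleate a droplet of height $k+1$: a droplet of area $A$ costs $\asymp\beta\sqrt A$ for its bounding level line and gains $\asymp\delta A$ for the volume it raises, so the free-energy profile along the optimal path peaks at the critical area $A^\ast\asymp(\beta/\delta)^2$ with barrier $\Theta(\beta^2/\delta)$. Part~(1) of \cref{mainthm:dynamical} is the case $\dl(\lambda_n)>\epsilon$, where no competing layer lies within relative weight $e^{-\Theta(n^2\epsilon)}$ and the surface relaxes inside one rigid phase; part~(2) is where this barrier diverges like $\beta^2/\delta$. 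The hypothesis $\delta\ge c\beta\log n/n$ is exactly what makes $A^\ast\le n^2$, so a critical droplet fits inside $\Lambda_n$, and (under zero boundary conditions) what makes the bulk volume gain $\Theta(\delta n^2)$ beat the $\Theta(\beta n)$ cost of carrying the extra level line around $\partial\Lambda_n$ --- i.e.\ what makes height $k+1$ genuinely stable.

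\emph{Step 1: equilibrium layering, uniformly in $k$.} The main new ingredient, and the step I expect to be the main obstacle, is a quantitative version of the Pirogov--Sinai / cluster-expansion analysis of Dinaburg--Mazel~\cite{DiMa94} and Cesi--Martinelli~\cite{CeMa1} that holds uniformly for $k$ up to order $\log n$, removing the restriction $k\le K_\beta$. I would: (i) set up the restricted ensembles of surfaces fluctuating around each integer height $h$ above the floor, tracking the effective activities of floor-touching excitations (of order $e^{-4\beta h}$) and their induced interactions as $h$ grows; (ii) show the per-site free energies $f_h(\lambda)$ exist, are analytic and concave in $\lambda$, and that $f_{k+1}-f_k$ is strictly decreasing with derivative of order $1$, hence has a unique zero $\lambda_c^{(k)}\asymp e^{-4\beta k}$ and equals $\mp\,\Theta(\delta)$ at $\lambda_n=\lambda_c^{(k)}\mp\delta$; (iii) prove the droplet-weight estimate --- the $\mu_{n,\lambda_n}$-probability of an $\ast$-connected height-$(\ge k{+}1)$ component of area $A$ sitting on a height-$k$ background is $e^{-\Theta(\beta\sqrt A)+\Theta(\delta A)}$ up to a $\mathrm{poly}(A)$ factor --- together with $k$-uniform rigidity and exponential tails for level-line lengths at rate $\Theta(\beta)$; and (iv) carry out the zero-boundary free-energy comparison of ``bulk at $k+1$'' versus ``bulk at $k$''. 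The difficulty is that the convergence radii and truncation errors of the relevant expansions must be controlled as $k$ grows, and the balance defining $\lambda_c^{(k)}$ is between differences of exponentially small activities.

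\emph{Step 2: the exponential lower bound.} Granting Step~1, the lower bound in \cref{eq:mainthm-gap-1/f} is a conductance (bottleneck) estimate. Let $B$ be the set of height functions with no $\ast$-connected height-$(\ge k{+}1)$ component of area exceeding $c(\beta/\delta)^2$. Then $\mu_{n,\lambda_n}(B)\le e^{-\Theta(n^2\delta)}\le\tfrac12$ --- inside $B$ every cluster of raised sites has perimeter-to-area cost bounded below, so $B$ is dominated by the height-$k$ plateau ensemble --- while any configuration in the inner vertex-boundary $\partial_{\mathrm{in}}B$ contains a height-$(\ge k{+}1)$ component of area $\asymp A^\ast$, so by the droplet-weight estimate $\mu_{n,\lambda_n}(\partial_{\mathrm{in}}B)\le\mathrm{poly}(n)\,\mu_{n,\lambda_n}(B)\,e^{-\Theta(\beta^2/\delta)}$, the $\mathrm{poly}(n)$ accounting for the droplet's location and shape. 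Since single-site updates change areas by $O(1)$, the Dirichlet flow across $\partial B$ is at most $O(n^2)\,\mu_{n,\lambda_n}(\partial_{\mathrm{in}}B)$, and the bottleneck inequality $\gap_{n,\lambda_n}^{-1}\ge\mu(B)\mu(B^c)/\text{(flow)}$ gives $\gap_{n,\lambda_n}^{-1}\ge\mathrm{poly}(n)^{-1}\,e^{\Theta(\beta^2/\delta)}$, which is of the claimed form $n^{-2}\exp[\Theta(1/\delta)]$.

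\emph{Step 3: the matching upper bounds.} For part~(1), the surface lives deep in one rigid phase at some height $k$; the $k$-uniform rigidity and exponential tails from Step~1 yield strong spatial mixing for the restricted ensemble with $k$-independent constants, and feeding this into the recursive two-block-dynamics argument of the type in~\cite{CLMST14,CeMa2} gives $\gap_{n,\lambda_n}^{-1}=O(1)$. For the upper bound in \cref{eq:mainthm-gap-1/f} I would bound $\tmix$ (hence $\gap^{-1}\lesssim\tmix$) via the monotone grand coupling of the chains $\varphi^\top,\varphi^\bot$ started from the all-$n$ and all-$0$ configurations, using censoring to pick convenient update schedules. The chain $\varphi^\top$ peels the excess layers one at a time; lowering layer $j\to j{-}1$ for $j\ge k+2$ requires nucleating a height-$(j{-}1)$ hole, with barrier $\Theta\big(\beta^2/(\lambda_n-\lambda_c^{(j-1)})\big)$, dominated by $j=k+2$, so $\varphi^\top$ enters a neighborhood of height $k+1$ within time $n^{-2}e^{\Theta(\beta^2/(\lambda_n-\lambda_c^{(k+1)}))}\le\exp[\Theta(\beta^2/\dl(\lambda_n))]$. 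The chain $\varphi^\bot$ first builds layers $1,\dots,k$, each with a barrier at most $O(e^{-4\beta})\cdot\beta^2/\delta$ and hence negligible next to the layer-$(k{+}1)$ barrier $\Theta(\beta^2/\delta)$; it then builds layer $k+1$ by nucleation and growth: partition $\Lambda_n$ into $\Theta((n\delta/\beta)^2)$ cells of side $\asymp\beta/\delta$; in each cell, starting from the height-$k$ plateau, the hitting time of a supercritical height-$(k{+}1)$ droplet is $e^{\Theta(\beta^2/\delta)}$ by a local Arrhenius estimate (comparison with a birth--death chain on the droplet area with $\asymp$ perimeter birth/death rates biased by $\delta$), and once all cells have nucleated, the densely spread supercritical droplets grow and merge into the height-$(k{+}1)$ plateau in $\mathrm{poly}(n)$ further time; thus $\varphi^\bot$ enters a neighborhood of height $k+1$ within time $n^{-2}e^{\Theta(\beta^2/\delta)}+\mathrm{poly}(n)$. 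Once both chains lie in an equilibrium-like set around height $k+1$, the nice-phase mixing of Step~1 (via block dynamics, as in part~(1)) couples them in $\mathrm{poly}(n)$ more steps. Adding the contributions and absorbing the $\beta$-dependence and the polynomial-in-$n$ factors into $C(\log n)^3$ gives $\gap_{n,\lambda_n}^{-1}\le\exp[C(\log n)^3/\dl(\lambda_n)]$. Besides Step~1, the delicate point here is proving the local Arrhenius estimate and the droplet growth/merging bound and stitching the per-cell estimates together over $\Theta((n\delta/\beta)^2)$ cells with only polylogarithmic slack.
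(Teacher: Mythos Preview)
Your Steps~1 and~2 are in the right spirit and close to the paper. For the lower bound, the paper's bottleneck set is the same idea but with a linear cutoff $r\asymp 1/\delta$ on the \emph{size} of connected components of $\cV_{\ge k}$ rather than your quadratic area cutoff; the key input is that any height-$(\ge k)$ contour of diameter at most $1/(4\delta)$ still has exponentially small renormalized weight at base height $k-1$ (\cref{cor:non-elem-control-smaller-than-delta-tr}), and this, together with FKG and conditioning on the outermost $\{\le k{-}1\}$-loop around $v$, gives $\mu(\partial A\mid A)\le n^2 e^{-(\beta-C)r}$ directly, without needing a separate droplet free-energy expansion.

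Where your proposal diverges substantially is Step~3. The paper does \emph{not} follow the maximal/minimal chains through nucleation events, prove local Arrhenius laws in cells, or track droplet growth and merging --- all of which you correctly flag as delicate (and which are essentially open even for the simpler 2D Ising shrinking problem; see~\cite{LMST}). Instead it reduces the upper bound to a purely static statement. It proves weak spatial mixing at scale $n_0=C\beta\log n/\dl(\lambda)$ not just on boxes (\cref{lem:wsm-ball}) but also on \emph{annuli} of width $n_0$ with arbitrary inner boundary (\cref{thm:wsm-annulus}). Then, using monotonicity and censoring, it (a) drops the ceiling from $n$ to $\log n$ and (b) localizes: for each site $v$, the discrepancy between the top and bottom chains at $v$ is bounded by the discrepancy of localized chains on $B_{v,n_0}$ (or on the annulus $A_{n,2n_0}$ if $v$ is near $\partial\Lambda_n$), plus the equilibrium spatial-mixing error. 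The mixing time on these cut-width-$n_0$ domains is then bounded crudely by canonical paths as $e^{C\beta n_0\log n}$. This accounts for the $(\log n)^3$ loss but completely sidesteps any pathwise dynamical analysis. The annulus spatial-mixing estimate --- absent from your sketch --- is exactly what handles the zero boundary condition, which otherwise breaks the standard ``weak spatial mixing $\Rightarrow$ fast mixing'' reduction since only WSM (not SSM) holds at scale $n_0$.
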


The following corollary sharply characterizes the onset of metastability in the two regimes which served as our earlier motivation: (a) as $\lambda_n\uparrow \lambda_c^{(k_n)}$, and (b) as $\lambda_n\downarrow 0$. 

\begin{corollary}\label{maincor:n^alpha}
In the setting of \cref{mainthm:dynamical}, for any fixed $0<\alpha<1$, in either of the following situations:
\begin{enumerate}[(a)]
    \item\label{it:cor-fixed-k} $\lambda_n = \lambda_c^{(k)} - \Theta(n^{-\alpha})$ for any $1\le k\le o(\log n)$,
    \item \label{it:cor-0} $\lambda_n = \Theta(n^{-\alpha})$ such that $\dl(\lambda_n)\gtrsim n^{-\alpha}$ (e.g., the sequence $\lambda_n = e^{-\beta - 4\beta k_n}$ for $k_n = \lfloor \frac{\alpha}{4\beta} \log n\rfloor$), 
\end{enumerate}
we have $\gap_{n,\lambda_n}^{-1} = \exp[\tilde\Theta(n^{\alpha})]$.
\end{corollary}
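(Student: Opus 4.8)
The plan is to read off \cref{maincor:n^alpha} from the second part of \cref{mainthm:dynamical}, i.e.\ from the two-sided estimate \cref{eq:mainthm-gap-1/f}: once we know $\dlp(\lambda_n)\asymp\dl(\lambda_n)\asymp n^{-\alpha}$ in each of the two regimes (and that the side condition $\frac{c\beta\log n}{n}\le\dlp(\lambda_n)\le\epsilon$ is met), plugging in will produce $\exp[\tilde\Theta(n^{\alpha})]$ directly. So the entire task is to pin down $\dlp(\lambda_n)$ and $\dl(\lambda_n)$, for which I would invoke the quantitative asymptotics $\lambda_c^{(k)}\asymp e^{-4\beta k}$ of the critical sequence: near any index $k$, consecutive critical points are geometrically spaced with ratio bounded away from $1$, so the gap between them is of order $e^{-4\beta k}$. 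Fixing $\epsilon\in(0,1)$ and letting $c,C$ be the resulting constants of \cref{mainthm:dynamical}, the requirement $\dl(\lambda_n)\le\epsilon$ holds for all large $n$ since $\dl(\lambda_n)\to0$ throughout.

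For situation~(a), $k=o(\log n)$ makes the gaps $\lambda_c^{(k-1)}-\lambda_c^{(k)}$ and $\lambda_c^{(k)}-\lambda_c^{(k+1)}$ both of order $e^{-4\beta k}=n^{-o(1)}$, which dominates the shift $\Theta(n^{-\alpha})$; hence $\lambda_n=\lambda_c^{(k)}-\Theta(n^{-\alpha})$ sits well inside $(\lambda_c^{(k+1)},\lambda_c^{(k)})$, with $\lambda_c^{(k)}$ its nearest critical point on either side, so $\dlp(\lambda_n)=\dl(\lambda_n)=\Theta(n^{-\alpha})$. For situation~(b), $\lambda_n=\Theta(n^{-\alpha})$ puts the index realizing $\dlp(\lambda_n)$ at $k_n\asymp\frac{\alpha}{4\beta}\log n$, where consecutive critical points differ by $\asymp e^{-4\beta k_n}\asymp n^{-\alpha}$, so $\dlp(\lambda_n)=O(n^{-\alpha})$; combined with $\dl(\lambda_n)\le\dlp(\lambda_n)$ and the standing hypothesis $\dl(\lambda_n)\gtrsim n^{-\alpha}$, this gives $\dl(\lambda_n)\asymp\dlp(\lambda_n)\asymp n^{-\alpha}$. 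For the explicit choice $\lambda_n=e^{-\beta-4\beta k_n}$ with $k_n=\lfloor\frac{\alpha}{4\beta}\log n\rfloor$, the hypothesis $\dl(\lambda_n)\gtrsim n^{-\alpha}$ itself is to be checked from $\lambda_c^{(k)}\asymp e^{-4\beta k}$: here $\lambda_n=e^{-\beta}\,e^{-4\beta k_n}$ lies, up to a bounded factor, strictly between $e^{-4\beta(k_n+1)}$ and $e^{-4\beta k_n}$, hence a bounded multiplicative fraction of the way between $\lambda_c^{(k_n+1)}$ and $\lambda_c^{(k_n)}$, so $\dl(\lambda_n)\asymp e^{-4\beta k_n}\asymp n^{-\alpha}$.

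With $\dl(\lambda_n),\dlp(\lambda_n)\asymp n^{-\alpha}$ established, the side condition is automatic, $\dlp(\lambda_n)\asymp n^{-\alpha}\gg(\log n)/n$ for fixed $\alpha<1$ (and the sharper footnote bound $\frac{c\beta k}{n}\le\dlp(\lambda_n)$ holds too, since $k=o(\log n)$), so \cref{eq:mainthm-gap-1/f} applies. Its lower bound becomes $\gap_{n,\lambda_n}^{-1}\ge n^{-2}\exp[\Theta(n^{\alpha})]=\exp[\Theta(n^{\alpha})]$, the $n^{-2}$ being absorbed as $n^{\alpha}\gg\log n$, and its upper bound becomes $\gap_{n,\lambda_n}^{-1}\le\exp[C(\log n)^3\,\Theta(n^{\alpha})]=\exp[\tilde O(n^{\alpha})]$; together these yield $\gap_{n,\lambda_n}^{-1}=\exp[\tilde\Theta(n^{\alpha})]$. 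I expect no genuine obstacle here---the corollary is bookkeeping layered on \cref{mainthm:dynamical}---the only steps needing a little care being the matching upper bound $\dlp(\lambda_n)=O(n^{-\alpha})$ in situation~(b), which must come from the density of critical points at scale $n^{-\alpha}$ rather than from the hypothesis, and the verification that the explicit sequence obeys $\dl(\lambda_n)\gtrsim n^{-\alpha}$.
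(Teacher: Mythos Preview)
Your proposal is correct and follows exactly the intended derivation: the paper does not give a separate proof of this corollary, treating it as an immediate consequence of \cref{mainthm:dynamical}\cref{it-thm-1-O(1/f)} once one checks $\dl(\lambda_n)\asymp\dlp(\lambda_n)\asymp n^{-\alpha}$ via the spacing $\lambda_c^{(k)}\asymp e^{-4\beta k}$ (cf.\ \cref{eq:I-i} and \cref{prop:lambda_c}). One minor indexing slip: for the explicit sequence $\lambda_n=e^{-\beta-4\beta k_n}$, this is the right endpoint $\lambda_+^{(k_n)}$ of $I_{k_n}$, so $\lambda_n\in[\lambda_c^{(k_n)},\lambda_c^{(k_n-1)}]$ rather than $[\lambda_c^{(k_n+1)},\lambda_c^{(k_n)}]$ as you wrote; this does not affect the conclusion since all gaps at that scale are $\Theta(n^{-\alpha})$.
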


In \cref{it:cor-fixed-k} of \cref{maincor:n^alpha}, the typical interface will be localized at height $k+1$, but under $k$ boundary conditions, only height-$(k+1)$ layers of size greater than $n^{\alpha}$ are desirable (smaller ones do not have enough room for the entropic repulsion to propel them to height $k+1$). As the dynamics climbs from below, say initialized at all-zero, this generates a bottleneck, whereby there is an exponential in $n^{\alpha}$ waiting time to randomly form such an atypically large droplet. In \cref{it:cor-0}, initialized from all-zero, there is a cascade of metastable transitions with times exponential in $(\lambda_c^{(k)}- \lambda_n)_{k\ge 0}$ to go from $k$ to $k+1$, with the final one to get to $k_n=\lfloor\frac{\alpha}{4\beta}\log n\rfloor$ dominating the mixing time. See \cref{fig:zero-bc-expected} for a rough depiction of the above.  

As evident from the fact that \cref{eq:mainthm-gap-1/f} features $\dlp(\lambda_n)$ in the lower bound yet $\dl(\lambda_n)$ in the upper bound (measuring the distance to the closest critical point $\lambda_c^{(k)}$ in the latter yet only to the closest $\lambda_c^{(k)}$ above $\lambda_n$ in the former), we expect the mixing time to be asymmetric in the window about each $\lambda_c^{(k)}$ (see \cref{fig:zero-bc-expected}). Indeed, when $\dl(\lambda_n) \ll \dlp(\lambda_n)$, e.g., $\lambda_n\downarrow \lambda_c^{(k)}$, the typical interface is rigid about height $k$, but in a region with boundary conditions $k+1$, only height-$k$ layers of size greater than $\dl(\lambda_n)^{-1}$ will be desirable. This would induce an exponential slowdown if the boundary conditions were above height $k$, but the height-$0$ boundary conditions enable the demolition of the height-$k$ layer from the outside inwards (via droplet shrinkage). This dynamical picture---in particular, if we identify heights $\{\leq k\}$ as minus spins and heights $\{>k\}$ as plus spins---is in analogy with (and more complicated than) the dynamics of the low-temperature Ising model with minus boundary conditions, initialized from all-plus (possibly with a small negative field of $\dl(\lambda_n)$). The latter is a notoriously hard open problem in the mixing time literature: see~\cite{LMST} for the best known bounds. In particular, for fixed $k$, exactly at $\lambda_c^{(k)}$, and with zero boundary conditions, one would expect the mixing time to be polynomially fast, with a $\Theta(n^2)$ inverse gap.  We discuss the delicate expected behavior in this regime in more detail in \cref{subsec:open-problems}. 

When considering the SOS model under \emph{periodic} (instead of zero) boundary conditions---that is, $\Lambda_n$ in \cref{eq:SOS-measure} is replaced by $(\mathbb Z/n\mathbb Z)^2$---the above asymmetry is no longer present, as illustrated by the next theorem. (See \cref{fig:torus-mixing-time} for a depiction of the bounds of \cref{mainthm:dynamical-torus} symmetric about $(\lambda_c^{(k)})_{k=0}^{\infty}$.) 

\begin{theorem}\label{mainthm:dynamical-torus}
    In the setting of \cref{mainthm:dynamical} but with periodic boundary conditions, so long as $\lambda_n \ge \frac{\log n}{n}$, for every $\epsilon>0$ there exists $C(\beta,\epsilon)$ such that for all $n$, 
    \begin{enumerate}
        \item \label{it-torus-O(1)} If $\lambda_n$ is such that $\dl(\lambda_n)\ge \epsilon$, then $C^{-1} \le \gap^{-1}_{n,\lambda_n}\le C$.
        \item \label{it-torus-O(1/f)} If $\lambda_n$ is such that $\dl(\lambda_n) \le \epsilon$, then 
        \begin{align*}
           \frac{1}{C} \vee \frac{1}{n^2} \exp\Big[\frac{1}{C} \Big( n \wedge \frac{1}{\dl(\lambda_n)}\Big)\Big] \le \gap_{n,\lambda_n}^{-1}\le \exp\Big[C \Big( n \log n \wedge \frac{(\log n)^3}{\dl(\lambda_n)}\Big)\Big]\,.
        \end{align*}
    \end{enumerate}
\end{theorem}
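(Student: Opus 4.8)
\textbf{Proof plan for \cref{mainthm:dynamical-torus}.}
The strategy is to reuse the apparatus behind \cref{mainthm:dynamical}, exploiting two features special to the torus. First, there is no zero boundary, hence no ``frame that eats inward'': the demolition mechanism that made the zero-bc mixing window asymmetric (slow only when climbing, i.e.\ governed by $\dlp$) disappears, so the obstruction now sits symmetrically on both sides of each $\lambda_c^{(k)}$ and both bounds are controlled by $\dl(\lambda_n)$. Second, on a torus of side $n$ a droplet never needs to exceed diameter $\Theta(n)$ before it wraps and becomes a band of decreasing free energy, so the nucleation barrier $\asymp 1/\dl(\lambda_n)$ is truncated at $\Theta(n)$; this is the source of the extra caps $n\log n$ and $n$, and also the reason no lower bound on $\dl(\lambda_n)$ is needed (unlike the restriction $\dlp(\lambda_n)\ge c\beta\log n/n$ for zero bc). For part~\ref{it-torus-O(1)}, the hypothesis $\dl(\lambda_n)\ge\epsilon$ forces $\lambda_n$ to lie in an open interval strictly between two consecutive critical points with $k(\lambda_n)=O_\epsilon(1)$, so the surface is in a single rigid phase and $\gap_{n,\lambda_n}^{-1}=O(1)$ follows exactly as in part~1 of the Cesi--Martinelli theorem \cite{CeMa1,CeMa2} (the recursive block-dynamics argument in the one-phase regime applies verbatim to periodic boundary conditions).

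For the upper bound in part~\ref{it-torus-O(1/f)} I would prove two separate estimates and take their minimum. The bound $\gap_{n,\lambda_n}^{-1}\le\exp[C(\log n)^3/\dl(\lambda_n)]$ is the same statement as the upper bound of \cref{mainthm:dynamical}, and should follow by the same proof: the only obstruction to $O(1)$ relaxation is the nucleation of a droplet of the non-preferred height, whose critical linear size is $\asymp 1/\dl(\lambda_n)$ by the isoperimetric tradeoff, and the recursive scheme (monotone couplings plus the one-phase block estimates of part~\ref{it-torus-O(1)} on sub-boxes) caps the relaxation time at the exponential of the free-energy cost of such a droplet, which is $O((\log n)^3/\dl)$ once the $O(\log n)$ entropic-repulsion corrections are included. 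The genuinely new estimate is the cap $\gap_{n,\lambda_n}^{-1}\le\exp[Cn\log n]$, in force even at $\lambda_n=\lambda_c^{(k)}$ where heights $k$ and $k+1$ coexist. Here I would build an explicit family of canonical paths to the preferred ground state: first locally monotonize onto (say) the all-$k$ state, then sweep a band of the preferred height across the torus one column at a time. Along such a path the interface length stays $\le 2n$ and each move perturbs the interface by $O(1)$, so the congestion of the path family is $\exp[O(n\log n)]$ — the $\log n$ having the same (presumably suboptimal) origin as the polylogarithmic losses in the upper bounds of \cref{mainthm:dynamical}, absorbing the entropic-repulsion corrections to the cost of a height-$k$/height-$(k+1)$ interface above the floor (relevant since $\lambda_n\ge\log n/n$ only guarantees $k\lesssim\log n$). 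Together these give $\exp[C(n\log n\wedge(\log n)^3/\dl)]$.

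For the lower bound in part~\ref{it-torus-O(1/f)}, fix the index $k$ with $\dl(\lambda_n)=|\lambda_c^{(k)}-\lambda_n|$ and suppose first $\lambda_n\le\lambda_c^{(k)}$, so height $k+1$ is preferred while the all-$k$ state is metastable. Put $R:=\tfrac12(n\wedge C'/\dl(\lambda_n))$ and let $A$ be the set of configurations in which the region at heights $\ge k+1$ contains no contractible droplet of diameter $\ge R$ and no non-contractible component; then every single-site transition between $A$ and $A^c$ occurs at a configuration containing a height-$(k+1)$ droplet of diameter $\approx R$, whose $\mu$-weight relative to the all-$k$ well is $\exp[-\Theta(n\wedge1/\dl)]$ by the isoperimetric free-energy estimate, times an $O(n^2)$ positional-entropy factor for the droplet location. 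Since $A$ contains the all-$k$ well and $A^c$ contains the dominant (height-$k{+}1$) phase, $\var(\one_A)=\mu(A)\mu(A^c)\ge c\,\mu(A)$ while the Dirichlet form obeys $\cE(\one_A,\one_A)\le n^2\exp[-\Theta(n\wedge1/\dl)]\,\mu(A)$, whence $\gap_{n,\lambda_n}^{-1}\ge\tfrac1{n^2}\exp[\Theta(n\wedge1/\dl)]$. The case $\lambda_n\ge\lambda_c^{(k)}$ is identical with heights $k$ and $k+1$ interchanged — and, crucially, on the torus there is no boundary frame available to demolish the unwanted height-$k$ layer from outside, so the bottleneck genuinely survives on this side as well (precisely the asymmetry that is present for zero bc and absent here).

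I expect the main obstacle to be the $\exp[O(n\log n)]$ cap in the upper bound. Bounding the congestion of the band-sweeping path family requires a quantitative handle on the interface between two rigid SOS phases sitting above a floor — its effective line tension, its transversal fluctuations, and the cost of the elementary ``advance one column'' moves — and this must be uniform in $k$ up to $k\asymp\log n$, which is exactly the regime where the contour analysis underlying \cite{CeMa1,CeMa2,DiMa94} becomes delicate and where the logarithmic corrections enter.
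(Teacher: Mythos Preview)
Your lower bound and the $\exp[C(\log n)^3/\dl]$ upper bound are essentially the paper's approach. For the lower bound the paper uses a slightly cleaner device to get $\mu(A)\mu(A^c)$ bounded below: it defines two bottleneck candidates, $A_r=\{\text{all components of }\cV_{\ge k}\text{ have size}\le r\}$ and $A_r'=\{\text{all components of }\cV_{\le k-1}\text{ have size}\le r\}$ with $r=\min\{n/8,1/(8\dl)\}$, observes these are disjoint (so one has mass $<1/2$), and takes that one as the bottleneck. This sidesteps any need to identify which phase is dominant. Otherwise the Peierls/Cheeger computation is the same as yours.

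Where you diverge from the paper is the $\exp[Cn\log n]$ cap, which you flag as the main obstacle and propose to attack via a bespoke band-sweeping canonical path family. The paper's route is much simpler, and your expected difficulty does not materialize. The paper first reduces the ceiling from $n$ to $\log n$ via censoring (exactly as for zero boundary conditions); the only torus-specific ingredient is a replacement for the a priori bound on the stationary maximum, namely that under periodic boundary conditions and $\lambda\ge (\log n)/n$ one has $\mu(\max_v\varphi_v\ge\tfrac12\log n)\le n^{-5}$, proved by dominating by all-$\log n$ boundary and invoking the height-$k$ rigidity. Once the ceiling is $\log n$, the generic canonical-path bound (congestion $\le\exp[C\beta\cdot\text{cut-width}\cdot\text{ceiling}]$) applied to the full torus with cut-width $n$ and ceiling $\log n$ immediately gives $\exp[C\beta n\log n]$. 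So the $\log n$ is not an entropic-repulsion correction to a line tension as you guess, but simply the ceiling height after the censoring reduction. Your band-sweeping scheme would also need this ceiling reduction as a preprocessing step (otherwise ``locally monotonize onto the all-$k$ state'' from heights up to $n$ has no controlled cost), at which point the generic bound already finishes the job and the band-sweeping is superfluous.
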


\begin{figure}
    \centering
    \vspace{-0.1in}
    \begin{tikzpicture}
   \node (fig1) at (0,0) {
\includegraphics[width=0.9\textwidth, height = 2.5in]{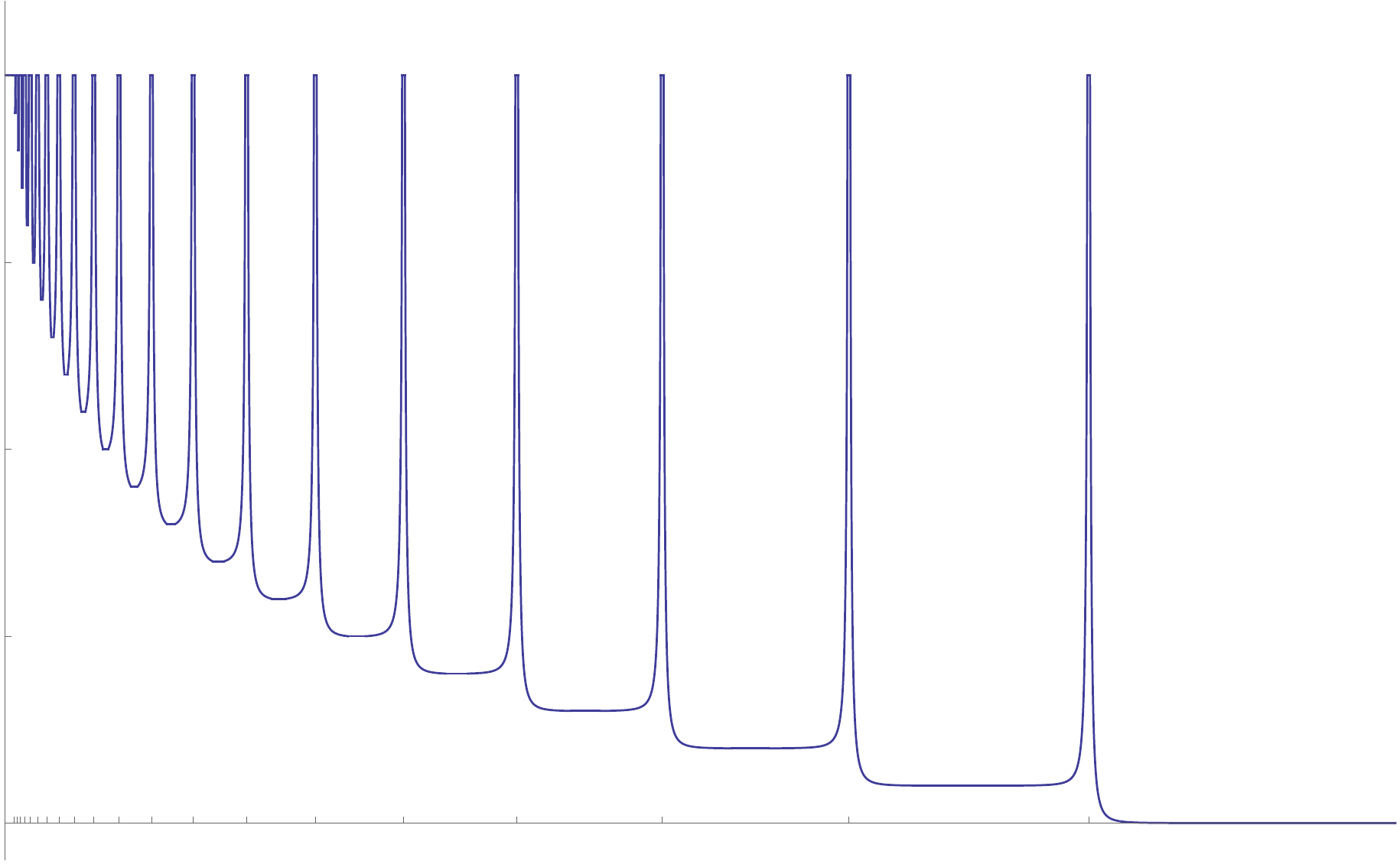}};
   \node[black,font=\small] at (-7.85,2.65) {$e^{n}$};
   % \node[black,font=\small] at (-7.85,1.3) {$e^{n^{3/4}}$};
   \node[black,font=\small] at (-7.85,0) {$e^{\sqrt n}$};
   \node[black,font=\small] at (-7.85,-1.5) {$e^{n^{\alpha}}$};

        \node[black,font=\small] at (-7.85,1.3) {$\vdots$};
        \node[black,font=\small] at (-7.85,-2) {$\vdots$};

     \node[black,font=\small] at (4.25,-3.15) {$\lambda_c^{(k)}$};
     \node[black,font=\small] at (1.7,-3.15) {$\lambda_c^{(k+1)}$};
   \node[black,font=\small] at (-0.3,-3.15) {$\cdots$};
     \node[black,font=\small] at (-1.8,-3.15) {$\cdots$};
        \node[black,font=\small] at (-5.5,-3.2) {$\cdots$};

     \node[black,font=\small] at (-3.95,-3.15) {$\lambda_c^{{\tiny{(\frac{\alpha}{4\beta}\log n)}}}$};
     \node[black,font=\small] at (-7.15,-3.15) {$\frac{1}{n}$};
     \node[black,font=\small] at (-6.45,-3.18) {$\frac{1}{\sqrt{n}}$};
\end{tikzpicture}
\vspace{-0.15in}
    \caption{The inverse gap $\gap_{n,\lambda_n}^{-1}$ for the model under periodic boundary conditions plotted against~$\lambda$. The critical slowdowns are symmetric, as per the bounds of \cref{mainthm:dynamical-torus}.}
    \vspace{-0.05in}
    \label{fig:torus-mixing-time}
\end{figure}

The case of free boundary conditions, which was the setting in which~\cite{CeMa2} showed $\exp[\Theta(n)]$ mixing at $(\lambda_c^{(k)})_{k=0}^{K_\beta}$, is similar to the torus in that the boundary conditions do not favor one of $\{k,k+1\}$. If $\lambda_n$ is uniformly bounded away from $0$, the factor of $\log n$ in the upper bound of \cref{it-torus-O(1/f)} of \cref{mainthm:dynamical-torus} can similarly be dropped (see \cref{rem:getting-rid-of-the-log}), thus recovering for a fixed $\lambda$, bounded away from $0$, the same bounds for the torus as obtained in~\cite{CeMa2} for free boundary conditions. Moreover, \cref{mainthm:dynamical-torus} allows $\lambda$ to be taken arbitrarily close to $0$, as well as $\lambda_n$ approaching $0$ or a critical point as~$n\to\infty$. While we did not pursue this, it should be possible to modify the arguments of this paper to the case of free boundary conditions.

We conclude this part with a brief comment on other possible boundary conditions. In analogy to the low-temperature 2D Ising model, the metastable windows can be quite sensitive to the choice of boundary conditions. If the boundary conditions were high, rather than at $0$, they would favor height $k+1$ over $k$ near $\lambda_c^{(k)}$, and \cref{it-thm-1-O(1/f)} of \cref{mainthm:dynamical} would apply with $\dlp$ replaced by $\dlm = \min_k \{(x-\lambda_c^{(k)}): \lambda_c^{(k)}\le x\}$. Dobrushin-type boundary conditions, say $\{\le k\}$ on half the boundary and $\{> k\}$ on the other half, should induce interfaces and rich static and dynamic behaviors: see the discussion of these interfaces as $\dl(\lambda_n)\downarrow 0$ in \cref{subsec:open-problems}. 
Finally, the $O(1)$ inverse gap bound of \cref{it-thm-1-O(1)} in \cref{mainthm:dynamical} applies for all fixed boundary conditions, so long as $n$ is sufficiently large as a function of $\epsilon$, extending the fast mixing results of \cite{CeMa2} (which needed $\lambda>\lambda_c^{(K_\beta)}$) to hold for $\lambda$ arbitrarily close to $0$ (while kept $\Omega(1)$ in $n$). 

\subsection{Proof ideas}\label{subsec:proof-ideas}
The proofs in our paper are divided into two parts, the equilibrium portion comprising \cref{sec:elementary,sec:infinite-sequence-lambda,sec:spatial-mixing} and the dynamical portion comprising \cref{sec:mixing-time-upper-bounds,sec:mixing-time-lower-bounds}. 

Our analysis of the equilibrium estimates differs from the works \cite{DiMa94,CeMa1} first and foremost via its focus on \emph{level lines} (viewing the SOS configuration as a contour ensemble, each associated to a level using appropriate boundary conditions). The aforementioned previous works on the $\lambda>0$ setting studied \emph{cylinders} as the main objects---maximal stacks of contours (all having the same projection on $\Z^2$). Operations on contours (such as Peierls-type maps that shift a single level line up or down) can in general allow a much more refined analysis than operations on cylinders, as was demonstrated in the work~\cite{CLMST14} for the $\lambda=0$ case. 
This different perspective does introduce additional complications, yet is essential in our extension of the previous results to any height $k$ (as opposed to $k\leq K_\beta$). 
Namely, by combining probabilistic arguments on the set of contours (based on coarse graining, monotonicity, and Peierls maps) with the (already quite sophisticated) cluster-expansion based techniques from \cite{DiMa94,CeMa1} (adapted from cylinders to our setting of contours in \cref{sec:elementary,sec:infinite-sequence-lambda}), we arrive at the following understanding of the shape of an SOS surface with zero boundary conditions in the presence of an external field $\lambda$. Note the subtle relation between the height $k$ the surface reaches, the distance $\dlp$ of the external field to its nearest critical point, and the domain size~$n$.  

\begin{theorem}\label{mainthm:equilibrium}
For every $\beta>\beta_0$ there exist some $C>0$, an $\epsilon_\beta$ (going to zero as $\beta \to\infty$), and an infinite sequence of critical points $(\lambda_c^{(k)})_{k=0}^\infty$ such that, if $\lambda_n\geq \frac {\log n}n$ satisfies $\lambda_n\in (\lambda_c^{(k_n+1)},\lambda_c^{(k_n)})$ 
    then with $\mu_{n,\lambda_n}$-probability at least $1-e^{ - n/C}$, 
    \begin{enumerate}
        \item if $n\ge C \beta k_n/\dlp(\lambda_n)$, all but an $\epsilon_\beta$ fraction of the sites will be at height $k_n+1$.
        \item If $n\le 1/(C\dlp(\lambda_n))$, all but an $\epsilon_\beta$ fraction of the sites in $\Lambda_n$ will be at height $k_n$.
    \end{enumerate}
\end{theorem}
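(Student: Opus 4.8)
The plan is to recast the claim as a comparison between the Gibbs weights of the two competing rigid ``phases'' --- the surface pinned near height $k_n$ versus near height $k_n+1$ --- carried out in the level-line representation of \cref{sec:elementary,sec:infinite-sequence-lambda}. Write $f_h(\lambda)=-\lambda h+e_\beta(h)+\mathrm{const}$ for the bulk free-energy density of the $h$-phase, where $e_\beta(h)$ is the entropic-repulsion gain of having $h$ levels of room below the surface. The cluster expansion of \cref{sec:infinite-sequence-lambda} (for $\beta>\beta_0$) supplies two inputs, uniformly in $h$: first, $h\mapsto e_\beta(h)$ is strictly increasing and concave with increments $e_\beta(h+1)-e_\beta(h)=\lambda_c^{(h)}\asymp e^{-4\beta h}$, so the $\lambda_c^{(h)}$ are strictly decreasing, geometrically separated, and accumulate only at $0$; second, the free energy of a ``wall'' lifting the surface from height $h$ to $h+1$ is $\tau_h=4\beta(1+O(e^{-\beta}))$ per unit length. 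Together these give $f_{k_n+1}-f_{k_n}=\lambda_n-\lambda_c^{(k_n)}=-\dlp(\lambda_n)$, i.e.\ phase $k_n+1$ beats phase $k_n$ by $\dlp(\lambda_n)$ per unit \emph{area}, while realizing it under zero boundary conditions costs one extra wall, $\approx\tau_{k_n}$ per unit boundary \emph{length}; the whole theorem is the resulting competition ``$\dlp(\lambda_n)\cdot\mathrm{area}$ vs.\ $\tau_{k_n}\cdot\mathrm{length}$''. Finally, $\lambda_n\ge\tfrac{\log n}{n}$ together with $\lambda_n\le\lambda_c^{(k_n)}\asymp e^{-4\beta k_n}$ forces $k_n\le\tfrac1{4\beta}\log n$ and, more importantly, $e^{-4\beta k_n}n^2\gtrsim\lambda_n n^2\ge n\log n$, which is what makes climbing all the way to height $k_n$ inexpensive.

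First I would show that, up to an $\epsilon_\beta$ fraction of the sites, the surface is confined to heights $\{k_n,k_n+1\}$. This is an energy--entropy (Peierls) estimate in the spirit of \cite{CLMST14}: if a region $R$ with $|R|\ge\tfrac12\epsilon_\beta n^2$ sits at height $\le k_n-1$, then lifting its level lines up to height $k_n$ (a composition of the elementary level-line raising maps) multiplies the Gibbs weight by at least $\exp\big[(\lambda_c^{(k_n-1)}-\lambda_n)|R|-\tau_{k_n-1}|\partial R|\big]\ge\exp\big[c\,e^{-4\beta k_n}|R|-\tau_{k_n-1}|\partial R|\big]$, using $\lambda_c^{(k_n-1)}-\lambda_n\ge\lambda_c^{(k_n-1)}-\lambda_c^{(k_n)}\asymp e^{-4\beta k_n}$; since $e^{-4\beta k_n}|R|\gtrsim\lambda_n n^2\gtrsim n\log n$ and the number of eligible $R$ with $|\partial R|=L$ is $e^{O(L)}\ll e^{c\beta L}$, the Peierls sum gives probability $\le e^{-n/C}$ for such an $R$ to exist. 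Symmetrically, a region at height $\ge k_n+2$ carries a superfluous wall on top of the nonnegative bulk cost $(\lambda_n-\lambda_c^{(k_n+1)})|R|\ge0$, hence is suppressed by $\exp[-\tau_{k_n+1}|\partial R|+O(|\partial R|)]$ --- here it matters that even at the endpoint $\lambda_n=\lambda_c^{(k_n+1)}$, where heights $k_n+1$ and $k_n+2$ coexist in the bulk, the zero boundary conditions break the tie downward through the wall cost. So with probability $1-e^{-n/C}$ all but $\epsilon_\beta n^2$ sites lie at height $k_n$ or $k_n+1$, and it remains to decide which dominates.

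Next comes the dichotomy. Let $R$ be the random set of sites at height $k_n+1$. By the contour representation and the cluster expansion, the Gibbs weight of $\{R\text{ at }k_n+1,\ \text{rest at }k_n\}$, relative to the all-$k_n$ profile, is $\exp[\dlp(\lambda_n)|R|-\tau_{k_n}|\partial R|+O(|\partial R|)]=\exp[\dlp(\lambda_n)|R|-c\beta|\partial R|]$, absorbing the Peierls and fluctuation entropy into $\tau_{k_n}\asymp4\beta$. In regime (1), $n\ge C\beta k_n/\dlp(\lambda_n)$: taking $R\approx\Lambda_n$ shows the partition function is at least $\exp[\dlp(\lambda_n)n^2-O(\beta n)]$ in these units, while any $R$ with $|R^c|\ge\epsilon_\beta n^2$ loses a factor $\ge\exp[\dlp(\lambda_n)\epsilon_\beta n^2]\ge\exp[C\beta k_n\epsilon_\beta n]$ relative to $R\approx\Lambda_n$, so summing over $R^c$ with the Peierls bound yields $\mu_{n,\lambda_n}(|R^c|\ge\epsilon_\beta n^2)\le e^{-n/C}$ --- i.e.\ conclusion (1). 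In regime (2), $n\le 1/(C\dlp(\lambda_n))$: now $\dlp(\lambda_n)|R|\le\dlp(\lambda_n)n^2\le n/C$ for every $R$, so the relative weight is $\le\exp[n/C-c\beta|\partial R|]$; if additionally $|R|\ge\epsilon_\beta n^2$ then $|\partial R|\ge c'\sqrt{\epsilon_\beta}\,n$, hence $\mu_{n,\lambda_n}(|R|\ge\epsilon_\beta n^2)\le e^{n/C}\sum_{L\ge c'\sqrt{\epsilon_\beta}n}e^{-c\beta L/2}\le e^{-n/C'}$ --- i.e.\ conclusion (2). Intersecting each with the event of the previous step (and relabeling $\epsilon_\beta$) completes the proof.

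The probabilistic skeleton above is light; the main obstacle --- and the reason the analysis of \cite{DiMa94,CeMa1} has to be redone through level lines rather than cylinders --- is making the two cluster-expansion inputs quantitatively sharp \emph{and uniform in $k$ up to $k\asymp\log n$}: that $e_\beta(h+1)-e_\beta(h)=\lambda_c^{(h)}$ with geometrically separated $\lambda_c^{(h)}$ (so $\lambda_c^{(k_n-1)}-\lambda_c^{(k_n)}\asymp e^{-4\beta k_n}$, not smaller), and that $\tau_h=4\beta$ up to a multiplicative error not growing with $h$. This is delicate precisely because the comparison in regime (1) is tight at the threshold --- $\dlp(\lambda_n)n^2$ exceeds $\tau_{k_n}n$ only by a constant there --- so a $k$-dependent error in $\tau_{k_n}$ would be fatal; the factor $k_n$ in the hypothesis $n\ge C\beta k_n/\dlp(\lambda_n)$ is exactly the slack needed to absorb the accumulated $O(k_n)$ boundary corrections from the $k_n$ nested walls separating height $0$ from height $k_n$ under zero boundary conditions. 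Verifying that the elementary level-line maps interact correctly with the hard floor at $0$ and with contour nesting, and that the expansion converges uniformly over this range of $k$, is where the real work lies; once the free energies and surface tensions are in hand, the two Peierls sums above are routine.
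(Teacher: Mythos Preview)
Your high-level strategy --- free-energy competition $f_{k_n+1}-f_{k_n}=\dlp(\lambda_n)$ per unit area against a wall cost $\tau\asymp\beta$ per unit length, implemented via Peierls maps --- is exactly what the paper does, and you correctly flag that the real work is making the cluster-expansion inputs uniform in $k$ up to $k\asymp\log n$. So the skeleton is right.

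Where your sketch and the paper part ways is in \emph{how} the Peierls sum is organized, and this is where you have a genuine gap. You write ``the number of eligible $R$ with $|\partial R|=L$ is $e^{O(L)}$'' and, for part~(2), ``if $|R|\ge\epsilon_\beta n^2$ then $|\partial R|\ge c'\sqrt{\epsilon_\beta}\,n$''. Both statements presuppose that the height-$(k_n{+}1)$ region $R$ is essentially a single connected droplet. It need not be: $R$ can be a union of many small mutually external contours, in which case the placement entropy is $\binom{n^2}{\#\text{components}}$ rather than $e^{O(L)}$, and the isoperimetric lower bound $|\partial R|\gtrsim\sqrt{|R|}$ fails. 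A single global Peierls bound does not close over this regime. The paper's \cref{lem:height-pm1-wsm} handles exactly this by splitting the bad event into $\cB_1$ (too much length in macroscopic $k$-contours), $\cB_2$ (too many microscopic $k$-contours at each dyadic scale), and $\cB_3$ (too little area enclosed by macroscopic contours); the multi-scale bound on $\cB_2$ is where the microscopic-droplet entropy is beaten, and it is not a one-line estimate.

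Relatedly, your global comparison has to account for all $k_n$ nested walls from height~$0$ to $k_n$ simultaneously (you allude to this as ``accumulated $O(k_n)$ boundary corrections''), which is why the paper instead iterates level by level: \cref{lem:height-pm1-wsm} does a single step $k{\pm}1\to k$, \cref{lem:kpm1-to-k-geenral-domain} and \cref{lem:reaching-height-k} iterate it from boundary height $0$ (or any $j$) down/up to $k$ using monotonicity to reset the floor at each step, and then the rigidity of \cref{subsec:rigidity-height-k} finishes inside the resulting $k$-loop. This modular route entirely avoids the global bookkeeping of $k_n$ walls and the many-droplet entropy, and is what makes the constants come out. For part~(2), the paper's route is also more direct than your isoperimetric comparison: once one has climbed to a height-$k_n$ loop (which the iteration gives, since the gaps $\lambda_c^{(j)}-\lambda_n$ for $j<k_n$ are all of order $e^{-4\beta k_n}\gg\dlp(\lambda_n)$), \cref{cor:non-elem-control-smaller-than-delta-tr} says every up-$(k_n{+}1)$-contour in a domain of diameter $\le (f^\tr_{k_n+1}-f^\tr_{k_n})^{-1}\asymp\dlp(\lambda_n)^{-1}$ has exponentially small renormalized weight, so a standard Peierls sum rules out any macroscopic $(k_n{+}1)$-region.
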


We in fact prove stronger equilibrium results in \cref{sec:spatial-mixing}, showing \emph{spatial mixing} estimates between SOS models with different boundary conditions that hold on boxes of side-length $n_0 \gtrsim k_n/\dl(\lambda_n)$, yet fail when $n_0 \lesssim 1/\dl(\lambda_n)$.
Note that the analysis of contours, beyond its effectiveness in capturing the behavior of the surface in regimes of $\lambda$ unaddressed by~\cite{CeMa1}, opens the door to a host of questions addressing the refined geometry of the individual level lines (notably the top one): e.g., understanding their macroscopic scaling limits and the random fluctuations around those as $\lambda$ approaches $\lambda_c^{(k)}$; see \cref{subsec:open-problems} for concrete examples.

Turning to the dynamical arguments, our proofs of the exponential lower and upper bounds are guided by the following mechanism for equilibration, which we describe here for the case of zero boundary condition and $\lambda_n = \lambda_c^{(k_n)} - \dlp(\lambda_n)$. If we let $n\ge n_0 = C\beta k_n/\dlp(\lambda_n)$, then on a domain of size $n$, the equilibrium surface is predominantly at height $k_n + 1$ per \cref{mainthm:equilibrium}. On the other hand, if the dynamics is initialized at heights $\{\le k_n\}$ it needs to grow a layer at height $k_n +1$ of size at least $n_0$ until that layer would become thermodynamically stable and could grow to encompass a $(1-\epsilon_\beta)$-fraction of $\Lambda_n$ (if the layer has diameter smaller than $n_0$, the exponential cost in its boundary dominates the free energy benefit of the area it confines being at height $k_n+1$ over $k_n$). Since the Glauber dynamics makes single-site updates, it in particular needs to first form a layer of diameter between $n_0/2$ and $n_0$, an event that has probability $e^{ - n_0/C}$. Once such a droplet has been formed, without waiting much longer, it will have formed around every site in $\Lambda_n$, and therefore the $k_n+1$ layer will in fact encompass almost all of $\Lambda_n$, leading to rapid equilibration. 

For the lower bounds on $\gap_{n,\lambda_n}^{-1}$, the above intuition is used to construct a bottleneck event for Glauber dynamics (see \cref{sec:mixing-time-lower-bounds}). For the upper bound, in \cref{sec:mixing-time-upper-bounds}, we guide the dynamics towards equilibrium, using censoring and monotonicity to reduce the mixing time on $\Lambda_n$ to the mixing time on graphs of cut-width at most $n_0$, where $n_0$ is the smallest scale at which the model exhibits weak spatial mixing. While reductions of mixing times to weak spatial mixing are common in the literature (e.g., the classical work~\cite{MaOl1}), boundary conditions pose difficulties when only weak spatial mixing, but not strong spatial mixing, holds. We handle this difficulty by also showing weak spatial mixing estimates in annuli of thickness $n_0$, whose mixing time will also only be exponential in $n_0$, and which is used to couple near the boundary.   

Finally, when $\lambda$ is uniformly bounded away from $(\lambda_c^{(k)})_{k=0}^{\infty}$ (i.e., $\dl(\lambda_n)\ge \epsilon>0$) the $O(1)$ bound on $\gap^{-1}_{n,\lambda_n}$ follows similarly to~\cite{CeMa2}; first, we use that in 2D systems, weak spatial mixing implies strong spatial mixing~\cite{MOS-WSM-SSM}, then we use that strong spatial mixing implies an $O(1)$ inverse gap. Note that the dependence of this bound on $1/\dl(\lambda_n)$ blows up in a doubly exponential manner whereas the upper bounds of \cref{it-thm-1-O(1/f)} of \cref{mainthm:dynamical} and \cref{it-torus-O(1/f)} of \cref{mainthm:dynamical-torus} are only exponential.

\begin{figure}
        \centering
        \vspace{-0.15in}
\begin{tikzpicture}
   \node (fig1) at (0,0) {    \includegraphics[width=.7\textwidth]{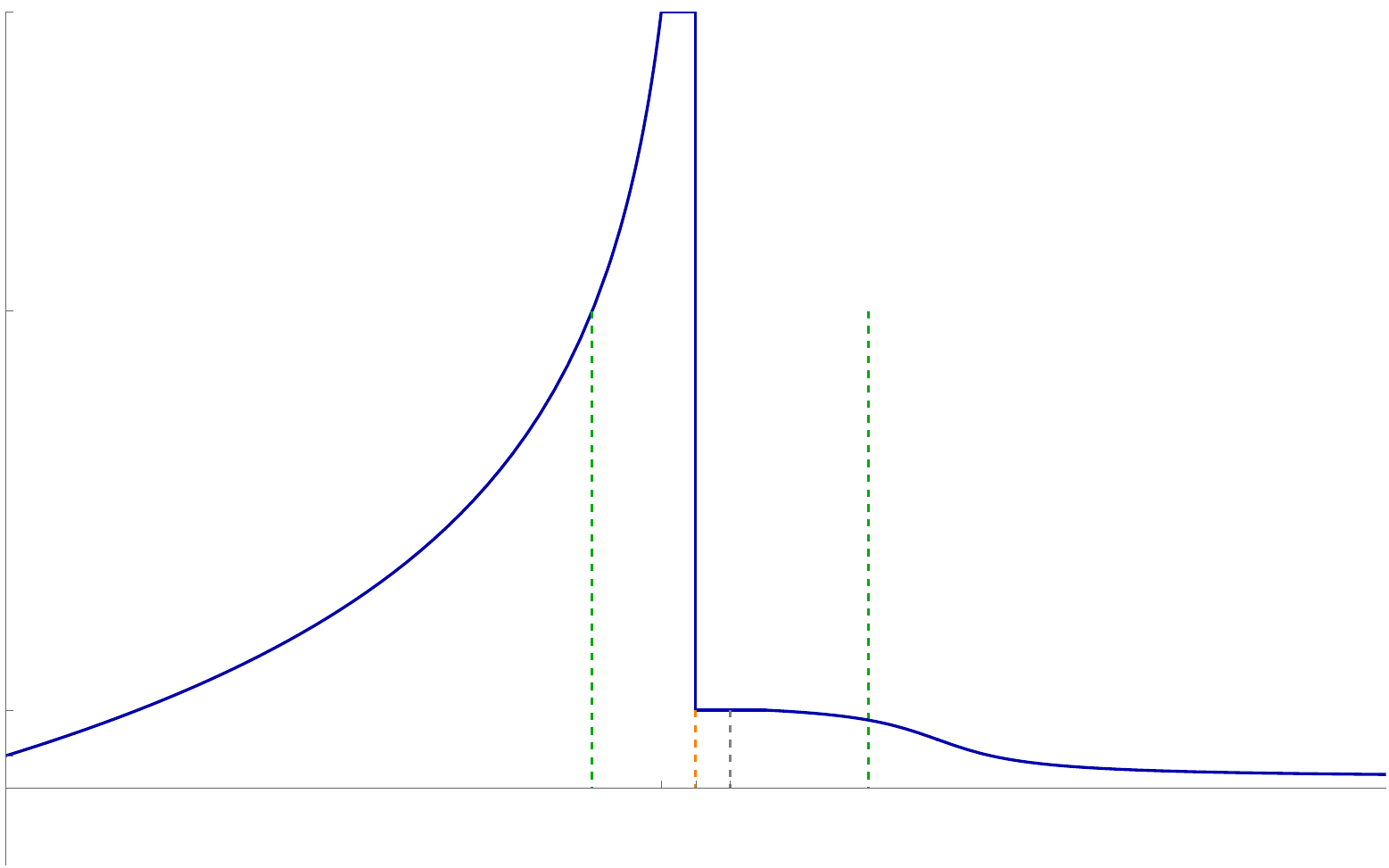}};
     \node[black,font=\small] at (.5,-3.2) {$\lambda_c^{(k)}$};
        \node[black,font=\small] at (-6.1,3.4) {$e^{n}$};
   % \node[black,font=\small] at (-7.85,1.3) {$e^{n^{3/4}}$};
   \node[black,font=\small] at (-6.1,1) {$e^{n^{\alpha}}$};
      \node[black,font=\small] at (-6.1,-2.25) {$n^2$};
      \node[black,font=\tiny] at (-6.1,-3) {$O(1)$};

        \node[black,font=\small] at (-6.1,2.3) {$\vdots$};
        \node[black,font=\small] at (-6.1,-.55) {$\vdots$};

        \draw[stealth-stealth,green!65!black,font=\tiny] (-.85,1.02) -- node[pos=0.65,above] {$O(n^{-\alpha})$} (1.4,1.02);

        \draw[stealth-stealth,orange!90!black,font=\tiny] (0,-2.2) -- node[pos=0.55,above] {$\frac{c_\star}n$} (.33,-2.2);
        
 \end{tikzpicture}
   \vspace{-0.2in}
    \caption{The predicted behavior of the inverse gap with zero boundary conditions when $\lambda_n = \lambda_c^{(k)}\pm o(1)$ for fixed $k\geq 0$. \Cref{mainthm:dynamical} establishes the curve left of the discontinuity (up to poly-logarithmic factors); the right side of the curve should be governed by stable layers shrinking via mean-curvature flow, leading to an $O(n^2)$ relaxation time at $\lambda_c^{(k)}$.}
    \label{fig:phase-transition-window}
    \vspace{-0.15in}
\end{figure}

\subsection{Open problems}\label{subsec:open-problems}

\subsubsection*{The critical window under zero boundary conditions}
Consider the dynamical phase transition about $\lambda_c^{(k)}$ for some fixed $k\geq 0$.
Our main results established the full dynamical phase transition in $\log\gap_{n,\lambda_n}^{-1}$ (up to a poly-log($n$) factor) in the torus (\cref{mainthm:dynamical-torus}). However, as noted above, the case of zero boundary conditions is significantly more involved, and our results in \cref{mainthm:dynamical} are only sharp for $\lambda$ below $\lambda_c^{(k)}$; we expect that the difference between the two settings would be apparent already when viewing $\lambda=\lambda_c^{(k)}$, where the inverse-gap under zero boundary conditions should no longer be $\exp[\Theta(n)]$, but instead have order $n^2$ (the conjectured~\cite{FisherHuse}  order of the inverse-gap 
in Ising under all-plus boundary conditions). More precisely, we would expect that as $\lambda_n\uparrow \lambda_c^{(k)}$, the regime where the inverse-gap is $\exp[\Theta(n)]$---which is reached as $\lambda_n$ draws within distance $O(1/n)$ from $\lambda_c^{(k)}$---
would terminate discontinuously at $\lambda_n=\lambda_c^{(k)}-c_\star/n$ for some $c_\star(k,\beta)$, reminiscent of the critical point appearing in the work of Schonmann and Shlosman~\cite{SchShl-Ising-field-dynamics-2D} on Glauber dynamics for the 2D Ising model under an external field.
At this $\lambda_n$, and as $\lambda_n$ further increases up to $\lambda_n = \lambda_c^{(k)}+O(1/n)$, the inverse-gap should be polynomial in $n$, and thereafter should smoothly drop to $O(1)$ as $\lambda_n-\lambda_c^{(k)}$ increases to a constant order (see a possible depiction of this behavior in \cref{fig:phase-transition-window}). A first step towards verifying this conjectured behavior would be to establish sub-exponential mixing at $\lambda=\lambda_c^{(k)}$.

\subsubsection*{Ferrari--Spohn limit in critical prewetting}
Recall that the SOS model above a floor with $\lambda=0$ is known to have a typical height of order $\log n$; it is also known~\cite{CLMST12,CLMST14,CLMST16} to be typically rigid about a single level, where the corresponding level-line contour has cube-root fluctuations around its deterministic scaling limit. In particular, as said limit coincides with the sides of the box near their centers, it is believed that in the setting of~\cref{eq:SOS-measure} with $\lambda=0$, taking the intersection of the top level-line with an $n^{2/3}\times n^{2/3}$ box $R_n$ about the center of the bottom boundary of $\Lambda_n$, and then rescaling its width by $n^{2/3}$ and its height by~$n^{1/3}$, should give a nontrivial random process in the $n\to\infty$ limit. Unfortunately, the attraction/repulsion effects between the $\log n$ level lines, as well as  against the boundary of $\Lambda_n$, pose a formidable challenge to the analysis of the model. So far it is only known that, in an $n^{2/3+\epsilon}\times n^{2/3+\epsilon}$ box $R'_n$ about the center of the bottom boundary,
 the maximum vertical distance of the top level line from $\partial\Lambda_n$ 
is at least $n^{1/3-\epsilon'}$ and at most~$n^{1/3+\epsilon'}$ (the aforementioned cube-root fluctuations).
Continuous $(1+1)$D models approximating this ensemble of SOS level lines in $R_n$ have been studied in detail, notably the ensemble of non-crossing Brownian excursions with an area tilt~\cite{CIW19a,CIW19b,DLZ23,CG23}, where the area $A_k$ under the $k$-th curve tilts the probability by a factor of $\exp(-\mathfrak{b}^k A_k)$ for some $\mathfrak b>1$ (in the SOS model with $\lambda =0$, the entropic repulsion reward becomes stronger at the level $k$ decreases, corresponding to a factor $\mathfrak b = e^{4\beta}$).
Note that a single Brownian excursion with an area tilt has the law of a Ferarri--Spohn~\cite{FS05} diffusion.
In the discrete setting, the longstanding prewetting problem in the 2D Ising model (akin to the SOS problem studied here, yet involving only a single level line) was very recently settled: the cube-root fluctuations of $n^{1/3+o(1)}$ at the critical scale $\lambda_n=\Theta(1/n)$ were established by Velenik~\cite{Velenik04} in 2004; Ganguly and the first author~\cite{GaGh20} established tightness of the fluctautions at scale~$n^{1/3}$; and the Ferrari--Spohn limit law was finally established by Ioffe et al.~\cite{IOSV21}.

In light of these results, and given the new results from this work, it would be interesting to examine the microscopic fluctuations of the top level line of the SOS model at $\lambda_n=\lambda_c^{(k)}-\Theta(1/n)$, corresponding to critical prewetting between levels $k$ and~$k+1$. There, despite the existence of $k+1$ interacting contours, one would expect the behavior to be effectively dictated by the top contour alone (as this value of $\lambda$ is bounded away from the critical points corresponding to  lower level lines), making it  more amenable to analysis.

\begin{question}
Fix $\beta>\beta_0$, an integer $k\geq 0$ and some constant $c>0$, and consider the $(2+1)$D SOS measure $\mu_{n,\lambda_n}$ from \cref{eq:SOS-measure} for $\lambda_n = \lambda_c^{(k)}-c/n$.
Let $R_n$ be the square whose bottom edge overlaps the bottom boundary $\partial\Lambda_n$ along  $\llb-n^{2/3},n^{2/3}\rrb$, take the ($k+1$)-level line of SOS restricted to $R_n$, and rescale its width by $n^{2/3}$ and height by $n^{1/3}$. Does its law converge to a Ferrari--Spohn diffusion on~$[-1,1]$?
\end{question}
A variant of this question, which is somewhat simpler yet should still exhibit a Ferarri--Spohn limit, is to consider  $\mu_{n,\lambda_n}$ under boundary conditions at height $k$ (as opposed to zero) for $\lambda_n = \lambda_c^{(k)}-c/n$. That setting should be essentially the same as the case of $\lambda_c^{(0)}$, where there is typically at most $1$ macroscopic level line. A useful first step towards these questions would be to understand the microscopic features of the $k$-to-$(k+1)$ contours at $\lambda_c^{(k)}$, e.g., \'{a} la the tools developed for Ising interfaces in \cite{DKS} and \cite{PV97}.

\subsubsection*{SOS near the roughening point and 3D Ising}

We conclude with two challenging fronts. First, in light of the recently obtained~\cite{Lammers22} sharpness of the roughening transition for the SOS model, one would want to extend the above results to all $\beta>\beta_{\textsc r}$. For $\beta$ so close to the roughening point, not only do cluster expansion techniques (used in this paper) become non-applicable, so do the Peierls maps on contours (which were the starting point of the entropic repulsion analysis of~\cite{CLMST14} for instance). Second, given recent progress on understanding the entropic repulsion phenomenon for the Ising interface~\cite{GL20,GL21}, one would hope to establish a similar metastability induced by layering for the Ising Glauber dynamics.  To that end, a prerequisite would be to pinpoint the typical height of the interface in the prewetting problem for 3D Ising.

\section{Notation and preliminaries}\label{sec:prelim}
In this section, we introduce much of the notation we will use throughout the paper. We also introduce the basic contour representation of SOS configurations, and include a few basic but important estimates that follow from the monotonicity of the model in the external field. 

\subsection{Notational disclaimers}
Throughout the paper, we imagine $\beta_0$ to be a large absolute constant, so that $\beta>\beta_0$ indicates $\beta$ sufficiently large (independent of everything else). We are only interested in $\lambda \le 1$, say, since for $\beta$ large, even the first critical point $\lambda_c^{(0)}$ will be less than $1$, and monotonicity arguments can easily reduce analysis of larger $\lambda$ to $\lambda =1$. Finally, we use $C$ in various place to indicate an absolute constant, and $\epsilon_\beta$ a sequence going to zero as $\beta\to\infty$, both of which may change from line to line. 

\subsection{Underlying geometry}
The underlying geometry we work with are subsets of $\mathbb Z^2$, with edge sets given by subsets of nearest neighbor edges $E(\mathbb Z^2)$. For a domain $V \subset \mathbb Z^2$, we use $E(V)$ to denote the edges in $E(\mathbb Z^2)$ between two vertices in $V$. When these subsets are squares, we use the notation $\Lambda_n = \llb -\frac{n}{2},\frac{n}{2}\rrb^2$ with corresponding edge-set $[-\frac{n}{2},\frac{n}{2}]^2 \cap E(\mathbb Z^2)$. 
For a general domain $V \subset \mathbb Z^2$, let 
$$\partial_i V = \{x\in V: x\sim \mathbb Z^2 \setminus V\} \qquad \text{and}\qquad \partial_o V = \{x\in \mathbb Z^2\setminus V: x\sim V\}\,,$$  
where the subscripts $i,o$ are meant to indicate inner and outer boundary. The edge boundary is $$\partial_e V = \{e= vw: v\in \partial_i V, w\in \partial_o V\}\,,$$
and we can also denote the strict interior of a set $V$ by $\mathring{V} = V \setminus \partial_i V$. 

\subsection{The SOS measure}
We define the SOS model on general domains, with generic floors (and ceilings) and describe some important properties of the model. 

Consider a finite domain $V \subset \mathbb Z^2$. A boundary condition is an assignment of heights to the vertices of $\partial_o V$. A general SOS distribution that we consider may also have floors and ceilings that vary depending on the vertex; for that, we let $(\mathbf{a},\mathbf{b}) = ((a_v)_{v\in V},(b_v)_{v\in V})$ be such that $a_v <b_v$ for all $v$, and we call $a_v$ the floor and $b_v$ the ceiling for a vertex. 

For an inverse temperature $\beta>0$, external field $\lambda\ge 0$, the SOS distribution on $V$ with boundary conditions $\phi\in \mathbb Z^{\partial_o V}$ and ceiling/floor collection $(\mathbf{a},\mathbf{b})$ is the distribution 
\begin{align}\label{eq:general-SOS-measure}
    \mu_{\lambda,\beta,V}^{\mathbf{a},\mathbf{b},\phi}(\varphi)  = \frac{1}{Z_{\lambda,\beta,V}^{\mathbf{a},{\mathbf{b},\phi}}} e^{ -  H(\varphi)} \prod_{v\in V} \mathbf{1}_{a_v \le \varphi_v \le b_v}\,,
    \end{align}
    where $Z_{\lambda,\beta,V}^{\mathbf{a},{\mathbf{b},\phi}}$ is a normalizing factor to make it a probability measure, and 
    \begin{align}
    H(\varphi) = \beta \sum_{v\sim w} |\varphi_v - \varphi_w| + \lambda \sum_v \varphi_v\,.
\end{align}
where in the presence of boundary conditions $\phi$, the first sum includes a sum over all edges between $V$ and $\mathbb Z^2\setminus V$ in which case $\varphi_w$ is taken to be $\phi_w$. If we write $\mu[X]$ for a random variable $X$, we mean the expectation under the event $\mu$. 

In order to simplify notation, we will tend to drop $\mathbf{a},\mathbf{b}$ and $\lambda,\beta$ from the subscripts and superscripts of the measure to write $\mu_{\phi,V}$. We retain the relevant sub/superscripts when they are varying, but typically these will be fixed throughout a proof. The baseline floor/ceiling choices to have in mind will be $a_v \equiv 0$ and $b_v \equiv \infty$ for all $v\in V$, and we comment soon on other ones the reader might encounter. At the above level of generality, the SOS model satisfies a few essential properties that we recall. 

\subsubsection{Monotonicity and the FKG property}
Endow the space of SOS configurations with the natural partial order (induced by pointwise total orders). The above monotonicity statement in boundary conditions,  floors/ceilings, and field, is standard; we include a proof for completeness in \cref{sec:general-monotonicity-fkg}. 

\begin{lemma}\label{lem:SOS-monotonicity}
    Fix any $\beta>0$. Suppose that $\lambda'\le \lambda$, $a_v\le a_v'$, $b_v\le b_v'$ and $\phi \le \phi'$. Then 
    \begin{align*}
        \mu_{\lambda,V}^{\mathbf{a},\mathbf{b},\phi} \preceq \mu_{\lambda',V}^{\mathbf{a}',\mathbf{b}',\phi'}\,.
    \end{align*}
    where here and throughout the paper, $\preceq$ denotes stochastic domination. 
\end{lemma}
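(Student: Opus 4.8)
The statement is the standard FKG/monotonicity package for the SOS model, and the cleanest route is via a Markov chain coupling (Glauber dynamics) rather than via Holley's inequality directly, since the three monotonicity parameters (field $\lambda$, floors/ceilings $(\mathbf a,\mathbf b)$, boundary condition $\phi$) can all be handled in one stroke. First I would reduce to the case of a common state space: replace the constrained measures $\mu_{\lambda,V}^{\mathbf a,\mathbf b,\phi}$ and $\mu_{\lambda',V}^{\mathbf a',\mathbf b',\phi'}$ by measures on a fixed product space $\prod_{v\in V}\llb a^-_v, b^+_v\rrb$ (with $a^-_v = a_v$, $b^+_v = b'_v$, say, or even $\Z$) by absorbing the hard floor/ceiling constraints $\mathbf 1_{a_v\le\varphi_v\le b_v}$ into the Hamiltonian as a $\{0,+\infty\}$-valued external potential $\sum_v W_v(\varphi_v)$, where $W_v$ is a convex (indeed, increasing-then-$+\infty$ or $+\infty$-then-$0$) single-site term. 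The point is that $a_v\le a'_v$ and $b_v\le b'_v$ translate precisely into $W_v \ge W'_v$ being a decreasing change of the single-site potential in the monotone-coupling sense, exactly as $\lambda'\le\lambda$ makes $\lambda\sum_v\varphi_v$ a pointwise-larger convex single-site term than $\lambda'\sum_v\varphi_v$.

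Next I would run the heat-bath (Glauber) dynamics simultaneously for both measures, using the \emph{grand coupling}: a single family of rate-$1$ Poisson clocks on the sites of $V$ and, at each ring at a site $v$, a single uniform random variable $U\in[0,1]$ used to resample $\varphi_v$ from the conditional law $\mu_{\lambda,V}^{\mathbf a,\mathbf b,\phi}(\varphi_v\in\cdot \mid \varphi_u, u\ne v)$ via its quantile (inverse-CDF) function, and likewise resample $\varphi'_v$ from the conditional law of the primed measure via \emph{the same} $U$. The key one-step claim is: if $\varphi\le\varphi'$ pointwise before an update at $v$, then $\varphi\le\varphi'$ after. This is where all the hypotheses enter. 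The conditional law at $v$ under $\mu_{\lambda,V}^{\mathbf a,\mathbf b,\phi}$ is proportional to $\exp(-\beta\sum_{u\sim v}|\varphi_v-\varphi_u| - \lambda\varphi_v - W_v(\varphi_v))$ on $\llb a_v,b_v\rrb$; the nearest-neighbor term $\sum_{u\sim v}|\cdot - \varphi_u|$, the linear field term, and the potential $W_v$ are each such that, in the primed configuration ($\varphi'\ge\varphi$ on the neighbors and on the boundary, $\lambda'\le\lambda$, $W'_v\le W_v$ in the stochastic-ordering sense, $a'_v\ge a_v$, $b'_v\ge b_v$), the resulting conditional distribution at $v$ \emph{stochastically dominates} the unprimed one. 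Concretely, the ratio of consecutive point masses $p(t+1)/p(t) = \exp(-\beta\,\Delta_v(t) - \lambda - [W_v(t+1)-W_v(t)])$ where $\Delta_v(t) = \sum_{u\sim v}(|t+1-\varphi_u| - |t-\varphi_u|)$ is nondecreasing in each $\varphi_u$; increasing the neighbors $\varphi_u$, decreasing $\lambda$, and shifting the floor/ceiling window up all make this ratio larger at every $t$, i.e. push mass upward, which is exactly the log-supermodularity/single-site stochastic-monotonicity needed for the quantile-coupled samples to satisfy $\varphi_v\le\varphi'_v$. I would verify this elementary one-dimensional domination as a short lemma (monotone likelihood ratio $\Rightarrow$ stochastic domination $\Rightarrow$ the inverse-CDFs are ordered pointwise).

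Finally, starting both chains from ordered initial configurations (e.g. $\varphi\equiv a^-$ for the unprimed chain clamped into its window and $\varphi'\equiv b^+$ for the primed, or more simply the all-floor and all-ceiling states, suitably truncated), the one-step claim and induction give $\varphi(t)\le\varphi'(t)$ for all $t$ almost surely; letting $t\to\infty$ and using that heat-bath Glauber dynamics on a finite state space is irreducible and converges to its reversible stationary measure ($\mu_{\lambda,V}^{\mathbf a,\mathbf b,\phi}$ and $\mu_{\lambda',V}^{\mathbf a',\mathbf b',\phi'}$ respectively) yields $\mu_{\lambda,V}^{\mathbf a,\mathbf b,\phi}\preceq\mu_{\lambda',V}^{\mathbf a',\mathbf b',\phi'}$. (One must check the dynamics indeed has the constrained measure as its unique stationary distribution, which is routine since the constraints correspond to finite or $+\infty$ single-site potentials and the state space, after truncation to a large finite box that both windows sit inside, is finite and the chain connected on the support.) The main obstacle — really the only non-bookkeeping point — is the one-step monotonicity lemma: one has to be careful that the \emph{floor/ceiling} monotonicity ($a\le a'$, $b\le b'$) is genuinely a monotone perturbation of the single-site conditional law in the right direction, which it is precisely because raising the window $\llb a_v,b_v\rrb$ to $\llb a'_v,b'_v\rrb$ is a stochastically-increasing change of a truncated distribution with a monotone likelihood ratio; once that is isolated, combining it with the (classical) boundary-condition and field monotonicity is immediate. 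Everything else (the grand coupling, the induction, the $t\to\infty$ limit) is standard.
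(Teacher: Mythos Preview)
Your proposal is correct and takes essentially the same approach as the paper: reduce to the single-site case, verify that the conditional law at $v$ under the primed parameters stochastically dominates the unprimed one via a monotone likelihood ratio (checking the field, boundary, and floor/ceiling contributions separately), and then run coupled heat-bath dynamics to $t\to\infty$ to transport this to the full measure. The paper is somewhat terser in its reduction step (it simply asserts that the single-vertex check suffices because column Glauber dynamics then furnishes the coupling) and carries out the likelihood-ratio verification factor by factor exactly as you outline.
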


\subsubsection{Domain Markov property}
The next essential property satisfied by the model is the domain Markov property, that describes the law of the SOS measure conditional on its values on some subset of $V$, as a new SOS measure with some other boundary conditions. Namely, if we fix a subset $W \subset V$, and let $\zeta$ be a configuration on $W$ (consistent with the floors and ceilings $\mathbf{a},\mathbf{b}$), then 
    \begin{align*}
        \mu_{V}^\phi ( \varphi(V\setminus W) \in \cdot \mid \varphi(W) = \zeta)  \stackrel{d}=\mu_{V\setminus W}^{(\phi,\zeta)}\,,
    \end{align*}
    where the boundary conditions $(\phi,\zeta)$ are the concatenation of $\eta,\zeta$ on $\partial_o(V\setminus W)$. 

We end with an observation that if boundary conditions are above/below the floor/ceiling they can be treated as being exactly at the height of the floor/ceiling. Namely, the distribution $\mu_{V}^{\mathbf{a},\mathbf{b},\phi}$ is identical to $\mu_V^{\mathbf{a},\mathbf{b},\phi'}$ where for each vertex $w\in \partial_o V$, the boundary condition 
\begin{align}\label{eq:boundary-condition-equivalence}
\phi'_w = \begin{cases}
    \max_{v\sim w} b_v & \quad \phi_w\ge \max_{v\sim w} b_v \\ \min_{v\sim w} a_v & \quad \phi_w \le \min_{v\sim w}a_v \\ \phi_w & \qquad \text{else}
\end{cases}\,.\end{align}
It  thus suffices to always consider boundary conditions bounded between $\min_{v\in \partial_i V}a_v$ and $\max_{v\in \partial_i V} b_v$.

\subsection{A few basic estimates}

We present here a few basic results that will be helpful at various points in the paper. 
The first will be a bound on the maximum that is derived from comparison to the $\lambda =0$ SOS measure, and crude bounds on the maximum therein.  
(Much more refined understanding of the maxima was obtained in \cite{CLMST14}). For all $k\ge \frac{1}{4}\log |V|$, all $\beta$ large,   

\begin{align}\label{eq:easy-SOS-maximum}
    \mu_{V}^{\mathbf{a},\mathbf{b},\phi} \big(\max_{v\in V} \varphi_v \ge \max\{\|\mathbf{a}\|_\infty, \|\phi\|_\infty\} + k\big) \le  e^{ - \beta k}\,.
\end{align}
\begin{proof}
By monotonicity, it suffices to upper bound the probability that $\max_{v\in V}\varphi_v \ge k/2$ under the measure with $\lambda =0$, no ceiling, and with boundary conditions that are identically $\max\{\|\mathbf{a}\|_\infty,\|\phi\|_\infty\} + k/2$. By a union bound over the $|V|$ vertices, the probability $Ce^{ - 4\beta (k/2)}$ of a downward deviation by $k/2$ (this follows from a Peierls argument so long as $\beta\ge 1$, say, per \cite[Proposition 3.9]{CLMST14}), this measure is coupled to the one with no floor at $0$, except with probability $C |V| e^{ - 2\beta k}$. By a similar union bound, that probability that the no-floor sample has a maximum that goes up from the boundary conditions by an additional $k/2$ is at most $C |V|  e^{ - 4\beta (k/2)}$. Together with the fact that $k\ge \log |V|$, we obtain \cref{eq:easy-SOS-maximum}. 
\end{proof}

Various of our estimates in the first few sections of the paper will go through comparison and manipulation of partition functions to obtain convergent cluster expansions corresponding to the SOS model with boundary conditions that are at the ``correct" height. The next two bounds are simple a priori bounds on ratios of partition functions with different (constant) boundary conditions. 

\begin{lemma}\label{lem:preliminary-partition-function-bound}
    For every $V$ and every $k<m$, we have 
    \begin{align*}
        \frac{Z_{V}^{\mathbf{a},\mathbf{b},k}}{Z_{V}^{\mathbf{a}',\mathbf{b}',m}} \le e^{\lambda (m-k)|V|}\,.
    \end{align*}
    so long as $[a_v +m-k, b_v +m-k] \subset [a_v',b_v']$ for all $v$.
\end{lemma}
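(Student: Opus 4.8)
The plan is to exploit the translation-invariance of the gradient interaction via an explicit height-shift injection between configuration spaces. Fix $k<m$ and suppose $[a_v+m-k,\,b_v+m-k]\subset[a_v',b_v']$ for every $v$. Given a configuration $\varphi$ admissible for $Z_V^{\mathbf{a},\mathbf{b},k}$ (i.e.\ $\varphi\in\Z^V$ with $a_v\le\varphi_v\le b_v$ for all $v$, and constant boundary condition $k$ on $\partial_o V$), define $\psi=\Phi(\varphi)$ by $\psi_v=\varphi_v+(m-k)$ for all $v\in V$, paired with the constant boundary condition $m$ on $\partial_o V$. First I would check admissibility of $\psi$ for $Z_V^{\mathbf{a}',\mathbf{b}',m}$: from $a_v\le\varphi_v\le b_v$ we get $a_v+(m-k)\le\psi_v\le b_v+(m-k)$, and the containment hypothesis is exactly what is needed to conclude $a_v'\le\psi_v\le b_v'$. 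Since adding a fixed integer to every coordinate is a bijection of $\Z^V$, the map $\Phi$ is injective.

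Next I would compare energies. The gradient part $\beta\sum_{v\sim w}|\cdot_v-\cdot_w|$ is unchanged under adding the constant $m-k$ to every height; this includes the edges between $V$ and $\mathbb Z^2\setminus V$, precisely because the boundary condition is shifted from $k$ to $m$ by the same amount, so each such term $\beta|\varphi_v-k|$ becomes $\beta|\psi_v-m|=\beta|\varphi_v-k|$. The field part changes by a deterministic amount: $\lambda\sum_{v\in V}\psi_v=\lambda\sum_{v\in V}\varphi_v+\lambda(m-k)|V|$. Hence $H(\psi)=H(\varphi)+\lambda(m-k)|V|$, i.e.\ $e^{-H(\psi)}=e^{-\lambda(m-k)|V|}\,e^{-H(\varphi)}$.

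Finally, summing over the image of $\Phi$ gives a sub-sum of the full partition function $Z_V^{\mathbf{a}',\mathbf{b}',m}$:
\[
Z_V^{\mathbf{a}',\mathbf{b}',m}\ \ge\ \sum_{\varphi}e^{-H(\Phi(\varphi))}\ =\ e^{-\lambda(m-k)|V|}\sum_{\varphi}e^{-H(\varphi)}\ =\ e^{-\lambda(m-k)|V|}\,Z_V^{\mathbf{a},\mathbf{b},k},
\]
where the sums run over configurations admissible for $Z_V^{\mathbf{a},\mathbf{b},k}$. Rearranging yields the claimed bound. There is essentially no obstacle here: the only two points requiring any care are that the containment hypothesis $[a_v+m-k,b_v+m-k]\subset[a_v',b_v']$ is exactly what makes the shifted configuration respect the new floor and ceiling, and that the boundary condition must be translated in tandem (from $k$ to $m$) so the boundary-edge gradient terms cancel; the rest is one line of energy bookkeeping.
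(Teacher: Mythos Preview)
Your proof is correct and follows exactly the same approach as the paper: shift every configuration up by $m-k$, use the containment hypothesis to verify admissibility, and observe that the only change in weight comes from the field term. The paper's proof is just a two-sentence sketch of the same argument you wrote out in full.
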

\begin{proof}
    Every configuration contributing to $Z_{V}^{\mathbf{a},\mathbf{b},k}$ can be mapped to one in $Z_{V}^{\mathbf{a}',\mathbf{b}',m}$ by shifting it by $m-k$. The change in the weights from this map is evidently exactly $e^{ \lambda(m-k)|V|}$. 
\end{proof}

The following shows a non-quantitative monotonicity of ratios of partition functions (the top corresponding to a lower measure than the bottom) as one increases the external field. 

\begin{lemma}\label{lem:monotonicity-part-function-ratio}
    For every $V$, every $\lambda$, every $\phi\le \phi'$, and any floor/ceiling pair $\mathbf{a},\mathbf{b}\le \mathbf{a}',\mathbf{b}'$, 
    \begin{align*}
        \frac{d}{d\lambda} \frac{Z_{\lambda,V}^{\mathbf{a},\mathbf{b},\phi}}{Z_{\lambda,V}^{\mathbf{a}',\mathbf{b}',\phi'}} \ge 0\,.
    \end{align*}
\end{lemma}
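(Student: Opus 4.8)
The plan is to compute the derivative explicitly and recognize it as (a multiple of) a difference of expectations of the monotone observable $\sum_v \varphi_v$ under the two measures, then invoke the stochastic domination of \cref{lem:SOS-monotonicity}. Write $Z_\lambda = Z_{\lambda,V}^{\mathbf{a},\mathbf{b},\phi}$ and $\tilde Z_\lambda = Z_{\lambda,V}^{\mathbf{a}',\mathbf{b}',\phi'}$, so that $Z_\lambda = \sum_\varphi e^{-\beta\sum_{v\sim w}|\varphi_v-\varphi_w|} e^{-\lambda \sum_v \varphi_v}\prod_v \one_{a_v\le\varphi_v\le b_v}$ and similarly for $\tilde Z_\lambda$ with $(\mathbf a',\mathbf b',\phi')$. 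Differentiating under the (finite) sum gives $\frac{d}{d\lambda}Z_\lambda = -Z_\lambda\,\mu_{\lambda,V}^{\mathbf a,\mathbf b,\phi}[S]$ where $S(\varphi):=\sum_{v\in V}\varphi_v$, and likewise $\frac{d}{d\lambda}\tilde Z_\lambda = -\tilde Z_\lambda\,\mu_{\lambda,V}^{\mathbf a',\mathbf b',\phi'}[S]$.

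Then by the quotient rule,
\begin{align*}
\frac{d}{d\lambda}\frac{Z_\lambda}{\tilde Z_\lambda}
&= \frac{(\frac{d}{d\lambda}Z_\lambda)\tilde Z_\lambda - Z_\lambda(\frac{d}{d\lambda}\tilde Z_\lambda)}{\tilde Z_\lambda^2}
= \frac{Z_\lambda}{\tilde Z_\lambda}\Big(\mu_{\lambda,V}^{\mathbf a',\mathbf b',\phi'}[S] - \mu_{\lambda,V}^{\mathbf a,\mathbf b,\phi}[S]\Big)\,.
\end{align*}
Since $Z_\lambda/\tilde Z_\lambda>0$, it suffices to show $\mu_{\lambda,V}^{\mathbf a',\mathbf b',\phi'}[S]\ge \mu_{\lambda,V}^{\mathbf a,\mathbf b,\phi}[S]$. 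But $S(\varphi)=\sum_v\varphi_v$ is a nondecreasing function of $\varphi$ in the natural partial order, and by \cref{lem:SOS-monotonicity} (applied with $\lambda'=\lambda$, i.e.\ the same field, and the hypotheses $\mathbf a\le\mathbf a'$, $\mathbf b\le\mathbf b'$, $\phi\le\phi'$) we have $\mu_{\lambda,V}^{\mathbf a,\mathbf b,\phi}\preceq\mu_{\lambda,V}^{\mathbf a',\mathbf b',\phi'}$. Hence the expectation of the monotone observable $S$ is at least as large under the dominating measure, giving $\frac{d}{d\lambda}(Z_\lambda/\tilde Z_\lambda)\ge 0$ as claimed.

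There is essentially no obstacle here: the only points requiring a word of care are (i) that the sum defining $Z_\lambda$ is over infinitely many configurations when $b_v\equiv\infty$, so one should note that $Z_\lambda$ is finite and smooth in $\lambda$ for $\lambda>0$ (the partition function converges, with derivatives controlled by dominated convergence, because the high-energy configurations are exponentially suppressed) — alternatively one may simply first prove the statement for finite ceilings and pass to the limit; and (ii) that $S$ is indeed monotone, which is immediate. Thus the proof is a one-line differentiation followed by an application of FKG-type monotonicity.
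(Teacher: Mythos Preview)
Your proof is correct and follows essentially the same approach as the paper: both compute the $\lambda$-derivative (you via the quotient rule, the paper via the log-derivative, which is equivalent since the ratio is positive) and reduce to the nonnegativity of $\mu_{\lambda,V}^{\mathbf a',\mathbf b',\phi'}[\sum_v\varphi_v]-\mu_{\lambda,V}^{\mathbf a,\mathbf b,\phi}[\sum_v\varphi_v]$, then invoke the stochastic domination of \cref{lem:SOS-monotonicity}. Your added remarks on finiteness/smoothness for infinite ceilings are a reasonable extra justification the paper omits.
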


\begin{proof}
    Since the derivative of $\log(x)$ is positive on $\mathbb R_+$, it suffices to show the claimed non-negativity of 
   \begin{align*}\frac{d}{d\lambda}\log\Big(\frac{Z_{\lambda,V}^{\mathbf{a},\mathbf{b},\phi}}{Z_{\lambda,V}^{\mathbf{a}',\mathbf{b}',\phi'}}\Big) =
        \mu_{\lambda,V}^{\mathbf{a}',\mathbf{b}',\phi'} \big[\sum_{v} \varphi_v \big] - \mu_{\lambda,V}^{\mathbf{a},\mathbf{b},\phi} \big[\sum_v \varphi_v \big]\,.
    \end{align*}
    By the stochastic domination from \cref{lem:SOS-monotonicity} and the fact that $\sum_v \varphi_v$ is an increasing function, this difference is non-negative as claimed. 
\end{proof}

\subsection{The common choices of boundary conditions and floors/ceilings}
For much of the paper, we will be focused on a specific class of boundary conditions and floors/ceilings. The baseline choice of floors and ceilings in this paper is $a_v \equiv 0$ and $b_v \equiv \infty$ except when specified otherwise. The special class of boundary conditions are those which are constant $\phi \equiv k$ for some $k\ge 0$. The special class of floors/ceilings will  modify the baseline choice along $\partial_i V$ via a \emph{boundary signing} $\eta \in \{+,-,\emptyset,f\}^{\partial_i V}$.

\begin{definition}\label{def:boundary-signing}
    If the boundary condition $\phi$ is constant, $\phi \equiv k$, say, then the \emph{boundary signing} indicates the choices of floors/ceilings where for each $v\in \partial_i V$,
    \begin{itemize}
        \item if $\eta_v= +$ then $a_v = k$ and if $\eta_v = -$, then $b_v =k$
        \item if $\eta_v = \emptyset$, then $a_v = b_v =k$
        \item if $\eta_v = f$, then $a_v, b_v$ are the default choices of $0$ and $\infty$ respectively.  
    \end{itemize}
\end{definition}

Until \cref{sec:spatial-mixing}, constant $\phi$ and floors/ceilings given by $a_v \equiv 0, b_v \equiv \infty$ and boundary signings will be the exclusive family of boundary conditions and floors/ceilings we work with and therefore we give it a shortened notation.
Define   $\mu_{\eta,k,V}$ as the SOS measure with boundary conditions that are identically $k$, and boundary signing $\eta$ modifying $a_v\equiv 0$, $b_v =\infty$ along $\partial_i V$ according to \cref{def:boundary-signing}. Let $Z_{\eta,k,V}$ be the corresponding partition function.  
In this special context, \cref{lem:preliminary-partition-function-bound} translates to the following. 
\begin{corollary}\label{cor:preliminary-partition-function-bound}
    For every $V$ and every $k<m$ and every boundary signing $\eta$,  
    \begin{align*}
        \frac{Z_{\eta,k,V}}{Z_{\eta,m,V}} \le e^{\lambda (m-k)|V|}\,.
    \end{align*}
\end{corollary}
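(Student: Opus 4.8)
The plan is to observe that Corollary~\ref{cor:preliminary-partition-function-bound} is simply the specialization of Lemma~\ref{lem:preliminary-partition-function-bound} to the particular family of boundary conditions and floors/ceilings singled out in Definition~\ref{def:boundary-signing}, so the entire proof amounts to checking that the hypotheses of Lemma~\ref{lem:preliminary-partition-function-bound} are met by the pair $(\mu_{\eta,k,V},\mu_{\eta,m,V})$. First I would unwind the notation: $Z_{\eta,k,V}$ is $Z_V^{\mathbf a,\mathbf b,k}$ where the constant boundary condition is $\phi\equiv k$ and the floors/ceilings $(\mathbf a,\mathbf b)$ are the baseline choice $a_v\equiv 0$, $b_v\equiv\infty$ modified along $\partial_i V$ by the signing $\eta$ as in Definition~\ref{def:boundary-signing}; similarly $Z_{\eta,m,V}=Z_V^{\mathbf a',\mathbf b',m}$ with boundary condition $\phi'\equiv m$ and floors/ceilings $(\mathbf a',\mathbf b')$ the baseline modified by the \emph{same} signing $\eta$ but now anchored at height $m$ (so that at a site $v\in\partial_i V$ with $\eta_v=+$ one has $a_v=k$, $a_v'=m$; with $\eta_v=-$ one has $b_v=k$, $b_v'=m$; with $\eta_v=\emptyset$ one has $a_v=b_v=k$, $a_v'=b_v'=m$; and with $\eta_v=f$, as well as at every interior site $v\in\mathring V$, one has $a_v=a_v'=0$, $b_v=b_v'=\infty$).

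Next I would verify the condition ``$[a_v+m-k,\,b_v+m-k]\subset[a_v',b_v']$ for all $v\in V$'' required by Lemma~\ref{lem:preliminary-partition-function-bound}. For $v\in\mathring V$ and for $v\in\partial_i V$ with $\eta_v=f$ this reads $[m-k,\infty]\subset[0,\infty]$, which holds since $m>k$. For $\eta_v=+$ it reads $[k+(m-k),\infty]=[m,\infty]\subset[m,\infty]=[a_v',b_v']$, an equality. For $\eta_v=-$ it reads $[0+(m-k),\,k+(m-k)]=[m-k,m]\subset[0,m]=[a_v',b_v']$, which holds since $m-k\geq 0$. For $\eta_v=\emptyset$ it reads $[k+(m-k),k+(m-k)]=\{m\}\subset\{m\}=[a_v',b_v']$, again an equality. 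Hence in every case the inclusion holds, and Lemma~\ref{lem:preliminary-partition-function-bound} applies verbatim, giving $Z_{\eta,k,V}/Z_{\eta,m,V}=Z_V^{\mathbf a,\mathbf b,k}/Z_V^{\mathbf a',\mathbf b',m}\le e^{\lambda(m-k)|V|}$, which is the claim.

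There is no real obstacle here — the only thing to be careful about is the bookkeeping at the boundary sites, making sure that shifting the lower configuration up by $m-k$ lands it inside the window allowed by the upper measure's floors/ceilings at sites signed $-$ or $\emptyset$ (this is exactly where the direction $m>k$ and the nonnegativity of $m-k$ get used). One could alternatively give a self-contained one-line argument mirroring the proof of Lemma~\ref{lem:preliminary-partition-function-bound}: the map $\varphi\mapsto\varphi+(m-k)$ injects the configurations contributing to $Z_{\eta,k,V}$ into those contributing to $Z_{\eta,m,V}$ (the nearest-neighbor gradient term of the Hamiltonian is unchanged by a global shift, including across the boundary since $\phi$ shifts from $k$ to $m$ by the same amount, and the floor/ceiling constraints are respected by the inclusion check above), while the field term $\lambda\sum_v\varphi_v$ increases by exactly $\lambda(m-k)|V|$; since $e^{-H}$ only decreases under this shift by the factor $e^{-\lambda(m-k)|V|}$, summing over configurations yields $Z_{\eta,k,V}\le e^{\lambda(m-k)|V|}Z_{\eta,m,V}$. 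Either route is routine, so I would present it briefly as a corollary of Lemma~\ref{lem:preliminary-partition-function-bound}.
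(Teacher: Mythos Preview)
Your proposal is correct and takes the same approach as the paper: the corollary is stated there without a separate proof, as an immediate specialization of Lemma~\ref{lem:preliminary-partition-function-bound} to the boundary-signing setup of Definition~\ref{def:boundary-signing}. Your explicit case-by-case verification of the inclusion $[a_v+m-k,b_v+m-k]\subset[a_v',b_v']$ is more detailed than the paper (which omits it entirely), but the underlying argument is identical.
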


\subsection{The contour representation}\label{subsec:contour-representation-defs}
An important tool in the analysis of the SOS model is the \emph{contour representation}, essentially a bijection between configurations and a family of non-crossing loops representing level curves of the SOS surface. The contours will be formed out of \emph{dual edges}; for that, let $V^*$ denote the planar dual graph of $V$ generated by considering $(\mathbb Z^2)^* =\mathbb Z^2 + (\frac{1}{2},\frac{1}{2})$, then taking $E(V^*)$ to consist of all edges of $E((\mathbb Z^2)^*)$ bisecting the edge set $\{e = vw: v\in V\}$. The vertex set $V^*$ is then naturally induced.

We will use the contour representation of SOS configurations, referring the reader to \cite[Section 3]{CLMST14} for more background on the below. 

\begin{definition}
        A geometric contour $\gamma$ is a connected set of dual-edges in $E(V^*)$ 
    whose elements can be sequenced $e_0,...,e_{|\gamma|}$ in such a way that 
    \begin{itemize}
        \item $e_{i} \ne e_j$ if $i\ne j$ except if $i=0$ and $j=n$;
        \item $e_{i}$ and $e_{i+1}$ are incident for every $i$;
        \item if $e_i,e_{i+1},e_j,e_{j+1}$ all share a single dual vertex, then $e_i,e_{i+1}$ are either the south and west pair of dual edges, or the north and east pair. 
    \end{itemize}
\end{definition}

The last item above is a canonical splitting rule, where the pairings SW and NE are simply a convention. With this convention, we will call sets $V\subset \mathbb Z^2$ simply-connected if there is a geometric contour, whose edge set is exactly $(\partial_e V)^*$, and we will use the notation $\gamma_V$ to denote this contour.  
\begin{definition}\label{def:interior-exterior-contour}
    A vertex is exterior to a contour $\gamma$ if it lies in the infinite connected component of $\mathbb R^2\setminus \gamma$. When there is an underlying domain $V\subset \mathbb Z^2$, we use $\Ext(\gamma)$ to denote those vertices of $V$ exterior to $\gamma$. 
    A vertex is interior to a contour $\gamma$ if it lies in a finite connected component of $\mathbb R^2 \setminus \gamma$, and $\Int(\gamma)$ denotes the set of such vertices. 
    We say $\gamma$ nests $\gamma'$ if $\Int(\gamma')\subset \Int(\gamma)$, and two contours $\gamma,\gamma'$ are mutually external if neither nests the other. 
\end{definition}

    We are now able to define up and down contours corresponding to an SOS configuration $\varphi$.  

\begin{definition}\label{def:up-down-contour}
In an SOS configuration $\varphi$, a geometric contour $\gamma$ is an up-$h$-contour if $\varphi_x \ge h$ for all $h\in \partial_i \Int(\gamma)$ and $\varphi_x <h$ for all $h\in \partial_o \Int(\gamma)$. Analogously, a geometric contour $\gamma$ is a down-$h$-contour if $\varphi_x \le h$ for all $h\in \partial_i \Int(\gamma)$ and $\varphi_x >h$ for all $h\in \partial_o \Int(\gamma)$. In these cases, use the notation $S(\gamma)=\pm$ for whether it is up or down, and $h(\gamma)=h$. 
\end{definition}

If we consider SOS configurations with constant boundary $\phi \equiv k$, we can associate to each configuration a contour representation, where the contours it consists of are all signed contours at specific heights. Namely, for a configuration $\varphi$, with boundary conditions $k$, on a domain $V$, first include the outermost up-$(k+1)$-contours and down-$(k-1)$-contours. Interior to each of the outermost up-$(k+1)$-contours, we can consider the outermost up-$(k+2)$ and down-$k$-contours, and proceed iteratively in this manner to get a family of signed contours, each having an associated height. Per this construction, if multiple contours are the same geometric contour, just at different heights, the one whose height is smallest if they are up-contours, or largest if they are down-contours, is called the outermost one.

\begin{definition}\label{def:compatible-contours}
    A family of contours $\Gamma$ is said to be admissible if there exists a $\varphi$ whose contour collection is exactly $\Gamma$. (This effectively imposes that contours may not cross, and if they overlap on an edge, their signs must be the same if one nests the other, while their signs must be different if they are mutually external.)
\end{definition}

With this, we can express the SOS measure in terms of weights associated to contours. 

\begin{definition}\label{def:contour-weight}
    The weight of a contour $\gamma$ is 
    \begin{align*}
        W(\gamma) = \begin{cases}
            e^{ - \beta |\gamma| - \lambda |\Int(\gamma)|} & \quad \gamma \text{ is an up-contour} \\
            e^{ - \beta |\gamma| + \lambda |\Int(\gamma)|} & \quad \gamma \text{ is a down-contour}
        \end{cases}\,.
    \end{align*}
\end{definition}

There is a bijection from height functions that have a constant boundary condition $\phi\equiv h$ outside a set $V\subset \mathbb Z^2$ (and no floor/ceiling) and admissible contour collections in $V^*$. In fact, the information of the heights of the contours is not needed to reconstruct $\varphi$, only their signs are needed (the heights can be reconstructed from the boundary inwards). 
In our contexts, we require contour collections to be compatible with the floor/ceilings. In the default case we are interested in a floor at height zero, and a boundary signing $\eta$ per \cref{def:boundary-signing}.

Denote by $\mathscr{G}_{\eta,h,V}$ the set of all admissible contours collections whose corresponding configuration is in $\Omega_{\eta,h,V}$. Notice that the boundary signing is imposing a constraint on the contours that no dual edge bisecting $\partial_e V \cap \partial_e \{v\}$
is part of an up contour if $\eta_v\in\{-,\emptyset\}$ (respectively is part of a down contour if $\eta_v \in \{+,\emptyset\}$). 

With that, we can rewrite the corresponding partition function in terms of contour weights as:    
\begin{align}\label{eq:contour-rep-partition-function}
    Z_{\eta,h,V} := e^{ - \lambda h|V|} \sum_{\Gamma \in \mathscr G_{\eta, h,V}} \prod_{\gamma \in \Gamma} W(\gamma)\,.
\end{align}

The specific choices where $\eta \equiv +$ and $\eta \equiv -$ will be especially pertinent since, e.g.,  $Z_{+,h(\gamma),\Int(\gamma)}$ describes the possible contour collections found in $\Int(\gamma)$ if $\gamma$ is an up-$h$-contour.

\section{Renormalized weights, elementary contours and free energies}\label{sec:elementary}
We study a family of contours which are exponentially unlikely (in their length) no matter what height they are based at. These are small enough that neither the relative benefit of the external field (for down-contours) nor the entropic repulsion effect (for up-contours) are  enough to overcome the exponential in $\beta$ cost in the length of the contour. Following the language of~\cite{DiMa94,CeMa1}, these contours are called \emph{elementary}. We note that the overarching goal of this section, \cref{thm:main-elementary-general-bc}, tracks~\cite{DiMa94} but for contours instead of the \emph{cylinders}---essentially maximal stacks of identical contours---that were considered in that paper. This simplifies some aspects of the proof, while complicating others; in particular, our proof requires stitching together Peierls-type arguments in the standard contour representation of the SOS model, with arguments for using the ``renormalized" contour representation defined as follows. 

\subsection{Renormalized contour weights}
The first step we take is to rewrite SOS partition functions in terms of renormalized contour weights according to a specified boundary height $h$. The role of this renormalization is to move to a model over contour collections with all the contours based at height $h$, rather than each having their own associated height; the outermost contours of this model will still correspond to the outermost contours of the original model.  

\begin{definition}\label{def:contour-renormalized-weight}
    For a contour $\gamma$, the $h$-renormalized weight is defined as 
    \begin{align*}
         W_h^\rn(\gamma) = \begin{cases} e^{ - \beta |\gamma|} \frac{Z_{+,h+1,\Int(\gamma)}}{Z_{+,h,\Int(\gamma)}} & \quad \gamma \text{ is an up-contour} \\ e^{ - \beta |\gamma|}\frac{Z_{-,h-1,\Int(\gamma)}}{Z_{-,h,\Int(\gamma)}} & \quad \gamma \text{ is a down-contour}
         \end{cases}
         \,.
    \end{align*}
\end{definition}

Given these renormalized weights, we can obtain the following equivalence between the partition function on a simply connected domain $V$ and without tracking the heights of the corresponding contours, allowing arbitrary compatible families. Towards that purpose, define $\mathscr{G}_{\eta,h,V}^\rn$ as admissible collections of signed contours compatible with $\eta$, but with no constraint on the height function they generate respecting a floor at height $0$ (unless $h=0$, in which case no down contours are allowed). With that, we can define 
\begin{align}\label{eq:renormalized-partition-function}
    Z^\rn_{\eta,h,V} = e^{ - \lambda h|V|} \sum_{\Gamma \in \mathscr{G}^\rn_{\eta,k,V}} \prod_{\gamma \in \Gamma} W^\rn_h(\gamma)\,.
\end{align}
In an element $\Gamma\in \mathscr{G}^\rn_{\eta,h,V}$, we say a contour $\gamma\in \Gamma$ is outermost if it is not strictly nested by any other contour in $\Gamma$. Since contours may overlay fully and in the renormalized collection are all based at the same height (i.e., multiple copies of the same contour can be in $\Gamma$), we arbitrarily call one of them the outermost one.  

\begin{lemma}\label{lem:part-function-renormalized-weights}
    The standard partition function and the renormalized one are equal, namely for any simply-connected set $V$, 
    \begin{align*}
        Z_{\eta,h,V}= Z^\rn_{\eta,h,V}\,.
    \end{align*}
    Moreover, the same equality holds if we specify some outermost contours which must belong to a configuration and only sum over $\Gamma$ having those as outermost contours. 
\end{lemma}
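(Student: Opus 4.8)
The plan is to prove the identity $Z_{\eta,h,V}=Z^{\rn}_{\eta,h,V}$ by induction on the number of vertices of $V$ (equivalently, on the maximal possible nesting depth of a contour collection in $V^*$), peeling off the outermost contours and using the definition of the renormalized weight to absorb everything strictly interior to each outermost contour. The base case $V=\varnothing$ (or $|V|$ small enough that no contour fits) is trivial since both sides equal $e^{-\lambda h|V|}$. For the inductive step, I would start from the standard contour representation \cref{eq:contour-rep-partition-function}, $Z_{\eta,h,V}=e^{-\lambda h|V|}\sum_{\Gamma\in\mathscr G_{\eta,h,V}}\prod_{\gamma\in\Gamma}W(\gamma)$, and group the sum according to the (unordered) family $\Gamma_{\out}=\{\gamma_1,\dots,\gamma_m\}$ of outermost contours. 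These are pairwise mutually external, each at height $h\pm 1$ with sign forced by the boundary signing constraints encoded in $\mathscr G_{\eta,h,V}$ (no up-contour across a $\{-,\emptyset\}$ boundary edge, no down-contour across a $\{+,\emptyset\}$ one), and the region $V\setminus\bigcup_i \overline{\Int(\gamma_i)}$ carries no further contours touching the outermost layer. The weight of the outermost contour $\gamma_i$ itself is $e^{-\beta|\gamma_i|\mp\lambda|\Int(\gamma_i)|}$ by \cref{def:contour-weight}.

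The key algebraic step: for each outermost up-$(h+1)$-contour $\gamma_i$, the contours nested inside it form exactly an admissible collection in $\mathscr G_{+,h+1,\Int(\gamma_i)}$ (boundary signing $+$ because those interior vertices must sit at height $\ge h+1$), and summing their weights gives, by \cref{eq:contour-rep-partition-function} applied on $\Int(\gamma_i)$, precisely $e^{\lambda(h+1)|\Int(\gamma_i)|}Z_{+,h+1,\Int(\gamma_i)}$. Combining with the factor $e^{-\lambda|\Int(\gamma_i)|}$ from $W(\gamma_i)$ and pulling out $e^{-\lambda h|\Int(\gamma_i)|}$ to match the renormalized normalization, the inner sum collapses to $e^{-\beta|\gamma_i|}Z_{+,h+1,\Int(\gamma_i)}/Z_{+,h,\Int(\gamma_i)}\cdot\big(e^{-\lambda h|\Int(\gamma_i)|}Z_{+,h,\Int(\gamma_i)}\big)$; here I apply the inductive hypothesis to rewrite $Z_{+,h,\Int(\gamma_i)}=Z^{\rn}_{+,h,\Int(\gamma_i)}$ so that the bracketed quantity is itself a renormalized partition function over the interior, and the leading factor is exactly $W^{\rn}_h(\gamma_i)$ from \cref{def:contour-renormalized-weight}. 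The down-contour case is symmetric, using the $-$ boundary signing and the $Z_{-,h-1,\cdot}/Z_{-,h,\cdot}$ branch, with the sign of the $\lambda|\Int|$ term flipped so the bookkeeping again works out. Multiplying over the disjoint interiors and over the outermost-free region, and re-summing over all admissible outermost families (which are in bijection with the outermost families allowed in $\mathscr G^{\rn}_{\eta,h,V}$, since outermost contours in the renormalized model are subject to the same $\eta$-constraints and no floor constraint beyond that), yields $Z^{\rn}_{\eta,h,V}$. The "moreover" statement—fixing some prescribed outermost contours and only summing over collections having exactly those as outermost—follows by simply not re-summing over that part of the outermost family in the last step; the same term-by-term identity holds.

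The main obstacle I anticipate is the careful matching of constraints and floor/ceiling conventions, rather than any deep idea. Specifically: (i) in the standard model the interior of an up-$(h+1)$-contour genuinely forbids heights below $0$, whereas in $Z^{\rn}$ we have dropped that floor constraint for $h\ge 1$—so I must verify that the renormalized weight $W^{\rn}_h(\gamma)$, defined via $Z_{+,h+1,\Int(\gamma)}/Z_{+,h,\Int(\gamma)}$ with the \emph{floored} partition functions, is exactly what makes the telescoping consistent, and that iterating the identity downward never produces an inconsistency with the $h=0$ special case (where down-contours are banned and $\mathscr G^{\rn}_{\eta,0,V}=\mathscr G_{\eta,0,V}$ trivially). (ii) One must check that $V\setminus\bigcup_i\overline{\Int(\gamma_i)}$ contributes no nontrivial factor beyond $e^{-\lambda h|\cdot|}$ and imposes no extra contours—i.e., that "outermost" was defined so that the decomposition of $\Gamma$ into $\Gamma_{\out}$ together with the families inside each $\Int(\gamma_i)$ is a genuine bijection—which is exactly the iterative construction of the signed-contour representation recalled before \cref{def:compatible-contours}. (iii) The convention that overlaid identical contours have an arbitrarily-chosen outermost representative must be handled uniformly on both sides; since the weights $W$ and $W^{\rn}$ depend only on $|\gamma|$ and $\Int(\gamma)$, this causes no asymmetry. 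Once these bookkeeping points are pinned down, the proof is a one-line telescoping induction.
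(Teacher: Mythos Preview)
Your proposal is correct and follows essentially the same approach as the paper: decompose by outermost contours, use the algebraic identity $W(\gamma)\,e^{\lambda(h\pm1)|\Int(\gamma)|}Z_{S(\gamma),h\pm1,\Int(\gamma)}=W^\rn_h(\gamma)\,e^{\lambda h|\Int(\gamma)|}Z_{S(\gamma),h,\Int(\gamma)}$, and recurse. The paper phrases this as an iterative peeling (``performing the same expansion and repeating'') rather than an explicit induction on $|V|$, but the content is identical; the bookkeeping issues you flag about the floor constraint being absorbed into $W^\rn_h$ and the $h=0$ special case are exactly why the identity telescopes cleanly, and the paper takes them as understood.
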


\begin{proof}
    If we let $\Gamma_\out$ be collections of outermost contours, then by \cref{eq:contour-rep-partition-function} and \cref{def:contour-renormalized-weight},
    \begin{align*}
        e^{ \lambda h|V|} Z_{\eta,h,V} = \sum_{\Gamma_\out} \prod_{\gamma \in \Gamma_\out} W(\gamma) e^{ \lambda h |\Int(\gamma)|} Z_{S(\gamma),h \pm 1, \Int(\gamma)} = \sum_{\Gamma_\out} \prod_{\gamma\in \Gamma_\out} W^\rn_h(\gamma) e^{ \lambda h |\Int(\gamma)|} Z_{S(\gamma),h,\Int(\gamma)}\,,
    \end{align*}
    where in the middle display, $h+1$ is used when $S(\gamma) =+$ and $h-1$ when $S(\gamma) = -$. Performing the same expansion on  $e^{\lambda h |\Int(\gamma)|} Z_{S(\gamma),h,\Int(\gamma)}$ and repeating, this gives the claimed equality to $e^{ \lambda h |V|} Z_{\eta,h,V}^\rn$ concluding the proof. It is evident that the same argument could be performed restricted to sums over $\Gamma_\out$ that contain a fixed collection of outermost contours. 
\end{proof}

The benefit of the above renormalized contour weight representation is that the deletion of a contour from $\Gamma$ leaves an element of the same $\mathscr{G}_{\eta,h,V}^\rn$, whereas in $\mathscr{G}_{\eta,h,V}$ such a deletion could cause conflicts with the floor at height $0$. This enables the use of simple Peierls maps on the renormalized collection so long as the renormalized weights are exponentially small in $|\gamma|$.  

\subsection{Elementary contours}
We next consider \emph{elementary contours}, which are those we can show to have exponentially small renormalized weight regardless of the height $h$ with respect to which they are renormalized. 
For a subset $A$ in $\mathbb R^2$, we use $\diam(A)$ to denote $\max\{\|x-y\|_\infty: x,y\in A\}$.

\begin{definition}\label{def:h-elem}
    A contour $\gamma$ is called $h$-elementary if it has 
    \begin{align*}
        \diam(\gamma) \le \lambda^{-1}\wedge e^{3\beta (h+1)}\,.
    \end{align*}
\end{definition}

\begin{remark}\label{rem:properties-of-elementary}
Note the following two important properties: 
\begin{itemize}
\item if $\gamma$ is $h$-elementary, then it is also $(h+1)$-elementary.
\item if $\gamma'$ is nested in $\gamma$ and $\gamma$ is $h$-elementary, then $\gamma'$ is also $h$-elementary. 
\end{itemize}
\end{remark}

We now introduce the notation $\mathscr{G}^\elem_{\eta,h,V}$ and the notation $\mathscr{G}^\rnelem_{\eta,h,V}$, for compatible (renormalized) collections of contours all of which are $h$-elementary. With these notations, we can define 
\begin{align}\label{eq:elem-rnelem-partition-functions}
    Z^\elem_{\eta,h,V} = e^{ - \lambda h|V|} \sum_{\Gamma \in \mathscr{G}^\elem_{\eta, h,V}} \prod_{\gamma \in \Gamma}  W(\gamma)\,, \qquad\text{and} \qquad Z^{\rnelem}_{\eta,h,V} = e^{ - \lambda h|V|} \sum_{\Gamma \in \mathscr{G}^\rnelem_{\eta, h,V}} \prod_{\gamma \in \Gamma}  W^\rn_h(\gamma)\,.
\end{align}

By the same reasoning as in the proof of \cref{lem:part-function-renormalized-weights}, we obtain the following equivalence. 

\begin{lemma}\label{lem:elem-part-function-renormalized-weights}
The elementary partition function and the renormalized elementary one are equal, namely for a simply-connected set $V$ 
\begin{align*}
    Z^\elem_{\eta,h,V} = Z^\rnelem_{\eta,h,V}\,.
\end{align*}
Moreover, the same equality holds if we specify some outermost contours which must belong to a configuration, and only sum over $\Gamma$ having those as outermost contours. 
\end{lemma}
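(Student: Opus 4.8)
The plan is to replay the telescoping expansion from the proof of \cref{lem:part-function-renormalized-weights}, restricting only the \emph{outermost} level of the expansion to $h$-elementary contours, and to observe that the recursion into the interiors then carries no elementarity constraint at all.

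Concretely, I would first expand $e^{\lambda h|V|}Z^\elem_{\eta,h,V}$ by summing over all families $\Gamma_{\out}$ of outermost contours (necessarily $h$-elementary), and over the configurations interior to each. The key point is that inside each $\gamma\in\Gamma_{\out}$, \emph{every} contour of any admissible, floor-respecting, $S(\gamma)$-signed configuration is automatically $h$-elementary --- this is exactly the second bullet of \cref{rem:properties-of-elementary} (a contour nested in an $h$-elementary contour is $h$-elementary). Hence the interior partition function on $\Int(\gamma)$ is simply the ordinary one ($Z_{S(\gamma),h+1,\Int(\gamma)}$ for an up-contour and $Z_{S(\gamma),h-1,\Int(\gamma)}$ for a down-contour), with no ``$\elem$'' superscript, so that
\begin{align*}
e^{\lambda h|V|}Z^\elem_{\eta,h,V} \;=\; \sum_{\Gamma_{\out}}\, \prod_{\gamma\in\Gamma_{\out}} W(\gamma)\, e^{\lambda h|\Int(\gamma)|}\, Z_{S(\gamma),h\pm1,\Int(\gamma)}\,,
\end{align*}
where $\Gamma_{\out}$ ranges over families of $h$-elementary outermost contours and $h\pm1$ reads $h+1$ for up- and $h-1$ for down-contours. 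Exactly as in \cref{lem:part-function-renormalized-weights}, \cref{def:contour-renormalized-weight} rewrites each factor as $W^\rn_h(\gamma)\, e^{\lambda h|\Int(\gamma)|}\, Z_{S(\gamma),h,\Int(\gamma)}$.

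Next I would run the parallel expansion on $Z^\rnelem_{\eta,h,V}$. Since $h$-elementarity is inherited under nesting, the collections in $\mathscr{G}^\rnelem_{\eta,h,V}$ are exactly those whose \emph{outermost} contours are $h$-elementary; peeling those off and applying \cref{lem:part-function-renormalized-weights} to the (now unconstrained) interior renormalized collection shows the interior sum on $\Int(\gamma)$ equals $e^{\lambda h|\Int(\gamma)|}Z_{S(\gamma),h,\Int(\gamma)}$. This lands on the very same expression $\sum_{\Gamma_{\out}}\prod_{\gamma\in\Gamma_{\out}}W^\rn_h(\gamma)\,e^{\lambda h|\Int(\gamma)|}\,Z_{S(\gamma),h,\Int(\gamma)}$ obtained above, yielding $Z^\elem_{\eta,h,V}=Z^\rnelem_{\eta,h,V}$. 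The ``moreover'' clause needs no extra work: every step restricts verbatim to sums over $\Gamma_{\out}$ that contain a prescribed collection of outermost contours.

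The one genuinely new ingredient relative to \cref{lem:part-function-renormalized-weights} --- and the step I would be most careful about --- is the interaction of the $h$-elementarity constraint with the recursion: one must check that restricting a collection to $h$-elementary contours is the same as restricting only its outermost contours, so that each interior may be treated by the \emph{unrestricted} \cref{lem:part-function-renormalized-weights} rather than by an ``elementary'' recursion. This is precisely what \cref{rem:properties-of-elementary} provides, and it is also the reason elementarity is defined via a single global height $h$ rather than the local height of each contour: only then is the class closed under passing to nested contours. A minor secondary point to note is that the floor-at-$0$ constraint inside down-contours is harmlessly absorbed into the honest interior partition functions $Z_{-,h-1,\Int(\gamma)}$ already appearing in the definition of $W^\rn_h$, exactly as in the non-elementary case.
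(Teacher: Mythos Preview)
Your proposal is correct and is precisely the way to make rigorous the paper's one-line proof (``by the same reasoning as in the proof of \cref{lem:part-function-renormalized-weights}''). You have correctly identified the single extra ingredient beyond that lemma---namely, that \cref{rem:properties-of-elementary} renders the $h$-elementarity constraint a condition on outermost contours only, so that the interiors are governed by the unrestricted partition functions and \cref{lem:part-function-renormalized-weights} applies verbatim there.
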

In what follows, we use the notation $\mu^\elem_{\eta,h,V}$ to denote the distribution over $\mathscr{G}_{\eta,h,V}^{\elem}$ which assigns each $\Gamma$, the weight $\prod_{\gamma\in \Gamma}W(\gamma)$. Equivalently, this is the SOS measure $\mu_{\eta,h,V}$ conditioned on its contour representation belonging to $\mathscr{G}^\elem_{\eta,h,V}$, i.e., conditioned on all its outermost contours being $h$-elementary.

The following theorem is our main result in this section, and establishes that the $h$-elementary contours always have exponentially small $h$-renormalized weight. It also gives a first bound, that is sharp up to the constant in the exponent, for the ratio of elementary partition functions at different heights. This is the analogue, in the context of our contours, of~\cite[Lemma 2.5]{DiMa94} and~\cite[Lemma 2.7]{CeMa1}.

    \begin{theorem}\label{thm:main-elementary-general-bc}
    There exists $\beta_0$ such that for every simply-connected $V\subset \mathbb Z^2$ and every $\beta>\beta_0$, 
    \begin{enumerate}
        \item \label{it:Z-elem-ratio-general-bc} For every $0\le k<m$ and every boundary signing $\eta$, 
        \begin{align}\label{eq:ratio-of-Z}
            e^{ - 2|V| e^{ - (4\beta - \lambda)(k+1)}} \le e^{-\lambda(m-k)|V|}\frac{Z^\elem_{\eta,k,V}}{Z^\elem_{\eta, m,V}} \le 2e^{ - \frac{1}{2}|\mathring{V}| e^{ - (4\beta - \lambda)(k+1)}}\,.
        \end{align}
        \item \label{it:contour-weights} For every $h\ge 0$, if $\gamma$ is $h$-elementary, 
        \begin{align*}
             W_h^\rn(\gamma) \le e^{ - (\beta -2)|\gamma|}\,.
        \end{align*}
    \end{enumerate}
\end{theorem}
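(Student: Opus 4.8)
The plan is to prove the two items by a simultaneous induction on the height parameter $k$ (resp.\ $h$), since they feed into each other: the renormalized weight bound in item \eqref{it:contour-weights} at height $h$ requires control of the partition-function ratio $Z_{+,h+1,\Int(\gamma)}/Z_{+,h,\Int(\gamma)}$ from item \eqref{it:Z-elem-ratio-general-bc} on the smaller domain $\Int(\gamma)$, while item \eqref{it:Z-elem-ratio-general-bc} at height $k$ will be proven by a Peierls argument in the renormalized elementary ensemble, which needs item \eqref{it:contour-weights} to know that each $k$-elementary contour carries weight $W_k^\rn(\gamma)\le e^{-(\beta-2)|\gamma|}$. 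The base of the induction is $k=0$: there are no down-contours allowed, and for up-contours $Z_{+,1,\Int(\gamma)}/Z_{+,0,\Int(\gamma)} \le e^{\lambda|\Int(\gamma)|} \le e^{\lambda \diam(\gamma)^2}\le e^{\lambda/\lambda^2\cdot(\ldots)}$ — more carefully, since $\gamma$ is $0$-elementary we have $\diam(\gamma)\le \lambda^{-1}\wedge e^{3\beta}$, so $|\Int(\gamma)|\le \diam(\gamma)^2 \le \diam(\gamma)\cdot\lambda^{-1}$, and $\lambda|\Int(\gamma)|\le\diam(\gamma)\le|\gamma|$, giving $W_0^\rn(\gamma)\le e^{-\beta|\gamma|}e^{|\gamma|} = e^{-(\beta-1)|\gamma|}$. (For down-contours based at general $h\ge 1$, the analogous computation uses $Z_{-,h-1,\Int(\gamma)}/Z_{-,h,\Int(\gamma)}\le 1$ by \cref{cor:preliminary-partition-function-bound} with the roles reversed, or more precisely by monotonicity, so the up-contour case is the binding one and the factor $e^{3\beta(h+1)}$ in the definition of $h$-elementary is what keeps the entropic-repulsion gain $e^{4\beta(h+1)}$-scale from overwhelming $e^{\beta|\gamma|}$.)

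For the inductive step of item \eqref{it:contour-weights}: assuming \eqref{it:Z-elem-ratio-general-bc} holds on all strictly smaller simply-connected domains at all heights $\le h$, we bound $Z_{+,h+1,\Int(\gamma)}/Z_{+,h,\Int(\gamma)}$. The honest partition functions differ from the elementary ones by the contribution of configurations with a non-elementary outermost contour, which costs at least $e^{-\beta\cdot(\text{large})}$; a standard argument (Peierls plus the a priori bound \cref{lem:preliminary-partition-function-bound}) shows $Z_{+,h,\Int(\gamma)}$ and $Z_{+,h,\Int(\gamma)}^\elem$ are comparable up to a factor $1+o_\beta(1)$. Then by item \eqref{it:Z-elem-ratio-general-bc} applied on $\Int(\gamma)$ (which is a strictly smaller domain, so permitted by the induction), $e^{-\lambda|\Int(\gamma)|}Z^\elem_{+,h,\Int(\gamma)}/Z^\elem_{+,h+1,\Int(\gamma)}\le 2e^{-\frac12|\mathring{\Int(\gamma)}|e^{-(4\beta-\lambda)(h+1)}}\le 2$, whence the ratio $Z_{+,h+1,\Int(\gamma)}/Z_{+,h,\Int(\gamma)}\le C e^{\lambda|\Int(\gamma)|}$, and since $\gamma$ is $h$-elementary, $\lambda|\Int(\gamma)|\le \lambda\diam(\gamma)^2\le\diam(\gamma)\le|\gamma|$, so $W_h^\rn(\gamma)\le Ce^{-\beta|\gamma|}e^{|\gamma|}\le e^{-(\beta-2)|\gamma|}$ for $\beta$ large, using $|\gamma|\ge 4$. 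The key point making $\lambda|\Int(\gamma)|\le|\gamma|$ is the $\lambda^{-1}$ cap in \cref{def:h-elem}.

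For the inductive step of item \eqref{it:Z-elem-ratio-general-bc}: passing to renormalized weights via \cref{lem:elem-part-function-renormalized-weights}, we compare $Z^{\rnelem}_{\eta,k,V}$ and $Z^{\rnelem}_{\eta,m,V}$; by \cref{cor:preliminary-partition-function-bound}-type shifting it suffices to treat $m=k+1$ and iterate, so fix $m=k+1$. The lower bound: every $\Gamma\in\mathscr{G}^{\rnelem}_{\eta,k+1,V}$ is also in $\mathscr{G}^{\rnelem}_{\eta,k,V}$ (by \cref{rem:properties-of-elementary}, $(k+1)$-elementary contours at the shifted level correspond to $k$-elementary ones), and the ratio of renormalized weights $W_k^\rn(\gamma)/W_{k+1}^\rn(\gamma)$ is controlled; summing the "one extra contour" gain over all possible small up-contours anchored near $\partial_i V$ (which produces the entropic factor $e^{-(4\beta-\lambda)(k+1)}$ per site of $V$) and over the bulk gives the lower bound $e^{-2|V|e^{-(4\beta-\lambda)(k+1)}}$. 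The upper bound is the crux: one runs a Peierls map deleting the outermost up-$(k+1)$-contours one at a time (legitimate in the renormalized ensemble precisely because deletion keeps us in $\mathscr{G}^{\rnelem}$, as emphasized after \cref{lem:part-function-renormalized-weights}), gaining a factor $e^{-\lambda|\Int(\gamma)|}W_k^\rn(\gamma)/W_{k+1}^\rn(\gamma)$-type weight; controlling the entropy of contour placements against the weight bound $W^\rn\le e^{-(\beta-2)|\gamma|}$ from item \eqref{it:contour-weights} yields that the surplus of $Z^\elem_{\eta,k,V}$ over $e^{\lambda|V|}Z^\elem_{\eta,k+1,V}$ concentrates on configurations with essentially one tiny up-contour near a boundary-adjacent interior site, and summing $\sum_{v\in\mathring V}\sum_{\gamma\ni v, \text{small}} e^{-(4\beta-\lambda)(k+1)}(1+o(1))$ gives $\exp[-\tfrac12|\mathring V|e^{-(4\beta-\lambda)(k+1)}]$ after exponentiating the "independent small-contour" heuristic rigorously via a cluster-expansion / polymer bound. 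I expect the \textbf{main obstacle} to be exactly this last step — making the "sum over independent small up-contours exponentiates to $\exp[-c|\mathring V|\,e^{-(4\beta-\lambda)(k+1)}]$" rigorous uniformly in $k$ (including $k$ growing with $n$), since one must ensure the error terms from multi-contour interactions, from the boundary signing $\eta$, and from the discrepancy between renormalized and honest weights are all genuinely lower-order than the leading entropic term $e^{-(4\beta-\lambda)(k+1)}$, which itself can be as small as $n^{-1+o(1)}$; this is where adapting the cluster-expansion bookkeeping of \cite{DiMa94,CeMa1} from cylinders to contours does real work.
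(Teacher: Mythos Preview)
Your high-level scheme (simultaneous induction linking items \eqref{it:Z-elem-ratio-general-bc} and \eqref{it:contour-weights}) is correct, but several concrete steps diverge from the paper in ways that either do not work or miss the actual mechanism.

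\textbf{Induction variable.} The paper inducts on $|V|$, not on the height. The base case is $|V|=1$, checked by hand for each boundary signing. The inductive loop is: item \eqref{it:Z-elem-ratio-general-bc} for all $|V|\le l$ implies item \eqref{it:contour-weights} for $|\Int(\gamma)|=l$; item \eqref{it:contour-weights} for all $|\Int(\gamma)|<l$ implies item \eqref{it:Z-elem-ratio-general-bc} for $|V|\le l$. At each step both items are established for \emph{all} heights simultaneously, so there is no separate height induction.

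\textbf{Item \eqref{it:contour-weights} from item \eqref{it:Z-elem-ratio-general-bc}.} You invoke the \emph{upper} bound in \eqref{eq:ratio-of-Z} and conclude $Z_{+,h+1,\Int(\gamma)}/Z_{+,h,\Int(\gamma)}\le Ce^{\lambda|\Int(\gamma)|}$; that inequality goes the wrong way (the upper bound on $e^{-\lambda|V|}Z_k/Z_m$ gives a \emph{lower} bound on $Z_m/Z_k$). The paper uses the \emph{lower} bound of \eqref{eq:ratio-of-Z}, which yields $Z_{h+1}/Z_h\le e^{-\lambda|\Int(\gamma)|+2|\Int(\gamma)|e^{-(4\beta-\lambda)(h+1)}}$. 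The entropic term $2|\Int(\gamma)|e^{-(4\beta-\lambda)(h+1)}$ is then controlled via $|\Int(\gamma)|\le |\gamma|\diam(\gamma)\le |\gamma|e^{3\beta(h+1)}$, so it is the $e^{3\beta(h+1)}$ cap (not the $\lambda^{-1}$ cap) that is used for up-contours; the $\lambda^{-1}$ cap is what handles down-contours via $\lambda|\Int(\gamma)|\le|\gamma|$.

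\textbf{Upper bound in \eqref{eq:ratio-of-Z}.} This is not a Peierls/cluster-expansion argument. The paper builds an explicit injection $T_S$ from $\mathscr G^\elem_{\eta,k,V}$ into $\mathscr G^\elem_{\eta,m,V}$: at every site $x\in V_\Gamma$ (those in $\mathring V$ at distance $>1$ from every contour of $\Gamma$) one may insert $k{+}1$ stacked down-singletons $\gamma_{\{x\}}$, each such insertion contributing a factor $e^{-(4\beta-\lambda)(k+1)}$. Summing over subsets $S\subset V_\Gamma$ gives $(1+e^{-(4\beta-\lambda)(k+1)})^{|V_\Gamma|}$, and one then shows $|V_\Gamma|\ge\tfrac34|\mathring V|$ with high $\mu^\elem$-probability using the Peierls bound from item \eqref{it:contour-weights}. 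No cluster expansion is needed here; the ``exponentiation'' you flag as the main obstacle is literal, coming from independent choices over $S$.

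\textbf{Lower bound in \eqref{eq:ratio-of-Z}.} The paper shifts configurations down by one, valid on the event $\mathcal G\cap\{\varphi_V\ge1\}$ (where $\mathcal G$ guarantees the shifted contours remain elementary). The crux is lower bounding $\mu^\elem_{\eta,h+1,V}(\mathcal G,\varphi_V\ge1)$ by $\exp(-\tfrac32|V|e^{-(4\beta-\lambda)(h+1)})$. Since $\mu^\elem$ does \emph{not} satisfy FKG and $\mathcal G$ is non-monotone, this cannot be done by positive association; the paper instead runs a vertex-by-vertex revealing procedure, proving a one-point estimate (\cref{clm:one-point-probability}) and chaining conditional probabilities along an ordering of $V$. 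Your description (``summing the one extra contour gain'') does not capture this difficulty.
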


\begin{proof}
        We prove this by induction, showing that \cref{it:Z-elem-ratio-general-bc} implies \cref{it:contour-weights} holds for that same $V$, and then by showing that if \cref{it:contour-weights} holds for $V$ for all connected sets $V$ having $|V|\le l$, then that implies \cref{it:Z-elem-ratio-general-bc} for all sets $V'$ having $|V'|\le l+1$. This second step is the core of the argument and entails maps on elementary contour families to estimate the entropic repulsion effect amongst elementary contours, at a given height $k$. 

\bigskip
\noindent {\em Base case for~\cref{it:Z-elem-ratio-general-bc}.} Consider $V = \{x\}$, whence $\mathring V = \emptyset$. 
   In this case, the only contours in $E(V^*)$ are the loops consisting of the four dual-edges surrounding $x$, i.e., $\gamma_{\{x\}}$ (notice that the these contours are always elementary, no matter its height or sign, since $\diam(\gamma_{\{x\}})=1$ is less than $\lambda^{-1} \wedge e^{3\beta}$).

    If $\eta = \emptyset$ then $Z^\elem_{\emptyset,k,V} = e^{ - \lambda k}$ and the middle quantity in \cref{eq:ratio-of-Z} is exactly $1$, which satisfies the two bounds since $|\mathring{V}| = 0$. 
  If $\eta= +$ then
   \begin{align*}
        Z_{+, k,V}^\elem = e^{ - \lambda k} \sum_{l\ge 0} e^{ - 4\beta l  - \lambda l}\,, \qquad \text{so that}\qquad 
        e^{-\lambda (m-k)} \frac{Z_{+, k,V}^\elem}{Z_{+, m,V}^\elem} = \frac{\sum_{l \ge 0}e^{ - 4\beta l - \lambda l}}{\sum_{l\ge 0}e^{ - 4\beta l - \lambda l}} =1\,,
   \end{align*}
   which again satisfies the two bounds in \cref{eq:ratio-of-Z}. 
      If $\eta= -$, then 
\begin{align*}
        Z_{-, k,V}^\elem = e^{ - \lambda k} \sum_{l\ge 0} e^{ - 4\beta l  + \lambda l}\,, \qquad \text{so that} \qquad
        e^{-\lambda (m-k)} \frac{Z_{-, k,V}^\elem}{Z_{-, m,V}^\elem} = \frac{\sum_{0\le l \le k}e^{ - 4\beta l + \lambda l}}{\sum_{0\le l \le m}e^{ - 4\beta l + \lambda l}}\,.
   \end{align*}
   Since $\lambda \le 1$ and $\beta>\beta_0$, we have $e^{ - 4\beta + \lambda}<1$ and therefore the middle quantity in \cref{eq:ratio-of-Z} is equal to 
   \begin{align*}
        \frac{1-e^{ (- 4\beta + \lambda) (k+1)}}{1-e^{ (- 4\beta +\lambda) (m+1)}}\,.
    \end{align*}
    This is at most $1$ since $m>k$. 
   As for the lower bound, the denominator is at most $1$ (as $k<m$ and $\lambda<4\beta$), and for the numerator we use $1-x\geq e^{-2x}$ for $x<\frac12$, so it is at least $e^{ - 2 e^{ (-4\beta + \lambda)(k+1)}}$ as claimed. Finally if $\eta = f$, then 
   \begin{align*}
       e^{ - \lambda(m-k)}\frac{Z^\elem_{f,k,V}}{Z^\elem_{f,m,V}} = \frac{\sum_{l \ge -k} e^{ - 4\beta |l| - \lambda l}}{\sum_{l\ge -m} e^{- 4\beta |l| - \lambda l}} = \frac{\sum_{l>0} e^{ - 4\beta l - \lambda l} + \sum_{0\le l\le k} e^{ - 4\beta l + \lambda l}}{\sum_{l>0} e^{ - 4\beta l - \lambda l} + \sum_{0\le l\le m} e^{ - 4\beta l + \lambda l}}\,.
   \end{align*}
   The last fraction evidently lies between the ratio of the first sums and the ratio of the second sums. The first of these ratios is exactly $1$ and the second of these ratios is exactly ${Z_{-, k,V}^\elem}/{Z_{-, m,V}^\elem}$; in particular, both satisfy the upper and lower bounds of \cref{eq:ratio-of-Z}, and thus so does the above. 

       \bigskip
    \noindent \emph{\cref{it:Z-elem-ratio-general-bc} for $|V| \le l$ implies \cref{it:contour-weights} for $\gamma: |\Int(\gamma)| = l$.} Suppose \cref{it:Z-elem-ratio-general-bc} holds for all sets $V$ having $|V|\le l$. 
    Fix any $h$ and consider an up-contour $\gamma$ with $\Int(\gamma) = l$. Recalling \cref{def:contour-renormalized-weight}, we have  
    \begin{align*}
        W^\rn_h(\gamma) = e^{ - \beta |\gamma|} \frac{Z_{+, h+1,\Int(\gamma)}}{Z_{+,h,\Int(\gamma)}} 
        e^{ - \beta |\gamma|} \frac{Z^\elem_{+, h+1,\Int(\gamma)}}{Z^\elem_{+, h,\Int(\gamma)}}\,,
    \end{align*}
    where we used that since $\gamma$ is $h$-elementary (and by \cref{rem:properties-of-elementary} also $h+1$-elementary), every $\gamma'$ in $\Int(\gamma)$ must also be both $h$ and $h+1$-elementary, so both partition functions are equal to their $\rnelem$-versions, and those in turn are equal to the elementary ones since $\Int(\gamma)$ is simply connected per \cref{lem:elem-part-function-renormalized-weights}. 
    Thus, by the lower bound in \cref{eq:ratio-of-Z} from \cref{it:Z-elem-ratio-general-bc} for the domain $\Int(\gamma)$, with $k=h$ and $m=h+1$,  
    \begin{align*}
          W^\rn_h(\gamma) \le e^{ - \beta |\gamma| - \lambda |\Int(\gamma)| + 2|\Int(\gamma)|e^{ - (4\beta - \lambda)(h+1)}}\,. 
    \end{align*}
    Since $\gamma$ is $h$-elementary, $|\Int(\gamma)|\le |\gamma| \diam(\gamma)  \le  |\gamma| e^{ 3\beta(h+1)}$, and thus we get the claimed bound on $ W^\rn_h(\gamma)$ at $\beta>\beta_0$.  
    For a down-contour $\gamma$, by \cref{cor:preliminary-partition-function-bound} with $k=h-1$ and $m=h$, 
    \begin{align*}
         W^\rn_h(\gamma) = e^{ - \beta|\gamma|} \frac{Z_{-, h-1,\Int(\gamma)}}{Z_{-, h,\Int(\gamma)}} \le e^{ - \beta |\gamma| + \lambda|\Int(\gamma)|}\,.
    \end{align*} 
    Since $|\Int(\gamma)| \le |\gamma|\diam(\gamma)\le  \lambda^{-1}|\gamma|$ and $|\gamma| \ge 1$, this gives the claimed bound. 

    \bigskip    
    \noindent \emph{\cref{it:contour-weights} for all $|V|< l$ implies \cref{it:Z-elem-ratio-general-bc} for $|V|\le l$.} In what follows, suppose \cref{it:contour-weights} holds for all $\gamma$ having $|\Int(\gamma)|\le l-1$. Let us begin with a few  consequences of the inductive assumption of \cref{it:contour-weights} for $V:|V|\le l-1$ that will be useful in both the upper and lower bound of \cref{it:contour-weights}. 

    The first is the easy observation that a bound on the renormalized weights implies a bound on the probability that under the renormalized contour model, a certain family of compatible $k$-contours is present. 
    \begin{lemma}\label{lem:Peierls-renormalized-elementaries}
        Consider a fixed family $G$ of compatible $k$-elementary contours, all of which have $|\Int(\gamma)|\le l-1$. Let $Z^\rnelem_{\eta,k,V}(G)$ be the analogue of $Z^\rnelem_{\eta,k,V}$ with the sum only running over elements of $\mathscr{G}^{\rnelem}_{\eta,k,V}$ containing all of $G$. 
        Then for any $V$, 
        \begin{align*}
        \frac{Z^\rnelem_{\eta,k,V}(G)}{Z^\rnelem_{\eta,k,V}} \le e^{ - (\beta -2) \sum_{\gamma \in G}|\gamma|}\,.
        \end{align*}
        If the contours in $G$ are mutually external and we only sum over $\mathscr{G}^{\rnelem}_{\eta,k,V}$  in which they are outermost, the same bound applies to $Z^\elem_{\eta,k,V}(G)/Z^\elem_{\eta,k,V}$. 
    \end{lemma}
    \begin{proof}
        Consider the map that removes $G$ from the numerator to send it into an element of the denominator. This is an injection, and the relative weight of each pre-image to its image is $\prod_{\gamma \in G} W^\rn_k(\gamma)$. By the inductive assumption and the fact that every $\gamma\in G$ has interior smaller than $l-1$, we get the claim. 
    \end{proof}

    Since our contours may overlay, when $|V| = l$, it may admit the contour $\gamma_V= (\partial_e V)^*$, which will have $|\Int(\gamma_V)| = l$ and to which the inductive assumption will not apply. We use a 1-site modification to show therefore that if $V$ has size $l$, then it is exponentially unlikely to have $\gamma_V$ as a contour. For $j\in \mathbb Z$,  
    \begin{align}\label{eq:E^j}
    \cE_{\eta,k,V}^j = \{\Gamma\in \mathscr{G}_{\eta,k,V}: \text{exactly $|j|$ copies of $\gamma_V$ with sign matching $\sgn(j)$ in $\Gamma$}\}\,.\end{align}

    \begin{lemma}\label{lem:no-fully-enclosing-contour}
        For every $\eta,k$ and $V$ such that $|V|\le l$, we have for every $j$, 
        \begin{align*}
            \mu_{\eta,k,V}^\elem(\cE^j_{\eta,k,V}) \le \exp( - (\beta - 2) |j| |\gamma_V| + 5(1\vee j))\,.
        \end{align*}
    \end{lemma}
    \begin{proof}
        In order for the probability not to be zero, it must be that $\gamma_V$ is $k$-elementary, and either $\eta_v = \{-,f\}$ for all $v\in \partial_i V$ if $j$ negative, or $\eta_v \in \{+,f\}$ for all $v\in \partial_i V$ if $j$ positive. 

        Consider first the probability that under $\mu^\elem_{\eta,k,V}$ there exists at least one contour in $\Gamma$ that is all of $\gamma_V$. In the language of \cref{lem:part-function-renormalized-weights}, this is equivalent to the fraction $\frac{Z^\elem_{\eta,k,V}(\{\gamma_V\})}{Z^\elem_{\eta,k,V}}=\frac{Z^\rnelem_{\eta,k,V}(\{\gamma_V\})}{Z^\rnelem_{\eta,k,V}}$; however, we cannot yet apply \cref{lem:Peierls-renormalized-elementaries} because $|\Int(\gamma_V)| = |V| = l$, not $l-1$. Therefore, we first consider the modification $M_x \Gamma$ of $\mathscr{G}^\rnelem_{\eta,k,V}(G)$ that performs an $\oplus$ operation with a single copy of $\gamma_{\{x\}}$ satisfying $V\setminus \{x\}$ is s.c. (such an $x$ must exist). For each $\Gamma \in \mathscr{G}^{\rnelem}_{\eta,k,v}(\{\gamma_V\})$, let $d_x$ ($|d_x|\le 2$) be the change in the total number of edges in $\Gamma$ vs.\ the $M_x\Gamma$. The resulting configuration evidently belongs to $\mathscr{G}^\rnelem_{\eta,k,V}$. Its relative weight satisfies 
        \begin{align*}
            \frac{\prod_{\gamma \in M_{x}\Gamma} W^\rn_k(\gamma)}{\prod_{\gamma \in \Gamma} W^\rn_k(\gamma)} \le e^{ \beta d_x+ \lambda}\,.
        \end{align*}
        Furthermore, $M_x$ is an injection since the original configuration can be recovered by the $\oplus$ operation with the singleton loop around $\{x\}$. Let 
        $$\widetilde Z = \sum_{\Gamma\in \mathscr{G}^\rnelem_{\eta,k,V}(\{\gamma_V\})} \prod_{\gamma \in M_x \Gamma} W^\rn_k(\gamma) = \sum_{M_x\Gamma\in M_x\mathscr{G}^\rnelem_{\eta,k,V}(\{\gamma_V\})} \prod_{\gamma \in M_x\Gamma} W^\rn_k(\gamma)\,.$$ 
        Each $M_x\gamma$ appearing in this sum can in turn be mapped to one in which the contour $\tilde \gamma$ in $M_x\Gamma$ resulting from the modification to $\gamma_V$ after $\oplus \gamma_{\{x\}}$, is deleted from the collection. That yields a further weight change $W^\rn_k(\tilde \gamma)$. Since $\tilde \gamma$ necessarily has $|\Int(\tilde \gamma)| \le l-1$, this is at most $e^{ - (\beta -2) |\tilde \gamma|}$. Summing over the possible choices for the contour $\tilde \gamma$ (namely summing over possible contours $\gamma'$ adjacent to $x$ such that $\tilde \gamma = \gamma_V \oplus \gamma_{\{x\}}\oplus \gamma'$, the number of such choices being at most $4^{r}$ for $|\gamma'|=r$, say), we get 
        \begin{align}\label{eq:at-least-one-gammaV}
            \frac{Z^\elem_{\eta,k,V}(\{\gamma_V\})}{Z^\elem_{\eta,k,V}} \le \sum_{r\ge 0} 4^{r} e^{ - (\beta -2)(|\gamma_V| + d_x + r) + \beta d_x + \lambda} \le  e^{ - (\beta -2)|\gamma_V| + \lambda + 4} \,.
        \end{align}
        where the $4$ comes from the worst-case value of $d_x$. 

        We now wish to reduce the event $\cE^j_{\eta,k,V}$ to the event of having at least $1$ such contour. If $j$ is negative, consider the deletion of $|j|-1$ such contours from the (standard) contour representation. That gives a change of $e^{ \beta(j-1)|\gamma_V| -\lambda (j-1)|V|}$ to the weight of the configurations, and is an injection from $\mathcal E^j_{\eta,k,V}$ to $\mathcal E^{-1}_{\eta,k,V}$. Since we assumed $\gamma_V$ to be $k$-elementary, $\lambda |V|\le \lambda \diam(V) |\gamma_V|\le  |\gamma_V|$, and thus we have 
        \begin{align*}
            \mu(\cE^{j}_{\eta,k,V}) \le e^{- (\beta-1) (j-1)|\gamma_V|} \mu(\cE^{-1}_{\eta,k,V}) \le e^{ - (\beta -1)(j-1)|\gamma_V|} e^{ - (\beta -2)|\gamma_V| + \lambda +  4}\,.
        \end{align*}
        which easily the claimed bound using $\lambda \le 1$. 

        If $j$ is positive, notice that on the event of having $\ge j-1$ many up-contours $\gamma_V$, the probability of having at least $j$ is at most the probability of having at least $j-1$, times the conditional probability of having at least one more given those $j-1$. But the conditional measure given at least $j-1$ up-contours $\gamma_V$ is exactly $\mu_{+,k+j-1,V} = \mu_{+,k+j-1,V}^\elem$ because all possible contours in $V$ are enclosed in $\gamma_V$, which is $k$-elementary and therefore also $k+j-1$ elementary for $j\ge 1$. For this conditional probability, the bound of \cref{eq:at-least-one-gammaV} applies (with $k+j-1$ instead of $k$), and iterating that down, this gives  
        \begin{align*}
            \mu^\elem_{\eta,k,V}(\cE^j_{\eta,k,V}) \le \prod_{i=1}^j e^{ - (\beta -2) |\gamma_V| + \lambda + 4}\,,
        \end{align*}
        yielding the claim after $\lambda \le 1$. 
    \end{proof}

    Together, \cref{lem:Peierls-renormalized-elementaries,lem:no-fully-enclosing-contour} imply that if $V$ has $|V|= l$, the probability under $\mu^\elem_{\eta,k,V}$ of any fixed collection of outermost contours is exponentially small. 
    \begin{corollary}\label{cor:outermost-elem-contour-exp-tail}
        Let $V$ have $|V|\le l$. The probability of having a specific contour $\gamma$ as an outermost contour under $\mu^\elem_{\eta,k,V}$, is at most $e^{ - (\beta -3)|\gamma|}$. 
    \end{corollary}
    \begin{proof}
        If $\gamma = \gamma_V$, this is ruled out by \cref{lem:no-fully-enclosing-contour}, a sum over $j$, and $|\gamma_V|>4$. Otherwise, the probability of $\gamma$ being an outermost contour is at most $e^{ - (\beta-2) |\gamma|}$ per \cref{lem:Peierls-renormalized-elementaries}. 
    \end{proof}
    
    With the above in hand, we move to proving \cref{it:Z-elem-ratio-general-bc}; we split up the proofs of the upper and lower bound.  

    \medskip
    \noindent   \emph{Upper bound in \cref{eq:ratio-of-Z}}. Intuitively, the proof of the upper bound goes by mapping configurations in $Z_{\eta,k,V}^\elem$ to $Z_{\eta,m,V}^\elem$ in such a way to inject entropy via singleton spikes $\gamma_{\{x\}}$ that go down by height $m$, which would have been prohibited under $Z_{\eta,k,V}^\elem$ due to the floor constraint. We first construct such a map, using the contour representation of the elementary SOS configurations. We then use its alternative representation with renormalized weights together with the inductive hypothesis to argue that under $Z_{\eta,k,V}^\elem$, most sites are at height $k$, and thus there is typically $(1-\epsilon_\beta)|V|$ many sites to insert these downward spikes. 
    
    We begin by working with $\mathscr{G}^\elem_{\eta,k,V}$, which we recall is the set of contour representations of SOS configurations compatible with the floor at height zero, and all of whose outermost contours are all $k$-elementary. This in fact implies that all its contours are $k$-elementary (though the inner contours may not be elementary at the height they are based at).     
    For $\Gamma \in \mathscr{G}^\elem_{\eta,k,V}$, let 
    \begin{align*}
        V_\Gamma = \big\{x\in \mathring{V}: \min_{\gamma \in \Gamma} d(x,\Int(\gamma)) >1\big\}\,.
    \end{align*}
    Given any $\Gamma \in \mathscr{G}^\elem_{\eta,k,V}$ and a subset $S\subset V_\Gamma$, define the map $T_s$ as the one that adds to $\Gamma$, collections of $k+1$ many copies of the down-contour $\gamma_{\{v\}}$.  
    
    \begin{claim}
        For every $k<m$, every $\Gamma \in \mathscr{G}_{\eta,k,V}^\elem$, and every $S\subset V_\Gamma$, we have $T_S \Gamma \in \mathscr{G}_{\eta,m,V}^\elem$. 
    \end{claim}
    \begin{proof}
    This resulting collection $T_S \Gamma$ is in $\mathscr{G}_{\eta, m,V}$ because the addition of these contours (1) is compatible with the remainder of the contour collection since $S\subset V_\Gamma$ and $V_\Gamma$ ensures that any newly added contours will not be incident to any contours in $\Gamma$; and (2) does not violate the floor constraint since $m\ge k+1$, and in $\Gamma$, the height of sites in $V_\Gamma$ is $k$. Finally, it is compatible with the signing on the boundary condition since $S\subset V_\Gamma \subset \mathring{V}$. To see that $T_S \Gamma\in \mathscr{G}_{\eta, m,V}^\elem$, we further claim that in $T_S \Gamma$, every outermost contour is $m$-elementary. This is because the singleton contours are trivially $m$-elementary having diameter $1$, and all the outermost contours in $\Gamma$ were $k$-elementary so they are also $m$-elementary per \cref{rem:properties-of-elementary}.
    \end{proof}

    Each configuration in the set $(T_S \Gamma)_{S\subset V_\Gamma, \Gamma \in \mathscr{G}_{\eta,k,V}^\elem}$ has a unique pre-image in $\mathscr{G}_{\eta,k,V}^\elem$, since the preimage can be reconstructed by taking the corresponding SOS configuration, shifting all sites at heights $m-(k+1)$ up to $m$, then shifting the whole configuration down by $m-k$. Thus, for any $\theta>0$,  
    \begin{align*}
        Z_{\eta, m,V}^\elem \ge e^{ - \lambda m|V|} \sum_{\substack{\Gamma\in \mathscr{G}_{\eta, k,V}^\elem \\ |V_\Gamma|\ge \theta|\mathring{V}|}} \sum_{S\subset V_\Gamma} \prod_{\gamma \in T_S\Gamma}W(\gamma)\,.
    \end{align*}
    Also, by definition of the map $T_S$, we have 
    \begin{align*}
        \frac{\prod_{\gamma \in T_S\Gamma} W(\gamma)}{\prod_{\gamma \in \Gamma} W(\gamma)} \ge e^{ - (4\beta - \lambda)(k+1)|S|}\,.
    \end{align*}
    (This is an inequality rather than equality because some sites in $S$ could be adjacent.)
    Thus, for every $\theta>0$,  
    \begin{align*}
        \frac{Z_{\eta, m,V}^\elem}{Z_{\eta, k,V}^\elem} & \ge e^{ - \lambda m|V|} \sum_{\substack{\Gamma\in \mathscr{G}_{\eta, k,V}^\elem \\ |V_\Gamma|\ge \theta|\mathring{V}|}} \sum_{S\subset V_\Gamma} \frac{ \prod_{\gamma \in T_S(\Gamma)} W(\gamma)}{Z_{\eta, k,V}^\elem}  \ge e^{ - \lambda (m-k)|V|}\! \sum_{\substack{\Gamma\in \mathscr{G}_{\eta, k,V}^\elem \\ |V_\Gamma|\ge \theta|\mathring{V}|}} \sum_{S\subset V_\Gamma} e^{ - (4\beta - \lambda)(k+1)|S|} \mu_{\eta, k,V}^\elem(\Gamma) \\ 
        & = e^{ - \lambda(m-k)|V|} \sum_{\substack{\Gamma\in \mathscr{G}_{\eta, k,V}^\elem \\ |V_\Gamma|\ge \theta|\mathring{V}|}} \mu_{\eta, k,V}^\elem(\Gamma) \Big(1+e^{ - 4\beta-  \lambda)(k+1)}\Big)^{|V_\Gamma|} \\ 
        & \ge \exp\Big( - \lambda(m-k)|V| + \tfrac{1}{2} e^{ - (4\beta - \lambda)(k+1)} \theta |\mathring{V}|\Big) \mu_{\eta, k,V}^\elem(|V_\Gamma| \ge \theta|\mathring{V}|)\,.
    \end{align*}
    
    Taking $\theta = 3/4$, proving the following display would conclude the proof of the upper bound in \cref{eq:ratio-of-Z}: there is an absolute constant $C$ such that for $\beta>\beta_0$, and every $\eta$, we have 
        \begin{align}\label{eq:whp-VGamma-large}
            \mu_{\eta, k,V}^{\elem}(|V_\Gamma| \ge 3|\mathring{V}|/4) \ge 1-Ce^{ - (\beta - C)|\mathring{V}|^{1/3}}\,.
        \end{align}
        as it will in particular be greater than $0.9$ for every $V$. 
    The event $|V_\Gamma| \ge 3|\mathring{V}|/4$ is a subset of the union of 
    \begin{itemize}
        \item $\cB_1$: there exists a contour $\gamma$ confining a vertex in $\mathring{V}$ and having $|\gamma|\ge |\mathring{V}|^{1/2}$
        \item $\cB_2$: the subset of $\mathring{V}$ interior to outermost contours $\gamma$ with $|\gamma|\le |V|^{1/2}$ is at least $|\mathring{V}|/4$. 
    \end{itemize}
    We first bound the probability of $\cB_1$.   
    The number of choices for $\gamma$ having  $|\gamma|= r \ge |\mathring{V}|^{1/2}$ is at most  $|\mathring{V}| 4^r$, first for picking a site in $\mathring{V}$ to root the contour at, then for picking the contour $\gamma$. The probability of any fixed such $\gamma$ being an outermost contour is at most $\exp( - (\beta -3)r)$ per \cref{cor:outermost-elem-contour-exp-tail}. Thus, 
    \begin{align*}
        \mu_{\eta,k,V}^\elem(\cB_1) \le \sum_{r \ge |\mathring{V}|^{1/2}} |V| 4^r e^{ - (\beta -3)r} \le e^{ - (\beta - C)|\mathring{V}|^{1/2}}\,.
    \end{align*}
    Next, we consider the probability of $\cB_2$. For $\cB_2$ to happen, there must be a collection $G$ of outermost elementary contours each of which have  $|\gamma|\le |\mathring{V}|^{1/2}$, and for which 
\begin{align*}
    \sum_{\gamma \in G} |\Int(\gamma)\cap \mathring{V}| > \frac{1}{4}|\mathring{V}|\,.
\end{align*}
In particular, there must exist $\ell = 1,...,\frac{1}{2}\log_2 |\mathring{V}|$ such that if $\mathfrak U_\ell$ is the set of $\gamma\in G$ having $|\gamma|\in [2^{\ell-1},2^{\ell})$, 
\begin{align*}
    \sum_{\gamma \in \mathfrak U_\ell} |\Int(\gamma)\cap \mathring{V}| \ge \frac{|\mathring{V}|}{8 \ell^2}\,.
\end{align*}
Consider the probability of this for fixed $\ell$, and note that the above requires 
\begin{align}\label{eq:mathfrakU-needs-to-have}
    \sum_{\gamma\in \mathfrak U_\ell} |\gamma| \ge 4 \sum_{\gamma\in \mathfrak U_\ell} 2^{-\ell/2}|\Int(\gamma)| \ge \frac{|\mathring{V}|}{2\ell^2} 2^{- \ell/2}\,.
\end{align}
Counting over the possible choices for such $\mathfrak{U}_\ell$, and applying \cref{lem:Peierls-renormalized-elementaries} to bound the probability that $\mathfrak{U}_\ell$ are outermost contours, the probability that there exists $\mathfrak{U}_\ell$ having \cref{eq:mathfrakU-needs-to-have} is at most 
\begin{align*}
    \sum_{r\ge \frac{|\mathring{V}|}{2^{(\ell+1)/2} \ell^2}} 2^r \binom{|\mathring{V}|}{2^{(-\ell-1)}|\mathring{V}|} 4^r e^{ - (\beta -2)r} \le  e^{|\mathring{V}|\ell 2^{-(\ell-1)}} e^{- (\beta - 3) \frac{1}{2\ell^2} 2^{ - \ell/2} |\mathring{V}|}\,.
\end{align*}
If $\beta$ is large, for every $\ell$, this is at most $\exp(- \frac{\beta}{3\ell^2 2^{\ell/2}}|\mathring{V}|)$. 
The coefficient of $|\mathring{V}|$ is decreasing in $\ell$ and therefore, this is at most its value for $\ell=\frac{1}{2} \log_2 |\mathring{V}|$ which gives $e^{ - 4\beta |\mathring{V}|^{3/4}/(3 (\log_2 |\mathring{V}|)^{2})}$. 
This is at least $\exp( - \beta |\mathring{V}|^{1/2})$ as long as $|\mathring{V}|$ is larger than an absolute constant. The sum of the above probabilities thus yields the bound of \cref{eq:whp-VGamma-large}.

\medskip
\noindent \emph{Lower bound in \cref{eq:ratio-of-Z}}. 
In the other direction, define the identity map $T$ that acts on the subset of configurations in $\mathscr{G}_{\eta, k+1,V}^\elem$ that have $\{\varphi_V\ge 1\}$ and sends them to $\mathscr{G}_{\eta, k,V}$ by preserving the same contour collection, just viewing them all as being based at $1$ height lower. This is equivalent to shifting the height configuration down by $1$ everywhere, and thus $\{\varphi_V \ge 1\}$ implies that $T\varphi$ will respect the floor constraint. The core of the argument is to inductively estimate the fraction of $\mathscr{G}^\elem_{\eta,k+1,V}$ that has $\{\varphi_V \ge 1\}$ (for which this map is valid), as a way to bound the relative weight of $Z^\elem_{\eta,k+1,V}/Z^\elem_{\eta,k,V}$. 

A potential problem is that the image under this map may not be in $\mathscr{G}_{\eta, k,V}^\elem$ because contours can be $(k+1)$-elementary but not $k$-elementary. To this end, define the good event $\mathcal G$ on a contour collection (with a boundary height so that the contours have associated heights) as follows:  
\begin{align}\label{eq:cG-event}
\mathcal G: = \Big\{\Gamma\, : \, \forall \gamma \in \Gamma\,,\, |\gamma| \le e^{ 3\beta h_b(\gamma)}\Big\}\,.
\end{align}
where $h_b(\gamma)$ is its base height, i.e., $h_b(\gamma) = h(\gamma) + 1$ if $S(\gamma) = -$ and $h_b(\gamma) = h(\gamma) -1$ if $S(\gamma)=+$. 
Because outermost contours in $\mathscr{G}^\elem_{\eta,k+1,V}$ have base height $k+1$, and $\diam(\gamma)\le |\gamma|$, we have by definition of $k$-elementary, that 
\begin{align*}
    T(\mathcal G \cap \{\varphi_V \ge 1\}) \subset \mathscr{G}_{\eta, k,V}^{\elem}\,.
\end{align*}
Furthermore, $T$ is an injection because the pre-image can be recovered uniquely by shifting all heights up by $1$. As a consequence of this inclusion, and the simple calculation of the weight change $e^{ \lambda |V|}$ under application of the map $T$, for every $h$, 
\begin{align}\label{eq:part-function-change-T}
    \frac{Z_{\eta, h,V}^{\elem}}{Z_{\eta, h+1,V}^{\elem}} \ge e^{\lambda |V|} \mu_{\eta, h+1,V}^{\elem}(T^{-1}(\mathscr G_{\eta, h,V}^{\elem})) \ge e^{ \lambda |V|} \mu_{\eta, h+1,V}^{\elem} (\mathcal G, \varphi_V \ge 1)\,.
\end{align}
Our goal is now to establish for every $h$,
\begin{align}\label{eq:wts-positivity-probability}
    \mu_{\eta, h+1,V}^\elem(\mathcal G, \varphi_V \ge 1) \ge \exp\Big( - \tfrac{3}{2} |V|e^{ - (4\beta - \lambda)(h+1)}\Big)\,,
\end{align}
since plugging this bound into \cref{eq:part-function-change-T} and iterating over $h=k,...,m-1$ yields the claimed lower bound.

Henceforth, fix any $h$. The core content of \cref{eq:wts-positivity-probability}, is that the probability of $\mathcal G^c \cup \{\min_v \varphi_v =0\}$ is dominated by the event that at some single site $v\in V$ a singleton spike reaches down from height $h+1$ to height $0$, as that is what provides the rate $e^{ - (4\beta -\lambda)(h+1)}$. Towards this, for a simply-connected subdomain $U\subset V$ and point $y\in U$, we can define the event in $\mathscr{G}^\elem_{\zeta, h+1,U}$, 
$$\tilde G_y = \{\varphi_y \ge 1\} \cap \bigcap_{\gamma: y\in \Int(\gamma)} \{|\gamma|\le e^{ 3\beta h_b(\gamma)}\}\,.
$$
\begin{claim}\label{clm:one-point-probability}
Let $U \subsetneq V$ be simply connected. For any $\ell\ge 1$, $\zeta$ and every $y\in U$, we have 
    \begin{align*} \mu_{\zeta,\ell,U}^\elem(\tilde G_y^c) \le (1+e^{ -\beta}) e^{-(4\beta -\lambda)\ell}\,.
    \end{align*}
\end{claim}
\begin{proof}
Let us prove the claim inductively over $U$. The case $|U|=1$ is governed entirely by the event of $\{\varphi_y =0\}$ via singleton contours, which has probability at most $e^{ - (4\beta - \lambda)\ell}$. 
    Generally, let us decompose 
    \begin{align}\label{eq:want-to-bound-tildeGy}
        \mu_{\zeta,\ell,U}^\elem(\tilde G_y^c) \le \mu_{\zeta,\ell,U}^\elem(\tilde G_y^c, \cE^0_{\zeta,\ell,U})  + \sum_{j\ne 0} \mu_{\zeta,\ell,U}^\elem(\tilde G_y^c, \cE^j_{\zeta,\ell,U})\,.
    \end{align}
    where we recall the events $\cE^j_{\zeta,\ell,U}$ from \cref{eq:E^j} and use the shorthand $\cE^j$. 
    We begin with comparing the second term in \cref{eq:want-to-bound-tildeGy} back to the first, so that the dominant contribution comes on the event $\cE^0$. This will be helpful again in the future, so we isolate this reduction with a general event $A$ in place of $\tilde G_y^c$. 
    
    Let $j_0$ be the largest number such that $|\gamma_U|\le e^{ 3\beta (\ell-j_0 +1)}$ (this is the largest number of down $\gamma_U$ contours we can reveal while still having that conditional on those contours, the interior measure is the elementary one). 
    For $j > -j_0$, the probability of $A\cap \cE^j$ is bounded by the probability that there exist $|j|$ contours that are all $\gamma_U$, times the probability that under $\mu^\elem_{\sgn(j), \ell+ j, U}(A,\cE^0)$. For the $j\le -j_0$ contributions, their total probability is at most the probability of first having $j_0-1$ down contours, times the probability of having a contour of size $|\gamma_U| \ge e^{ 3\beta (\ell - j_0 +1)}$ (as well as $|\gamma_U|\ge 6$ since $|U|\ge 2$), whose probability by \cref{cor:outermost-elem-contour-exp-tail} is at most $e^{ - (\beta -2) e^{ 3\beta (\ell - j_0+1)}}$. 
    Together, these imply the bound 
    \begin{align*}
        \sum_{j\ne 0} \mu^\elem_{\zeta,\ell,U}(A,\cE^j) \le   \sum_{j\ne 0} e^{ - (\beta - 2) |j| |\gamma_U|} \mu_{\sgn(j),\ell +j,U}^\elem(A,\cE^0) +  \max_{j\ne 0} e^{ - (\beta -2)((|j|-1)|\gamma_U|+\max\{6,e^{ 3\beta (\ell-j+1)}\})}\,.
    \end{align*}
    For the first term, we can use the bound $|\gamma_U|\ge 6$. For the second term, we notice that $6(|j|-1) + \max\{6, e^{ 3\beta (\ell-j+1)}\}$ is at least $6\ell$ for all $j$. Together, these imply for a general event $A$ and $U: |U|\ge 2$, 
    \begin{align}\label{eq:reduction-to-cE^0}
        \sum_{j\ne 0} \mu^\elem_{\zeta,\ell,U}(A,\cE^j) \le \sum_{j\ne 0} e^{ - 6(\beta -2) |j|} \mu_{\sgn(j),\ell+j,U}^\elem(A,\cE^0) + e^{ - 6(\beta-2)\ell}\,.
    \end{align}
    The last term being at most $\frac{1}{4}e^{ - \beta}e^{ - (4\beta - \lambda)\ell}$, we have reduced \cref{eq:want-to-bound-tildeGy} to bounding $\mu_{\eta,\ell +j,U}^\elem(A,\cE^0)$. 
    
    In what follows, let $\gamma_y^\out$ be the outermost contour having $y\in \Int(\gamma_y^\out)$. 
    We have for general $\ell$,
    \begin{align}\label{eq:tilde-g-E^0-bound}
        \mu_{\zeta,\ell,U}^\elem(\tilde G_y^c, \cE^0) \le  \mu_{\zeta,\ell,U}^\elem( \varphi_y =0, |\gamma_y^\out| = 4) +\! \sum_{6\le r \le e^{ 3\beta \ell}} \mu_{\zeta,\ell,U}^\elem( \tilde G_y^c, |\gamma_y^\out| = r, \cE^0) + \mu_{\zeta,\ell,U}^\elem( |\gamma_y^\out| \ge e^{3\beta \ell})\,.
    \end{align}
    For the first term in \cref{eq:tilde-g-E^0-bound}, observe that if $|\gamma_y^\out|=4$ and $\varphi_y =0$, the only contours surrounding $y$ are exactly $\ell$ down contours that are each the singleton contour $\gamma_{\{y\}}$. By explicit calculation, the probability of such configurations is at most $e^{ - (4\beta - \lambda)\ell}$. 
    For the third term in \cref{eq:tilde-g-E^0-bound}, we can
    sum the probability of $\gamma_y^\out = \gamma$ over all possible $\gamma$ having $|\gamma|\ge e^{3\beta \ell}$ using \cref{cor:outermost-elem-contour-exp-tail}, to find that 
    \begin{align*}
        \sum_{r\ge e^{ 3\beta \ell}} \mu_{\zeta,\ell,U}^\elem( |\gamma_y^\out| = r) \le \sum_{r\ge e^{ 3\beta \ell}} 4^r e^{ - (\beta -3)r} \le e^{ - (\beta -5) \max\{6, e^{ 3\beta \ell}\}} \le e^{ - 6(\beta -5) \ell}\,.
    \end{align*}
    For the second term in \cref{eq:tilde-g-E^0-bound}, bounding the probability of $|\gamma_y^\out| = r$ similarly, and then conditioning on that outermost contour surrounding $y$, (using that for $r\le e^{ 3\beta \ell}$ its interior will necessarily only consist of $h(\gamma_y^\out)$-elementary contours), 
    \begin{align*}
        \sum_{r\ge 6} \mu_{\zeta,\ell,U}^\elem(\tilde G_y^c, |\gamma_y^\out| =r, \cE_{\zeta,\ell,U}^0) \le \sum_{r\ge 6} 4^r e^{ - (\beta -3)r} \max_{|\gamma_y^\out|\ge 6\,;\, \Int(\gamma_y^\out)\ne U} \max_{\zeta} \mu_{\zeta,\ell \pm 1,\Int(\gamma_y^\out)}^\elem(\tilde G_y^c)\,.
    \end{align*}
    Since $\Int(\gamma_y^\out)$ is strictly contained in $U$, we can use the inductive hypothesis on this last quantity, to get 
    \begin{align*}
        \sum_{r\ge 6} \mu_{\zeta,\ell,U}^\elem(\tilde G_y^c, |\gamma_y^\out| =r, \cE_{\zeta,\ell,U}^0) \le e^{ - 6(\beta -5)} (1+e^{-\beta})e^{ - (4\beta - \lambda)(\ell -1)} \le \frac{1}{4} e^{ -\beta}e^{ - (4\beta - \lambda) \ell}\,.
    \end{align*}
    Combining these three bounds, we get for general $\ell\ge 1$, and $\zeta$,  
    \begin{align}\label{eq:cE^0-bound}
        \mu_{\zeta,\ell,U}^\elem(\tilde G_y^c, \cE^0) \le (1+ \frac{1}{2}e^{ - \beta})e^{ - (4\beta - \lambda)\ell}\,.
    \end{align}
    Plugging this into \cref{eq:reduction-to-cE^0} with $\ell$ replaced by $\ell + j$ and $A$ replaced by $\tilde G_y^c$ gives 
    \begin{align*}
        \sum_{j\ne 0} \mu^\elem_{\zeta,\ell,U}(\tilde G_y^c,\cE^j) & \le \sum_{j\ne 0} e^{ - 6(\beta -2)|j|}(1+e^{- \beta}) e^{ - (4\beta - \lambda)(\ell+j)}  + \frac{1}{4}e^{ - \beta} e^{ - (4\beta - \lambda)\ell} \\ 
        & \le (1+e^{- \beta}) e^{ - (4\beta - \lambda)\ell} \sum_{j\ne 0} e^{ - 2(\beta -2)|j|} + \frac{1}{4} e^{ - \beta} e^{ - (4\beta - \lambda)\ell}\,,
    \end{align*}
    which is at most $\frac{1}{2} e^{ - \beta} e^{ - (4\beta - \lambda)}\ell$. Combined with \cref{eq:cE^0-bound} and plugged into \cref{eq:want-to-bound-tildeGy} we conclude.
 \end{proof}

In order to go from the one-point estimate to the probability that $\tilde G_y$ holds at all $y\in V$, we argue as follows. 
 (A boosting like this in the setting of the SOS model without $\elem$ could be done by means of FKG, but in the absence of FKG---$\mu^\elem$ does not satisfy this and further the event $\mathcal G$ is non-monotone---a revealing procedure is required.) 
For a configuration in $\mathscr{G}_{\eta, h+1,V}^\elem$, let 
\begin{align*}
    G_x = \{\varphi: \varphi(\Int(\gamma_x^\out)) \in \mathcal G\} \cap \{\varphi: \varphi(\Int(\gamma_x^\out)) \ge 1\}\,.
\end{align*}
The role of this event is that $$\mathcal G \cap \{\varphi_V \ge 1\} = \bigcap_{x\in V} G_x \qquad \text{and}\qquad G_x^c \subset \bigcup_{y\in \Int(\gamma_x^\out)} \tilde G_y^c\,.$$
Our goal is to establish that if we order the vertices of $V$ in such a manner that $V\setminus \bigcup\{y:y<x\}$ is always simply connected (e.g., repeatedly assign $x$ to be an arbitrary boundary vertex of $V\setminus \bigcup\{y:y<x\}$), then 
\begin{align}\label{eq:positivity-lower-bound}
    \mu^\elem_{\eta, h+1,V} (G_{x} \mid (G_{y})_{y<x}) \ge 1- (1+\epsilon_\beta) e^{- (4\beta -\lambda)(h+1)}\,.
\end{align}
from which the claim would follow since 
\begin{align*}
    \mu_{\eta, h+1,V}^\elem\Big(\bigcap_{x\in V} G_x \Big) = \prod_{x\in V} \mu_{\eta, h+1,V}^\elem\big(G_x \mid (G_y)_{y<x}\big) \ge \Big(1-(1+\epsilon_\beta)e^{ - (4\beta - \lambda)(h+1)}\Big)^{|V|}\,,
\end{align*}
where $\eta_i$ are the induced boundary signings by the exposure of the outermost contours confining $y<x$. 
This gives the claimed bound of \cref{eq:wts-positivity-probability} since $1-x\ge e^{-x}$ for $0<x<1/2$. 

In order to establish \cref{eq:positivity-lower-bound}, for ease of notation, let $$\nu_{h+1} = \mu_{\eta_x, h+1,V}^\elem (\,\cdot \mid (G_y)_{y<x},(\cF_y)_{y<x})\,,$$
where, by $\cF_y$ we mean the $\sigma$-algebra generated by the configuration interior to $\gamma_y^\out$. 
Notice that $\nu_{h+1}$ is exactly the measure $\mu_{\zeta,h+1,U}^\elem$ where $U = V\setminus \bigcup_{y<x} \Int(\gamma_{y}^\out)$ (which is $(\cF_y)_{y<x}$-measurable and is simply-connected by the ordering we imposed over $V$) and $\zeta$ agrees with $\eta$ on $\partial_i V\setminus \partial_o U$ and takes the opposite sign of $\gamma_{y}^\out$ on every vertex in $\partial_o U$. If $U$ is a singleton, the bound of \cref{eq:positivity-lower-bound} is immediate as the probability of $G_x^c$ would be $e^{ - (4\beta - \lambda)(h+1)}$ if a single-column spike down to height zero is permissible and $0$ otherwise. 

Suppose now that $|U|\ge 2$. Then we can naturally decompose 
\begin{align}\label{eq:split-G-x^c}
   \nu_{h+1}(G_x^c)\le \nu_{h+1}(G_x^c, \cE^0)  + \sum_{j\ne 0} \nu_{h+1}(G_x^c, \cE^j)\,,
\end{align}
and apply \cref{eq:reduction-to-cE^0} to get the bound  
\begin{align}\label{eq:j-bound-G-x^c}
    \nu_{h+1}(G_x^c) \le \sum_{j} e^{ - (\beta -2)|j|} \nu_{h+1+j}(G_x^c,\cE^0) + e^{ - 6(\beta -2)(h+1)}\,,
\end{align}
(where we use the shorthand $\nu_{h+1+j}$ for $\mu_{\sgn(j),h+1+j,U}$ when $j\ne 0$). 
We now examine $\nu_{h+1+j}(G_x^c,\cE^0)$. For general $\ell$, just as in \cref{eq:tilde-g-E^0-bound}, we can split
\begin{align*}
     \nu_\ell (G_x^c, \cE^0) \le \nu_\ell(\varphi_x =0, |\gamma_x^\out| = 4) + \sum_{6\le r\le e^{ 3\beta \ell}} \nu_\ell(G_x^c,|\gamma_x^\out|=r,\cE^0) + \nu_{\ell}(|\gamma_x^\out|\ge e^{3\beta \ell})\,.
\end{align*}
As argued following \cref{eq:tilde-g-E^0-bound}, the first term will contain the dominant contribution of at most $e^{ - (4\beta - \lambda)\ell}$, and the last term will be at most $e^{ - 6(\beta -5)\ell} \le \frac{1}{4} e^{-\beta} e^{ - (4\beta - \lambda)\ell}$. The middle sum is similarly at most 
\begin{align*}
    \sum_{r\ge 6} 4^r e^{ - (\beta-3)r}\max_{|\gamma_x^\out|\ge 6\,;\, \Int(\gamma_x^\out)\ne U} \max_{\zeta} \mu_{\zeta,\ell\pm 1,\Int(\gamma_x^\out)}^\elem(G_x^c)\,.
\end{align*}
Recalling that $G_x^c$ is a subset of $\bigcup_{y\in \Int(\gamma_x^\out)} \tilde G_y^c$, by a union bound and \cref{clm:one-point-probability},
\begin{align*}
     \sum_{6\le r\le e^{ 3\beta \ell}} \nu_\ell(G_x^c,|\gamma_x^\out|=r,\cE^0) \le \sum_{r\ge 6} e^{ - (\beta -4)r} r^2 (1+e^{- \beta}) e^{ - (4\beta - \lambda)(\ell-1)}\,.
\end{align*}
This sum is evidently at most $\frac{1}{4} e^{ - \beta} e^{ - (4\beta - \lambda)\ell}$. Altogether, we get 
\begin{align*}
    \nu_\ell (G_x^c, \cE^0) \le (1+ \tfrac{1}{2} e^{ - \beta}) e^{ - (4\beta - \lambda) \ell}\,. 
\end{align*}
Plugging this bound into \cref{eq:j-bound-G-x^c} with $\ell = h+1+j$, we get 
\begin{align*}
    \nu_{h+1}(G_x^c) \le \sum_j e^{ -(\beta -2)|j|} (1+\tfrac{1}{2} e^{- \beta}) e^{ - (4\beta - \lambda) (h+1+j)} + e^{ - 6(\beta -2) (h+1)} \le (1+e^{ - \beta}) e^{ - (4\beta - \lambda) (h+1)}\,,
\end{align*}
yielding the claimed \cref{eq:positivity-lower-bound} and concluding the proof. 
    \end{proof}

    \subsection{The elementary cluster expansion}
We can use \cref{thm:main-elementary-general-bc} to deduce that the elementary partition functions admit convergent cluster expansions. For a thorough background on cluster expansion, see~\cite[Chapter 5]{FreidliVelenik}, though we will only use fairly elementary consequences. We will work with the renormalized elementary partition function $Z^\rnelem_{\eta,k,V}$, though we recall that for simply-connected domains, this is the same as $Z^\elem_{\eta,k,V}$ per \cref{lem:elem-part-function-renormalized-weights}. For non-simply connected domains, we define $Z^\rnelem_{\eta,k,U}$ as in \cref{eq:elem-rnelem-partition-functions} (contours encircling holes of $U$ being admissible).

\begin{lemma}\label{lem:elementary-free-energies}
    For each $i$, the limit,
    \begin{align*}
        f^\elem_i = \lim_{n\to\infty} \frac{1}{|\Lambda_n|} \log Z_{\eta,i,\Lambda_n}^\elem\,,
    \end{align*} 
    exists. Furthermore, for all (not necessarily simply connected) $U$, all $i$ and all $\eta$, 
    \begin{align*}
        \big| \log Z_{\eta,i,U}^\rnelem  - |U| f^\elem_i \big| \le e^{-\beta/2} |\partial_e U|\,.
    \end{align*}
\end{lemma}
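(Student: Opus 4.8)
The plan is to deduce both claims from the standard cluster-expansion machinery once we verify the Kotecký--Preiss convergence criterion for the polymer model whose polymers are the $h$-elementary contours weighted by $W^\rn_h(\gamma)$, with two polymers declared incompatible when they share a dual vertex or one nests the other. The crucial input is part~\eqref{it:contour-weights} of \cref{thm:main-elementary-general-bc}: $W^\rn_h(\gamma)\le e^{-(\beta-2)|\gamma|}$ uniformly in $h\ge 0$. Since the number of contours of length $r$ rooted at (or passing through) a fixed dual edge is at most $C^r$ for an absolute constant $C$, and since any contour incompatible with a fixed $\gamma$ must be rooted within distance $|\gamma|$ of it (so the "incompatibility weight" of $\gamma$ is $O(|\gamma|)$ times a convergent sum), for $\beta>\beta_0$ large we get $\sum_{\gamma'\not\sim \gamma} W^\rn_h(\gamma') e^{a|\gamma'|} \le |\gamma|$ with, say, $a=1$. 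This is exactly the Kotecký--Preiss condition with weight function $a(\gamma)=|\gamma|$, so the cluster expansion
\[
\log Z^\rnelem_{\eta,h,V} + \lambda h|V| \;=\; \sum_{\text{clusters } X \text{ in } V} \Psi(X)
\]
converges absolutely, with the standard exponential decay bound $\sum_{X \ni e} |\Psi(X)| e^{|X|} \le C'$ for every fixed dual edge $e$, where $|X|$ denotes the total length of the contours in the cluster.

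From here the proof is routine bookkeeping. First I would define $f^\elem_i$ as the limit: writing the expansion for $\Lambda_n$ with, say, $\eta\equiv\emptyset$ (the boundary signing affects only clusters touching $\partial_e\Lambda_n$, of which there are $O(n)$, each contributing $O(1)$, hence an $O(|\partial_e\Lambda_n|)$ correction that is negligible after dividing by $|\Lambda_n|$), the quantity $\frac{1}{|\Lambda_n|}(\log Z^\elem_{\emptyset,i,\Lambda_n}+\lambda i|\Lambda_n|)$ is a sum over clusters normalized by volume; a standard subadditivity/translation-invariance argument (clusters deep inside $\Lambda_n$ are translates of clusters in $\Z^2$, and the density of clusters per site converges) gives existence of the limit of $\frac{1}{|\Lambda_n|}\log Z^\elem_{\emptyset,i,\Lambda_n}$, and this limit is $f^\elem_i$ regardless of the boundary signing by the $O(n)$-correction remark.

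For the second, quantitative claim, fix a general (possibly non-simply-connected) $U$ and compare $\log Z^\rnelem_{\eta,i,U}+\lambda i|U|$ with $|U| f^\elem_i$. Both are sums over clusters: the former over clusters that fit inside $U$ (respecting $\eta$ on $\partial_e U$), the latter — after writing $|U|f^\elem_i = \sum_{x\in U} (\text{density of } \Psi \text{ at } x)$ — over clusters in $\Z^2$ counted with the appropriate multiplicity. A cluster $X$ contributes differently to the two sums only if $X$ intersects $\partial_e U$ (equivalently, $X$ is not entirely contained in the interior of $U$, or is affected by the signing constraint). Each such cluster is rooted within distance $|X|$ of some edge of $\partial_e U$, so by the decay bound $\sum_{X\cap \partial_e U \ne \emptyset}|\Psi(X)| \le \sum_{e\in\partial_e U}\sum_{X\ni e}|\Psi(X)| \le C'|\partial_e U|$; tightening the constant via the $e^{|X|}$ factor in the decay bound lets me absorb $C'$ and replace $\beta-2$-type constants to land the clean bound $e^{-\beta/2}|\partial_e U|$ for $\beta>\beta_0$. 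This handles both the "missing" clusters (in $|U|f^\elem_i$ but not fitting in $U$) and the "distorted" ones (present but with altered weight due to $\eta$), each contributing $O(e^{-\beta/2})$ per boundary edge.

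The main obstacle is not any single step but the care needed in the polymer-model setup: the $h$-elementary contours can \emph{overlay} (multiple copies of the same geometric contour at different heights appear in $\mathscr{G}^\rnelem$), so the polymer system is not the usual "hard-core non-overlapping contours" model, and one must check the Kotecký--Preiss criterion for the correct incompatibility relation (sharing a dual vertex, or nesting — and crucially \emph{allowing} repeated copies, which one accommodates either by treating copies as a colored/multiset polymer gas or by noting that the geometric series over the number of copies of a fixed $\gamma$ converges since $W^\rn_h(\gamma)<1/2$). Once the criterion is verified with room to spare — which the exponential-in-$|\gamma|$ bound of \cref{thm:main-elementary-general-bc}\eqref{it:contour-weights} comfortably provides for $\beta$ large — the rest is the standard surface-order correction estimate. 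I would also need the uniformity of the convergence in $h$ (which we have, since the bound $e^{-(\beta-2)|\gamma|}$ is $h$-independent) so that a single $\beta_0$ works for all $i$ simultaneously.
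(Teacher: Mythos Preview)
Your approach is the paper's: verify the Kotecký--Preiss criterion for the polymer model whose polymers are $h$-elementary signed contours with weights $W^\rn_h(\gamma)$, using \cref{thm:main-elementary-general-bc}\eqref{it:contour-weights} as the input, then read off both the existence of $f^\elem_i$ and the surface-order correction as standard consequences of the convergent cluster expansion. The paper's own proof is a three-sentence invocation of exactly this.

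There is one slip worth flagging. You declare two contours incompatible when ``one nests the other,'' but nested contours are \emph{compatible} in $\mathscr{G}^\rnelem_{\eta,h,V}$: the renormalized partition function \cref{eq:renormalized-partition-function} sums over all admissible collections, and admissibility (\cref{def:compatible-contours}) permits nesting --- indeed the proof of \cref{lem:part-function-renormalized-weights} iterates outermost contours inward precisely because nested configurations contribute. With your incompatibility relation the polymer gas would sum only over mutually external disjoint families, which is not $Z^\rnelem$. The correct hardcore relation is the one inherited from admissibility: two signed contours are incompatible when their union is not admissible (they cross, or share an edge with a forbidden sign pattern). With that fix your Kotecký--Preiss verification still goes through verbatim --- the sum $\sum_{\gamma'\not\sim\gamma} W^\rn_h(\gamma')e^{a|\gamma'|}$ is still bounded by $O(|\gamma|)$ times a convergent series, since incompatibility forces $\gamma'$ to share a dual vertex with $\gamma$ --- and the rest of your argument (including the well-placed remark about overlaying copies, handled by the geometric series) is fine.
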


\begin{proof}
    The partition function $Z^\rnelem$ fits into the standard cluster expansion framework with hardcore interactions between contours (which serve as the polymers). Furthermore, per \cref{it:contour-weights} of \cref{thm:main-elementary-general-bc}, they satisfy the requisite exponential decay property on their weights for the Kotecky--Preiss condition~\cite{Kotecky-Preiss} to hold at large $\beta$. The lemma is then a direct corollary of convergence of the cluster expansion. We have used the bound $e^{ - \beta/2}$ on the sum of weights of  all possible elementary contours confining a vertex $v$ in their interior, which holds for $\beta$ larger than some absolute constant. 
\end{proof}

Using the bounds we obtained in \cref{it:Z-elem-ratio-general-bc} from \cref{thm:main-elementary-general-bc},  we can provide uniform lower bounds on $f^\elem_i-f^\elem_j$ in certain windows of the parameter $\lambda$. Namely, we get an infinite sequence of disjoint intervals $I_i$ in which the boundary condition $i$ elementary partition function dominates. Define 
\begin{align}\label{eq:I-i}
    I_i =  [\lambda_{-}^{(i)},\lambda_+^{(i)}] \quad\mbox{for}\quad
    \lambda_-^{(i)}:=e^{ - 4\beta i - 2\beta}\,,\quad \text{and}\quad \lambda_+^{(i)}:=e^{ - 4\beta i- \beta} \,.
\end{align}

\begin{lemma}\label{lem:free-energy-lower-bound-in-windows}
    If $\lambda \in I_i$, then for every $j\ne i$ we have 
    \begin{align*}
        f^\elem_i - f^\elem_j \ge e^{ - 4\beta( i\wedge (j+1)) - 3 \beta}\,.
    \end{align*}
\end{lemma}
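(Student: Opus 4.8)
The plan is to turn \cref{thm:main-elementary-general-bc} into a two-sided bound on the free-energy gaps $f^\elem_k-f^\elem_m$ and then insert the endpoints of the window $I_i$ from \cref{eq:I-i}. First I would fix any boundary signing $\eta$, apply \cref{it:Z-elem-ratio-general-bc} of \cref{thm:main-elementary-general-bc} with $V=\Lambda_n$, take logarithms, divide by $|\Lambda_n|$, and let $n\to\infty$. Since $|\mathring\Lambda_n|/|\Lambda_n|\to1$, $|\partial_e\Lambda_n|=O(n)=o(|\Lambda_n|)$, and the limit $f^\elem_i$ exists and does not depend on $\eta$ by \cref{lem:elementary-free-energies} (using $Z^\elem_{\eta,i,\Lambda_n}=Z^\rnelem_{\eta,i,\Lambda_n}$ for the simply-connected box), this yields, for all integers $0\le k<m$,
\[
\lambda(m-k)-2e^{-(4\beta-\lambda)(k+1)}\ \le\ f^\elem_k-f^\elem_m\ \le\ \lambda(m-k)-\tfrac12 e^{-(4\beta-\lambda)(k+1)}\,.
\]

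Next I would record two elementary inequalities valid for $\lambda\in I_i$, i.e.\ $e^{-4\beta i-2\beta}\le\lambda\le e^{-4\beta i-\beta}$: since $(i+1)e^{-4\beta i}\le1$ for $\beta>\beta_0$, one gets $\lambda(i+1)\le e^{-\beta}$ and hence $(4\beta-\lambda)(i+1)\ge 4\beta(i+1)-1$; and since $t\,e^{-4\beta(t-1)}\le1$ for every integer $t\ge1$, writing $i=j+t$ gives $\lambda(i-j)\le e^{-4\beta i-\beta}\,t=e^{-\beta}e^{-4\beta(j+1)}\cdot t e^{-4\beta(t-1)}\le e^{-\beta}e^{-4\beta(j+1)}$. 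With these the two cases are immediate. If $j>i$, apply the lower bound above with $(k,m)=(i,j)$: the leading term is $\lambda(j-i)\ge\lambda\ge e^{-4\beta i-2\beta}$, while the error is $2e^{-(4\beta-\lambda)(i+1)}\le 2e^{\,1-4\beta}e^{-4\beta i}\le\tfrac12 e^{-4\beta i-2\beta}$ (taking $\beta_0$ so that $4e^{1-2\beta}\le1$), whence $f^\elem_i-f^\elem_j\ge\tfrac12 e^{-4\beta i-2\beta}\ge e^{-4\beta i-3\beta}$, which equals $e^{-4\beta(i\wedge(j+1))-3\beta}$ because $i<j+1$. If $j<i$, apply the upper bound with $(k,m)=(j,i)$, so that $f^\elem_i-f^\elem_j\ge\tfrac12 e^{-(4\beta-\lambda)(j+1)}-\lambda(i-j)\ge\tfrac12 e^{-4\beta(j+1)}-e^{-\beta}e^{-4\beta(j+1)}\ge\tfrac14 e^{-4\beta(j+1)}\ge e^{-4\beta(j+1)-3\beta}$ (taking $\beta_0$ so that $e^{-\beta}\le\tfrac14$), which equals $e^{-4\beta(i\wedge(j+1))-3\beta}$ because $j+1\le i$.

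The one point that needs genuine care—and the only place the precise window $I_i$ is used—is bounding the error/cost terms \emph{uniformly in $i$ and $j$}. The crude estimate $\lambda\le1$ is hopeless for large $i$: both the Peierls-type error $e^{-(4\beta-\lambda)(i+1)}$ (case $j>i$) and the ``cost of the extra layers'' $\lambda(i-j)$ (case $j<i$) have to be compared against quantities of order $e^{-4\beta i}$, which works only because $\lambda\le\lambda_+^{(i)}=e^{-4\beta i-\beta}$ is exponentially small in $i$. The inequalities $(i+1)e^{-4\beta i}\le1$ and $t\,e^{-4\beta(t-1)}\le1$ are exactly what keep these comparisons valid with a constant-factor margin, and fixing $\beta_0$ large enough (any $\beta_0$ with $4e^{1-2\beta}\le1$ and $e^{-\beta}\le\tfrac14$) absorbs the remaining constants.
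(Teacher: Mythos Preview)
Your proof is correct and follows essentially the same approach as the paper's: both take logarithms of \cref{it:Z-elem-ratio-general-bc} from \cref{thm:main-elementary-general-bc}, pass to the infinite-volume free energies via \cref{lem:elementary-free-energies}, and split into the two cases $j>i$ and $j<i$, using in each case the same key inequality $t\,e^{-4\beta(t-1)}\le 1$ (for $t=i-j\ge 1$) to control the cost term. The only cosmetic difference is that you take the $n\to\infty$ limit first and then do the algebra, whereas the paper keeps finite volume and takes the limit at the end; in the $j>i$ case the paper factors the lower bound multiplicatively as $\lambda(j-i)(1-\epsilon_\beta)$ rather than your additive comparison, but the substance is identical.
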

\begin{proof}
    Applying \cref{it:Z-elem-ratio-general-bc} of \cref{thm:main-elementary-general-bc} with $\lambda \in I_i$, we find that for every $k<m$, 
    \begin{align}\label{eq:elem-FE-bounds}
        \lambda (m-k) - 2e^{ - (4\beta - \lambda)(k+1)} \le \frac{1}{|V|} \log \frac{Z_{\eta,k,V}^\elem}{Z_{\eta,m,V}^\elem} \le \lambda(m-k) - \frac{1}{2}\frac{|\mathring{V}|}{|V|} e^{ - (4\beta -\lambda)(k+1)} + \frac{\log 2}{|V|}\,.
    \end{align}
    Taking $k=i$ and $m=j>i$, we see that 
    \begin{align*}
    \frac{1}{|V|} \log \frac{Z_{\eta,i,V}^\elem}{Z_{\eta,j,V}^\elem} \ge \lambda (j-i)(1-2e^{ - (4\beta -\lambda)(i+1) + 4\beta i + 2\beta}) \ge \lambda(j-i)(1-2e^{ - 4\beta +e^{ - 4\beta i}(i+1) + 2\beta})\,.
    \end{align*}
    Since $e^{ - 4\beta i}(i+1) \le 1$, we see that for $\beta$ large, this satisfies   
    \begin{align*}
         \frac{1}{|V|} \log \frac{Z_{\eta,i,V}^\elem}{Z_{\eta,j,V}^\elem} \ge \lambda(j-i)(1-\epsilon_\beta)\,,
    \end{align*}
    and $\epsilon_\beta$ can be taken to be $e^{ - \beta}$. Since this bound is uniform over $V$, we can take $V$ to be sequences of boxes going to $\mathbb Z^2$ to get the lower bound in the limit. 

    If $j<i$, the upper bound of \cref{eq:elem-FE-bounds} with $k=j$ and $m=i$ implies 
    \begin{align*}
        \frac{1}{|V|}\log \frac{Z_{\eta,j,V}^\elem}{Z_{\eta,i,V}^\elem} \le e^{ - 4\beta i - \beta} (i-j) - \tfrac{1}{2} e^{- 4\beta (j+1)}+ O(\tfrac{|\partial V|}{|V|})\,.
    \end{align*}
    Since $j<i$, we can write $e^{ - 4\beta i - \beta} = e^{ - 4\beta (j+1)} e^{ - 4\beta (i-j-1) - \beta}$ and use the fact that $e^{ - 4\beta(i-j-1)- \beta}(i-j) - \frac{1}{2}$ is at most $- \frac{1}{2} + e^{ - \beta} \le -e^{ - \beta}$ uniformly over $i-j\ge 1$, yielding 
    \begin{align*}
        \frac{1}{|V|}\log \frac{Z_{\eta,j,V}^\elem}{Z_{\eta,i,V}^\elem} \le - e^{- 4\beta (j+1) -\beta} + O(\tfrac{|\partial V|}{|V|})\,.
    \end{align*}
    Taking $V$ to be sequences of boxes going to $\mathbb Z^2$, and negating the above, we get the claimed bound. 
\end{proof}

\section{An infinite sequence of critical external field vales}\label{sec:infinite-sequence-lambda}
The aim in this section is to establish an infinite sequence of discontinuous phase transitions as one varies the external field $\lambda$, with critical points $(\lambda_c^{(k)})_{k\ge 0}$ ordered $\lambda_c^{(0)}>\lambda_c^{(1)}>\ldots$ and accumulating at zero, such that for all $\lambda \in [\lambda_c^{(k)},\lambda_c^{(k-1)}]$ the SOS measure with boundary conditions $k$ is thermodynamically stable. In particular, if one starts with boundary conditions $k$ on any domain, then the interface will remain rigid about height $k$ in the bulk of that domain whenever $\lambda \in [\lambda_c^{(k)},\lambda_c^{(k-1)}]$. 

Moreover, and crucially for the purposes of this paper, we probe the precise behavior as $\lambda$ approaches the critical points, demonstrating that both of the heights $k$ and $k-1$ will be stable as long as the domain is smaller than $|\lambda-\lambda_c^{(k-1)}|^{-1}$. It is in this and the following section that the true benefits of the contour representation and our more geometric arguments arise, allowing us to use much shorter arguments to get sharp results analogous to~\cite{CeMa1} for \emph{all} $\lambda \ge 0$ rather than only for $\lambda\ge c_\beta>0$ fixed and bounded away from zero.

\subsection{Rigidity of height \texorpdfstring{$h$}{h} when \texorpdfstring{$\lambda \in I_h$}{lambda is in I\_h}}
For each $h$, the full window $[\lambda_c^{(h)},\lambda_c^{(h-1)}]$ will contain the interval $I_h$ from \cref{eq:I-i}, so we begin by establishing the rigidity in the smaller interval away from the critical points. 
A general contour collection in $\mathscr{G}_{\eta, h,V}^\rn$ consists both of $h$-elementary contours, and ones that are not $h$-elementary. The above estimates give us the ability to prove that even the contours that are not $h$-elementary have exponentially small $h$-renormalized weight when $\lambda \in I_h$. 

\begin{lemma}\label{lem:non-elem-contour-bound}
    If $\lambda \in I_h$, then for every (not necessarily elementary) $\gamma$, we have 
    \begin{align*}
         W^\rn_h(\gamma)\le e^{- (\beta -4)|\gamma|}\,.
    \end{align*}
\end{lemma}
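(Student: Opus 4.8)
The plan is to deduce the bound from the fact that, when $\lambda\in I_h$, the stable elementary free energy $f^\elem_h$ dominates all the others, so that no matter what boundary height a simply-connected domain carries, its \emph{full} partition function is controlled from above by $e^{|V|f^\elem_h}$ up to a boundary correction. First I would dispose of the easy case: if $\gamma$ is $h$-elementary then \cref{it:contour-weights} of \cref{thm:main-elementary-general-bc} already gives $W^\rn_h(\gamma)\le e^{-(\beta-2)|\gamma|}\le e^{-(\beta-4)|\gamma|}$, so assume $\gamma$ is not $h$-elementary. Writing $W^\rn_h(\gamma)=e^{-\beta|\gamma|}\,Z_{\pm,\,h\mp1,\,\Int(\gamma)}/Z_{\pm,\,h,\,\Int(\gamma)}$ (with $+,h+1$ for up-contours and $-,h-1$ for down-contours), I would bound the denominator from below by restricting the sum in \cref{eq:contour-rep-partition-function} to elementary collections, using $Z_{\pm,h,\Int(\gamma)}\ge Z^{\elem}_{\pm,h,\Int(\gamma)}=Z^{\rnelem}_{\pm,h,\Int(\gamma)}$ (\cref{lem:elem-part-function-renormalized-weights}) and then \cref{lem:elementary-free-energies} together with $|\partial_e\Int(\gamma)|\le|\gamma|$, giving $\log Z_{\pm,h,\Int(\gamma)}\ge|\Int(\gamma)|f^\elem_h-e^{-\beta/2}|\gamma|$. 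The lemma then reduces to the claim that $\log Z_{\pm,\,h\mp1,\,\Int(\gamma)}\le|\Int(\gamma)|f^\elem_h+C|\gamma|$ for an absolute constant $C$, since subtracting yields $W^\rn_h(\gamma)\le e^{-(\beta-C')|\gamma|}$ with $C'\le 4$ once $\beta$ is large.

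The heart of the argument is therefore the a priori upper bound
\[
\log Z_{\zeta,m,V}\ \le\ |V|\,f^\elem_h \;+\; C\,|\partial_e V| \qquad\text{for every }\zeta,\ m,\ \text{and simply-connected }V,
\]
valid whenever $\lambda\in I_h$. I would prove this by induction on $|V|$, simultaneously with the lemma itself, the base case $|V|=1$ being an explicit single-site computation. For the inductive step, expand $Z_{\zeta,m,V}=Z^\rn_{\zeta,m,V}$ over its outermost contours as in \cref{lem:part-function-renormalized-weights}. The outermost contours that are $m$-elementary, together with the $m$-elementary "sea" around and between them, are handled by the convergent elementary cluster expansion of \cref{lem:elementary-free-energies} and contribute at most $e^{(\cdot)f^\elem_m+O(|\partial_eV|)}\le e^{(\cdot)f^\elem_h+O(|\partial_eV|)}$, the inequality $f^\elem_m\le f^\elem_h$ being exactly \cref{lem:free-energy-lower-bound-in-windows} with $i=h$. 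Each outermost contour $\gamma'$ that is not $m$-elementary is peeled off: its interior $\Int(\gamma')$ is strictly smaller than $V$ (the sole exception, the fully-enclosing contour $\gamma_V$, being excised beforehand by the one-site surgery of \cref{lem:no-fully-enclosing-contour}), so the inductive hypothesis bounds its interior partition function by $e^{|\Int(\gamma')|f^\elem_h+C|\partial_e\Int(\gamma')|}$; together with the $e^{-\beta|\gamma'|}$ factor carried by $\gamma'$ this both supplies the needed $e^{|\Int(\gamma')|f^\elem_h}$ factor and, for $\beta$ large, leaves room to pay both for the entropy of the choice of $\gamma'$ and for the accumulated boundary terms. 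Summing over all placements of the non-$m$-elementary outermost contours closes the induction.

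The main obstacle is precisely that one cannot invoke a ready-made cluster expansion for the full model: a non-$m$-elementary contour can have \emph{large} $m$-renormalized weight (a large contour that drops the interior toward the stable height $h$ is favorable when $\lambda\in I_h$ and $m\ne h$), so the Koteck\'y--Preiss condition fails for the full polymer system and the outermost-contour expansion must be run "by hand", with the inductive hypothesis furnishing the per-contour estimate $e^{|\Int(\gamma')|f^\elem_h+C|\partial_e\Int(\gamma')|}$ in place of a convergence input. A secondary point requiring care is keeping \cref{lem:free-energy-lower-bound-in-windows} pointed the right way — it is used only in the form $f^\elem_h=\max_j f^\elem_j$ when $\lambda\in I_h$, which is what makes $f^\elem_h$ the universal exponential rate regardless of the imposed boundary height — together with the bookkeeping of the various $e^{-\lambda(\cdot)|V|}$ prefactors from \cref{eq:contour-rep-partition-function} through the peeling step.
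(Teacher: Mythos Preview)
Your plan has a genuine gap: the inductive target
\[
\log Z_{\zeta,m,V}\ \le\ |V|\,f^\elem_h \;+\; C\,|\partial_e V|
\]
is simply false for large $V$. The point is that $f^\elem_h$ is strictly smaller than the full free energy $f_h=f^\tr_h$, because the truncated/full model includes non-$h$-elementary contours, and these contribute a strictly positive \emph{volume} term to the cluster expansion. Concretely, for $m=h+1$ and $V=\Lambda_n$, inserting a single down-contour near $\partial\Lambda_n$ gives $\log Z_{\zeta,h+1,\Lambda_n}\ge \log Z^\tr_{-,h,\Lambda_{n-2}}-O(n)\ge n^2 f^\tr_h - O(n)$ by \cref{lem:truncated-partition-function-cluster-expansion}; since $f^\tr_h>f^\elem_h$, no fixed $C$ can absorb $n^2(f^\tr_h-f^\elem_h)$ into $C\cdot 4n$. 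At the level of your induction this shows up as follows: after peeling the non-$m$-elementary outermost family $G$ and applying the inductive hypothesis inside each $\gamma'\in G$, you are left with $\sum_G\prod_{\gamma'\in G}e^{-(\beta-C')|\gamma'|}$, which is a polymer gas whose logarithm scales like $\epsilon|V|$ (with $\epsilon>0$ tiny but fixed), not like $|\partial_e V|$. So the induction cannot close with the same constant~$C$. Swapping $f^\elem_h$ for $f^\tr_h$ would repair the upper bound but then your lower bound on the denominator (which rested on $Z_{\pm,h,\Int(\gamma)}\ge Z^\elem_{\pm,h,\Int(\gamma)}$ and hence only reaches $f^\elem_h$) falls short by the same volume term, and closing that gap is exactly the lemma you are trying to prove.

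What the paper does to break this circularity is not to bound $Z_{\zeta,m,V}$ at all, but to pass to \emph{clusters} of non-elementary contours --- maximal nested excursions away from height $h$ --- and compare, in each annulus between successive contours of a cluster, the elementary partition function at height $h(\gamma)$ to that at height~$h$. \cref{lem:free-energy-lower-bound-in-windows} then produces not just an $e^{-\beta|\gamma|}$ factor but also an \emph{area} decay $e^{-c\,|\mathsf{Ann}(\gamma;\cC)|}$; this area term is precisely what pays for the placement entropy when one sums over all nestings via the Dinaburg--Mazel tree encoding. Your per-contour estimate $e^{|\Int(\gamma')|f^\elem_h+C|\gamma'|}$ discards exactly this area gain (it bounds the interior by the \emph{maximal} free energy $f^\elem_h$ rather than the actual, smaller $f^\elem_{m\pm1}$), and that is why the entropy sum over $G$ cannot be controlled.
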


The proof of \cref{lem:non-elem-contour-bound} goes by treating nested collections of non $h$-elementary contours together, so that the partition functions on their complements are exactly the elementary ones, for which we know that being at height $h$ is the most preferred when $\lambda \in I_h$, per \cref{lem:elementary-free-energies}. The key step in the argument is to bound the total weight contribution over all choices of non-elementaries that can be nested in $\gamma$, noticing that their being non-elementary is enough to beat out the entropy over the possible locations they can be placed (also using that if they are far from the boundary of their nesting contour, there is a cost due to the height-$h$ elementary free energy being the dominant one). This argument was performed in \cite[Lemmas 2.13--2.14]{DiMa94} for their cylinders rather than our contours, using an encoding of \emph{clusters of non-elementary cylinders} into weighted trees. With only minor modifications, this argument can be imported into our setting of contours, and we therefore defer the details to \cref{sec:exp-tails-nonelem-renormalized-weights}.

    Given \cref{lem:non-elem-contour-bound}, we can deduce convergence of the cluster expansion for $Z_{h,V}$ (see the next subsection) as well as the following rigidity of the (full) SOS measure with $h$-boundary conditions when $\lambda \in I_h$. 
    \begin{lemma}\label{lem:rigidity-in-I-windows}
        Suppose $\lambda \in I_h$. For every $V$, every $\eta$, and every $x\in V$,
        \begin{align*}
            \mu_{\eta,h,V}(|\varphi_x - h| \ge r) \le e^{ - 4(\beta-5)r}\,.
        \end{align*}
    \end{lemma}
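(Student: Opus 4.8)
The plan is to bound the probability that $\varphi_x$ deviates from $h$ by at least $r$ by exhibiting, in the contour representation, a contour $\gamma$ with $x \in \Int(\gamma)$ that is ``large'' — and then to control the total renormalized weight of such contours using \cref{lem:non-elem-contour-bound}. First I would observe that if $\varphi_x \ge h+r$ (resp.\ $\varphi_x \le h-r$) in a configuration $\varphi$ with boundary height $h$, then there must be at least $r$ nested up-$h'$-contours (resp.\ down-$h'$-contours) surrounding $x$, one for each height level crossed between $h$ and $\varphi_x$. Since contours are non-crossing, we may pass to the \emph{innermost} of these, which is an outermost contour of the $\rn$-collection once we condition on the nesting ancestors: more carefully, I would reveal the outermost contour $\gamma_x^\out$ around $x$ in the $h$-renormalized representation of $Z_{\eta,h,V}$ and note that on the deviation event, either $\gamma_x^\out$ has length $|\gamma_x^\out| \ge r$, or (peeling off $\gamma_x^\out$ and recursing into its interior, where by the domain Markov property we again have an SOS/renormalized model at height $h\pm 1$) the deviation persists in the interior — but the renormalized weights $W^\rn_{h}(\gamma) \le e^{-(\beta-4)|\gamma|}$ of \cref{lem:non-elem-contour-bound} hold at \emph{every} base height when $\lambda\in I_h$ only for the base height $h$; for base heights $h\pm 1$ one would want the analogous bound, which for the elementary part is given by \cref{it:contour-weights} of \cref{thm:main-elementary-general-bc} and for the non-elementary part requires a version of \cref{lem:non-elem-contour-bound} relative to that shifted height. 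I would avoid this subtlety by simply counting \emph{all} nested contours at once: the deviation event $\{\varphi_x \ge h+r\}$ forces the presence of nested up-contours $\gamma_1 \subsetneq \gamma_2 \subsetneq \cdots$ at consecutive heights, and $\{\varphi_x \le h - r\}$ forces nested down-contours; crucially, the innermost $r$ of these, taken together, enclose $x$ and have total length at least $4r$ (each has length at least $4$ and they are genuinely distinct as geometric contours only when shifts occur, so I would instead argue via the single outermost contour and a recursion of depth controlled by the height).

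Concretely, here is the cleaner route I would carry out. By a union bound over the sign of the deviation, it suffices to bound $\mu_{\eta,h,V}(\varphi_x \ge h + r)$; the down case is symmetric. Consider the outermost contour $\gamma_x^\out$ around $x$ in the standard contour representation. If $\varphi_x \ge h+r$, then either $\gamma_x^\out$ is an up-contour with $h(\gamma_x^\out) \ge h+1$ and the configuration restricted to $\Int(\gamma_x^\out)$ — which by the domain Markov property is $\mu_{+, h(\gamma_x^\out), \Int(\gamma_x^\out)}$ — still has $\varphi_x$ exceeding its boundary by $r-1$, giving a recursion; or, unwinding this recursion $r$ times, $x$ is surrounded by $\ge r$ nested up-contours. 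To turn this into a quantitative bound, I would use the renormalized representation of \cref{lem:part-function-renormalized-weights}: condition on $\gamma_x^\out$ being some fixed up-contour $\gamma$ with $x\in\Int(\gamma)$; by \cref{lem:part-function-renormalized-weights} (with $\gamma$ specified as an outermost contour) and \cref{lem:non-elem-contour-bound}, the probability of this is at most $W^\rn_h(\gamma)\cdot (\text{interior partition function ratio})$, and summing over all $\gamma$ of length $\ell\ge 4$ around $x$ — at most $\ell\,4^\ell$ of them — gives that $\mu_{\eta,h,V}(|\gamma_x^\out| = \ell) \le \ell\,4^\ell e^{-(\beta-4)\ell}$. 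Conditional on $\gamma_x^\out = \gamma$, the interior measure is again an SOS measure with constant boundary at $h(\gamma)$, and iterating the argument $r$ times — at each stage the interior boundary height has risen by at least one, so the interior measure is $\mu_{+, h', \cdot}$ with $h'\ge h+1$, for which the contour-weight bound $e^{-(\beta-2)|\gamma|}$ of \cref{it:contour-weights} applies to the $h'$-elementary contours, while non-elementary ones are handled exactly as before (the windows $I_{h'}$ are irrelevant here because we are not claiming rigidity \emph{at} $h'$, only decay of the one outermost up-contour's weight, which is $e^{-(\beta-2)|\gamma|}$ unconditionally once $\gamma$ is $h'$-elementary, and via the non-elementary bound otherwise). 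Thus each of the $r$ nested contours costs a factor bounded by $\sum_{\ell\ge 4} \ell\,4^\ell e^{-(\beta-5)\ell} \le e^{-4(\beta-5)}$ for $\beta>\beta_0$, yielding $\mu_{\eta,h,V}(\varphi_x \ge h+r) \le e^{-4(\beta-5)r}$ and, adding the symmetric down bound, the claim.

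The main obstacle, as flagged above, is ensuring that the contour-weight decay used at each level of the recursion is legitimate when the base height is not $h$ (so \cref{lem:non-elem-contour-bound} does not directly apply). I would resolve this by splitting each nested contour into the elementary case — handled by \cref{it:contour-weights} of \cref{thm:main-elementary-general-bc}, which holds at \emph{every} $h'\ge 0$ — and the non-elementary case, where I would invoke (or re-derive in the appendix, in parallel with \cref{lem:non-elem-contour-bound}) the bound $W^\rn_{h'}(\gamma) \le e^{-(\beta-4)|\gamma|}$ for non-$h'$-elementary $\gamma$ whenever the base height $h'$ satisfies $\lambda$ lies below the window $I_{h'-1}$, which holds here since $\lambda\in I_h$ and $h' \ge h+1$ forces $\lambda < \lambda_+^{(h'-1)}$, i.e., the field is too weak to stabilize any height below $h'$. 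Combining the elementary and non-elementary contributions at each of the $r$ levels gives the geometric decay, and the constant $4(\beta-5)$ in the exponent comes from the minimal contour length $4$ and the loss of $5$ from the combinatorial factors $\ell\,4^\ell$ and the partition-function corrections. A small additional check is the base case of the recursion (when $\Int(\gamma)$ becomes a singleton), which is immediate since a single-site column has deviation probability at most $e^{-(4\beta-\lambda)}$, comfortably within the claimed bound.
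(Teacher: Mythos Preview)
Your recursion has a genuine gap. After the first peel (whose cost $W^\rn_h(\gamma_x^\out)$ is indeed controlled by \cref{lem:non-elem-contour-bound}), the interior measure lives at base height $h\pm 1$, and the next peel costs $W^\rn_{h\pm 1}(\gamma')$. You correctly flag that an analog of \cref{lem:non-elem-contour-bound} at the shifted height is needed, and propose to re-derive it; but that analog is \emph{false} in the cases your recursion actually meets. When $\lambda\in I_h$ and the base height has drifted to some $h'>h$, a \emph{down}-contour $\gamma'$ at base $h'$ has
\[
W^\rn_{h'}(\gamma')=e^{-\beta|\gamma'|}\,\frac{Z_{-,h'-1,\Int(\gamma')}}{Z_{-,h',\Int(\gamma')}},
\]
and this ratio can be exponentially \emph{large} in $|\Int(\gamma')|$, precisely because $h'-1$ is closer to the dominant height $h$; the symmetric failure occurs for up-contours at base $h'<h$. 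Your recursion does encounter such contours: even on $\{\varphi_x\ge h+r\}$, the outermost contour around $x$ need not be an up-contour---it can be a down-contour to $h-1$, followed by a large up-contour back to $h$, and that second peel has weight $W^\rn_{h-1}(\cdot)$ which is uncontrolled. So restricting attention to ``up-contours only'' and invoking a shifted-height version of \cref{lem:non-elem-contour-bound} does not close the argument, and the claimed symmetry for the down case inherits the same problem.

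The paper avoids this entirely by running the Peierls iteration through the renormalized identity of \cref{lem:part-function-renormalized-weights}: peeling the outermost contour contributes exactly the factor $W^\rn_h(\gamma)$ while leaving a partition function again at base height $h$ (not $h\pm 1$). Thus the single bound $W^\rn_h(\gamma)\le e^{-(\beta-4)|\gamma|}$ of \cref{lem:non-elem-contour-bound}, valid for \emph{every} $\gamma$ when $\lambda\in I_h$, applies at each of the $r$ iterations, and no extension to other base heights is ever required.
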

    \begin{proof}
        It suffices to bound the probability that there is a set of all contours nesting $x$ have total size at least $4r$. The bound on this goes via a standard Peierls estimate, with the set of all such configurations being mapped, via deletion of their outermost contour, to the set of configurations whose set of all contours nesting $x$ has total size at least $4(r-1)$ (since $|\varphi_x - h|$ would change by $1$ under this operation). If in the contour representation of $\varphi$, $\gamma_x^\out$ is the outermost contour confining $x$, we can enumerate over all such choices of the contour having size $r\ge 4$, there being at most $4^r$ many such choices, and recall from \cref{lem:part-function-renormalized-weights} that the probability of a specific contour $\gamma$ being $\gamma_x^\out$ is exactly $W^\rn_{h}(\gamma)$. Together with \cref{lem:non-elem-contour-bound}, we get 
        \begin{align*}
        \mu_{\eta,h,V}\Big(\sum_{\gamma: x\in \Int(\gamma)}|\gamma| \ge 4r\Big) \le \sum_{r\ge 4} 4^r e^{ - (\beta -4)r} \mu_{\eta,h,V}\Big(\sum_{\gamma: x\in \Int(\gamma)}|\gamma| \ge 4(r-1)\Big)\,,
        \end{align*}
        which, when iterated implies the claimed bound for large $\beta$. 
    \end{proof}

\subsection*{Truncated free energies and cluster expansion}
Having established exponential tails not only on elementary $\gamma$, but also when $\lambda \in I_h$, non-elementary $\gamma$ based at height $h$, we can deduce that the full partition function $Z_{\eta,h,V}$ admits a convergent cluster expansion when $\lambda \in I_h$. In order to compare this with other partition functions (which may not admit convergent cluster expansions), we introduce ``truncated" partition functions which modify the renormalized weights so that they admit convergent cluster expansions at all reference heights and all $\lambda$. These will coincide with the usual partition function whenever the usual partition function has exponential tails on its renormalized weights.

\begin{definition}
    For an contour $\gamma$, define its $h$-truncated weight 
    \begin{align}\label{eq:W-tr}
         W^{\tr}_h(\gamma) = \min\big\{  W^\rn_h(\gamma), e^{ - (\beta - 5)|\gamma|} \big\}\,.
    \end{align}
\end{definition}

As a consequence of \cref{it:contour-weights} of \cref{thm:main-elementary-general-bc}, we have for every elementary $\gamma$, $W^\tr_h(\gamma) =  W^\rn_h(\gamma)$. 
Also, by \cref{lem:non-elem-contour-bound}, if $\lambda \in I_h$, then every non-elementary contour $\gamma$ also has $ W_h^\tr(\gamma) =  W^\rn_h(\gamma)$. We can now use the truncated weights to construct truncated partition functions. 

\begin{definition}
    For general $\eta,h,V$, define the truncated partition function 
    \begin{align*}
        Z_{\eta,h,V}^\tr = e^{ - \lambda h |V|} \sum_{\Gamma\in \mathscr G_{\eta,h,V}^\rn} \prod_{\gamma\in \Gamma}  W_h^\tr(\gamma)\,.
    \end{align*}    
\end{definition}

By definition of $W^\tr_h$ and $Z_{\eta,h,V}^\tr$, we always have 
\begin{align}\label{eq:Z-tr-leq-Z-rn}
    Z_{\eta,h,V}^\tr \le Z_{\eta,h,V}^\rn\,,
\end{align}
By the earlier observation that $W^\tr_h(\gamma) = W^\rn_h(\gamma)$ for all $\gamma$ when $\lambda\in I_h$, we have 
\begin{align}\label{eq:Z-tr-eq-Z-rn}
    Z_{\eta,h,V}^\tr = Z_{\eta,h,V}^\rn \qquad \text{for all $\lambda \in I_h$}\,.
\end{align}
By \cref{lem:part-function-renormalized-weights}, if $V$ is simply connected both the right-hand sides above can be replaced by $Z_{\eta,h,V}$. 

Below, we collect some standard consequences of convergent cluster expansions that we get for the truncated partition functions. Towards this, define the following finite-volume free energies: 
\begin{align}\label{eq:finite-vol-free-energies}
            f^\tr_{\eta,h,V} = \frac{1}{|V|} \log Z^\tr_{\eta,h,V}\,, \qquad\text{and}\qquad f_{\eta,h,V}= \frac{1}{|V|}\log Z_{\eta,h,V}\,.
\end{align}

As argued in \cref{lem:elementary-free-energies}, the uniform $e^{ - (\beta -5)|\gamma|}$ bound on truncated contour weights implies that the truncated partition functions admit convergent cluster expansions, yielding the following. 

\begin{lemma}\label{lem:truncated-partition-function-cluster-expansion}
    For each $h$, the following infinite-volume free energy
    \begin{align*}
        f^\tr_h:= \lim_{n\to\infty} f^\tr_{\eta,h,\Lambda_n}\,,
    \end{align*}
    exists. Furthermore, for all (not necessarily simply connected) $U$, all $h$, and all $\eta$, 
    \begin{align*}
        |\log Z_{\eta,h,U}^\tr - |U|f^\tr_h| \le e^{ - \beta/2} |\partial_e U|\,.
    \end{align*}
\end{lemma}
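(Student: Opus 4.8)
The plan is to mirror the proof of \cref{lem:elementary-free-energies} essentially verbatim, now invoking the uniform bound \eqref{eq:W-tr} in place of \cref{it:contour-weights} of \cref{thm:main-elementary-general-bc}. Concretely, I would first observe that, after factoring out the deterministic prefactor $e^{-\lambda h|V|}$, the partition function $Z^\tr_{\eta,h,V}$ is a polymer partition function in the Kotecký--Preiss framework: the polymers are the signed geometric contours in $V^*$ compatible with the boundary signing $\eta$, the interaction is the hard-core one coming from admissibility (\cref{def:compatible-contours}), and the activity of a polymer $\gamma$ is $W^\tr_h(\gamma)$. The crucial point is that, by the very definition in \eqref{eq:W-tr}, one has $|W^\tr_h(\gamma)| \le e^{-(\beta-5)|\gamma|}$ for \emph{every} contour $\gamma$, with this bound holding uniformly over $h$, over $\eta$, and over the (possibly non-simply-connected) domain.

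Next I would verify the Kotecký--Preiss condition. Since the number of contours of length $r$ through a fixed dual edge is at most $C^r$ for an absolute constant $C$ (connectedness plus the SW/NE splitting rule), and a contour is incompatible with $\gamma$ only if it passes within $O(1)$ of one of the $|\gamma|$ edges of $\gamma$, the sum $\sum_{\gamma'\not\sim\gamma}|W^\tr_h(\gamma')|\,e^{|\gamma'|}$ is at most $|\gamma|\sum_{r\ge 4}(Ce)^r e^{-(\beta-5)r}$, which is $\le |\gamma|$ once $\beta$ exceeds an absolute constant. This yields an absolutely convergent cluster expansion, $\log Z^\tr_{\eta,h,V} = -\lambda h|V| + \sum_C \Phi_h(C)$, with cluster weights $\Phi_h(C)$ that in the bulk depend only on the translate of $C$ and satisfy the standard pointwise summability $\sum_{C\ni e}|\Phi_h(C)|\,e^{|C|}\le 1$ for every dual edge $e$. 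Existence of $f^\tr_h$ is then the usual Cesàro argument applied to $|\Lambda_n|^{-1}\log Z^\tr_{\eta,h,\Lambda_n} = -\lambda h + |\Lambda_n|^{-1}\sum_{C\subset\Lambda_n}\Phi_h(C)$: the limit exists because the per-edge cluster weight is summable, it is independent of $\eta$ since the signing only affects clusters within $O(1)$ of the boundary, and I would define $f^\tr_h$ to be this limit.

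Finally, for the quantitative estimate I would write $\log Z^\tr_{\eta,h,U} - |U|f^\tr_h$ as a difference of cluster sums: every cluster realizable in $U$ and compatible with $\eta$ contributes $+\Phi_h(C)$, while $|U| f^\tr_h$ contributes the corresponding bulk density $\rho_h(C)\Phi_h(C)$ summed over positions; any cluster for which these do not cancel must intersect $\partial_e U$, either because it fails to fit inside $U$ or because $\eta$ forbids it near $\partial_i U$. Using the bound $\sum_{C\ni e}|\Phi_h(C)| \le e^{-\beta/2}$ for $\beta$ above an absolute constant (itself the sum of truncated weights of all contours whose interior contains a fixed vertex, passed through the polymer-to-cluster estimate) and summing over the $|\partial_e U|$ boundary edges gives $|\log Z^\tr_{\eta,h,U} - |U|f^\tr_h| \le e^{-\beta/2}|\partial_e U|$. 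The only place requiring any care — and the "main obstacle," such as it is — is the bookkeeping of the boundary signing $\eta$ together with the holes of a non-simply-connected $U$, so that the discrepancy is genuinely supported on $\partial_e U$ (contours encircling a hole are long, hence heavily suppressed, and are accounted for by the portion of $\partial_e U$ along the hole boundary); none of this is conceptually new beyond \cref{lem:elementary-free-energies}.
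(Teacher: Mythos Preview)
Your proposal is correct and matches the paper's own approach: the paper simply remarks that, as argued in \cref{lem:elementary-free-energies}, the uniform bound $W^\tr_h(\gamma)\le e^{-(\beta-5)|\gamma|}$ from \eqref{eq:W-tr} places the truncated partition functions in the standard Kotecký--Preiss framework, and the convergent cluster expansion yields both the existence of $f^\tr_h$ and the boundary estimate. Your write-up fills in exactly those details.
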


At the same time, due to \cref{eq:Z-tr-eq-Z-rn} and \cref{lem:part-function-renormalized-weights}, when $\lambda \in I_h$,  
\begin{align*}
    f_h := \lim_{n\to\infty} \frac{1}{|\Lambda_n|} \log Z_{\eta,h,\Lambda_n}\,,
\end{align*}
exists and is equal to $f^\tr_h$ because the partition functions are equal. Since 
\begin{align*}
   \Big| \frac{1}{|V|}\log \frac{Z_{\eta, j,V}}{Z_{\zeta,h,V}} \Big| \le  (\beta + \lambda)\cdot|h-j|\cdot\frac{|\partial_e V|}{|V|}\,,
\end{align*}
(by crudely forcing the heights along the boundary to be at $h$), we find that when $\lambda \in I_h$, 
\begin{align}\label{eq:f_i-eq-f_j}
    f_j = \lim_{n\to\infty} f_{\eta,j,\Lambda_n} \quad \text{exists, and} \quad f_j = f_h= f_h^\tr \text{ for all $j\ne h$}\,.
\end{align}

We now show using the truncated free energies, that there exist an infinite sequence of critical values of $\lambda$ dictating transitions between which height maximizes the infinite-volume truncated free energy. 

\begin{proposition}\label{prop:lambda_c}
    There exists $(\lambda_c^{(i)})_{i=0}^\infty>0$ such that for every $i$, it has $I_i \subset [\lambda_c^{(i)}, \lambda_c^{(i-1)}]$ and
    \begin{enumerate}
        \item \label{it:between_lambda_c} If $\lambda_c^{(i)}<\lambda<\lambda_c^{(i-1)}$, then $f^\tr_i>f^\tr_j$ for all $j\ne i$\,.
        \item \label{it:existence_lambda_c} If $\lambda= \lambda_c^{(i)}$, then $f^\tr_{i+1} = f^\tr_i >f^\tr_j$ for $j\notin \{i,i+1\}$. 
    \end{enumerate}
\end{proposition}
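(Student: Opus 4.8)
The plan is to define $\lambda_c^{(i)}$ as the value of $\lambda$ at which the two infinite-volume truncated free energies $f^\tr_i$ and $f^\tr_{i+1}$ cross, and to deduce both items from three inputs: continuity of $\lambda\mapsto f^\tr_h(\lambda)$, a transfer of the window bounds of \cref{lem:free-energy-lower-bound-in-windows} from $f^\elem$ to $f^\tr$, and \emph{strict} monotonicity of $\lambda\mapsto f^\tr_h(\lambda)-f^\tr_{h+1}(\lambda)$. Continuity (in fact Lipschitzness on $[0,1]$) is immediate: the weights $W^\tr_h(\gamma)$ are continuous in $\lambda$, being a minimum of the $\lambda$-independent $e^{-(\beta-5)|\gamma|}$ and the $\lambda$-continuous ratio $W^\rn_h(\gamma)$, so the cluster expansion of \cref{lem:truncated-partition-function-cluster-expansion} converges uniformly in $\lambda$ and $f^\tr_h$ inherits continuity from the finite-volume free energies.

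First I would transfer the window bounds and use them to locate $\lambda_c^{(i)}$. Since for elementary contours $W^\tr_h=W^\rn_h$ by \cref{it:contour-weights} of \cref{thm:main-elementary-general-bc}, and $\mathscr{G}^\rnelem_{\eta,h,V}\subset\mathscr{G}^\rn_{\eta,h,V}$, we get $Z^\tr_{\eta,h,V}\ge Z^\rnelem_{\eta,h,V}=Z^\elem_{\eta,h,V}$, hence $f^\tr_h\ge f^\elem_h$. In the other direction, the only contours whose truncated weight differs from their renormalized one are non-$h$-elementary, of length at least $2(\lambda^{-1}\wedge e^{3\beta(h+1)})$; comparing the (uniformly convergent, by \cref{lem:elementary-free-energies} and \cref{lem:truncated-partition-function-cluster-expansion}) cluster expansions of $Z^\tr_{\eta,h,V}$ and $Z^\elem_{\eta,h,V}$, only clusters touching such a contour contribute to the difference, so $0\le f^\tr_h-f^\elem_h\le e^{-(\beta-C)(\lambda^{-1}\wedge e^{3\beta(h+1)})}$. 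For every $i,j$ this is, uniformly, much smaller than the gap $e^{-4\beta(i\wedge(j+1))-3\beta}$ of \cref{lem:free-energy-lower-bound-in-windows}, so combining gives $f^\tr_i-f^\tr_j\ge\tfrac12 e^{-4\beta(i\wedge(j+1))-3\beta}>0$ for $\lambda\in I_i$ and all $j\ne i$. In particular $f^\tr_i-f^\tr_{i+1}>0$ at $\lambda_-^{(i)}$ and $f^\tr_i-f^\tr_{i+1}<0$ at $\lambda_+^{(i+1)}$, so by continuity there is a crossing $\lambda_c^{(i)}\in(\lambda_+^{(i+1)},\lambda_-^{(i)})$; this immediately forces $I_i\subset[\lambda_c^{(i)},\lambda_c^{(i-1)}]$ (with the convention $\lambda_c^{(-1)}=1$), and since $\lambda_+^{(i+1)}<\lambda_c^{(i)}<\lambda_-^{(i)}=e^{-4\beta i-2\beta}$ these points are positive, strictly decreasing in $i$, and tend to $0$.

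Next, the crux is strict monotonicity of $\lambda\mapsto f^\tr_h(\lambda)-f^\tr_{h+1}(\lambda)$. I would first treat the elementary free energies, where the $\lambda$-dependence of $Z^\elem_{\eta,h,V}$ is linear in the exponent (the contour weights being $e^{-\beta|\gamma|\mp\lambda|\Int(\gamma)|}$), so $f^\elem_{\eta,h,V}$ is convex in $\lambda$ and $\frac{d}{d\lambda}f^\elem_{\eta,h,V}=-\tfrac1{|V|}\mu^\elem_{\eta,h,V}\big[\sum_v\varphi_v\big]=-h-\tfrac1{|V|}\mu^\elem_{\eta,h,V}\big[\sum_v(\varphi_v-h)\big]$. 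The entropic-repulsion tail bound \cref{cor:outermost-elem-contour-exp-tail} gives $\big|\mu^\elem_{\eta,h,V}[\sum_v(\varphi_v-h)]\big|\le\sum_{\gamma}|\Int(\gamma)|\,\mu^\elem_{\eta,h,V}(\gamma\in\Gamma)\le e^{-(\beta-C)}|V|$, whence $\frac{d}{d\lambda}\big(f^\elem_{\eta,h,V}-f^\elem_{\eta,h+1,V}\big)\ge1-2e^{-(\beta-C)}\ge\tfrac12$; equivalently $f^\elem_{\eta,h,V}-f^\elem_{\eta,h+1,V}-\tfrac\lambda2$ is non-decreasing, a relation preserved under the pointwise (thermodynamic) limit. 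The same cluster-expansion comparison as in the previous paragraph, now applied to difference quotients, shows $f^\tr_h-f^\elem_h$ is Lipschitz in $\lambda$ with a super-exponentially small (in $\beta$) constant, again because only huge non-elementary contours contribute. Therefore $f^\tr_h-f^\tr_{h+1}$ is strictly increasing, with $(f^\tr_h-f^\tr_{h+1})(\lambda')-(f^\tr_h-f^\tr_{h+1})(\lambda)\ge\tfrac14(\lambda'-\lambda)$ for $\lambda'>\lambda$; in particular its zero set is the single point $\lambda_c^{(h)}$, with $f^\tr_h>f^\tr_{h+1}$ for $\lambda>\lambda_c^{(h)}$ and $f^\tr_h<f^\tr_{h+1}$ for $\lambda<\lambda_c^{(h)}$.

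Finally I would assemble the conclusion by telescoping. Fix $\lambda\in(\lambda_c^{(i)},\lambda_c^{(i-1)})$. As $(\lambda_c^{(l)})_l$ is strictly decreasing, $\lambda>\lambda_c^{(l)}$ for all $l\ge i$ and $\lambda<\lambda_c^{(l)}$ for all $l<i$, so the previous step yields $f^\tr_l>f^\tr_{l+1}$ for every $l\ge i$ and $f^\tr_{l+1}>f^\tr_l$ for every $l<i$; chaining these gives $f^\tr_i>f^\tr_j$ for all $j\ne i$, which is item (1) (for $i=0$ the interval is $(\lambda_c^{(0)},1]$). At $\lambda=\lambda_c^{(i)}$ one has $f^\tr_i=f^\tr_{i+1}$ by definition, while for $l\ne i$ the same sign analysis ($\lambda_c^{(i)}>\lambda_c^{(l)}$ for $l>i$, $\lambda_c^{(i)}<\lambda_c^{(l)}$ for $l<i$) and telescoping give $f^\tr_i=f^\tr_{i+1}>f^\tr_j$ for all $j\notin\{i,i+1\}$, which is item (2). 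The step I expect to be the main obstacle is the strict-monotonicity one: one must push the uniform finite-volume derivative lower bound through the thermodynamic limit (clean for the convex $f^\elem$) and then across the truncation to $f^\tr$, where the minimum destroys the linear-in-$\lambda$ structure — but, as indicated, both are controlled by the elementary-contour tail estimates of \cref{thm:main-elementary-general-bc} together with the uniform convergence of the cluster expansion.
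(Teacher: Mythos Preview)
Your proof is correct and reaches the same conclusion, but it takes a detour through $f^\elem$ that the paper avoids. Two specific comparisons:

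\textbf{The inequality $f^\tr_i\ge f^\tr_j$ on $I_i$.} The paper gets this in one line from already-established identities: for $\lambda\in I_i$ one has $f^\tr_i=f_i$ by \cref{eq:Z-tr-eq-Z-rn}, $f_i=f_j$ by \cref{eq:f_i-eq-f_j}, and $f_j\ge f^\tr_j$ by \cref{eq:Z-tr-leq-Z-rn}. Your route via the cluster-expansion estimate $0\le f^\tr_h-f^\elem_h\le e^{-(\beta-C)(\lambda^{-1}\wedge e^{3\beta(h+1)})}$ plus \cref{lem:free-energy-lower-bound-in-windows} is valid but unnecessary here.

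\textbf{Strict monotonicity.} The paper differentiates $f^\tr_{i,V}$ \emph{directly} in finite volume: the prefactor $e^{-\lambda i|V|}$ contributes $-i$, and the remaining term is $\tfrac1{|V|}\mu^\tr_{i,V}\big[\sum_\gamma \partial_\lambda\log W^\tr_i(\gamma)\big]$, which is bounded in absolute value by $\tfrac1{|V|}\mu^\tr_{i,V}\big[\sum_\gamma|\Int(\gamma)|\big]$ (the bound $|\partial_\lambda\log W^\rn_h(\gamma)|\le|\Int(\gamma)|$ follows from \cref{lem:SOS-monotonicity} and a shift). That expectation is $\le e^{-\beta/2}$ by the convergent truncated cluster expansion, giving $\big|\tfrac{d}{d\lambda}(f^\tr_{k,V}-f^\tr_{l,V})-(l-k)\big|\le e^{-\beta/2}$ uniformly, which passes to the limit. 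Your two-step route (differentiate $f^\elem$, then transfer) works too, but note two wrinkles you glossed over: (i) $\mathscr G^\elem_{\eta,h,V}$ itself depends on $\lambda$ through the constraint $\diam(\gamma)\le\lambda^{-1}$, so $f^\elem_{\eta,h,V}$ is only \emph{piecewise} convex with (super-tiny) downward jumps, not globally convex; (ii) your Lipschitz control of $f^\tr_h-f^\elem_h$ requires bounding $\partial_\lambda$ of the non-elementary cluster weights, not just their size --- this goes through with the same $|\partial_\lambda\log W^\rn|\le|\Int(\gamma)|$ bound, but you should say so. The paper's direct approach sidesteps both.

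Your final telescoping step is exactly how the paper (implicitly) concludes as well.
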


\begin{proof}
    We first use  \cref{eq:f_i-eq-f_j}, then \cref{eq:Z-tr-leq-Z-rn} to deduce that for every $\lambda \in I_i$, we have for all $j$, 
    \begin{align}\label{eq:f-tr-i-ge-f-tr-j}
        f^\tr_i = f_i = f_j \ge f^\tr_j\,.
    \end{align}
    We now lower bound the $\lambda$-derivative of $f_i^\tr - f_j^\tr$ to deduce that there is a single $\lambda$ at which the former becomes strictly larger than the latter. 
    We start by considering the derivative for finite volumes. (The boundary signing can be arbitrary here, so let us take it as free and drop it from the notation.) Explicit differentiation gives
    \begin{align*}
        \Big|\frac{d}{d\lambda} f_{i,V}^\tr + i\Big| \le \frac{1}{|V|}\mu_{i,V}^\tr\big[ \sum_{\gamma \in \Gamma} |\Int(\gamma)| \big]\,,
    \end{align*}
    (where when the derivative does not exist due to the $\min$ in $W^\tr(\gamma)$, this bound holds for both the right and left derivatives). 
    By standard cluster expansion reasoning, the total volume confined by $\gamma\in \Gamma$ has an exponential tail beyond $e^{ - \beta} |V|$, and therefore the expectation on the right above is at most $e^{ - \beta/2}|V|$, uniformly over $\lambda$. This implies that for any simply connected $V$, for all $i,j$
    \begin{align}\label{eq:dlambda-ftr}
        \Big|\frac{d}{d\lambda}(f^\tr_{i,V}- f^\tr_{j,V})  - (j-i)\Big| \le e^{ - \beta/2}\,.
    \end{align}
    Taking $V= \Lambda_n$ and $n\to \infty$, this implies that $f^\tr_i,f^\tr_j$ are (Lipschitz-)continuous in $\lambda$, and for all $\lambda>0$, we have for all $i,j$
    \begin{align*}
        \frac{d}{d\lambda} (f^\tr_i - f^\tr_j) \ge (1-\epsilon_\beta) (j-i)\,.
    \end{align*}
    (Again, when there are issues of non-differentiability, this lower bound holds both for the right and left derivatives.) 
    When this lower bound is combined with \cref{eq:f-tr-i-ge-f-tr-j}, the desired sequence of critical values $\lambda_c^{(i)}$ and the claimed strict inequalities on truncated free energies follows by continuity in $\lambda$. 
\end{proof}

    \cref{eq:dlambda-ftr} also gives the following relation between $f_k^\tr - f_{k\pm 1}^\tr$ and $\dl^\pm(\lambda)$ from \cref{eq:dl}. 

\begin{corollary}\label{cor:delta-f-delta-l-relation}
    We have for every $k$ that for $\lambda <\lambda_c^{(k-1)}$, 
    \begin{align*}
       (1-\epsilon_\beta) (\lambda_c^{(k-1)}-\lambda) \le f_k^\tr - f_{k-1}^\tr \le (1+\epsilon_\beta) (\lambda_c^{(k-1)}-\lambda)\,, 
    \end{align*}
    and similarly, for $\lambda>\lambda_c^{(k)}$, we have $(1-\epsilon_\beta) (\lambda -\lambda_c^{(k)}) \le f_k^\tr - f_{k+1}^\tr \le (1+\epsilon_\beta) (\lambda - \lambda_c^{(k)})$.
\end{corollary}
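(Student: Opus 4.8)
The plan is to integrate the derivative estimate \cref{eq:dlambda-ftr} outward from the critical points, using that the relevant free-energy differences vanish there.

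First I would record the two inputs. By \cref{it:existence_lambda_c} of \cref{prop:lambda_c}, at $\lambda=\lambda_c^{(k-1)}$ one has $f_k^\tr=f_{k-1}^\tr$ and at $\lambda=\lambda_c^{(k)}$ one has $f_k^\tr=f_{k+1}^\tr$; hence the functions $g^-(\lambda):=f_k^\tr(\lambda)-f_{k-1}^\tr(\lambda)$ and $g^+(\lambda):=f_k^\tr(\lambda)-f_{k+1}^\tr(\lambda)$ vanish at $\lambda_c^{(k-1)}$ and $\lambda_c^{(k)}$, respectively. From \cref{eq:dlambda-ftr} applied with $V=\Lambda_n$ and $n\to\infty$ (legitimate since the bound there is uniform in $V$ and, via \cref{lem:truncated-partition-function-cluster-expansion}, $f^\tr_{h,\Lambda_n}\to f_h^\tr$ with uniformly Lipschitz difference quotients), each $f_h^\tr$ is Lipschitz in $\lambda$, hence absolutely continuous, and wherever its derivative exists (and for both one-sided derivatives at the at-most-countably-many exceptional points arising from the $\min$ in \cref{eq:W-tr}) one has, taking the two slot-indices in \cref{eq:dlambda-ftr} to be $k-1$ and $k$ (resp.\ $k$ and $k+1$), so that their difference is $1$ and the $e^{-\beta/2}$ is absorbed into $\epsilon_\beta$,
\[
\Big|\tfrac{d}{d\lambda}\big(f_{k-1}^\tr - f_k^\tr\big) - 1\Big| \le \epsilon_\beta \qquad\text{and}\qquad \Big|\tfrac{d}{d\lambda}\big(f_k^\tr - f_{k+1}^\tr\big) - 1\Big| \le \epsilon_\beta .
\]

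Then I would apply the fundamental theorem of calculus. For $\lambda<\lambda_c^{(k-1)}$, since $g^-(\lambda_c^{(k-1)})=0$,
\[
g^-(\lambda) = \int_{\lambda}^{\lambda_c^{(k-1)}} \tfrac{d}{ds}\big(f_{k-1}^\tr(s) - f_k^\tr(s)\big)\, ds ,
\]
and the integrand lies in $[1-\epsilon_\beta,1+\epsilon_\beta]$ over an interval of length $\lambda_c^{(k-1)}-\lambda$, which yields the first pair of inequalities (and en passant shows $g^-$ is strictly decreasing, so positive below $\lambda_c^{(k-1)}$). Identically, for $\lambda>\lambda_c^{(k)}$ one writes $g^+(\lambda)=\int_{\lambda_c^{(k)}}^{\lambda}\tfrac{d}{ds}\big(f_k^\tr(s)-f_{k+1}^\tr(s)\big)\,ds$, whose integrand is in $[1-\epsilon_\beta,1+\epsilon_\beta]$ over an interval of length $\lambda-\lambda_c^{(k)}$, giving the second pair.

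There is no genuine obstacle here: the two nontrivial ingredients — the uniform derivative bound \cref{eq:dlambda-ftr} and the coincidence of free energies at the critical points — are already established, and what remains is the elementary calculus above. The only points deserving a sentence of care are the passage from finite-volume to infinite-volume derivatives and the non-differentiability coming from the truncation $\min$; both are dispatched via Lipschitz continuity, since then $g^\pm$ is the integral of an a.e.-defined derivative that agrees a.e.\ with either one-sided derivative, each of which obeys the stated bound.
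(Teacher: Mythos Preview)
Your proposal is correct and is exactly the argument the paper has in mind: the paper simply states that \cref{eq:dlambda-ftr} ``gives'' \cref{cor:delta-f-delta-l-relation} without spelling out the integration, and your write-up (vanishing of $f_k^\tr-f_{k\pm1}^\tr$ at the critical points from \cref{prop:lambda_c}, then integrating the derivative bound from the critical point) is precisely that. Your remarks on Lipschitz continuity and one-sided derivatives are appropriate caveats already implicit in the paper's treatment of \cref{eq:dlambda-ftr}.
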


In what follows, to simultaneously consider the $i=0$ case, by convention set $\lambda_c^{(-1)} = \infty$. 

\subsection{Extending control of non-elementary contours to 
\texorpdfstring{$\lambda\in [\lambda_c^{(h)},\lambda_c^{(h-1)}]$}
{lambda is in [lambda\_c(h),lambda\_c(h-1)]}.}
Our aim is now to prove that the rigidity at height $h$ (e.g., exponential tails on $W^\tr_h(\gamma)$ even for non-elementary $\gamma$) holds throughout the full window $[\lambda_c^{(h)}, \lambda_c^{(h-1)}]$ where \cref{prop:lambda_c} showed the $h$-free energy dominates.  

From this point forth, our arguments deviate much more significantly, even at the high level, from the strategy of~\cite{CeMa1} as their arguments became more constrained by the usage of cylinders instead of contours, and begin to necessitate $\lambda$ bounded away from zero by a $\beta$-dependent constant, or equivalently necessitate the boundary level bounded by a $\beta$-dependent constant. By contrast, our arguments are significantly shorter than those in~\cite{CeMa1} and allow us to interpolate all the way down to $\lambda=0$.   

\begin{proposition}\label{prop:non-elem-control-entire-window}
    Let $\lambda \in [\lambda_c^{(k)},\lambda_c^{(k-1)}]$. For every (including non-elementary) contour $\gamma$, 
    \begin{align}\label{eq:W-tr-eq-W-rn}
        W_k^\tr (\gamma) =  W^\rn_k(\gamma)\,.
    \end{align}
    In particular, if $\lambda \in [\lambda_c^{(k)},\lambda_c^{(k-1)}]$, then 
        \begin{align}\label{eq:Z-rn-eq-Z-tr-full-window}
            Z_{\eta,k,V}^\rn = Z_{\eta,k,V}^\tr\,.
        \end{align}
    and if $V$ is simply connected, $Z_{\eta,k,V} = Z_{\eta,k,V}^\tr$. 
\end{proposition}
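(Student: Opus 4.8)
The plan is to prove \cref{eq:W-tr-eq-W-rn} by strong induction on $|\Int(\gamma)|$, mimicking the structure of the proof of \cref{thm:main-elementary-general-bc} but now leveraging that the $k$-free energy $f_k^\tr$ dominates all others throughout the enlarged window $[\lambda_c^{(k)},\lambda_c^{(k-1)}]$ (\cref{prop:lambda_c}, together with \cref{lem:truncated-partition-function-cluster-expansion}). For elementary $\gamma$ the identity $W_k^\tr(\gamma)=W_k^\rn(\gamma)$ is already known (it follows from \cref{it:contour-weights} of \cref{thm:main-elementary-general-bc} and the definition \cref{eq:W-tr}), so only non-elementary $\gamma$ need treatment. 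For such $\gamma$, say an up-contour, by \cref{def:contour-renormalized-weight} we have $W_k^\rn(\gamma)=e^{-\beta|\gamma|}Z_{+,k+1,\Int(\gamma)}/Z_{+,k,\Int(\gamma)}$, and since $\Int(\gamma)$ is simply connected and strictly smaller than $\Int(\gamma_V)$ (so the inductive hypothesis applies to every contour it can contain), the partition functions appearing here equal their renormalized and hence — by the inductive hypothesis applied contour-by-contour — their truncated versions. The point is then to estimate $\log Z_{+,k+1,\Int(\gamma)}^\tr - \log Z_{+,k,\Int(\gamma)}^\tr$ using \cref{lem:truncated-partition-function-cluster-expansion}: it is $|\Int(\gamma)|(f_{k+1}^\tr - f_k^\tr) + O(e^{-\beta/2}|\partial_e\Int(\gamma)|)$, and by \cref{prop:lambda_c} we have $f_{k+1}^\tr - f_k^\tr \le 0$, so $W_k^\rn(\gamma) \le e^{-\beta|\gamma|}e^{Ce^{-\beta/2}|\partial_e\Int(\gamma)|} \le e^{-\beta|\gamma|}e^{Ce^{-\beta/2}|\gamma|} \le e^{-(\beta-5)|\gamma|}$ for $\beta$ large (using $|\partial_e\Int(\gamma)|\le|\gamma|$ up to a constant). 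By definition of the truncation \cref{eq:W-tr}, this forces $W_k^\tr(\gamma)=W_k^\rn(\gamma)$. The down-contour case is symmetric, using $f_{k-1}^\tr - f_k^\tr \le 0$ instead.

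I would organize the induction carefully around the one subtlety: the inductive hypothesis is needed both to justify replacing $Z^\rn$ by $Z^\tr$ on $\Int(\gamma)$ (so that \cref{lem:truncated-partition-function-cluster-expansion} is applicable) and to propagate the claim. Concretely, I would first establish, as a lemma within the proof, that if \cref{eq:W-tr-eq-W-rn} holds for all contours with interior of size $\le \ell-1$, then $Z_{\eta,k,U}^\rn = Z_{\eta,k,U}^\tr$ for every simply connected $U$ with $|U|\le\ell$ — this is immediate from the contour-collection expansions \cref{eq:renormalized-partition-function} and the definition of $Z^\tr$, since every contour that can appear in such a $U$ has interior of size $\le\ell-1$ (here one wants $U\neq\Int(\gamma_U)$... actually $\gamma_U$ itself has $|\Int(\gamma_U)|=|U|=\ell$, so one needs the small extra argument, exactly as in \cref{lem:no-fully-enclosing-contour}, that a fully-enclosing contour $\gamma_U$ has $|\gamma_U|\ge 6$ when $|U|\ge 2$ and so already satisfies $W^\rn_k(\gamma_U)\le e^{-(\beta-5)|\gamma_U|}$ via the same cluster-expansion estimate with $|\partial_e\Int(\gamma_U)|$ replaced by a crude $O(|\gamma_U|)$ bound, or one simply treats the base case $|U|\le$ constant separately). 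Then the main step above closes the induction at level $\ell$.

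Once \cref{eq:W-tr-eq-W-rn} is proved for all $\gamma$, \cref{eq:Z-rn-eq-Z-tr-full-window} follows termwise from \cref{eq:renormalized-partition-function} and the definition of $Z_{\eta,k,V}^\tr$ (the two expansions are over the same set $\mathscr{G}^\rn_{\eta,k,V}$ with now-identical per-contour weights), and the final assertion $Z_{\eta,k,V}=Z_{\eta,k,V}^\tr$ for simply connected $V$ is then immediate from \cref{lem:part-function-renormalized-weights}.

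I expect the main obstacle to be a bookkeeping one rather than a conceptual one: making the induction airtight requires being scrupulous that at every invocation of \cref{lem:truncated-partition-function-cluster-expansion} on a domain $\Int(\gamma)$ (or $U$), the truncated and renormalized partition functions genuinely coincide there — which is precisely what the inductive hypothesis buys, but only because every proper sub-contour has strictly smaller interior. The one place this could fail is a self-enclosing contour $\gamma_V$, and handling it needs the auxiliary estimate on $W^\rn_k(\gamma_V)$ noted above; I would borrow the mechanism of \cref{lem:no-fully-enclosing-contour} (the single-site modification $M_x$) if the crude cluster-expansion bound with an $O(|\gamma_V|)$ boundary term is not by itself sharp enough at the relevant $\beta$, though I anticipate $e^{Ce^{-\beta/2}|\gamma_V|}\le e^{5|\gamma_V|}$ suffices directly for $\beta>\beta_0$.
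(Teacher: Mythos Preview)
Your inductive skeleton and the endgame (\cref{eq:Z-rn-eq-Z-tr-full-window} and the simply-connected case via \cref{lem:part-function-renormalized-weights}) are fine, but there is a genuine gap in the core step. You assert that, by the inductive hypothesis ``applied contour-by-contour'', $Z_{+,k+1,\Int(\gamma)}$ equals its truncated version $Z_{+,k+1,\Int(\gamma)}^\tr$. The inductive hypothesis, however, is the identity $W_k^\rn(\gamma')=W_k^\tr(\gamma')$ for smaller $\gamma'$ at the \emph{current} $\lambda\in[\lambda_c^{(k)},\lambda_c^{(k-1)}]$; it says nothing about $W_{k+1}^\rn$ versus $W_{k+1}^\tr$, which is what you would need to pass from $Z_{+,k+1}^\rn$ to $Z_{+,k+1}^\tr$. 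And in fact, at a generic $\lambda$ strictly inside $(\lambda_c^{(k)},\lambda_c^{(k-1)})$, large down-contours $\gamma'$ based at height $k+1$ can have $W_{k+1}^\rn(\gamma')\gg W_{k+1}^\tr(\gamma')$, precisely because $f_k^\tr>f_{k+1}^\tr$ there (cf.\ the diameter restriction in \cref{cor:non-elem-control-smaller-than-delta-tr}). So $Z_{+,k+1,\Int(\gamma)}\ne Z_{+,k+1,\Int(\gamma)}^\tr$ in general, and you only have the useless inequality $Z_{+,k+1}\ge Z_{+,k+1}^\tr$ from \cref{eq:Z-tr-leq-Z-rn}. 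The symmetric issue arises for down-contours at height $k-1$.

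The paper closes this gap with an extra monotonicity move that your proposal is missing. After removing a single boundary site to strictly shrink the domain (so the inductive hypothesis becomes available), it uses \cref{lem:monotonicity-part-function-ratio} to \emph{decrease} $\lambda$ to $\lambda_c^{(k)}$, which only increases the ratio $Z_{k+1}/Z_k$. At $\lambda=\lambda_c^{(k)}$ the pair $(k+1,\lambda)$ also satisfies the proposition's hypothesis (right endpoint of $[\lambda_c^{(k+1)},\lambda_c^{(k)}]$), so the inductive hypothesis now legitimately yields both $Z_{k+1}=Z_{k+1}^\tr$ and $Z_k=Z_k^\tr$ on the smaller domain; moreover $f_{k+1}^\tr=f_k^\tr$ there by \cref{prop:lambda_c}, and \cref{lem:truncated-partition-function-cluster-expansion} finishes as you intended. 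A second monotonicity argument (pushing $\lambda$ into $I_{k+1}$ and invoking \cref{lem:rigidity-in-I-windows}) handles the single-site factor $\big(\mu_{+,k+1,V\cup x}(\varphi_x=k+1)\big)^{-1}$ produced by the site removal. Without these monotonicity-in-$\lambda$ steps, the induction does not close.
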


\begin{proof}
    We prove the proposition by induction. \cref{eq:W-tr-eq-W-rn} is already proved for elementary contours in \cref{thm:main-elementary-general-bc} so it suffices to show it for $\gamma$ that is not $k$-elementary. 
    Suppose \cref{eq:W-tr-eq-W-rn} has been shown for every contour $\gamma'$ having $|\Int(\gamma')|\le m$ and show it for $\gamma$ such that $|\Int(\gamma)| = m+1$. For ease of notation, let $V\cup\{x\} = \Int(\gamma)$ where $x$ is a prescribed point on the boundary $\partial_i(V\cup \{x\})$ such that $V$ is simply connected. Recalling the definition \cref{eq:W-tr}, if $\gamma$ is an up-contour, we wish to show
        \begin{align}\label{eq:wts-nonelem-induction}
        \frac{Z_{+,k+1,V\cup x}}{Z_{+,k,V\cup x}} \le e^{|\gamma|}\,.
    \end{align}
    If we let $\tilde +$ be the boundary signing on $\partial_i V$ that is $+$ on all vertices in $\partial_i (V\cup x)$, and free on $\partial_i V\setminus \partial_i (V\cup x)$, the first bound we make in this direction is     
    \begin{align*}
          \frac{Z_{+,k+1,V\cup x}}{Z_{+,k,V\cup x}} \le e^{\lambda k} \frac{Z_{+,k+1,V\cup x}}{Z_{\tilde +,k,V}} = e^{ \lambda k} \frac{Z_{\tilde +,k+1,V}}{Z_{\tilde +,k,V}} \frac{Z_{+,k+1,V\cup x}}{Z_{\tilde +,k+1,V}}\,,
    \end{align*}
    using in the first inequality the fact that $Z_{+,k,V\cup x}\ge e^{ - \lambda k} Z_{\tilde +,k,V}$. 
    The second ratio on the right is exactly 
    $$\big(\mu_{+,k+1,V\cup x}(\varphi_{x} = k+1)\big)^{-1} = \big(\mu_{+,k+1,V\cup x}(\varphi_{x} \le k+1)\big)^{-1}\,.$$ 
     By monotonicity, the probability of $\{\varphi\le k+1\}$ is only increasing as we increase $\lambda$, and so the right-hand side above increases as we decrease $\lambda$. Therefore, we upper bound the right-hand side by decreasing $\lambda$ to $\lambda'\in I_{k+1}$. At that point, the rigidity ~\cref{lem:rigidity-in-I-windows} implies the above display is at most $1+\epsilon_\beta$. 

    Plugging this bound in, we are left with 
    \begin{align}\label{eq:Z-ratio-intermediate-step}
        \frac{Z_{ +,k+1,V\cup x}}{Z_{ +,k,V\cup x}} \le (1+\epsilon_\beta) e^{\lambda k} \frac{Z_{\tilde +,k+1,V}}{Z_{\tilde +,k,V}}\,.
    \end{align}
    For the ratio on the smaller domain $V$, we first use \cref{lem:monotonicity-part-function-ratio} (saying that the ratio is decreasing in $\lambda$) to increase the ratio by decreasing $\lambda$ to $\lambda_c^{(k)}$, at which, by the inductive hypothesis, both partition functions are exactly their truncated versions. 
   By definition of $\lambda_c^{(k)}$ and \cref{prop:lambda_c}, we have $f^\tr_{k+1} = f^\tr_{k}$ at $\lambda_c^{(k)}$, whence 
    by \cref{lem:truncated-partition-function-cluster-expansion}, 
    \begin{align*}
        \frac{Z^\tr_{\lambda_c^{(k)},\tilde +,k+1,V}}{Z^\tr_{\lambda_c^{(k)},\tilde +,k,V}} \le \frac{Z^\tr_{\lambda_c^{(k)},\tilde +,k+1,V}}{Z^\tr_{\lambda_c^{(k)},\tilde +,k,V}} \le e^{ 2e^{ - \beta/2}|\partial_e V|}\,.
    \end{align*}
    Plugging this bound into \cref{eq:Z-ratio-intermediate-step}, we are left with 
    \begin{align*}
         \frac{Z_{+,k+1,V}}{Z_{\tilde +,k,V}}  \le (1+\epsilon_\beta)e^{ \lambda k} e^{ 2e^{ - \beta/2}|\partial_e V|}\,.
    \end{align*}
    Since $\lambda\le e^{- 4\beta k+2\beta}\wedge 1$, we have $(1+\epsilon_\beta) e^{\lambda k}\le e^2$ say.  
    Since we are assuming the confining contour $\gamma$ bounding the region $V\cup x$ is not $k$-elementary, it has $|\gamma|\ge e^{ 3\beta}$, so for $\beta$ large,
    \begin{align*}
        2e^{ - \beta/2} |\partial_e V| + 2 \le |\partial_e (V\cup x)|\,,
    \end{align*}
    which together with the above implies the claimed bound of \cref{eq:wts-nonelem-induction} since $|\partial_e (V \cup x)| = |\gamma|$. 

    If $\gamma$ is instead a negative contour, the argument proceeds analogously. Here, $$(\mu_{-,k-1,V\cup x}(\varphi_x \ge k-1))^{-1}\,,$$ will  be increasing as we increase $\lambda$, and taking $\lambda$ to $\lambda'\in I_{k-1}$ is an increase of $\lambda$. Similarly, the ratio $Z_{\tilde -,k-1,V}/Z_{\tilde -,k,V}$ will be increasing in $\lambda$, so we can increase $\lambda$ to $\lambda_c^{(k-1)}$ as needed. 
\end{proof}

    We can also deduce the following, showing exponential tails on all contours based at heights $k\pm 1$ if they are small as compared to $\dl(\lambda_n)$ from \cref{eq:dl} so the free energy difference does not overcome the cost in the boundary. Later, this will be crucial to understanding bottlenecks and the formation of critical droplets when $\lambda$ is microscopically close to a critical value. 
    
\begin{corollary}\label{cor:non-elem-control-smaller-than-delta-tr}
    Suppose $\lambda \in [\lambda_c^{(k)},\lambda_c^{(k-1)}]$. If $\gamma$ is a down contour and $\diam(\gamma)\le (f^\tr_k - f^\tr_{k+1})^{-1}$, then $ W^\rn_{k+1}(\gamma) =  W_{k+1}^\tr(\gamma)$, and if $\gamma$ is an up contour with $\diam(\gamma)\le (f^\tr_k - f^\tr_{k-1})^{-1}$ then $ W^\rn_{k-1}(\gamma) =  W_{k-1}^\tr(\gamma)$. 
\end{corollary}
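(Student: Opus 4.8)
The plan is to reduce both claimed identities to a single a priori bound on a renormalized weight: by the definition \cref{eq:W-tr} of $W^{\tr}$, we have $W^\rn_{k+1}(\gamma)=W^{\tr}_{k+1}(\gamma)$ precisely when $W^\rn_{k+1}(\gamma)\le e^{-(\beta-5)|\gamma|}$ (and likewise $W^\rn_{k-1}(\gamma)=W^{\tr}_{k-1}(\gamma)$ iff $W^\rn_{k-1}(\gamma)\le e^{-(\beta-5)|\gamma|}$), so it suffices to establish these two inequalities. The point of the diameter hypothesis is that it will let the bulk free-energy gain win against the surface cost only by a factor that still leaves a $\beta|\gamma|$-sized margin.

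I would treat the down-contour case first. Write $U=\Int(\gamma)$, which is simply connected with $|\partial_e U|=|\gamma|$. By \cref{def:contour-renormalized-weight}, $W^\rn_{k+1}(\gamma)=e^{-\beta|\gamma|}\,Z_{-,k,U}/Z_{-,k+1,U}$. The crucial observation is that the numerator sits at reference height $k$, which is the height dominating the truncated free energy on the window $[\lambda_c^{(k)},\lambda_c^{(k-1)}]$: hence by \cref{prop:non-elem-control-entire-window} (together with \cref{lem:part-function-renormalized-weights}) it equals its truncated version, $Z_{-,k,U}=Z^{\tr}_{-,k,U}$. For the denominator, at reference height $k+1$, I only have the one-sided comparison $Z_{-,k+1,U}=Z^\rn_{-,k+1,U}\ge Z^{\tr}_{-,k+1,U}$ from \cref{eq:Z-tr-leq-Z-rn} --- but that is exactly the direction needed. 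Therefore $W^\rn_{k+1}(\gamma)\le e^{-\beta|\gamma|}\,Z^{\tr}_{-,k,U}/Z^{\tr}_{-,k+1,U}$, and applying the cluster-expansion estimate \cref{lem:truncated-partition-function-cluster-expansion} at heights $k$ and $k+1$ gives $\log\bigl(Z^{\tr}_{-,k,U}/Z^{\tr}_{-,k+1,U}\bigr)\le |U|(f^{\tr}_k-f^{\tr}_{k+1})+2e^{-\beta/2}|\gamma|$. It then remains to absorb the volume term using the isoperimetric bound $|U|=|\Int(\gamma)|\le |\gamma|\,\diam(\gamma)$ (as used in the proof of \cref{thm:main-elementary-general-bc}) and the hypothesis $\diam(\gamma)\le(f^{\tr}_k-f^{\tr}_{k+1})^{-1}$: this yields $|U|(f^{\tr}_k-f^{\tr}_{k+1})\le|\gamma|$, whence $\log W^\rn_{k+1}(\gamma)\le-(\beta-1-2e^{-\beta/2})|\gamma|\le-(\beta-5)|\gamma|$ for $\beta$ large. (By \cref{prop:lambda_c} the difference $f^{\tr}_k-f^{\tr}_{k+1}$ is nonnegative on the window, and in the degenerate case $f^{\tr}_k=f^{\tr}_{k+1}$, which happens only at $\lambda=\lambda_c^{(k)}$, both the hypothesis and the inequality $|U|(f^{\tr}_k-f^{\tr}_{k+1})\le|\gamma|$ are vacuous.)

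The up-contour case is carried out identically, now with $W^\rn_{k-1}(\gamma)=e^{-\beta|\gamma|}\,Z_{+,k,U}/Z_{+,k-1,U}$: the numerator is again at reference height $k$ and hence equals $Z^{\tr}_{+,k,U}$ by \cref{prop:non-elem-control-entire-window}, the denominator obeys $Z_{+,k-1,U}\ge Z^{\tr}_{+,k-1,U}$, and the same cluster-expansion-plus-isoperimetry step with $f^{\tr}_k-f^{\tr}_{k-1}$ in place of $f^{\tr}_k-f^{\tr}_{k+1}$ closes it. I do not expect a genuine obstacle here; the one place that needs care --- and the reason the corollary is phrased around reference heights $k\pm1$ --- is that the renormalized and truncated partition functions are only \emph{known} to coincide at the reference height $k$ that dominates the free energy on this window, so the numerators must be routed through \cref{prop:non-elem-control-entire-window} while the denominators, at heights $k\pm1$, are handled solely by the trivial inequality $Z^\rn\ge Z^{\tr}$, which happens to point in the useful direction.
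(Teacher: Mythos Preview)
Your proposal is correct and follows essentially the same route as the paper: replace the numerator by its truncated version via \cref{prop:non-elem-control-entire-window}, lower bound the denominator via \cref{eq:Z-tr-leq-Z-rn}, apply \cref{lem:truncated-partition-function-cluster-expansion} to each, and absorb the $|U|(f^\tr_k-f^\tr_{k\pm 1})$ term using $|\Int(\gamma)|\le|\gamma|\,\diam(\gamma)$ together with the diameter hypothesis. The paper arrives at the slightly sharper $e^{-(\beta-2)|\gamma|}$ but otherwise the arguments are identical.
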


\begin{proof}
        Consider $\gamma$ a down contour having $\diam(\gamma) \le (f^\tr_k - f^\tr_{k+1})^{-1}$. For these, we have 
    \begin{align*}
        \frac{Z_{-,k,\Int(\gamma)}}{Z_{-,k+1,\Int(\gamma)}} \le \frac{Z^\tr_{-,k,\Int(\gamma)}}{Z_{-,k+1,\Int(\gamma)}^\tr}\,,
    \end{align*}
    where we used \cref{eq:Z-rn-eq-Z-tr-full-window} on the numerator, and \cref{eq:Z-tr-leq-Z-rn}. By \cref{lem:truncated-partition-function-cluster-expansion} applied to both terms on the right, 
\begin{align*}
     W^\rn_{k+1}(\gamma) \le \exp \big( - \beta|\gamma| + (f^\tr_k - f^\tr_{k+1})^{-1} |\Int(\gamma)| + 2e^{ - \beta/2}|\gamma|\big)\,. 
    \end{align*}
    Using the assumption that $\diam(\gamma)\le (f^\tr_k - f^\tr_{k+1})^{-1}$, and $|\Int(\gamma)| \le \diam(\gamma) \cdot |\gamma|$, we get that this is at most $\exp( - (\beta - 2)|\gamma|)$ as claimed. The reasoning for a up contour from $k-1$ is analogous.  
\end{proof}

We will also use the following more technical corollary deducing for the full, as opposed to truncated, partition functions that up to an error of $\epsilon_\beta |\partial_e U|$, the full partition function on $U$ with boundary $k$ is dominant when $\lambda \in [\lambda_c^{(k)},\lambda_c^{(k-1)}]$ (cf.\ the crude error size $\beta |\partial_e U|$ one gets by forcing the configuration values along the boundary).

\begin{corollary}\label{cor:part-function-comparison-full-window}
    Consider an arbitrary connected domain $U$ (possibly not simply connected) and any boundary signings $\eta$ and $\zeta$. If $\lambda \in [\lambda_c^{(k)},\lambda_c^{(k-1)}]$, then for all $j$, 
    \begin{align*}
        \frac{Z_{\eta,j,U}}{Z_{\zeta,k,U}}\le e^{ e^{-\beta/3}|\partial_e U| \, |j-k|}\,.
    \end{align*}
\end{corollary}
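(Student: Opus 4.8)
The plan is to move the boundary height from $j$ down (or up) to $k$ one unit at a time, bounding each single-height step by the monotonicity of partition-function ratios in $\lambda$ together with a slide of $\lambda$ to the critical point separating the two heights involved (where the relevant truncated free energies coincide and the cluster expansion of \cref{lem:truncated-partition-function-cluster-expansion} applies). First I would assume $j>k$, the case $j<k$ being the mirror image (up-contours in place of down-contours, the floor at $0$ playing no role since one never descends below height $k\ge0$) and the case $j=k$ being immediate from \cref{lem:truncated-partition-function-cluster-expansion} as in the prefactor bound below. Writing
\[
\frac{Z_{\eta,j,U}}{Z_{\zeta,k,U}} \;=\; \frac{Z_{\eta,k,U}}{Z_{\zeta,k,U}}\;\prod_{m=k+1}^{j}\frac{Z_{\eta,m,U}}{Z_{\eta,m-1,U}}\,,
\]
it suffices to bound the prefactor and each of the $j-k$ factors by $e^{2e^{-\beta/2}|\partial_e U|}$, since then $2e^{-\beta/2}(j-k+1)\le e^{-\beta/3}|j-k|$ for $\beta$ large. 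The prefactor is controlled directly: for $\lambda\in[\lambda_c^{(k)},\lambda_c^{(k-1)}]$, \cref{prop:non-elem-control-entire-window} (together with the renormalization identity $Z_{\eta,k,U}=Z^\rn_{\eta,k,U}$) gives $Z_{\eta,k,U}=Z^\tr_{\eta,k,U}$ and likewise for $\zeta$, so by \cref{lem:truncated-partition-function-cluster-expansion} both logs are within $e^{-\beta/2}|\partial_e U|$ of $|U|f^\tr_k$ and the ratio is at most $e^{2e^{-\beta/2}|\partial_e U|}$.

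For a single factor with $k<m\le j$: raising the constant boundary height from $m-1$ to $m$ with a fixed boundary signing $\eta$ raises $(\mathbf a,\mathbf b,\phi)$ (by \cref{def:boundary-signing}), so \cref{lem:monotonicity-part-function-ratio} shows $Z_{\eta,m-1,U}/Z_{\eta,m,U}$ is non-decreasing in $\lambda$, i.e.\ $Z_{\eta,m,U}/Z_{\eta,m-1,U}$ is non-increasing in $\lambda$. Since $m-1\ge k$ we have $\lambda\ge\lambda_c^{(k)}\ge\lambda_c^{(m-1)}$, so this ratio at our $\lambda$ is at most its value at $\lambda=\lambda_c^{(m-1)}$. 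There both $m$ and $m-1$ lie in their closed stability windows $[\lambda_c^{(m)},\lambda_c^{(m-1)}]$ and $[\lambda_c^{(m-1)},\lambda_c^{(m-2)}]$, so \cref{prop:non-elem-control-entire-window} makes both partition functions coincide with their truncated versions, and \cref{prop:lambda_c} gives $f^\tr_m=f^\tr_{m-1}$ at $\lambda_c^{(m-1)}$; hence \cref{lem:truncated-partition-function-cluster-expansion} bounds the ratio there by $e^{2e^{-\beta/2}|\partial_e U|}$. For $j<k$ one telescopes $Z_{\eta,m,U}/Z_{\eta,m+1,U}$ over $j\le m<k$: this ratio is now non-decreasing in $\lambda$, and $\lambda\le\lambda_c^{(k-1)}\le\lambda_c^{(m)}$, so one slides $\lambda$ up to $\lambda_c^{(m)}$, where $f^\tr_m=f^\tr_{m+1}$ and the same bound applies.

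I do not expect a serious obstacle. The one genuinely necessary idea is that the telescoping must proceed one height at a time: sliding $\lambda$ all the way to a single critical point to compare $Z_{\eta,j,U}$ with $Z_{\zeta,k,U}$ directly overshoots once $|j-k|\ge2$, since no single value of $\lambda$ equates three or more of the $f^\tr_i$. The remaining points only require care, not ingenuity: checking that $\lambda$ lies on the side of each intermediate critical point that makes the monotone slide increase the ratio (this is exactly $\lambda_c^{(m-1)}\le\lambda_c^{(k)}\le\lambda$ going down and $\lambda\le\lambda_c^{(k-1)}\le\lambda_c^{(m)}$ going up, using that $(\lambda_c^{(i)})_i$ is decreasing); verifying the stochastic-domination hypothesis of \cref{lem:monotonicity-part-function-ratio} from \cref{def:boundary-signing}; and observing that the identity $Z_{\eta,h,U}=Z^\rn_{\eta,h,U}$ (hence $=Z^\tr_{\eta,h,U}$ inside a stability window) extends to non-simply-connected $U$, since the outermost-contour peeling of \cref{lem:part-function-renormalized-weights} is insensitive to the topology of $U$ once contours encircling holes are admitted as in the definition of $Z^\rn$ on such domains.
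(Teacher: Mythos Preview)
Your overall strategy---telescoping the boundary height one unit at a time and sliding $\lambda$ to the relevant critical point via \cref{lem:monotonicity-part-function-ratio}---is exactly the paper's approach. The gap is in your treatment of non--simply-connected $U$. You assert that the identity $Z_{\eta,h,U}=Z^\rn_{\eta,h,U}$ extends to such $U$ because the outermost-contour peeling of \cref{lem:part-function-renormalized-weights} is ``insensitive to topology once contours encircling holes are admitted,'' but this fails: if an outermost contour $\gamma$ encircles a hole $H$, peeling $\gamma$ leaves a configuration on $\Int(\gamma)\setminus H$ with boundary height $h\pm1$ along $\gamma$ but boundary height $h$ along $\partial H$, which is \emph{not} the simply-connected $Z_{\pm,h\pm1,\Int(\gamma)}$ entering the definition of $W^\rn_h(\gamma)$. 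So for non--simply-connected $U$ one only has $Z^\rn_{\eta,k,U}=Z^\tr_{\eta,k,U}$ from \cref{prop:non-elem-control-entire-window}, not $Z_{\eta,k,U}=Z^\tr_{\eta,k,U}$, and your prefactor and single-step bounds do not go through as written.

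The paper's proof repairs exactly this point. For the numerator it fills the holes, bounding $Z_{\eta,k+1,U}\le Z_{\eta,k+1,U\cup\bigcup_i H_i}/\prod_i Z_{\eta,k+1,H_i}$, where every domain appearing is simply connected and hence equal to its truncated version. For the denominator it restricts to configurations with no hole-encircling contours, $Z_{\zeta,k,U}\ge\bar Z_{\zeta,k,U}=\bar Z^\rn_{\zeta,k,U}=\bar Z^\tr_{\zeta,k,U}$, and then compares $\bar Z^\tr$ to $Z^\tr$ by cluster expansion. These manipulations count edges of $\partial_e U$ up to three times, which is why the final exponent is $e^{-\beta/3}$ rather than $2e^{-\beta/2}$. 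Your telescoping and monotonicity steps are otherwise correct and match the paper; the hole argument is the one missing ingredient.
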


\begin{proof}
    Suppose first that $j = k+1$. By \cref{lem:monotonicity-part-function-ratio}, we can decrease $\lambda$ to $\lambda_c^{(k)}$, only increasing the ratio between the partition functions. Since \cref{lem:part-function-renormalized-weights}  only applied to simply connected domains, we cannot move to $Z^\tr$ immediately. Instead, if we call $H_i$ the holes of $U$ 
    on the one hand, at $\lambda_c^{(k)}$,
    \begin{align*}
        Z_{\eta,k+1,U} \le \frac{Z_{\eta,k+1, U \cup \bigcup_i H_i}}{\prod_i Z_{\eta,k+1,H_i}} = \frac{Z_{\eta,k+1, U \cup \bigcup_i H_i}^\tr}{\prod_i Z_{\eta,k+1,H_i}^\tr}\,,
    \end{align*}
    where the inequality is by inclusion, and the equality uses \cref{prop:non-elem-control-entire-window} and simply-connectedness of $U \cup \bigcup_i H_i$, and of $H_i$. By \cref{lem:truncated-partition-function-cluster-expansion}, therefore, 
    \begin{align}\label{eq:Z-k+1-not-sc}
         Z_{\eta,k+1,U} \le \exp\Big( f^\tr_{k+1} |U| + e^{ - \beta/2} (|\partial_e (U\cup \bigcup_i H_i)| + \sum_i |\partial_e H_i|)\Big)\,.
    \end{align}
    
    On the other hand, if we define $\bar Z_{\zeta,k,U}$ as the restricted partition function where no contours confining any hole $H_i$ in their interior are allowed, then 
    \begin{align*}
        Z_{\zeta,k,U} \ge \bar Z_{\zeta,k,U}  = \bar Z^\rn_{\zeta,k,U} = \bar Z^\tr_{\zeta,k,U}\,.
    \end{align*}
    From the convergence of the cluster expansion for $Z_{\zeta,k,U}^\tr$, it is not hard to deduce that 
    \begin{align*}
        \frac{\bar{Z}_{\zeta,k,U}^\tr}{Z_{\zeta,k,U}^\tr} \ge \exp \big( - e^{ - \beta/2} \sum_i |\partial_e H_i|\big)\,.
    \end{align*}
    Combining the above two, and applying \cref{lem:truncated-partition-function-cluster-expansion} to $Z_{\zeta,k,U}^\tr$, we get 
    \begin{align*}
        Z_{\zeta,k,U} \ge \exp\Big( f^\tr_{k} |U| - e^{ - \beta/2}(|\partial_e U| + \sum_i |\partial_e H_i|)\Big)\,.
    \end{align*}
    Combined with \cref{eq:Z-k+1-not-sc} and the fact from \cref{prop:lambda_c} that the limiting truncated free energies $f^\tr_{k+1}$ and $f^\tr_k$ are equal at $\lambda_c^{(k)}$, we get 
    \begin{align*}
        \frac{Z_{\eta,k+1,U}}{Z_{\zeta,k,U}} \le \exp\Big( e^{- \beta/2} \big(|\partial_e U| + 2 \sum_i |\partial_e H_i| + |\partial_e (U \cup \bigcup_i H_i)|\big)\Big)\,.
    \end{align*}
    That sum of the various boundary terms counts each edge in $\partial_e U$ at most three times, leaving that the above is at most $e^{ 3 e^{ - \beta/2}|\partial_e U|} \le e^{ e^{ - \beta/3} |\partial_e U|}$ at large $\beta$. 
    If $j= k+\ell$ more generally, we first express 
    \begin{align*}
        \frac{Z_{\eta,j,U}}{Z_{\zeta,k,U}} = \frac{Z_{\eta,k+\ell,U}}{Z_{\eta,k+\ell-1,U}} \cdots \frac{Z_{\eta,k+1,U}}{Z_{\zeta,k,U}}\,.
    \end{align*}
    We can then evaluate the last term at $\lambda_c^{(k+1)}$, the second-to-last one at $\lambda_c^{(k+2)}$, etc, each of these only increasing the ratio of partition functions per \cref{lem:monotonicity-part-function-ratio}, and then being calculated as above. 
    The other direction, where $j<k$ is analogous, with the increase of $\lambda$ to $\lambda_c^{(k-1)}$ increasing the partition function ratio.  
\end{proof}

\subsection{A geometric rigidity statement about height 
\texorpdfstring{$k$}{k} when 
\texorpdfstring{$\lambda \in [\lambda_c^{(k)},\lambda_c^{(k-1)}]$}
{lambda is in [lambda\_c(k),lambda\_c(k-1)]}}\label{subsec:rigidity-height-k} 
As a consequence of \cref{prop:non-elem-control-entire-window}, we can deduce rigidity of the SOS distribution with height $k$ boundary conditions just like \cref{lem:rigidity-in-I-windows} throughout the entire regime $\lambda \in [\lambda_c^{(k)},\lambda_c^{(k-1)}]$: for all $\eta$ and all $x\in V$
\begin{align}\label{eq:height-deviation-k-full-window}
     \mu_{\eta,k,V}(|\varphi_x - h| \ge r)\le e^{ - 4(\beta - 6)r }\,, \quad \text{for all $\lambda\in [\lambda_c^{(k)},\lambda_c^{(k-1)}]$}\,.
\end{align}
We go beyond that, showing a more geometric rigidity statement. Namely, if $\lambda \in [\lambda_c^{(k)},\lambda_c^{(k-1)}]$, we show that the set of height $k$ sites qualitatively behave like a highly supercritical percolation. 

For this, we will need a few geometric definitions. 
\begin{definition}\label{def:height-loop}
    We say a set of sites $L\subset \mathbb Z^2$ forms a \emph{loop} if the vertices of $L$ can be ordered $v_0,...,v_{|L|}$ such that $v_0=v_{|L|}$, for every other $i\ne j$, $v_i\ne v_j$, and $v_i \sim v_{i+1}$ for all $i$. They form a loop surrounding a set $A$ if $A$ is in a finite connected component of $\mathbb R^2 \setminus L$. 
\end{definition}

Given a configuration $\varphi$ on a domain $U$ we say $\cL\subset U$ is a height-$k$ loop if it is a loop and $\varphi_x =k$ for all $x\in \cL$. For a fixed set $A$, the \emph{outermost} height-$k$ loop in $U$ surrounding $A$ is measurable with respect to the sites exterior to it. In particular, this outermost loop can be revealed by starting from $\partial_i U$ and revealing its $*$-connected components (connected components with respect to $*$-adjacency, meaning adjacent or diagonally adjacent to) of non-height-$k$ sites. 

\begin{lemma}\label{lem:rigidity-height-k-contour}
    Consider a domain $V$ containing $\Lambda_{m+r}$, with boundary conditions $k$ and suppose $\lambda \in [\lambda_c^{(k)},\lambda_c^{(k-1)}]$. For every $\delta>0$, the outermost loop $\cL$ of height-$k$ sites in $\Lambda_{m+r}$ surrounding the origin satisfies  
	\begin{align*}
		|\cL|\le 4(1+\delta) (m+r)\,, \qquad \text{and} \qquad \Lambda_m \subset \Int(\cL)\,,
	\end{align*}
	except with $\mu_{\eta,k,V}$-probability $m e^{ - (\beta - C)r}+ e^{ - \delta (\beta - C)(m+r)}$.
\end{lemma}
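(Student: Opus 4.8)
The plan is to recast the geometric event in terms of the outermost contours based at height $k$ and then run a Peierls-type union bound, controlling all such contours (elementary or not) via \cref{prop:non-elem-control-entire-window}. Recall that under $\mu_{\eta,k,V}$ the sites with $\varphi_x\ne k$ are exactly those lying in the interior of some outermost (renormalized) contour based at height $k$; these outermost contours are mutually external; by the injective deletion map on $\mathscr{G}^\rn$ (as in \cref{lem:part-function-renormalized-weights} and \cref{lem:Peierls-renormalized-elementaries}) the probability that a prescribed mutually external family $\gamma_1,\dots,\gamma_j$ occurs as outermost contours is at most $\prod_i W^\rn_k(\gamma_i)$; and by \cref{prop:non-elem-control-entire-window} every contour satisfies $W^\rn_k(\gamma)=W^\tr_k(\gamma)\le e^{-(\beta-5)|\gamma|}$ throughout $\lambda\in[\lambda_c^{(k)},\lambda_c^{(k-1)}]$. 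I split the complementary event into $\{\Lambda_m\not\subseteq\Int(\cL)\}$ and $\{\Lambda_m\subseteq\Int(\cL),\ |\cL|>4(1+\delta)(m+r)\}$.

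For the first piece: any height-$k$ loop in $\Lambda_{m+r}$ surrounding $\Lambda_m$ also surrounds the origin, hence is nested inside the outermost such loop $\cL$, so $\Lambda_m\not\subseteq\Int(\cL)$ forces that no height-$k$ loop surrounds $\Lambda_m$, and hence (as any such loop would lie in the annulus $\Lambda_{m+r}\setminus\mathring\Lambda_m$) by planar $*$-versus-nearest-neighbour duality there is a $*$-connected path of non-height-$k$ sites crossing that annulus, whose $\ell^\infty$-width is $r/2$. This path lies in the union $W$ of interiors of a collection of outermost $k$-contours $\gamma_1,\dots,\gamma_j$; since $W$ spans the full width of the annulus in one coordinate direction, a column count gives $|\partial_e W|\ge r$, and as $|\partial_e W|\le\sum_i|\gamma_i|$ the total contour length is $\ge r$. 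These contours form a $*$-connected chain (ordered along the path), which can be rooted at one of the $\le 4m+O(1)$ sites on the inner frame of the annulus; using that the number of such chains of total length $\ell$ through a fixed site is at most $C^\ell$, the deletion bound yields $\le (4m+O(1))\sum_{\ell\ge r}C^\ell e^{-(\beta-5)\ell}\le m\,e^{-(\beta-C)r}$ for $\beta$ large.

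For the second piece: in the revealing description of $\cL$ one explores inward from $\partial_i\Lambda_{m+r}$, peeling $*$-connected clusters of non-height-$k$ sites, and $\cL$ is the first good circuit met, so $\cL$ lies in the edge-boundary of $\partial_i\Lambda_{m+r}$ together with the bad $*$-clusters attached to it; this gives $|\cL|\le 4(m+r)+2\sum_{\gamma\in\mathcal C_\partial}|\gamma|$, where $\mathcal C_\partial$ is the collection of outermost $k$-contours whose interior meets a boundary-attached bad cluster, so that $\{|\cL|>4(1+\delta)(m+r)\}\subseteq\{\sum_{\gamma\in\mathcal C_\partial}|\gamma|>2\delta(m+r)\}$. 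To estimate this probability I would \emph{not} union bound over $\mathcal C_\partial$ directly (this overcounts: there are $(m+r)^{\Theta(\ell)}$ ways to attach clusters of total length $\ell$ along the boundary, not absorbable by $e^{-(\beta-C)\ell}$ uniformly in $n$); instead, peel the boundary bad clusters one vertex of $\partial_i\Lambda_{m+r}$ at a time. Given the already-revealed region — including the height-$k$ sites bounding previously revealed clusters — the domain Markov property and the bound $W^\rn_k(\gamma)\le e^{-(\beta-5)|\gamma|}$ show the total contour length of the next revealed cluster is stochastically dominated by $Y$ with $\mathbb P(Y\ge x)\le e^{-(\beta-C)\max\{x,1\}}$, so $\sum_{\gamma\in\mathcal C_\partial}|\gamma|$ is stochastically dominated by $\sum_{i=1}^{4(m+r)}Y_i$ with $Y_i$ i.i.d.; a Chernoff bound then gives $\mathbb P\big(\sum_i Y_i\ge 2\delta(m+r)\big)\le \exp\big((m+r)(Ce^{-(\beta-C)/2}-\delta(\beta-C))\big)\le e^{-\delta(\beta-C)(m+r)}$ for $\beta$ large (with $\delta$ bounded below, the regime of use). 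Adding the two contributions gives the lemma.

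The step I expect to be the main obstacle is precisely this last concentration estimate: the naive Peierls union bound over which contours belong to $\mathcal C_\partial$ fails because of the $(m+r)^{\Theta(\ell)}$ entropy of ways to attach small clusters along the boundary, so one genuinely needs the sequential revealing of boundary clusters together with the domain Markov property to reduce to a sum of i.i.d.-type increments with exponential tails. The two ``covered by a chain of contours'' geometric reductions and the use of planar duality are routine, but the chains must be defined with care — ordering the covering contours along the crossing path in the first piece, and apportioning contours shared between $*$-adjacent bad clusters in the second — to keep the counting honest.
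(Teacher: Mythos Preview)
Your decomposition and the treatment of the first piece ($\Lambda_m\not\subset\Int(\cL)$ via a $*$-crossing of the annulus covered by a chain of outermost $k$-contours) are essentially the same as the paper's.

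Where you diverge is in the second piece, and the divergence stems from an overcount. You assert that the direct Peierls union bound over $\mathcal C_\partial$ fails because ``there are $(m+r)^{\Theta(\ell)}$ ways to attach clusters of total length $\ell$ along the boundary.'' This is not the right count. Every contour in $\mathcal C_\partial$ belongs to a $*$-cluster of bad sites touching the frame $\partial_i\Lambda_{m+r}$; each such cluster is a connected chain of mutually external contours rooted at some frame vertex. Rooting each cluster at its first frame vertex in cyclic order, the enumeration factors as: a composition of $\ell$ among the $4(m+r)$ potential roots (at most $2^{\ell+4(m+r)}$ choices), times the number of rooted connected contour-chains of each prescribed length (at most $C^{\ell}$ total). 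Thus the count is $2^{4(m+r)}C^{\ell}$, not $(m+r)^{\Theta(\ell)}$. Since we are on $\ell\ge\delta(m+r)$, the prefactor $2^{4(m+r)}$ is absorbed by $e^{-(\beta-5)\ell}$ and the direct Peierls bound yields $e^{-\delta(\beta-C)(m+r)}$ immediately. This is exactly what the paper does.

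Your sequential-revealing-plus-Chernoff route is correct (the conditional tail on each newly revealed cluster does hold by domain Markov and the same chain count), and it even gives a slightly tidier extraneous factor $e^{O(e^{-\beta/2})(m+r)}$ in place of $2^{4(m+r)}$. But since $\delta$ is bounded below in every application of this lemma, the simpler argument suffices and the extra machinery buys nothing here.
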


\begin{proof}
	Consider the outermost contours under $\mu_{\eta,k,V}$ whose interiors intersect $\partial_i \Lambda_{m+r}$, as well as any mutually external contours which are incident to those, and so forth. This leaves a collection of contours, call it $G$ the union of whose interiors confines the $*$-connected components of non-height-$k$ sites of $\partial_i U$. 
 
    The probability that $|\cL|>4(1+\delta)(m+r)$ is bounded by the probability that $\sum_{\gamma\in G} |\gamma| >\delta (m+r)$. This is governed by a Peierls map, since all contours in $G$ are mutually external, each contribute a weight of $W_{k}^\rn(\gamma)$ per \cref{lem:part-function-renormalized-weights}, which is at most $e^{ - (\beta -5)|\gamma|}$ using \cref{prop:non-elem-control-entire-window}. The number of choices for $G$ having $\sum_{\gamma \in G} |\gamma|= l$ is at most $2^{\partial_i \Lambda_{m+r}} C^l$, while the probability of any fixed $G$ is at most $\prod_{\gamma\in \Gamma}W_{k}^\rn(\gamma)\le e^{ - (\beta -5)l}$, so the sum over $l\ge \delta(m+r)$ comes out to at most $e^{ - \delta (\beta -C)(m+r)}$ for some universal $C$. 

    The probability that $\Lambda_m \not \subset \Int(\cL)$ is bounded by the probability that some incident sequence of contours of $G$ intersect both $\partial_i \Lambda_{m+r}$ and $\partial_o \Lambda_m$, which is seen to be at most $me^{-(\beta - C) r}$ by a union bound over the $4(m+r)$ sites where the sequence of incident contours could start, and then a Peierls map like the above to bound the probability of such a sequence of incident contours having size at least $r$. 
\end{proof}

\section{Spatial mixing properties in finite volumes}\label{sec:spatial-mixing}
In the previous section, we derived a sequence of critical external field values $(\lambda_c^{(k)})_k$ dictating transitions in which truncated free energy is largest, and showed that when $\lambda \in [\lambda_c^{(k)},\lambda_c^{(k-1)}]$, we have rigidity  for the height $k$ boundary SOS model. 
Our aim is now to deduce spatial mixing properties when the boundary conditions start from a different height. Namely, we show that if we start with some arbitrary boundary condition, the distance between the boundary and the nearest loop of height-$k$ sites has an exponential tail. Inside of the outermost such $k$-loop the surface will be rigid, and this will in particular be used to couple SOS measures with different boundary conditions away from their boundary. 

This section is sensitive to the isoperimetry properties of the underlying domain and we henceforth restrict attention to boxes $\Lambda_m$. Also, we now need to allow consideration of non-constant boundary conditions, so we let $\phi\in \mathbb Z_+^{\partial_o V}$ denote general boundary conditions on $V$, unless otherwise specified the floors and ceilings will be at $0$ and $\infty$ respectively everywhere, and we will use the shorthand notation $\mu_{\phi,\Lambda_m}$ for this measure on $\Lambda_m$. 
The main theorem in this section is the following.   

\begin{theorem}\label{lem:wsm-ball}
    Let $\beta>\beta_0$, $k_m < a_m \le \frac{m}{\log m}$ and $\lambda_m \in (\lambda_c^{(k_m)}, \lambda_c^{(k_m-1)})$. Consider the concentric boxes $\Lambda_{m/2}\subset \Lambda_m$. There is an absolute constant $C>0$ such that for all $m\ge C\beta  a_m  /\dl(\lambda)$ (where $\dl(\lambda)$ is defined as in \cref{eq:dl}) and all boundary conditions $\phi,\phi'$ having $\|\phi\|_\infty \vee \|\phi'\|_\infty < a_m$, we have 
    \begin{align*}
      \big\|\mu_{\phi,\Lambda_m}\big(\varphi(\Lambda_{m/2})\in \cdot\big) - \mu_{\phi',\Lambda_m}\big(\varphi(\Lambda_{m/2})\in \cdot\big)\big\|_\tv \le e^{ - \beta m/C}\,.
    \end{align*}
\end{theorem}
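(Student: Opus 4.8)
\medskip
\noindent\textbf{Proof strategy.}
The plan is to reduce to two extreme constant boundary conditions, show that each of them relaxes to the stable height $k_m$ within a thin collar along $\partial\Lambda_m$ of width $\asymp\beta a_m/\dl(\lambda)$ (which the hypothesis forces to be $\le m/16$), and then couple the two samples inside a shared loop of height-$k_m$ sites.

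\emph{Step 1: reduction.} If $\dl(\lambda)=0$ the hypothesis $m\ge C\beta a_m/\dl(\lambda)$ is vacuous, so assume $\dl(\lambda)>0$. By the boundary-condition equivalence \cref{eq:boundary-condition-equivalence} we may take $0\le\phi,\phi'\le a_m$, and by \cref{lem:SOS-monotonicity} both $\mu_{\phi,\Lambda_m}$ and $\mu_{\phi',\Lambda_m}$ are stochastically squeezed between $\mu_{\mathbf 0,\Lambda_m}$ and $\mu_{\mathbf a,\Lambda_m}$, where $\mathbf 0\equiv 0$ and $\mathbf a\equiv a_m$ are constant. In the monotone grand coupling $\varphi^{\mathbf 0}\le\varphi^{\phi},\varphi^{\phi'}\le\varphi^{\mathbf a}$ one has $\{\varphi^{\phi}\ne\varphi^{\phi'}\text{ on }\Lambda_{m/2}\}\subseteq\{\varphi^{\mathbf 0}\ne\varphi^{\mathbf a}\text{ on }\Lambda_{m/2}\}$, so it suffices to prove that in this coupling $\varphi^{\mathbf 0}$ and $\varphi^{\mathbf a}$ agree throughout $\Lambda_{m/2}$ except on an event of probability $e^{-\beta m/C}$.

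\emph{Step 2: relaxation and a shared $k_m$-loop.} Set $W:=C_0\beta a_m/\dl(\lambda)$; with $C=C(C_0)$ large the hypothesis gives $W\le m/16$. I claim that under $\mu_{\mathbf a,\Lambda_m}$, with probability $\ge 1-e^{-\beta m/C}$, the region at height $\le k_m$ contains a down-$k_m$-contour surrounding $\Lambda_{3m/4}$ and lying within distance $W$ of $\partial\Lambda_m$, and symmetrically, under $\mu_{\mathbf 0,\Lambda_m}$ the region at height $\ge k_m$ contains such an up-$k_m$-contour. Granting this, the domain Markov property leaves inside each of these contours an SOS measure with constant boundary $k_m$ (and the induced boundary signing), which is rigid about height $k_m$ for all $\lambda\in[\lambda_c^{(k_m)},\lambda_c^{(k_m-1)}]$ by \cref{eq:height-deviation-k-full-window}; since $W\le m/16$ the annulus $A:=\Lambda_{3m/4}\setminus\Lambda_{m/2}$ lies inside both interiors, and inside $A$ the sets $\{\varphi^{\mathbf a}\ne k_m\}$ and $\{\varphi^{\mathbf 0}\ne k_m\}$ have only $*$-clusters of diameter $\le m/32$ except with probability $m\,e^{-\beta m/C}$ (such a cluster is enclosed by contours of total length at least its diameter and renormalized weight $\le e^{-(\beta-5)|\gamma|}$ by \cref{prop:non-elem-control-entire-window}, hence cannot span the width-$m/8$ annulus $A$; cf.\ \cref{lem:rigidity-height-k-contour}). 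Therefore off an event of probability $e^{-\beta m/C}$ there is a loop $\cL^\star$ surrounding $\Lambda_{m/2}$, contained in $A$, on which $\varphi^{\mathbf 0}\equiv\varphi^{\mathbf a}\equiv k_m$. The relaxation claim itself is proved by a descending (resp.\ ascending) ``staircase'': reveal from $\partial_i\Lambda_m$ inward the successive outermost loops at heights $a_m,a_m-1,\dots,k_m$ (resp.\ $0,1,\dots,k_m$) around $\Lambda_{3m/4}$, using the domain Markov property to restart the model with constant boundary equal to the next height past each loop. The one-step estimate is: starting from constant boundary $j$ on a box-like domain $U\supseteq\Lambda_{3m/4}$, the outermost loop at the adjacent height closer to $k_m$ surrounds $\Lambda_{3m/4}$ within a collar of $\partial_i U$ of width $w_j:=C_0\beta/\delta_j$, with $\delta_j>0$ the corresponding truncated free-energy gap, except on an event of probability $\le m\,e^{-\beta m/C_1}$ --- because a failure forces a mesoscopic strip of width $\asymp w_j$ and length $\asymp m$ to remain at the less favourable height, which by \cref{cor:part-function-comparison-full-window}, \cref{lem:truncated-partition-function-cluster-expansion} and the rigidity \cref{eq:height-deviation-k-full-window} inside the favoured level carries an exponential weight of at most $e^{-\frac12\delta_j w_j m}=e^{-\frac12 C_0\beta m}$. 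By \cref{cor:delta-f-delta-l-relation} each such gap satisfies $\delta_j\ge(1-\epsilon_\beta)\dl(\lambda)$ (on the last step it is $\lambda-\lambda_c^{(k_m)}$ or $\lambda_c^{(k_m-1)}-\lambda$, and it is larger on the others), so $\sum_j w_j\le C_0\beta a_m/\dl(\lambda)=W\le m/16$, confining the staircase within distance $W$ of $\partial\Lambda_m$; a union bound over the $\le a_m\le m$ steps keeps the total failure probability at most $a_m m\,e^{-\beta m/C_1}\le e^{-\beta m/C}$.

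\emph{Step 3: conclusion and main obstacle.} Reveal the outermost loop $\cL^\star$ from Step 2 (which is measurable with respect to the two configurations exterior to it). Conditionally on it, both $\varphi^{\mathbf 0}|_{\Int(\cL^\star)}$ and $\varphi^{\mathbf a}|_{\Int(\cL^\star)}$ are distributed as $\mu_{k_m,\Int(\cL^\star)}$ --- the same measure --- so one may recouple them, keeping the coupling monotone and the marginals correct, so that they coincide on $\Int(\cL^\star)\supseteq\Lambda_{m/2}$; hence $\varphi^{\mathbf 0}=\varphi^{\mathbf a}$ on $\Lambda_{m/2}$ off an event of probability $e^{-\beta m/C}$, and with Step 1 this gives the theorem. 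The hard part is the one-step estimate inside the staircase: obtaining a bound exponential in $\beta m$ (not merely in $\beta$) for the cost of a mesoscopic layer failing to relax to the adjacent height within width $\asymp\beta/\dl(\lambda)$. This is exactly where the contour-level partition-function comparisons of \cref{sec:infinite-sequence-lambda} --- above all \cref{cor:part-function-comparison-full-window} together with the identification $\delta_j\asymp\dl(\lambda)$ from \cref{cor:delta-f-delta-l-relation} --- have to be combined with the convergent cluster expansion, while handling carefully the non-constant, signing-carrying boundary conditions induced along the revealed loops; a secondary bookkeeping point is that the accumulated staircase width remains below $m/16$, which is precisely the content of the hypothesis $m\ge C\beta a_m/\dl(\lambda)$.
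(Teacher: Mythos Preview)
Your overall strategy --- reduce to the extreme boundary conditions via monotonicity, run a level-by-level ``staircase'' to reach height $k_m$, then find a shared height-$k_m$ loop and identity-couple inside it --- is exactly the paper's approach, which is packaged as \cref{lem:height-pm1-wsm}, \cref{lem:kpm1-to-k-geenral-domain}, \cref{lem:reaching-height-k}, \cref{cor:general-bc-k-loop}, \cref{lem:coupling-shared-k-loop}, and then the short proof of \cref{lem:wsm-ball} itself. Two issues, one you flagged and one you did not.

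On the one-step estimate: you correctly identify it as the hard part, but the heuristic ``a failure forces a mesoscopic strip of width $\asymp w_j$ and length $\asymp m$'' is not the right picture --- an excursion of the $(j{\mp}1)$-loop past depth $w_j$ can be local and need not span length $m$, so the claimed weight $e^{-\frac12\delta_j w_j m}$ is not what one actually pays. The paper's \cref{lem:height-pm1-wsm} instead analyzes the family of outermost $k$-contours and splits the failure into three events (too much macroscopic contour length; too much area in microscopic contours; too little area in macroscopic contours), using an isoperimetric input (\cref{fact:isoperimetric-bound}) together with \cref{lem:truncated-partition-function-cluster-expansion} and \cref{cor:non-elem-control-smaller-than-delta-tr} for the area cost; the staircase itself (\cref{lem:reaching-height-k}) then does not insist each step stay within width $w_j$, but rather sums over all depth profiles $(r_i)$ with $\sum r_i\ge m/4$ and extracts a subsequence with $r_{i_l}\ge C_0\beta/\dl(\lambda)$ summing to $\ge m/8$.

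The genuine gap is the sentence ``Therefore \ldots\ there is a loop $\cL^\star$ \ldots\ on which $\varphi^{\mathbf 0}\equiv\varphi^{\mathbf a}\equiv k_m$.'' Knowing that the $*$-clusters of $\{\varphi^{\mathbf a}\ne k_m\}$ and of $\{\varphi^{\mathbf 0}\ne k_m\}$ each have diameter $\le m/32$ does \emph{not} preclude their \emph{union} from $*$-crossing the annulus $A$: small clusters of the two colors can interlock into a long chain. Under the monotone coupling this union is $\{\varphi^{\mathbf 0}<k_m\}\cup\{\varphi^{\mathbf a}>k_m\}$, defects coming from two different (coupled) samples, and controlling each marginal says nothing about their joint connectivity. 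The paper closes exactly this gap via \cref{lem:coupling-shared-k-loop}: absence of a common $k$-loop forces a connected \red/\blue\ collection of outermost contours crossing the annulus; at least half the edge-length is monochromatic, and a Peierls bound on that monochromatic piece (inside the already-revealed individual $k$-loop, using \cref{prop:non-elem-control-entire-window}) yields the missing $e^{-\beta\epsilon m/C}$.
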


 When $\lambda$ is kept fixed independent of the domain size, we desire 
 a stronger spatial mixing estimate, and towards that purpose need to allow for unbounded boundary conditions.

    \begin{corollary}\label{cor:fixed-lambda-wsm}
    If for some $\epsilon>0$, $\lambda>\epsilon$, then the maximum in \cref{lem:wsm-ball} can be replaced by a supremum over \emph{all possible} boundary conditions $\phi,\phi'$, so long as $m\ge C(\beta,\epsilon)$ for $C(\beta,\epsilon)>0$ that is finite for all $\lambda>0$ (it blows up as $\epsilon \downarrow 0$).  
    \end{corollary}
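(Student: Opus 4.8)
The plan is to bypass the need to control highly nonconstant boundary data directly and instead exploit monotonicity to squeeze $\mu_{\phi,\Lambda_m}$ between the two stochastically extreme SOS measures on $\Lambda_m$. Since every boundary condition satisfies $0\le\phi\le\|\phi\|_\infty$, \cref{lem:SOS-monotonicity} gives $\mu_{0,\Lambda_m}\preceq\mu_{\phi,\Lambda_m}\preceq\mu_{M,\Lambda_m}$ with $M:=\|\phi\|_\infty$. Coupling all three measures monotonically on the same probability space, if $\mu_{0,\Lambda_m}$ and $\mu_{M,\Lambda_m}$ agree on $\Lambda_{m/2}$ with probability at least $1-e^{-\beta m/C}$ \emph{uniformly in $M\ge0$}, then the sandwiched configuration drawn from $\mu_{\phi,\Lambda_m}$ also agrees with them there; applying this to both $\phi$ and $\phi'$ and using the triangle inequality for total variation yields the corollary. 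Thus the task reduces to coupling $\mu_{0,\Lambda_m}$ with $\mu_{M,\Lambda_m}$ on $\Lambda_{m/2}$, with a bound uniform over $M$.

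To build that coupling I would reveal, from $\partial_i\Lambda_m$ inward and under each of the two measures, the outermost loop $\mathcal{L}$ of height-$k_m$ sites surrounding $\Lambda_{m/2}$ (in the sense of \cref{def:height-loop}); this loop is measurable with respect to the configuration exterior to it, and inside $\Int(\mathcal{L})$ the domain Markov property leaves the SOS measure with the \emph{constant} boundary condition $k_m$. The core claim is that, provided $m\ge C\beta/\epsilon$, under $\mu_{M,\Lambda_m}$ for every $M\ge0$ and under $\mu_{0,\Lambda_m}$, this loop exists with $\Lambda_{m/2}\subset\Int(\mathcal{L})$ and $\Lambda_m\setminus\Int(\mathcal{L})$ confined to a collar of width $O_{\beta,\epsilon}(1)$, except with probability $\le e^{-\beta m/C}$. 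Granting it, the two revealed configurations leave an SOS measure with the \emph{same} constant boundary height $k_m$ on two domains $U,U'$ that each contain $\Lambda_{m/2}$ with a margin of order $m$; by the rigidity of the height-$k_m$ phase established in \cref{sec:infinite-sequence-lambda} (via \cref{lem:rigidity-height-k-contour} and \cref{cor:non-elem-control-smaller-than-delta-tr}, using that $\lambda\in[\lambda_c^{(k_m)},\lambda_c^{(k_m-1)}]$), both of these measures are within $e^{-\beta m/C}$ in total variation of the infinite-volume $k_m$-Gibbs measure on $\Lambda_{m/2}$, hence of each other, and can be coupled to agree there.

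For the core claim, the $\mu_{0,\Lambda_m}$ case is exactly the entropic-repulsion rigidity: with zero boundary the surface climbs back to height $k_m$ within a bounded collar, which is \cref{lem:rigidity-height-k-contour} applied with the induced boundary signing. The $\mu_{M,\Lambda_m}$ case (with $M$ possibly enormous) is the heart of the argument: the downward field makes it overwhelmingly favorable for the surface to "fall off a cliff" back to $k_m$ immediately inside the boundary. Quantitatively, a closed level-$k_m$ loop at distance $2$ inside $\partial_i\Lambda_m$ — which as a contour collection is a single geometric loop of length $\asymp m$ carrying a drop of $M-k_m$, hence energy $\asymp\beta m(M-k_m)$ — lowers the field energy of the enclosed $\asymp m^2$ sites by $\asymp\lambda m^2(M-k_m)$, so its net Gibbs weight relative to the "surface stays high" configuration is $\exp[\Omega((M-k_m)(\lambda m^2-\beta m))]$, positive for $m>C\beta/\epsilon$ since $\lambda>\epsilon$. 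A Peierls/injection argument then converts this into an $e^{-\beta m/C}$ bound on the event that no such loop sits in the collar: the only way to delay the descent is to deform the enclosing loop so as to both lengthen it and enlarge the high region it bounds, each unit of which is penalized by a factor carrying the common factor $M-k_m$, which therefore drops out and makes the estimate uniform in $M$ (the case $M\le k_m$ is immediate from monotonicity against $\mu_{k_m,\Lambda_m}$).

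The main obstacle is precisely this last Peierls step with an arbitrarily high boundary: one cannot argue level line by level line, because the down-contour weight bound $W^\rn_h(\gamma)\le e^{-\beta|\gamma|+\lambda|\Int(\gamma)|}$ from \cref{cor:preliminary-partition-function-bound} is vacuous once $|\Int(\gamma)|\gg|\gamma|$, so one must treat the entire "cliff" separating the high boundary region from the rigid interior as a single object and weigh it by pitting $\beta\times(\mathrm{perimeter})$ against $\lambda\times(\mathrm{area})$ — which is exactly why the hypothesis $m\ge C(\beta,\epsilon)$, forcing $\lambda m\gg\beta$, cannot be removed and $C(\beta,\epsilon)\to\infty$ as $\epsilon\downarrow0$. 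Finally, $C(\beta,\epsilon)$ must also dominate the threshold $C\beta k_m/\dl(\lambda)$ required to invoke the height-$k_m$ rigidity in the penultimate step; since $\lambda>\epsilon$ forces $k_m\le k^\star(\beta,\epsilon)$ bounded, this is finite for every fixed $\lambda>0$ (blowing up as $\lambda$ approaches a critical value or $0$), as the statement asserts.
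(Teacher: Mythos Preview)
Your overall structure---sandwich via monotonicity between $\mu_{0,\Lambda_m}$ and $\mu_{M,\Lambda_m}$, find height-$k_m$ loops for both extremes, couple inside---is exactly the paper's strategy (see the proofs of \cref{cor:general-bc-k-loop} and \cref{lem:wsm-ball}). The step that needs scrutiny is the one you correctly flag as the heart of the matter: producing a $k_m$-loop under $\mu_{M,\Lambda_m}$ uniformly in $M$.

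The Peierls sketch you give there has a real gap. You claim that delaying the descent is penalized by a factor carrying the common $M-k_m$, which then cancels. But the surface need not drop from $M$ to $k_m$ in one cliff: it can fall to height $k_m+1$ almost immediately (that cliff does carry the factor $M-k_m-1$ and is overwhelmingly favorable, exactly as you argue) and then \emph{stay} at $k_m+1$ deep into the box. The per-site penalty for sitting at $k_m+1$ rather than $k_m$ is only $f_{k_m}^\tr-f_{k_m+1}^\tr\asymp\dlm(\lambda)$, which carries no factor of $M$ whatsoever---so your uniformity argument breaks precisely at the last layer. More generally, a Peierls map on ``the enclosing loop'' does not control the partition function over all configurations lacking a $k_m$-loop, since those can have arbitrarily complicated intermediate-height structure between $k_m$ and $M$.

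The paper handles this by a two-stage descent. First, \cref{lem:getting-to-finite-heights} shows that, uniformly over all boundary data, a $\{\le C\lambda^{-2}\}$-loop surrounds $\Lambda_{(1-\epsilon)m}$ once $m\ge C/\lambda$. This uses not a cliff-Peierls map but an a priori moment bound borrowed from \cite{CeMa1} (every site at distance $\ge\lceil 8/\lambda\rceil$ from the boundary has $\mu_{\phi,V}[\varphi_x^s]\le C(\beta,s)\lambda^{-s}$ regardless of $\phi$), followed by coarse-graining into $O(1/\lambda)$-blocks and domination by subcritical $1$-dependent percolation. Second, once the surface is effectively at the bounded height $C\lambda^{-2}$, \cref{lem:reaching-height-k} (not \cref{lem:rigidity-height-k-contour}, which already assumes boundary height $k$) peels off one layer at a time down to $k_m$; this is where the threshold $C\beta k_m/\dl(\lambda)$ legitimately enters.
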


We will also prove a similar weak spatial mixing statement to \cref{lem:wsm-ball} for annuli of side-length $n$ and thickness $m$, with boundary conditions differing on the inner boundary: see \cref{thm:wsm-annulus}. 

In this section, we mix arguments based on the height function representation of the SOS model, and loops of height $k$, $\{\ge k\}$ and $\{\le k\}$, with those based on the contour representation. This enables us to combine more percolation-theoretic arguments based on coarse-graining, together with the cluster expansion-based estimates of the previous sections, and is key to our results holding arbitrarily close to $\lambda = 0$.

\subsection{\texorpdfstring{$k$}{k}-contours in boxes with height \texorpdfstring{$k\pm 1$}{k plus/minus 1}}
We first aim to establish that started from height $k\pm 1$, there will be a $k$-contour surrounding the concentric box of half the side-length in any box of side-length larger than $\dl^{\mp}(\lambda)$, which we recall are defined as 
\begin{align*}
    \dlp(\lambda) = \min_k \{(\lambda_c^{(k)} - \lambda): \lambda_c^{(k)}\ge \lambda\} \qquad \text{and}\qquad \dlm(\lambda) = \min_k \{(\lambda - \lambda_c^{(k)}): \lambda_c^{(k)}\le \lambda\}\,.
\end{align*}
(Recall also that $\dl(\lambda) = \min\{\dlp(\lambda),\dlm(\lambda)\}$.) Notice that if the boundary conditions on $\Lambda_m$ are $k+1$ and there is a down $k$-contour $\gamma$ surrounding $\Lambda_{m'}$, then necessarily $\partial_i \Int(\gamma)$ forms a loop (per \cref{def:height-loop}) of height-$\{\le k\}$ sites. Similarly for $k-1$ boundary conditions, an up $k$-contour, and height-$\{\ge k\}$ loops. 

The following lemma will be essential to our spatial mixing arguments, and will get boosted into exponential tails to get to height $k$ from any height, rather than just $k\pm 1$ using monotonicity and coarse-graining.  

\begin{lemma}\label{lem:height-pm1-wsm}
	For any $\epsilon>0$, there is $\beta_0(\epsilon)$, $\delta_0(\epsilon)$, and absolute constant $C_0$, such that the following holds. Suppose $V$ is a simply-connected subset of $\Lambda_{m}$ having $|\partial_e V|\le 4(1+\delta_0)m$, and $\lambda \in (\lambda_c^{(k)},\lambda_c^{(k-1)})$, with 
	$$ m \ge C_0 \beta /\dlm(\lambda) \,;$$
    Then, for $\eta \in \{-,f\}$, 
    \begin{align*}
        \mu_{\eta,k+1,V}\big(\exists \gamma\in \Gamma: h(\gamma) = k\,;\, \Int(\gamma) \supset \Lambda_{(1-\epsilon)m}\big) \ge 1-e^{ - \beta m/C}\,.
    \end{align*}
	The same  holds under $\mu_{\eta,k-1,V}$ for $\eta \in \{+,f\}$ if $m \ge C_0 \beta/ \dlp(\lambda)$ instead.  
\end{lemma}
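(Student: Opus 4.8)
We describe the argument for $\mu_{\eta,k+1,V}$ with $\eta\in\{-,f\}$; the case of $\mu_{\eta,k-1,V}$ with $\eta\in\{+,f\}$ follows by the mirror argument, interchanging up- and down-contours and $\dlm$ for $\dlp$. Throughout put $\delta:=\dlm(\lambda)=\lambda-\lambda_c^{(k)}$, so that by \cref{cor:delta-f-delta-l-relation} one has $f^\tr_k-f^\tr_{k+1}\ge(1-\epsilon_\beta)\delta>0$, and the hypothesis reads $m\ge C_0\beta/\delta$. The first thing to dispose of is the geometry: an isoperimetric observation shows that for $\delta_0=\delta_0(\epsilon)$ small enough, a simply-connected $V\subseteq\Lambda_m$ with $|\partial_e V|\le 4(1+\delta_0)m$ must satisfy $\Lambda_{(1-\epsilon/10)m}\subseteq V$ (a channel of $V^c$ reaching depth $d$ into $\Lambda_m$ forces $|\partial_e V|\ge 4m+2d-O(1)$). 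Fix the annulus $\mathcal A:=\Lambda_{(1-\epsilon/5)m}\setminus\Lambda_{(1-\epsilon/2)m}$: it lies inside $V$, is at distance $\ge\epsilon m/20$ from $\partial_iV$, and any loop of $\{\varphi\le k\}$-sites winding around its hole surrounds $\Lambda_{(1-\epsilon)m}$, so the innermost down-$k$-contour nesting such a loop has interior $\supseteq\Lambda_{(1-\epsilon)m}$. Hence it suffices to bound by $e^{-\beta m/C}$ the probability of the dual event $\mathcal F$ that a $*$-connected set $S$ of $\{\varphi\ge k+1\}$-sites crosses $\mathcal A$ radially; every such $S$ traverses the width of $\mathcal A$, so $|\partial_eS\cap\mathcal A|\ge c\epsilon m$ for an absolute $c>0$.

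The heart of the proof is the estimate $\mu_{\eta,k+1,V}(\mathcal F)\le e^{-\beta m/C}$, whose mechanism is that $\mathcal A$ sits well inside the region where height $k$ is the dominant phase. Indeed, since $\delta m\ge C_0\beta$ and $\beta>\beta_0(\epsilon)$, the depth $\epsilon m/20$ of $\mathcal A$ is much larger than $1/\delta$, so by \cref{cor:non-elem-control-smaller-than-delta-tr} (exponential decay of the renormalized weight of every down-contour of diameter $\lesssim 1/(f^\tr_k-f^\tr_{k+1})\asymp 1/\delta$), together with \cref{prop:non-elem-control-entire-window} (the uniform-in-the-window bound $W^\rn_k(\gamma)\le e^{-(\beta-5)|\gamma|}$ on all contours based at height $k$) and the convergent cluster expansion of \cref{lem:truncated-partition-function-cluster-expansion}, the influence of the height-$(k+1)$ boundary condition on $\mathcal A$ is negligible; quantitatively, the height-$k$ partition functions that appear below behave like $e^{f^\tr_k|\cdot|}$ (using also \cref{cor:part-function-comparison-full-window}), while an excursion to heights $\ge k+1$ over a set $U$ carries a factor $\le e^{-(f^\tr_k-f^\tr_{k+1})|U|+\epsilon_\beta|\partial_eU|}$. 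Concretely, I would reveal the $\{\varphi\ge k+1\}$-cluster $S$ crossing $\mathcal A$ together with the configuration exterior to it, and apply to the conditional law of the traversal of $\mathcal A$ a Peierls-type map that "fills in" this traversal into the surrounding height-$k$ phase: this erases level lines (at height $k+\tfrac12$) of total length $\ge|\partial_eS\cap\mathcal A|$ and/or converts an area $\ge|S\cap\mathcal A|$ from $\{\varphi\ge k+1\}$ to the dominant height $k$, hence multiplies the weight by at least $\exp\!\big(c(\beta|\partial_eS\cap\mathcal A|+(f^\tr_k-f^\tr_{k+1})|S\cap\mathcal A|)\big)$, and is injective up to multiplicity $e^{C|\partial_eS\cap\mathcal A|}$. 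Summing over the admissible crossing sets $S$ (at most $e^{C|\partial_eS\cap\mathcal A|}$ of them with $|\partial_eS\cap\mathcal A|=p\ge c\epsilon m$) and using $\beta>\beta_0(\epsilon)$ so that $c\beta-C\ge\beta/2$, the bound $\mu_{\eta,k+1,V}(\mathcal F)\le e^{-\beta m/C}$ follows, and therefore
\[
\mu_{\eta,k+1,V}\big(\exists\gamma:\,h(\gamma)=k,\ \Int(\gamma)\supseteq\Lambda_{(1-\epsilon)m}\big)\ \ge\ \mu_{\eta,k+1,V}(\mathcal F^c)\ \ge\ 1-e^{-\beta m/C}.
\]

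The step I expect to be most delicate is making the "fill-in" map precise and honestly injective when $S$ touches the height-$(k+1)$ boundary of $V$: one cannot simply re-sample inside $\mathcal A$ conditionally on a generic outer configuration, because controlling that outer configuration (e.g.\ bounding its maximum via \cref{eq:easy-SOS-maximum}) costs only a polynomially small probability, far weaker than the target $e^{-\beta m/C}$. The resolution is to localise the map to the traversal of $\mathcal A$ only — where, crucially, the uniform-in-the-window contour bounds of \cref{prop:non-elem-control-entire-window} and \cref{cor:non-elem-control-smaller-than-delta-tr} apply with no conditioning — and to treat the two regimes separately: a "thin" traversal, killed by the perimeter factor $e^{-\beta|\partial_eS\cap\mathcal A|}$, versus a "fat" traversal, killed by the area factor $e^{-(f^\tr_k-f^\tr_{k+1})|S\cap\mathcal A|}$, for which the hypothesis $m\ge C_0\beta/\dlm(\lambda)$ (i.e.\ $\delta m^2\gtrsim\beta m$) is exactly what makes the area gain dominate the entropy. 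The symmetric statement under $\mu_{\eta,k-1,V}$ is obtained by reflecting heights about $k$, replacing down-$k$-contours by up-$k$-contours and $\dlm(\lambda)$ by $\dlp(\lambda)$.
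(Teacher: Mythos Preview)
Your intuition is sound—the mechanism is a competition between a perimeter cost $\beta\cdot(\text{length of $k$-level line})$ and a bulk gain $(f_k^\tr-f_{k+1}^\tr)\cdot(\text{area still at height }\ge k+1)$—and you correctly identify the relevant inputs (\cref{prop:non-elem-control-entire-window}, \cref{cor:non-elem-control-smaller-than-delta-tr}, \cref{lem:truncated-partition-function-cluster-expansion}, \cref{cor:part-function-comparison-full-window}). But the route differs substantially from the paper's, and your key step is not carried out.

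The paper never localises to an annulus. It works globally in $V$ with the family $\Gamma_k$ of \emph{outermost} down-$k$-contours, split into macroscopic ($|\gamma|\ge(\log m)^2$) and microscopic. By an isoperimetric fact for families of mutually external contours in $\Lambda_m$, it suffices that the macroscopic family have total length $\le 4(1+\delta)m$ and confined area $\ge(1-\delta)m^2$. Three bad events $\cB_1,\cB_2,\cB_3$ (too much macroscopic length; too many microscopic contours at some dyadic scale; too little macroscopic area) are bounded by fixing the relevant contour family $G_k$, writing $Z_{k+1,V}(G_k)=\overline{Z}_{k+1,\Ext(G_k)}\prod_\gamma e^{-\beta|\gamma|}Z_{-,k,\Int(\gamma)}$, and comparing to $Z_{k+1,V}\ge e^{-\beta|\partial_eV|}Z_{k,\Ext(G_k)}\prod_\gamma Z_{-,k,\Int(\gamma)}$ via \cref{cor:part-function-comparison-full-window}. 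The area gain appears in $\cB_3$: once all outermost $k$-contours of size $\ge\Delta_\tr^{-1}$ are extracted into $G_k$, every remaining contour in $\overline{Z}_{k+1,\Ext(G_k)}$ is small enough that \cref{cor:non-elem-control-smaller-than-delta-tr} gives $\overline{Z}_{k+1,\Ext(G_k)}\le Z^\tr_{k+1,\Ext(G_k)}$, whence \cref{lem:truncated-partition-function-cluster-expansion} produces the factor $e^{-\Delta_\tr|\Ext(G_k)|}$; the entropy over $G_k$ is then controlled using the dyadic bounds from $\cB_2^c$.

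Your sketch has two concrete gaps. First, the assertion $|\partial_eS\cap\mathcal A|\ge c\epsilon m$ is false when $S\supseteq\mathcal A$ (the boundary inside $\mathcal A$ is then empty); and in the fat regime your entropy bound ``at most $e^{C|\partial_eS\cap\mathcal A|}$ crossing sets'' also fails—if $S$ omits a handful of sites from $\mathcal A$ there are $\sim|\mathcal A|$ such $S$ but the perimeter is $O(1)$. Second, and this is the real issue, the ``fill-in'' map is never constructed. One cannot extract the bulk gain $\Delta_\tr|S\cap\mathcal A|$ by an injective map on configurations, since heights in $S$ may be arbitrarily large; a partition-function comparison is required, and the natural region over which to run it is not $S\cap\mathcal A$ but the full exterior of the outermost $k$-contours in $V$. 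Once you condition on that contour family you are doing the paper's calculation. Your appeal to \cref{cor:non-elem-control-smaller-than-delta-tr} to say the $(k+1)$-boundary influence on $\mathcal A$ is ``negligible'' presupposes that all down-$k$-contours meeting $\mathcal A$ are small—precisely the content of $\cB_2^c$ in the paper's decomposition, which you have not secured. Making the annulus-crossing argument rigorous would, I believe, drive you back to a global analysis of the outermost $k$-contours, and the paper's three-event split is the clean way to organise it.
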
 

\begin{proof}
We prove the lemma with $k+1$-boundary conditions, the $k-1$ case being symmetrical. For readability, we drop the boundary signing subscript $\eta$ from the notation. 

We consider the geometric properties of the family of large (at least $(\log m)^2$ length) $k$-contours, to show that with high probability one of them must confine the box $\Lambda_{(1-\epsilon)m}$. Throughout this proof, we call a contour $\gamma$ \emph{macroscopic} if it is outermost and has $|\gamma|\ge (\log m)^2$. For a (standard) contour collection $\Gamma$, let $\Gamma_k^\mac$ be the collection of outermost macroscopic $k$-contours, and let $\Gamma_k^\mic$ be the set of outermost $k$-contours that are not macroscopic.  We use the following basic isoperimetric bound from~\cite[Lemma 2.6]{CLMST16}. 

\begin{fact}\label{fact:isoperimetric-bound}
    For every $\epsilon>0$, there exists $\delta(\epsilon)>0$ such that if $(\gamma_i)_i$ are any collection of mutually external geometric contours in $\Lambda_m$ satisfying 
    \begin{align}\label{eq:isoperimetric-bound}
        \sum_{i} |\gamma_i| \le 4(1+\delta)m\,, \qquad \text{and}\qquad \sum_{i} |\Int(\gamma_i)|\ge (1-\delta)m^2\,.
    \end{align}
    Then, if $\gamma_1$ is the one with the largest interior, $\Int(\gamma_1)$ contains $\Lambda_{(1-\epsilon)m}$. 
\end{fact}

Given \cref{fact:isoperimetric-bound}, it suffices to show that in a typical sample from $\mu_{\eta,k+1,V}$, the collection $\Gamma_k^\mac$ satisfies the two conditions of \cref{eq:isoperimetric-bound} for $\delta = \delta(\epsilon)$. Towards this, we will prove \cref{lem:height-pm1-wsm} with $\delta_0 = \frac{1}{2}\delta(\epsilon)$ and cover the complement of the event in \cref{eq:isoperimetric-bound} by the following bad events: 
\begin{align}
    \mathcal B_1 & := \Big\{\sum_{\gamma \in \Gamma_k^\mac} |\gamma| >4(1+\delta) m \Big\}\,, \label{eq:B1} \\
    \mathcal B_2 &  := \bigcup_{1\le i\le 2\log_2 \log m} \cB_{2,i}\,, \quad \text{where} \quad \cB_{2,i} = \Big\{ |\{\gamma\in \Gamma_k^\mic: |\gamma|\in [2^{i},2^{i+1}]\}| \ge \tfrac{1}{\sqrt{\beta}} e^{ - (1.1)^{i}} m^2\Big\}\,,  \label{eq:B2} \\
    \mathcal B_3 & := \cB_1^c \cap \cB_2^c \cap \Big\{\sum_{\gamma \in \Gamma_{k}^\mac} |\Int(\gamma)| \le (1-\delta) m^2\Big\}\,. \label{eq:B3}
\end{align}
Observe that on the intersection of $(\cB_{2,i}^c)_{1\le i\le 2\log_2 \log m}$, we have 
\begin{align*}
    \sum_{\gamma \in \Gamma_k^\mic} |\Int(\gamma)| \le \sum_{i} 2^{2(i+1)} \frac{1}{\sqrt{\beta}}e^{ - (1.1)^{i}} m^2 \le \frac{C}{\sqrt{\beta}} m^2\,,
\end{align*}
which for $\beta$ large, is at most $\delta m^2/2$ area confined by contours in $\Gamma_k^\mic$. 
We show that each of $\cB_1,\cB_2,\cB_3$ have small probability in \cref{eq:B1,eq:B2,eq:B3}, which when summed yield \cref{lem:height-pm1-wsm}. 

\medskip
\noindent \emph{Bound on $\cB_1$: too much length in macroscopic loops.}
We first show that for $\beta$ large enough, $\delta_0 \le \delta/2$, 
\begin{align}\label{eq:isoperimetry-B1-bound}
    \mu_{k+1,V}(\cB_1) \le e^{ - (\beta-C) \delta m}\,.
\end{align}
Consider a fixed collection of mutually external macroscopic $k$-contours $\Gamma_k^\mac$ in $\cB_1$. The weight of configurations with this specific realization of $\Gamma_k^\mac$ can be written as 
\begin{align*}
    Z_{k+1,V}(\Gamma_k^\mac) := \overline{Z}_{k+1,\Ext(\Gamma_{k}^\mac)} \prod_{\gamma\in \Gamma_k^\mac} e^{ - \beta |\gamma|} Z_{-,k,\Int(\gamma)}\,,
\end{align*}
where $\overline{Z}$ indicates here that the sum is restricted to having no outermost macroscopic $k$-contours, and no contours confining any of the $\gamma \in \Gamma_{k}^{\mac}$ (ensuring compatibility with $\Gamma_k^\mac$ indeed being the collection of outermost macroscopic $k$ contours), and the boundary signing is that induced by $-$ on $\partial_i V$ and $+$ on $\partial_0 \Int(\gamma)$ for $\gamma \in \Gamma_k^\mac$.  
By inclusion, the first term is at most $Z_{k+1,\Ext(\Gamma_k^\mac)}$. 
At the same time, the denominator in $\mu_{k+1,V}(\Gamma_k^\mac) = Z_{k+1,V}(\Gamma_k^\mac)/Z_{k+1,V}$ evidently satisfies
\begin{align}\label{eq:Z_{k+1}-lb}
    Z_{k+1,V} \ge e^{ - \beta|\partial_e V|} Z_{-,k,V}  \ge e^{ - \beta |\partial_e V|} Z_{k,\Ext(\Gamma_{k}^\mac)} \prod_{\gamma \in \Gamma_k^\mac} Z_{-,k,\Int(\gamma)}\,.
\end{align}
Dividing through, and using $|\partial_e V| \le 4(1+\delta_0) m$ we get 
\begin{align*}
    \mu_{k+1,V}(\Gamma_k^\mac) \le e^{\beta(4(1+\delta_0)m -\sum_{\gamma \in \Gamma_k^\mac}|\gamma|)} \frac{Z_{k+1,\Ext(\Gamma_k^\mac)}}{Z_{k,\Ext(\Gamma_k^\mac)}}\,.
\end{align*}
By \cref{cor:part-function-comparison-full-window} applied to $U = \Ext(\Gamma_k^\mac)$, whence $|\partial_e \Ext(\Gamma_k^\mac)| \le 4(1+\delta_0)m + \sum_{\gamma\in \Gamma_k^\mac} |\gamma|$, 
\begin{align*}
    \mu_{k+1,V}(\Gamma_k^\mac) \le e^{ \beta(4(1+\delta_0)m - \sum_{\gamma} |\gamma|) + e^{ - \beta/3}(4(1+\delta_0)m + \sum_{\gamma\in \Gamma_k^\mac}|\gamma|)}\,.
\end{align*}
At this point, we can bound $\mu_{k+1,V}(\cB_1)$ by summing over the possible choices of $\Gamma_k^\mac \in \cB_1$. 
Let $K$ denote the total number of contours in $\Gamma_k^\mac$, let $M$ denote $\sum_{\gamma\in \Gamma_k^\mac} |\gamma|$, and note that $K \le M/(\log m)^2$ since these are all macroscopic contours. Then bounding $\binom{m^2}{K} \le e^{ 2 K\log m}$, we have  
\begin{align*}
    \mu_{k+1,V}(\cB_1) 
    & \le \sum_{M\ge 4(1+\delta)m} \sum_{K\le M/(\log m)^2} e^{ 2 K \log m} 4^{M} e^{ - (\beta -1)(M-4(1+\delta_0)m)}\,.
\end{align*}
Since $K\le M/(\log m)^2$, we have $e^{2K \log m}\le e^{M}$, (for $m \ge 10$ say) and this is at most 
\begin{align*}
    \sum_{M\ge 4(1+\delta) m} e^{M} e^{ -(\beta -1)(M-4(1+\delta_0)m)} \le e^{4(1+\delta_0)m} \sum_{L \ge 4\delta_0 m} e^{ - (\beta -C) L}\,,
\end{align*}
where we used that $\delta  \ge 2\delta_0$. 
This implies the desired \cref{eq:isoperimetry-B1-bound}. 

\medskip
\noindent \emph{Bound on $\cB_2$: too much area in microscopic loops.}
Our next aim is to show that for every $i\le 2\log_2 \log m$,  
    \begin{align}\label{eq:isoperimetry-B2-bound}
        \mu_{k+ 1,V} (\cB_{2,i}) \le e^{ -(\beta - C)m^{3/2}}\,.
    \end{align}
    which when summed over $i\le 2\log_2 \log m$ bounds the probability of $\cB_{2}$ by $e^{ - (\beta - C)m^{4/3}}$, say.

For ease of notation, let $\Gamma_{k,i}^\mic = \{\gamma \in \Gamma_k^\mic: |\gamma| \in [2^i, 2^{i+1}]\}$
so that $\cB_{2,i}= \{|\Gamma_{k,i}^{\mic}| \ge \beta^{-1/2}e^{ - (1.1)^i}m^2\}$. 
Consider a fixed collection of outermost non-elementary $k$-contours $\Gamma_k= (\Gamma_{k}^\mic,\Gamma_k^\mac)$ belonging to $\mathcal B_{2,i}$. 
The total weight of configurations with a specific realization of $\Gamma_{k,i}^\mic$, denoted $Z_{k+1,V}(\Gamma_{k,i})$ is 
\begin{align*}
    Z_{k+1,V}(\Gamma_{k,i}^\mic) = \overline{Z}_{k+1,\Ext(\Gamma_{k,i}^\mic)}\prod_{\gamma \in \Gamma_{k,i}^\mic}e^{ - \beta |\gamma|}Z_{-,k,\Int(\gamma)}\,.
\end{align*}
where $\overline{Z}$ indicates that it has no non-elementary outermost down contours of size between $[2^i,2^{i+1}]$ and also no non-elementary contours nesting any $\gamma \in \Gamma_{k,i}^\mic$. By inclusion, the $\overline{Z}$ term is at most $Z_{k+1,\Ext(\Gamma_{k,i}^\mic)}$.   
Dividing out by $Z_{k+1,V}$ for which we have the lower bound analogous to \cref{eq:Z_{k+1}-lb},
\begin{align*}
    \frac{Z_{k+1,V}(\Gamma_{k,i}^\mic)}{Z_{k+1,V}} 
    \le e^{ 4\beta (1+\delta_0)m - \beta \sum_{\gamma \in \Gamma_{k,i}^\mic}|\gamma|} \frac{Z_{k+1,\Ext(\Gamma_{k,i}^\mic)}}{Z_{k,\Ext(\Gamma_{k,i}^\mic)}}\,.
\end{align*}
By \cref{cor:part-function-comparison-full-window}, this is at most 
\begin{align*}
    e^{\beta (4 (1+\delta_0)m - \sum_{\gamma\in \Gamma_{k,i}}|\gamma|) + e^{ - \beta/3} |\partial_e \Ext(\Gamma_{k,i})|}\,.
\end{align*}
Now observe that on the event $\cB_{2,i}$,  
\begin{align*}
    \sum_{\gamma \in \Gamma_{k,i}^\mic}|\gamma| \ge 2^i \beta^{-1/2} e^{ - (1.1)^{i}} m^2 = \beta^{-1/2} e^{i \log 2 - (1.1)^{i}} m^2\,.
\end{align*}
For all $i \le 2 \log_2 \log m$, this is at least $m^{3/2}$, say, so long as $m\ge C \beta^{1/4}$. 
Now to sum over the possible choices of $\Gamma_{k,i}^{\mic}$, let  $\sum_{\gamma \in \Gamma_{k,i}^\mic}|\gamma| = M$, let 
$K$ count the number of contours in $\Gamma_{k,i}^\mic$, and note the bound $|
\partial_e \Ext(\Gamma_{k,i})| \le 4(1+\delta_0)m + M$.  
We then obtain    
\begin{align*}
    \mu_{k+1,V}(\cB_{2,i})  \le  \sum_{M \ge \beta^{-1/2} e^{i\log 2 - (1.1)^i}m^2} \sum_{K \le 2^{-i}M}\binom{m^2}{K} C^{M} e^{ - (\beta -1) (M- 4(1+\delta_0)m)}\,.
\end{align*}
Letting $2^{-i} M = \rho m^2$, and using the fact that $M \ge m^{3/2}$, this is at most 
\begin{align*}
     \sum_{ \beta^{-1/2} e^{- (1.1)^i}\le \rho \le 1} (\rho m^2) e^{ \rho m^2 (1+\log\frac{2}{\rho})} e^{- (\beta - C)M}\,.
\end{align*}
Since $\rho \ge \beta^{-1/2} e^{-(1.1)^{i}}$ we can upper bound $\log(2/\rho)$ and use  $2^i \beta \rho m^2  \ge  4 \rho m^2 (1.1)^i\log \beta$ to see that the second exponential dominates the first, whence for all $m\ge C \beta^{1/4}$, we get 
\begin{align*}
    \mu_{k+1,V} (\cB_{2,i}) \le e^{ - (\beta -C) e^{-(1.1)^i}m^2}\,.
\end{align*}
Using that $e^{ - (1.1)^i}m^2 \ge m^{3/2}$ for all $i\le 2\log_2 \log m$ then yields the claimed \cref{eq:isoperimetry-B2-bound}.

\medskip
\noindent \emph{Bound on $\cB_3$: too little area in macroscopic loops.}
    For the last bound, let 
\begin{align}\label{eq:Delta-tr-def}
    \Delta_\tr = f^\tr_k - f^\tr_{k+1}\,.
\end{align}
which is positive and at least $(1-\epsilon_\beta)\dlm(\lambda)$ per \cref{cor:delta-f-delta-l-relation} for $\lambda \in (\lambda_c^{(k)},\lambda_c^{(k-1)}]$. 
The goal of this last part is to establish the following bound: 
    \begin{align}\label{eq:isoperimetry-B3-bound}
        \mu_{k+ 1,V} (\cB_3) \le e^{ - \beta m/C}\,.
    \end{align}
On $\cB_2^c$, the total boundary length confined by outer contours of size at least $\Delta_\tr^{-1}$ is at most 
\begin{align*}
    \sum_{i\ge -\log_2\Delta_\tr} 2^{i+1} \beta^{-1/2} e^{ - (1.1)^i } m^2\,. 
\end{align*}
We can absorb the $2^{i+1}$ prefactor by changing this to $\beta^{-1/2} e^{-(1.05)^i}$, and summing that out, we get that $\sum_{i\ge -\log_2 \Delta_\tr} \sum_{\gamma \in \Gamma_{k,i}^\mic} |\gamma| \le \beta^{-1/2} m^2 e^{ - \Delta_\tr^{-\alpha}}$, 
for some universal constant $\alpha>0$. 

Now consider any realization of $G_k = ((\Gamma_{k,i}^\mic)_{i\ge -\log_2 \Delta_\tr},\Gamma_k^\mac)$ belonging to $\cB_3$.  The partition function associated to these contour families is 
\begin{align*}
    Z_{k+1,V}(G_k) = \overline{Z}_{k+1,\Ext(G_k)} \prod_{\gamma \in G_k} e^{ - \beta |\gamma|} Z_{-,k, \Int(\gamma)}\,,
\end{align*}
where $\overline{Z}$ here indicates that there are no outermost $k$-contours of size larger than $\Delta_\tr^{-1}$, and none of the ones in $G_k$ are surrounded by another contour. As such, for every contour $\gamma$ appearing in the sum corresponding to $\overline{Z}$, it has $\diam(\gamma)\le |\gamma|/2 \le \Delta_\tr^{-1}$. By \cref{cor:non-elem-control-smaller-than-delta-tr}, this implies that for all contours appearing in $\overline{Z}$, they have $ W^\rn_{k+1}(\gamma) = {W}^\tr_{k+1}(\gamma)$. By inclusion, then, 
\begin{align*}
    Z_{k+1,V}(G_k) \le Z^\tr_{k+1,\Ext(G_k)} \prod_{\gamma \in G_k} e^{ - \beta |\gamma|} Z_{-,k,\Int(\gamma)}\,.
\end{align*}
Dividing by $Z_{k+1,V}$, on which we have the lower bound analogous to \cref{eq:Z_{k+1}-lb}, we get 
\begin{align*}
    \mu_{k+1,V}(G_k) \le e^{ 4\beta(1+\delta_0) m - \beta \sum_{\gamma \in G_k} |\gamma|} \frac{Z_{k+1,\Ext(G_k)}^\tr}{Z_{k,\Ext(G_k)}}\,.
\end{align*}
By \cref{prop:non-elem-control-entire-window}, $Z_{k,\Ext(G_k)} = Z_{k,\Ext(G_k)}^\tr$ since $\lambda \in [\lambda_c^{(k)},\lambda_c^{(k-1)}]$, so by \cref{lem:truncated-partition-function-cluster-expansion}, 
\begin{align*}
    \mu_{k+1,V}(G_k) \le e^{ \beta (4(1+\delta_0)m - \sum_{\gamma \in G_k} |\gamma|)} e^{- \Delta_\tr |\Ext(G_k)| + 2e^{ - \beta/2} |\partial_e \Ext(G_k)|}\,.
\end{align*}
On the event $\cB_3$, we must have $|\Ext(G_k)|\ge \delta m^2/2$ as under $\cB_2^c$ at most $\delta m^2/2$ area is confined by $\bigcup_{i} \Int(\Gamma_{k,i}^\mic)$ and on $\cB_3$ at most $(1-\delta)m^2$ is confined in $\Gamma_k^\mac$. Thus,  
\begin{align*}
    \mu_{k+1,V}(G_k) \le e^{ 4(\beta+1)(1+\delta_0)m - (\beta -1)\sum_{\gamma \in G_k}|\gamma|} e^{ - \Delta_\tr \delta m^2/2} \le e^{(4\beta + C)m - \Delta_\tr \delta m^2/2}\,. 
\end{align*}

Now consider the entropy over the choices of $G_k$ that belong to $\cB_3$. Since $\cB_3 \subset \cB_1^c$, for the macroscopic contours, the total number of choices of $\Gamma_k^\mac$ in $\cB_1^c$ is evidently at most 
\begin{align*}
    \binom{m^2}{4(1+\delta_0)m/(\log m)^2} 4^{4(1+\delta_0)m} \le \exp(10 m)\,.
\end{align*}
For the microscopic contours, since $\cB_3 \subset \cB_2^c$, for each $i$, there are at most $\beta^{-1/2} e^{ - (1.1)^i} m^2$ many contours in $\Gamma_{k,i}^\mic$, each of size at most $2^{i+1}$; thus, the number of choices is at most 
\begin{align*}
    \prod_{i=-\log\Delta_\tr}^{2\log_2 \log n} e^{\beta^{-1/2} e^{ - (1.1)^i} m^2 (1+ (1.1)^i + \log \beta)} 4^{ \beta^{-1/2} 2^{i+1} e^{ - (1.1)^i} m^2} = \exp\Big(\beta^{-1/3} m^2 \sum_{i=-\log \Delta_\tr}^{2\log_2\log n}  2^i e^{ - (1.1)^i}\Big)
\end{align*}
Since the sum is dominated by the smallest $i= -\log \Delta_\tr$, the count above is at most $\exp( \beta^{-1/3} e^{ - \Delta_\tr^{-\alpha}} m^2)$
for some universal constant $\alpha>0$. 
Therefore, summing up over the choices of $G_k \in \cB_3$, we get 
\begin{align*}
    \mu_{k+1,V}(\cB_3) \le \exp\big( - ( \tfrac{1}{2}\Delta_\tr \delta - \beta^{-1/3} e^{ - \Delta_\tr^{-\alpha}}) m^2  + (4\beta + C) m\big)\,.
\end{align*}
We have the naive upper bounds of $\Delta_\tr \le \max\{\lambda, (1+\epsilon_\beta)e^{ - 4\beta}\}$ on $\Delta_\tr$, we are really only interested in the behavior when $\Delta_\tr \le 1$, say. Since $\alpha$ is universal (independent of $\beta,\lambda$), it is easy to check that for large $\beta$ (depending only on $\alpha$) we have $\beta^{-1/3} e^{ - \Delta_\tr^{-\alpha}} \le \Delta_\tr/4$. As such, we deduce that 
\begin{align*}
    \mu_{k+1,V}(\cB_3)\le \exp( - \tfrac{1}{4} \Delta_\tr \delta m^2 + (4\beta + C) m)\,.
\end{align*}
This gives \cref{eq:isoperimetry-B3-bound} so long as $m\ge 20 \beta \delta^{-1}/\dlm(\lambda)$. 
\end{proof}

\subsection{The outermost \texorpdfstring{$k$}{k}-loop in general domains with \texorpdfstring{$k\pm 1$}{k plus/minus 1} boundary conditions}
We next use \cref{lem:height-pm1-wsm} to show that in general domains (not necessarily boxes) with $k\pm 1$ boundary conditions, the distance of the outermost loop of height-$k$ sites to the boundary will have an exponential tail. 
For any domain $V$, define the shrinkage of $V$ by $r$ as $$S_r(V)= \{v\in V: d(v,\partial_i V)\ge r\}\,.$$
For $V: S_r(V)\supset \Lambda_{m/2}$, let $\cL_{\le k}$ be the outermost loop of $\{\le k\}$ sites surrounding the origin. 

\begin{lemma}\label{lem:kpm1-to-k-geenral-domain}
    Let $V$ be any simply connected set having $\partial \Lambda_{m/2} \subset V \subset \partial \Lambda_m$. For every $\lambda \in (\lambda_c^{(k)},\lambda_c^{(k-1)})$, if $r\ge C_0 \beta /\dlm(\lambda)$ and $m\ge 2r$, then for $\eta \in \{-,f\}$ 
    $$\mu_{\eta,k+1,V}(\cL_{\le k} \subset V\setminus S_r(V)) \ge  1- |\partial_i S_r(V)| e^{ - \beta r/C}\,.$$
    The same holds under $\mu_{\eta,k-1,V}$ for $\eta \in \{+,f\}$ if $r\ge C_0 \beta/ \dlp(\lambda)$. 
\end{lemma}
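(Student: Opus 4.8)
We treat the case of boundary conditions $k+1$; the case of $k-1$ is entirely symmetric, with $\dlp$ in place of $\dlm$, up-contours in place of down-contours, and $\{\ge k\}$-loops in place of $\{\le k\}$-loops. Since the event that there is a height-$\{\le k\}$ loop encircling the origin inside $V\setminus S_r(V)$ is decreasing and implies $\cL_{\le k}\subseteq V\setminus S_r(V)$ (the outermost such loop being trapped between it and $\partial_i V$), \cref{lem:SOS-monotonicity} lets us reduce to the free signing $\eta=f$; so it suffices to bound $\mu_{f,k+1,V}(\text{no }\{\le k\}\text{-loop around the origin lies in }V\setminus S_r(V))$. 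If there is no such loop, then by planar duality the $*$-connected cluster of $\{\ge k+1\}$-sites containing $\partial_i V$ reaches a vertex $y$ with $\dist(y,\partial_i V)\ge r-1$, and being $*$-connected and meeting $\partial_i V$ it then contains a $\{\ge k+1\}$-$*$-path from $\partial_i V$ to $y$. Union-bounding over the at most $|\partial_i S_r(V)|$ possible endpoints $y$ (after harmlessly shrinking $r$ by $1$), it remains to bound, for each fixed $y$, the probability of such a path.

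\emph{Step 1: capping heights at $k+1$.} First reveal the $*$-cluster $\cR^+$ of $\{\ge k+2\}$-sites attached to $\partial_i V$. Its outermost bounding contours $\gamma$ are up-$(k+2)$-contours; since $\lambda\ge\lambda_c^{(k)}>\lambda_c^{(k+1)}$, \cref{lem:monotonicity-part-function-ratio} shows $Z_{+,k+2,\Int(\gamma)}/Z_{+,k+1,\Int(\gamma)}$ is at most its value at $\lambda_c^{(k+1)}$, where by \cref{prop:non-elem-control-entire-window}, \cref{prop:lambda_c} and \cref{lem:truncated-partition-function-cluster-expansion} both partition functions equal their truncated versions and have equal free energies, giving $W^\rn_{k+1}(\gamma)\le e^{-(\beta-C)|\gamma|}$. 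A Peierls sum then bounds the probability that $\cR^+$ reaches depth $r/4$ by $e^{-\beta r/C}$. On the complementary event, after conditioning on $\cR^+$ and its bounding loops (domain Markov), the remaining configuration obeys a ceiling at height $k+1$, so the portion of any $\{\ge k+1\}$-$*$-path lying at depth $\ge r/4$ consists entirely of $\{=k+1\}$-sites.

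\emph{Step 2: chaining boxes across scales.} Put $\ell_0:=\lceil C_0\beta/\dlm(\lambda)\rceil$, with $C_0$ twice the constant of \cref{lem:height-pm1-wsm}, so $2\ell_0\le r$ and $m\ge 2r$ guarantees that side-$\ell_0$ boxes can be fit inside $V$ throughout the depth band $\{r/4\le\dist(\cdot,\partial_i V)\le r\}$. Cover this band by $\asymp r/\ell_0$ nested annular rings of such boxes, consecutive boxes in a ring overlapping in a region thicker than $\epsilon\ell_0$ for a fixed small $\epsilon$. Explore the $\{=k+1\}$-cluster attached to $\partial_i V$ ring by ring: whenever the exploration reaches a box $B$, the conditional law inside $B$ has boundary heights $\le k+1$ (frontier sites at $k+1$, loop sites at $\le k$, ceiling $k+1$), hence is stochastically dominated by $\mu_{f,k+1,B}$, and by \cref{lem:height-pm1-wsm} (applicable since $\ell_0\ge C_0'\beta/\dlm(\lambda)$ and $|\partial_e B|\le 4(1+\delta_0)\ell_0$) with probability $\ge 1-e^{-\beta\ell_0/C}$ there is a down-$k$-contour in $B$ whose interior contains $\Lambda_{(1-\epsilon)\ell_0}(B)$, i.e.\ a $\{\le k\}$-loop across $B$. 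Using the overlaps, the loops produced by the boxes of a given ring close up into a $\{\le k\}$-circuit around the origin at that depth, which blocks the $\{\ge k+1\}$-path there; for the path to reach depth $r$ it must therefore defeat one such circuit in each of the $\asymp r/\ell_0$ rings, and accounting for the (at most $C^{r/\ell_0}$) choices of which boxes the connected path meets, this has probability at most $(Ce^{-\beta\ell_0/C})^{r/\ell_0}\le e^{-\beta r/C}$. Summing over $y$ and combining with Step 1 gives the claim.

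The main obstacle is twofold. First, \cref{lem:height-pm1-wsm} is phrased for the constant boundary condition $k+1$, while inside the ambient measure the auxiliary boxes inherit random boundary data; the point of Step 1 is exactly to strip off all $\{\ge k+2\}$-excursions so that those revealed heights are capped at $k+1$, after which monotonicity returns us to the constant case. Second, a single box only yields the bottleneck factor $e^{-\beta\ell_0/C}\sim e^{-\beta^2/(C\dlm)}$, which is far weaker than the target $e^{-\beta r/C}$ when $r\gg\beta/\dlm$; recovering the latter forces one to iterate over $\sim r/\ell_0$ nested annular scales via a sequential (rather than independent) revelation, and the bookkeeping that the chosen boxes genuinely sit inside $V$ and that their local loops patch into circuits obstructing every deep path is the delicate part, essentially re-running the isoperimetric covering argument behind \cref{lem:height-pm1-wsm} and using the hypotheses $r\ge C_0\beta/\dlm(\lambda)$, $m\ge 2r$, and the isoperimetric control on $V$ inherited from $\Lambda_{m/2}\subseteq V\subseteq\Lambda_m$.
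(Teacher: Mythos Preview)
Your overall two-step plan (cap the heights at $k+1$, then invoke \cref{lem:height-pm1-wsm}) matches the paper's, but Step~2 rests on a misreading of \cref{lem:height-pm1-wsm}: that lemma gives failure probability $e^{-\beta m/C}$ where $m$ is the sidelength of the box, valid for \emph{any} $m\ge C_0\beta/\dlm(\lambda)$, not just the minimal scale $\ell_0$. There is therefore no need for multi-scale chaining. The paper simply takes, for each $x\in\partial_i S_r(V)$, the ball $B_r(x)\subset V$; it first finds in the outer annulus $B_r(x)\setminus B_{3r/4}(x)$ an outermost $\{\le k+1\}$-loop $\cC_x$ of length $\le 4(1+\delta_0)r$ (the complement of this event is increasing, so one decreases $\lambda$ to $\lambda_c^{(k)}$ and applies the rigidity \cref{lem:rigidity-height-k-contour} at height $k+1$); then, conditioning on $\cC_x$ and using monotonicity to raise the boundary to $k+1$ on $\Int(\cC_x)$, it applies \cref{lem:height-pm1-wsm} \emph{once} with $m\asymp r$ (the isoperimetric hypothesis $|\partial_e\Int(\cC_x)|\le 4(1+\delta_0)r$ being exactly what the length control on $\cC_x$ provides). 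This yields a $\{\le k\}$-loop in $B_r(x)\setminus B_{r/2}(x)$ with one-shot failure probability $e^{-\beta r/C}$, and a union bound over $x$ finishes.

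Two smaller issues in your writeup. In Step~1 you claim that after conditioning on $\cR^+$ the remaining configuration ``obeys a ceiling at height $k+1$'' and that the depth-$\ge r/4$ part of any $\{\ge k+1\}$-path is ``entirely $\{=k+1\}$''; neither holds, since fresh $\{\ge k+2\}$-excursions can and will appear in the unexplored region. What you actually get (and all you need) is that the \emph{boundary} of the remaining domain is $\le k+1$, which suffices for the monotone comparison. And your chaining argument, as written, lacks the conditional-independence structure required to multiply the per-ring failure probabilities: the sequential revelation across rings, and the verification that the boxes the path meets are genuinely processed one after another under measures with $\le k+1$ boundary, is not set up. All of this evaporates once you use a single ball of radius $r$.
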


\begin{proof}
We prove one direction, the other direction being symmetrical. 
For every $x$ at distance $r$ from $\partial V$, let $B_x =B_{r}(x)$ be the ball of radius $r$ about $x$, and let $A_x= B_r(x)\setminus B_{r/2}(x)$ be the annulus between $B_x$ and its concentric box of half the radius. By construction, $B_x,A_x$ are interior to $V$. Let $E_x$ be the event that there is a loop of $\{\le k\}$-sites in the annulus $A_x$.  If $E_x$ occurs for all $x\in \partial_i S_{r}(V)$, then $\cL_{\le k} \subset V\setminus S_r(V)$.  

Therefore, the complement of that is a subset of the event $\bigcup_{x\in \partial_i S_r(V)} E_x^c$, and it suffices to upper bound the probability of this union. We use a union bound for this and consider any fixed $x$. For fixed $x\in \partial_i S_{r}(V)$, of which there are at most $m^2$, consider the probability of $E_x^c$. Let $A_x'$ be the annulus $B_{r}(x) \setminus B_{3r/4}(x)$ and first consider the event $E_{x,0}$ that there is an outermost loop of $\{\le k+1\}$-height sites $\cC_x$ in $A_x'$ having total length $|\cC_x|\le 4(1+\delta_0) r$. We use the bound 
\begin{align}\label{eq:E_x^c-split}
    \mu_{k+1,V}(E_x^c) \le \mu_{k+1,V}(E_{x,0}^c) + \mu_{k+1,V}(E_x \mid E_{x,0})\,.
\end{align}
For the first probability in \cref{eq:E_x^c-split}, note that the event $E_{x,0}^c$ is an increasing event, so its probability is only larger by decreasing $\lambda$ to $\lambda_c^{(k)}$. Upon doing so, the probability of $E_{x,0}^c$ is upper bounded by \cref{lem:rigidity-height-k-contour} whereby it is at most $e^{ - \delta_0 \beta r/4}$ for $r$ larger than an absolute constant.

For the second probability in \cref{eq:E_x^c-split}, we can condition on the outermost loop of $\{\le k+1\}$ sites, $\cC_x$, and note that it being measurable with respect to its exterior, we get 
\begin{align*}
    \mu_{k+1,V}(E_x^c \mid E_{x,0}) \le \max_{\cC_x} \max_{\phi \le k+1} \mu_{\phi,\Int(\cC_x)}(E_x^c)\,,
\end{align*}
where the maximum over $\cC_x$ runs over those confining $B_{3r/4}$ in their interior and having length at most $4(1+\delta_0)r$. Since $E_x^c$ is an increasing event, we can take the maximal boundary conditions $k+1$ on $\Int(\cC_x)$, at which point \cref{lem:height-pm1-wsm} with $\epsilon = 1/2$ (so that $\delta_0$ above is $\delta_0(\epsilon=1/2)$) provides an upper bound of $e^{ - \beta r/C}$ so long as $r \ge C_0 \beta /\dlm(\lambda)$ and $\beta$ large. Combining the above two estimates and taking a union bound over the $|\partial_i S_r(V)|$ many possible choices of $x$ gives the claimed bound with a different absolute $C$. 
\end{proof}

\subsection{The case of general boundary heights}
Our aim is now to show that if the boundary conditions are at any height between $0$ and $m/\log m$, the interface will, with high probability, contain a loop of height-$k$ sites surrounding the origin when $\lambda \in (\lambda_c^{(k)},\lambda_c^{(k-1)})$. The argument essentially goes by repeatedly using monotonicity to apply \cref{lem:kpm1-to-k-geenral-domain} to each level from the boundary condition height to $k$ one at a time. 

\begin{lemma}\label{lem:reaching-height-k}
Suppose $\lambda\in (\lambda_c^{(k)},\lambda_c^{(k-1)})$, $k+1\le j\le \frac{m}{\log m}$, and 
$ m \ge 2 C_0 j \beta / \dlm(\lambda)$. 
 For $\eta \in \{-,f\}$,  
\begin{align*}
    \mu_{\eta, j,\Lambda_m}\big(\cL_{\le k} \subset \Lambda_{m}\setminus \Lambda_{3m/4} \big) \ge  1- e^{ - \beta m/C}\,.
\end{align*}
The analogue holds for $\eta \in \{+,f\}$, $0\le j\le k-1$, and $\cL_{\ge k}$ replacing $\cL_{\le k}$ if $m\ge 2C_0 k \beta /\dlp(\lambda)$.  
\end{lemma}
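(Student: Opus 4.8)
The plan is to peel off one level at a time, descending from the boundary height $j$ down to $k$, at each step invoking a mild extension of \cref{lem:kpm1-to-k-geenral-domain} to produce a loop of sites one level lower, then conditioning on that loop and using \cref{lem:SOS-monotonicity} to reduce to a \emph{constant} boundary condition one level lower. The extension I need is: for every $h$, every $\lambda\ge\lambda_c^{(h-1)}$ (not only $\lambda\in[\lambda_c^{(h-1)},\lambda_c^{(h-2)}]$), every simply-connected $V$ with $\partial\Lambda_{m/2}\subset V\subset\partial\Lambda_m$, every $r\ge C_0\beta/\dlm(\lambda)$ with $m\ge 2r$, and every signing $\eta\in\{-,f\}$, one has $\mu_{\eta,h,V}\big(\cL_{\le h-1}\subset V\setminus S_r(V)\big)\ge 1-|\partial_i S_r(V)|e^{-\beta r/C}$, where $\cL_{\le h-1}$ is the outermost loop of $\{\le h-1\}$-sites around the origin (which appears as $\partial_i\Int(\gamma)$ for any down-$(h-1)$-contour $\gamma$). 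For $h=k+1$ this is exactly \cref{lem:kpm1-to-k-geenral-domain}.

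To obtain the extension I would rerun the proofs of \cref{lem:kpm1-to-k-geenral-domain} and of \cref{lem:height-pm1-wsm} on which it rests, changing two ingredients. First, the free-energy gap driving the isoperimetric estimate there is $f^\tr_{h-1}-f^\tr_h$; by \cref{cor:delta-f-delta-l-relation} and $\lambda\ge\lambda_c^{(h-1)}$ this is at least $(1-\epsilon_\beta)(\lambda-\lambda_c^{(h-1)})$, and whenever $\lambda_c^{(h-1)}\le\lambda_c^{(k)}$ (which holds in our application, since $h-1\ge k$) it is $\ge(1-\epsilon_\beta)\dlm(\lambda)$, so a boundary layer of width $r\ge C_0\beta/\dlm(\lambda)$ still suffices. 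Second, the proof of \cref{lem:height-pm1-wsm} uses two partition-function facts at reference height $h-1$ — an exponential bound $W^\rn_h(\gamma)\le e^{-(\beta-C)|\gamma|}$ on renormalized weights of down-$(h-1)$-contours, and a comparison $Z_{\eta,h,U}/Z_{\zeta,h-1,U}\le e^{C|\partial_e U|}$ — which were derived from \cref{prop:non-elem-control-entire-window} and \cref{cor:part-function-comparison-full-window} and presuppose the window. These I would reprove for all $\lambda\ge\lambda_c^{(h-1)}$: by \cref{lem:monotonicity-part-function-ratio} each of these ratios is monotone in $\lambda$, so it is enough to bound them at $\lambda=\lambda_c^{(h-1)}$; but $\lambda_c^{(h-1)}$ is at once the lower endpoint of $[\lambda_c^{(h-1)},\lambda_c^{(h-2)}]$ and the upper endpoint of $[\lambda_c^{(h)},\lambda_c^{(h-1)}]$, so there both $Z_{\cdot,h,U}$ and $Z_{\cdot,h-1,U}$ coincide with their truncated versions (\cref{prop:non-elem-control-entire-window}), \cref{lem:truncated-partition-function-cluster-expansion} applies, and $f^\tr_{h-1}=f^\tr_h$ (\cref{prop:lambda_c}); this yields both bounds with a constant independent of $h$ (holes in $U$ being handled as in the proof of \cref{cor:part-function-comparison-full-window}). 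Crucially, this is a \emph{one-level} comparison, so there is no factor growing with $h-k$.

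With the extension in hand, I would fix $\lambda\in[\lambda_c^{(k)},\lambda_c^{(k-1)}]$, $k+1\le j\le m/\log m$, $m\ge 2C_0 j\beta/\dlm(\lambda)$, set $r_0:=m/(8j)\ge C_0\beta/\dlm(\lambda)$ (after enlarging $C_0$), and sweep over $h=j,j-1,\dots,k+1$ starting from $V^{(j)}:=\Lambda_m$ with boundary conditions $j$ and signing $\eta\in\{-,f\}$. At stage $h$, with a simply-connected $V^{(h)}\supseteq\Lambda_{m-2(j-h)r_0}\supseteq\Lambda_{3m/4}$ carrying boundary conditions $h$ and a signing in $\{-,f\}$, apply the extension with $r=r_0$ to obtain, except with probability at most $|\partial_i S_{r_0}(V^{(h)})|e^{-\beta r_0/C}\le 4m\,e^{-\beta r_0/C}$, an outermost $\{\le h-1\}$-loop $\cL^{(h)}\subset V^{(h)}\setminus S_{r_0}(V^{(h)})$ around the origin; reveal it (it is measurable with respect to its exterior), set $V^{(h-1)}:=\Int(\cL^{(h)})$ — simply connected and $\supseteq\Lambda_{m-2(j-h+1)r_0}$ — and note that by the domain Markov property and \cref{lem:SOS-monotonicity} the conditional law on $V^{(h-1)}$ is stochastically dominated by $\mu_{f,h-1,V^{(h-1)}}$, since the revealed heights on $\partial_o V^{(h-1)}$ are $\le h-1$; then continue from $\mu_{f,h-1,V^{(h-1)}}$ (the event that the next revealed loop lies in its own boundary layer is decreasing in the surface, so the domination only helps). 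After $j-k\le j$ stages — the last being \cref{lem:kpm1-to-k-geenral-domain} itself, as $\lambda$ is in the window of $k$ — one obtains a $\{\le k\}$-loop around the origin inside $\Lambda_m\setminus\Lambda_{m-2jr_0}=\Lambda_m\setminus\Lambda_{3m/4}$, and since each domination only lowers heights the true outermost $\{\le k\}$-loop under $\mu_{\eta,j,\Lambda_m}$ lies in its interior, hence in $\Lambda_m\setminus\Lambda_{3m/4}$ as well; a union bound over the $\le m/\log m$ stages, using $r_0\ge m/(8j)$, gives the desired probability. The $\{+,f\}$ statement follows by the mirror-image argument (up-contours, $\dlp$, $\cL_{\ge k}$, windows approached from below). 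The main obstacle is precisely the extension of \cref{lem:kpm1-to-k-geenral-domain} to $\lambda$ above the window $[\lambda_c^{(h-1)},\lambda_c^{(h-2)}]$, i.e.\ recovering enough cluster-expansion control at reference height $h-1$ while $\lambda$ may exceed $\lambda_c^{(h-2)}$; this is what forces the route through the meeting point $\lambda_c^{(h-1)}$ of consecutive windows, and it is essential that one only ever needs single-level partition-function comparisons so that no factor depending on $h-k$ (which could be of order $\log m$) enters the estimates. The remaining bookkeeping — distances, perimeters, and keeping the induced boundary heights $\le j\le m/\log m$ — is routine.
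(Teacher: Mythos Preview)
Your peeling strategy mirrors the paper's at the top level, but the decisive reduction is handled very differently, and your version has a gap.

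The paper never extends \cref{lem:kpm1-to-k-geenral-domain} or \cref{lem:height-pm1-wsm} outside the window $[\lambda_c^{(k)},\lambda_c^{(k-1)}]$. At a stage with boundary height $i+1>k+1$ on the current domain $\cC_{i+1}$, it observes that the bad event (the next $\{\le i\}$-loop penetrating too far) is increasing, and uses \cref{lem:SOS-monotonicity} to \emph{raise the floor} from $0$ to $i-k$. The resulting measure is then an exact vertical translate of $\mu_{-,k+1,\cC_{i+1}}$ with floor $0$ and the \emph{same} $\lambda\in[\lambda_c^{(k)},\lambda_c^{(k-1)}]$, so \cref{lem:kpm1-to-k-geenral-domain} applies verbatim; no cluster-expansion control at reference heights $h>k+1$ is ever invoked. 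The paper also uses variable step-widths $r_i$ summing to $m/4$ (with a union bound over all such sequences), which makes the error a \emph{product} $\prod_l e^{-\beta r_{i_l}/C}\le e^{-\beta m/C'}$; your union bound over $j$ stages each with fixed $r_0=m/(8j)$ yields only $O(jm)\,e^{-\beta r_0/C}$, which for $j$ near $m/\log m$ is merely polynomially small in $m$, not the claimed $e^{-\beta m/C}$.

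The more serious issue is the extension itself. The $\cB_3$ step of \cref{lem:height-pm1-wsm} needs the analogue of \cref{cor:non-elem-control-smaller-than-delta-tr} at reference height $h$, namely that for down contours $\gamma$ with $\diam(\gamma)\le(f^\tr_{h-1}-f^\tr_h)^{-1}$,
\[
W^\rn_h(\gamma)=e^{-\beta|\gamma|}\,\frac{Z_{-,h-1,\Int(\gamma)}}{Z_{-,h,\Int(\gamma)}}\le e^{-(\beta-C)|\gamma|}\,,
\]
so that $\overline Z_{h,\Ext}\le Z^\tr_{h,\Ext}$. But by \cref{lem:monotonicity-part-function-ratio} the ratio $Z_{-,h-1}/Z_{-,h}$ is \emph{increasing} in $\lambda$, so evaluating at $\lambda_c^{(h-1)}$ gives a \emph{lower} bound, not the upper bound you claim. (Your monotonicity reduction does work for the other ingredient $Z_{\eta,h,U}/Z_{\zeta,h-1,U}$, which is decreasing in $\lambda$.) For $h\ge k+2$ one has $\lambda\ge\lambda_c^{(k)}\ge\lambda_c^{(h-2)}$, so neither $Z_{-,h-1,\Int(\gamma)}$ nor $Z_{-,h,\Int(\gamma)}$ coincides with its truncated version, and the proof of \cref{cor:non-elem-control-smaller-than-delta-tr} does not transfer. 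The paper's floor-lifting trick sidesteps this obstruction entirely and is the main idea you are missing.
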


\begin{proof}
We show the bound supposing that $j\ge k+1$, the other direction being analogous. Let $\cC_i$ denote $\Int(\cL_{\le i})$ for all $i$. Let $r_i$ be the minimal $r$ such that $\cL_{\le i-1}$ is a subset of $\cC_{i}\setminus S_r(\cC_i)$, in other words the maximal distance from $\cL_{\le i}$ that $\cL_{\le i- 1}$ gets. In order for $\cL_{\le k}$ to not be a subset of $\Lambda_{m}\setminus \Lambda_{3m/4}$, it must be the case that $\sum_{i=k}^{j-1} r_i$ is at least $m/4$. We fix any possible sequence $(r_i)_i$ with $r_i \ge 0$ for all $i$ and such that $\sum_{i=k}^{j-1} r_i = m/4$. Since $j\le m$, there are at most $2^{2m}$ many such choices. Since $m\ge 2 C_0 j \beta / \dlm(\lambda)$, there must be some subsequence of these $(r_i)$, call them $(r_{i_l})_l$, such that they are each larger than $C_0 \beta /\dlm(\lambda)$ and whose sum exceeds $m/8$. 

We now upper bound the probability that for all $l$, we have $\cL_{\le i_l}$ intersects $S_{r_{i_l}}(\cC_{i_l +1})$. We do so iteratively from the boundary inwards. Namely, for fixed such $(r_{i_l})_l$, the probability of interest is at most
\begin{align*}
    \prod_{l} \mu_{\eta,j,\Lambda_m}(\cL_{\le i_l} \cap S_{r_{i_l}}(\cC_{i_l + 1})\ne \emptyset \mid \varphi(\Lambda_m \setminus \cC_{i_l+1}))\,. 
\end{align*}
(Here we used the fact that $\cC_{i_l+1}$ is measurable with respect to the height values on its exterior.) The conditional measure is then exactly $\mu_{-,i_l+1,\cC_{i_l+1}}$. By monotonicity, since the event in question is an increasing event, the probability is only larger if we lift the floor up to $i_l - k$ whence the measure will be an additive translation of $\mu_{-k+1,\cC_{i_l+1}}$ with $\lambda \in (\lambda_c^{(k)},\lambda_c^{(k-1)})$. At that point, the probability we are interested in is exactly that bounded by \cref{lem:kpm1-to-k-geenral-domain}, whence it is at most $|\partial_i S_r(\cC_{i_l+1})| e^{ - r_{i_l}/C}$.  Performing the sum over all possible choices of $r_{i_l}$, and plugging this in, we get 
\begin{align*}
    \mu_{j,\Lambda_m}(\cC_k \cap \Lambda_{3m/4} \ne \emptyset) \le 4^m (m^2)^{j} e^{ - \beta m/C}\,.
\end{align*}
So long as $j\le m/(\log m)$, this will be at most $e^{ -\beta m/C}$ for a different $C$. 
\end{proof}

\begin{remark}
    At this point, one can see \cref{mainthm:equilibrium} by applying (the second part of) \cref{lem:reaching-height-k} with $j=0$, revealing the outermost loop $\cL_{\ge k}$, then using the rigidity at height $k$ interior to $\cL_{\ge k}$ per the estimates of \cref{subsec:rigidity-height-k} to see that interior to $\cL_{\ge k}$, most sites are exactly at height $k$ when $\lambda \in (\lambda_c^{(k)},\lambda_c^{(k-1)})$. 
\end{remark}

In the case where $\lambda>0$ is kept independent of the box size, the external field is strong enough to allow us to consider all possible boundary conditions (unbounded) and still obtain spatial mixing. For this purpose, the following lemma shows that the surface drops to some finite height quickly even started arbitrarily high.

\begin{lemma}\label{lem:getting-to-finite-heights}
    For every $\lambda>0$, if $m\ge C \lambda^{-1}$, the outermost loop of height $\{\le C\lambda^{-2}\}$ in $\Lambda_m$ confines $\Lambda_{(1-\epsilon)m}$ in its interior except with probability $e^{ - c m}$. 
\end{lemma}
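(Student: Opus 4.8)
The plan is to compare to the $\lambda=0$ model at a suitable height to control the maximum, and then to feed the resulting bounded boundary conditions into the spatial mixing machinery already developed, namely (the second part of) \cref{lem:reaching-height-k}. First I would fix $k_0 = \lfloor C_1 \lambda^{-2}\rfloor$ for a large absolute constant $C_1$ (to be chosen); note that $k_0 \le \lambda_c^{(0)}$-type scaling is irrelevant here since we only care about an \emph{upper} bound on the height. By \cref{lem:SOS-monotonicity} (monotonicity in the external field, raising $\lambda$ only lowers the surface, and monotonicity in boundary conditions, raising $\phi$ only raises it), it suffices to bound, for the measure on $\Lambda_m$ with $\lambda=0$, no ceiling, and boundary conditions identically equal to the worst possible value. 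But we may use \cref{eq:easy-SOS-maximum}: the maximum of the SOS surface exceeds its boundary height by more than $k$ with probability at most $e^{-\beta k}$ for $k\ge \tfrac14\log|\Lambda_m|$. The subtlety is that here the boundary conditions are unbounded, so I cannot directly control the absolute height this way. Instead I would argue as follows: the field $\lambda$ itself forces the surface down. Quantitatively, the energetic cost of a configuration sitting at height $\ge h$ over a region of area $a$ is at least $\lambda h a$, whereas the entropic gain from fluctuating (capped by comparison to the $\lambda=0$ Peierls estimate) is at most $Ca$ per unit height; so once $h \gtrsim \lambda^{-1}$ the field wins pointwise. More carefully: by \cref{cor:preliminary-partition-function-bound}, $Z_{\eta,h,\Lambda_m}/Z_{\eta,0,\Lambda_m}\le e^{-\lambda h|\Lambda_m|}\cdot(\text{stuff})$ — I would use this together with a union bound over sites to show $\mu(\max_v\varphi_v\ge h)\le |\Lambda_m| e^{-(\lambda h - C)\, \cdot\, 1}$-type estimate; choosing $h = C_1\lambda^{-1}\log(1/\lambda)$ (or $C_1\lambda^{-2}$, which dominates this for small $\lambda$) makes the probability at most $e^{-cm^2}$, hence certainly $\le \tfrac12 e^{-cm}$.

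Given that a priori bound on the maximum, I would reveal the (outermost) region where $\varphi \le k_0$ fails, and condition outside it. On the complement of the max-bound event, which has probability $\ge 1 - e^{-cm^2}$, all boundary conditions relevant to the interior are bounded by $k_0 \asymp \lambda^{-2}$. Now I want to apply the second part of \cref{lem:reaching-height-k} to produce a loop $\cL_{\ge k}$ at the appropriate critical height $k$ (the $k$ with $\lambda \in [\lambda_c^{(k)},\lambda_c^{(k-1)}]$) surrounding $\Lambda_{3m/4}$, and then note $k \le k_0$. The hypothesis of \cref{lem:reaching-height-k} in that direction requires $m \ge 2 C_0 k\beta/\dlp(\lambda)$; since $\lambda$ is bounded away from $0$ only in the sense $m \ge C\lambda^{-1}$, I need $\dlp(\lambda)$ not too small and $k$ not too large. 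For $\lambda$ bounded away from every critical point this is immediate; near a critical point $\lambda_c^{(k)}$ one has $\dlp(\lambda)\asymp \lambda_c^{(k)} \asymp e^{-4\beta k}$ by \cref{eq:I-i} and \cref{cor:delta-f-delta-l-relation}, and $k \asymp \log(1/\lambda)/\beta$, so the requirement reads $m \gtrsim \beta\log(1/\lambda)\, e^{4\beta k}/1 \asymp \lambda^{-1}\log(1/\lambda)$, which holds once $m\ge C\lambda^{-1}$ with $C$ large. So the condition of \cref{lem:reaching-height-k} is met. Actually, rather than invoke \cref{lem:reaching-height-k} verbatim (which is stated for boundary conditions of a fixed constant height $j$), I would instead observe that conditioned on the revealed region, the interior measure has boundary conditions $\le k_0$, apply monotonicity to replace them by the constant $k_0$ boundary condition, and then \cref{lem:reaching-height-k} with $j = k_0$ applies directly, giving a height-$k$ loop surrounding $\Lambda_{3m/4}$ inside the revealed domain, except with probability $e^{-\beta m/C}$.

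Combining: with probability $\ge 1 - e^{-cm^2} - e^{-\beta m/C} \ge 1 - e^{-cm}$ (after adjusting $c$), there is a loop on which $\varphi = k \le k_0 = C\lambda^{-2}$ surrounding $\Lambda_{3m/4}$; in particular a loop of height $\{\le C\lambda^{-2}\}$ surrounds $\Lambda_{(1-\epsilon)m}$ for $\epsilon \le 1/4$ (and for general $\epsilon<1$ one shrinks $3m/4$ to $(1-\epsilon)m$, which only relaxes the constant in \cref{lem:reaching-height-k} via $\cL_{\ge k}\subset \Lambda_m\setminus\Lambda_{(1-\epsilon)m}$). The main obstacle I anticipate is the a priori maximum bound with \emph{unbounded} boundary conditions: \cref{eq:easy-SOS-maximum} controls deviations above the boundary height, not the absolute height, so one genuinely needs the field to pull the surface down, and the quantitatively cleanest route is the partition-function ratio \cref{cor:preliminary-partition-function-bound} combined with a union bound — I would double-check that the area factor $|\Lambda_m|$ in the union bound is killed by $e^{-\lambda h |\Lambda_m|}$ for the chosen $h$, which it is as soon as $\lambda h \ge C + \tfrac{2\log m}{m^2}$, trivially satisfied by $h\asymp\lambda^{-2}$ and $m\ge C\lambda^{-1}$.
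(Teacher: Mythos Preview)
Your proposal has a genuine gap in its first step, the a priori bound on the surface height under \emph{unbounded} boundary conditions. You correctly flag this as the crux, but the tool you propose, \cref{cor:preliminary-partition-function-bound}, does not deliver what you claim: that corollary compares partition functions at different \emph{constant boundary heights} and says nothing about $\mu_{\phi,\Lambda_m}(\varphi_v\ge h)$ for a single site under a fixed $\phi$. In particular, for $v$ near $\partial\Lambda_m$ and $\phi$ arbitrarily large, that probability is near $1$ for any $h$, so a global maximum bound uniform over $\phi$ is simply false. What is actually needed is a bound for sites at distance $\gtrsim \lambda^{-1}$ from the boundary, and this is a genuinely new input: the paper imports \cite[Proposition~3.2(i)]{CeMa1}, giving $\sup_\phi \mu_{\phi,V}[\varphi_x^s]\le C(\beta,s)\lambda^{-s}$ for $x$ at distance at least $L_0:=\lceil 8/\lambda\rceil$ from $\partial V$. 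Your heuristic (``field cost $\lambda h a$ beats entropic gain $Ca$'') is morally the content of that imported estimate, not something you can extract from the partition-function ratios available here.

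Your second step is also problematic, and in any case unnecessary. Applying \cref{lem:reaching-height-k} with $j=k_0\asymp\lambda^{-2}$ requires $m\ge 2C_0\,k_0\,\beta/\dlm(\lambda)$, which is at least of order $\lambda^{-2}/\dlm(\lambda)$; this is far stronger than the assumed $m\ge C\lambda^{-1}$, and diverges entirely at the critical values $\lambda=\lambda_c^{(k)}$ where $\dl(\lambda)=0$. (Your claim that $\dlp(\lambda)\asymp\lambda_c^{(k)}$ near the critical point is false by definition of $\dlp$.) The paper sidesteps all of this: after the moment bound, it tiles $\Lambda_m$ by overlapping $2L_0\times 2L_0$ blocks, uses Markov's inequality to show each block contains a $\{\le L_0\lambda^{-1}\}$-loop around its center with probability $1-C(\beta,s)L_0^{-s+1}$ uniformly over the block boundary, and then runs a $1$-dependent block percolation argument (via \cite{LSS}) to rule out a bad path from $\partial\Lambda_m$ to $\partial\Lambda_{(1-\epsilon)m}$. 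This is more elementary, never touches $\dl(\lambda)$ or the critical structure, and gives the weaker conclusion the lemma actually states (a $\{\le C\lambda^{-2}\}$-loop, not a height-$k$ loop), which is precisely the preliminary reduction to bounded heights needed before the $\dl$-sensitive machinery of \cref{lem:reaching-height-k} is invoked downstream.
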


\begin{proof}
It is fairly straightforward to check that as long as a box has side-length $L$ at least $L_0 :=\lceil 8/\lambda\rceil$ the values of $\varphi$ will have finite moments uniformly over all (unbounded) boundary conditions: see~\cite[Proposition 3.2, item (i)]{CeMa1} for a short self-contained argument. In particular, in a domain $V$, for any $x$ at distance at least $L_0$ away from $\partial V$, for every $s$ there exists a constant $C(\beta,s)>0$ such that for all $\lambda>0$,
\begin{align*}
    \sup_{\phi}\mu_{\phi, V}[\varphi_x^s] \le \lambda^{-s} C(\beta,s)\,.
\end{align*}
Now for simplicity, consider the domain $\Lambda_m$, and tile it by overlapping boxes of side-length $2L_0 \times 2L_0$ that overlap on half of their area (i.e., centered at the vertices of $\Lambda_m \cap L_0 \mathbb Z^2$.

Consider any such box $B_x$. Uniformly over the boundary conditions on that box, by Markov's inequality, and a union bound, the probability that there is no $\{\le L_0\lambda^{-1}\}$ loop around the concentric $L_0 \times L_0$ box is at most $C(\beta,s)L_0^{-s+1}$. Call such a box bad, and call a box good if there is such a loop. The set of bad blocks form a $1$-dependent percolation in the graph induced by the boxes, with two boxes called adjacent if they intersect. By classical reasoning (see e.g.,~\cite{LSS}), this will be stochastically dominated by an independent percolation with a parameter that can be made arbitrarily small by taking $L_0$ large, from which we deduce that the probability of a path of bad blocks from $\partial \Lambda_m$ to $\partial \Lambda_{(1-\epsilon)m}$ is exponentially unlikely in $\epsilon m$. 
\end{proof}

\subsection{From \texorpdfstring{$\{\ge k\}$}{\{>= k\}} and \texorpdfstring{$\{\le k\}$}{\{<= k\}} loops to spatial mixing}
We now wish to use \cref{lem:reaching-height-k} to deduce weak spatial mixing amongst the set of boundary conditions ranging from $0$ to some upper bound $a_m$ (and in the case where $\lambda$ is bounded away from zero, any arbitrary boundary conditions). The following lemma boosts the probability of two configurations individually having $k$-loops to the joint probability of having a common $k$-loop (a loop of sites whose height is $k$ in both configurations).

\begin{lemma}\label{lem:coupling-shared-k-loop}
    Suppose $\lambda \in (\lambda_c^{(k)}, \lambda_c^{(k-1)})$. For any two boundary conditions $\phi,\phi'$ and any coupling of $\varphi \sim \mu_{\phi,\Lambda_m}$ and $\varphi'\sim \mu_{\phi',\Lambda_m}$, the probability that $(\varphi,\varphi')$ don't have a common $k$-loop surrounding $\Lambda_{(1-2\epsilon)m}$ is at most the sum of the marginal probabilities of not having a $k$-loop surrounding $\Lambda_{(1-\epsilon)m}$, plus $e^{ - \beta \epsilon m /C}$. 
\end{lemma}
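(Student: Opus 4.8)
The plan is to leverage the single-boundary-condition estimates from \cref{lem:reaching-height-k} in each coordinate of the coupling, and then upgrade the pair of individual $k$-loops to a \emph{shared} $k$-loop using a second round of revealing and coupling inside the region they both enclose. The key point is that a ``height-$k$ loop'' is not a monotone event, so having a $\{\le k\}$-loop from above and a $\{\ge k\}$-loop from below does not immediately give a common $k$-loop; one must also couple the conditional measures inside.

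Here is the sequence of steps. First I would fix an arbitrary coupling of $\varphi\sim\mu_{\phi,\Lambda_m}$ and $\varphi'\sim\mu_{\phi',\Lambda_m}$ and apply \cref{lem:reaching-height-k} to each marginal: except on an event of probability at most the stated sum of marginal probabilities, $\varphi$ has an outermost $k$-loop $\cL$ surrounding $\Lambda_{(1-\epsilon)m}$, and similarly $\varphi'$ has one, $\cL'$. (Here I use that \cref{lem:reaching-height-k}, via monotonicity as in its proof, can be invoked whether the true boundary height lies above or below $k$, which covers general $\phi$; one takes $\cL$ to be the outermost loop of $\{\le k\}$ or $\{\ge k\}$ sites as appropriate, and then the rigidity at height $k$ interior to it, \cref{eq:height-deviation-k-full-window}, shows it can be taken to be an actual height-$k$ loop at a cost $e^{-\beta m/C}$.) Second, on this good event, let $D$ be the connected component of $\Lambda_{(1-\epsilon)m}$ in the intersection of the interiors of $\cL$ and $\cL'$; it contains $\Lambda_{(1-\epsilon)m}$. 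Both $\cL$ and $\cL'$ are measurable with respect to the configurations exterior to them, so conditionally the laws of $\varphi\restriction \Int(\cL)$ and $\varphi'\restriction\Int(\cL')$ are SOS measures with height-$k$ boundary conditions (boundary signing $\eta=\emptyset$ or $f$) on their respective interiors. Third, I would run \cref{lem:reaching-height-k} once more, now inside these conditional measures on $\Int(\cL)$ and $\Int(\cL')$ (both of which contain $\Lambda_{(1-\epsilon)m}$ and have boundary conditions exactly $k$), to produce in each a further $k$-loop surrounding $\Lambda_{(1-2\epsilon)m}$, except on an event of probability $e^{-\beta\epsilon m/C}$; since these inner $k$-loops lie inside $D$ and are ``anchored'' to height $k$, one can arrange for them to coincide by coupling the two conditional height-$k$ SOS measures on $D$ together via \cref{lem:wsm-ball} (whose hypothesis $m\ge C\beta a_m/\dl(\lambda)$ is exactly what we are assuming, taking $a_m = k+1$), so that with probability $\ge 1-e^{-\beta m/C}$ the restrictions of $\varphi$ and $\varphi'$ to $\Lambda_{(1-2\epsilon)m}$ agree and in particular share the same height-$k$ loop around $\Lambda_{(1-2\epsilon)m}$. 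Finally I would collect the error terms: the two marginal failures, plus a bounded number of $e^{-\beta\epsilon m/C}$ terms from the rigidity corrections and the inner coupling, all of which absorb into a single $e^{-\beta\epsilon m/C}$.

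The main obstacle, and the step deserving the most care, is the middle one: converting ``$\varphi$ has a $\{\le k\}$-loop and $\varphi'$ has a $\{\ge k\}$-loop'' (or whatever combination the boundary heights force) into a genuine \emph{common} $k$-loop. Because the loop of height-exactly-$k$ sites is a non-monotone event, one cannot simply intersect two monotone events; instead the argument must reveal the outermost loops, use the domain Markov property to pass to conditional height-$k$ measures on the enclosed simply connected regions, and then invoke the already-established weak spatial mixing (\cref{lem:wsm-ball}) to couple those two conditional measures so their interiors — hence their inner $k$-loops around $\Lambda_{(1-2\epsilon)m}$ — literally coincide. One also has to check that the enclosed region $D$ still contains a box comparable to $\Lambda_m$ in linear size (so that the $m\ge C\beta a_m/\dl(\lambda)$ hypothesis of \cref{lem:wsm-ball} is met on $D$ after shrinking), which follows from the isoperimetric control (length $\le 4(1+\delta)(m/\log m+\cdots)$) implicit in \cref{lem:rigidity-height-k-contour} and \cref{lem:reaching-height-k}, ensuring $\cL,\cL'$ do not pinch $D$ down.
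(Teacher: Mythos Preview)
Your argument is circular. You invoke \cref{lem:wsm-ball} to couple the two conditional measures inside the revealed $k$-loops, but \cref{lem:wsm-ball} is proved \emph{after} this lemma and in fact relies on it (via \cref{cor:general-bc-k-loop}). So at this point in the paper there is no weak spatial mixing statement available to cite. Relatedly, the lemma as stated carries no hypothesis of the form $m\ge C\beta a_m/\dl(\lambda)$, so your step ``whose hypothesis \ldots\ is exactly what we are assuming'' is not justified.

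There is also a mismatch with the quantifier over couplings. The lemma asserts a bound valid for \emph{any} coupling of $\varphi,\varphi'$; once that coupling is fixed, you are not free to re-couple the conditional laws inside $D$ in a favorable way. The paper's proof sidesteps this by never touching the joint law: on $\cE_k\cap\cE_k'\cap\cS_k^c$ there must be a connected collection of dual edges crossing the annulus $\Lambda_{(1-\epsilon)m}\setminus\Lambda_{(1-2\epsilon)m}$, each edge lying on an outermost contour interior to the $k$-loop of $\varphi$ (colored \textsc{red}) or of $\varphi'$ (colored \textsc{blue}). A union bound over the crossing and its coloring, followed by a Peierls estimate on the monochromatic half using the exponential decay of $W^{\rn}_k(\gamma)$ from \cref{prop:non-elem-control-entire-window}, yields $e^{-\beta\epsilon m/C}$. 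Because the final bound is a sum of \emph{marginal} probabilities $\mu_{\phi,\Lambda_m}(\cE_k,E_{\tilde\cP})+\mu_{\phi',\Lambda_m}(\cE_k',E_{\tilde\cP}')$, it is automatically uniform over couplings.
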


\begin{proof}
    Let $\cE_k$ and $\cE_k'$ be the events that there is a loop of  height-$k$ sites surrounding $\Lambda_{(1-\epsilon)m}$ in $\varphi$ and $\varphi'$ respectively. Let $\cS_k$ be the event that there is a common $k$-loop in the annulus $\Lambda_{(1-\epsilon)m}\setminus \Lambda_{(1-2\epsilon)m}$. The desired probability is at most 
    \begin{align*}
        \mu_{\phi,\Lambda_m}(\cE_k^c) + \mu_{\phi',\Lambda_m}((\cE_k')^c) + \mathbb P(\cE_k,\cE_k', \cS_k^c)\,.
    \end{align*}
    It suffices to bound the third probability.    
    The event $\cS_k^c$ implies the existence of a path $\cP$ from $\partial_i \Lambda_{(1-\epsilon)m}$ to $\partial_o \Lambda_{(1-2\epsilon)m}$ such that for all $x\in \cP$ either $x$ is interior to a contour inside the outermost $k$-loop of $\varphi$, or interior to a contour inside the outermost $k$-loop of $\varphi'$. In particular, if we consider the collection of all outermost contours inside the outermost $k$-loop of $\varphi$, and color them $\red$ and do the same for $\varphi'$ and color them $\blue$, there must be a collection of $\red$ and $\blue$ contours whose union is a connected subset $\cP$ of $\Lambda_m^*$ connecting $\partial_i \Lambda_{(1-\epsilon)m}$ to $\partial_o \Lambda_{(1-2\epsilon)m}$.  Let $M\ge \epsilon m$ denote the total number of edges in $\cP$, there being at most $4m 3^M$ many choices for it, and another $3^M$ many choices of for choosing whether each edge in $\cP$ was colored $\red$, $\blue$, or both. There must then be a subset of $\cP$, call it $\tilde \cP$ which has at least $M/2$ many edges, and all of whose edges were colored $\red$, or all of whose edges were colored $\blue$. This gives us the following bound: 
    \begin{align*}
        \mathbb P(\cE_k,\cE_k',\cS_k^c) \le \sum_{M\ge \epsilon m}4 m 3^{2M} \max_{\cP: |\cP| = M} 2^M \max_{\tilde\cP \subset \cP: |\tilde \cP|\ge M/2} \big(\mu_{\phi,\Lambda_m}(\cE_k, E_{\tilde \cP}) + \mu_{\phi',\Lambda_m}(\cE_k',E_{\tilde \cP}')\big)\,.
    \end{align*}
    where we are using $E_{\tilde \cP}, E_{\tilde \cP}'$ to denote the event that the contours in $\tilde \cP$ are all outermost contours interior to $\cL_k$ in $\varphi$ or $\varphi'$ respectively. 
    
    Fix $M$, a colored collection of contours $\cP$, a subset $\tilde \cP$ forming mutually external contours, and w.l.o.g.\ consider the first of the two probabilities on the right-hand side above. We can condition on  the outermost $k$-loop in $\mu_{\phi,\Lambda_m}$, call its interior $\cC_k$, this being measurable with respect to its exterior and inducing boundary conditions $k$ on its interior. The bound on the probability of $E_{\cP}$ then becomes a standard consequence of the rigidity at height $k$ in $\cC_k$. Namely, by the Peierls map that deletes the contours making up $\tilde \cP$ from the renormalized contour collection for $\varphi(\cC_k)$, and using \cref{prop:non-elem-control-entire-window} to bound the weight change, uniformly over $\cC_k$, we have 
    \begin{align*}
        \mu_{\emptyset,k,\cC_k}(E_{\tilde \cP}) \le e^{ - (\beta -5)|\tilde \cP|} \le e^{ - (\beta -5) M/2}\,.
    \end{align*}
    Plugging this bound in to the earlier sum, together with its equivalent for $\varphi'$, implies $\mathbb P(\cE_k,\cE_k',\cS_k^c)$ is at most $\exp( - \beta \epsilon m/C)$ as claimed. 
\end{proof}

\begin{corollary}\label{cor:general-bc-k-loop}
    If $\lambda \in (\lambda_c^{(k)},\lambda_c^{(k-1)})$ and $k \vee \|\phi\|_\infty < a_m \le \frac{m}{\log m}$, as long as $m\ge 2 C_0 a_m \beta /\dl(\lambda)$, 
    \begin{align*}
        \mu_{\phi,\Lambda_m}(\cL_k \subset \Lambda_m \setminus \Lambda_{2m/3}) \ge 1-e^{ - \beta m/C}\,,
    \end{align*}
    If $\lambda>\epsilon$ for some $\epsilon>0$ fixed, then we can drop the upper bound on $\|\phi\|_\infty$, at the expense of also assuming $m$ is at least some constant $C(\epsilon)>0$.   
\end{corollary}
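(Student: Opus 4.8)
The plan is to build, with probability $1-e^{-\beta m/C}$, a loop of sites at height exactly $k$ surrounding $\Lambda_{2m/3}$; the stated bound then follows since the outermost such loop is at least as outer. First I would reduce to constant boundary conditions via \cref{lem:SOS-monotonicity}: couple $\varphi\sim\mu_{\phi,\Lambda_m}$ monotonically with $\varphi^{\uparrow}\sim\mu_{f,\|\phi\|_\infty,\Lambda_m}$ and $\varphi^{\downarrow}\sim\mu_{f,0,\Lambda_m}$ so that $\varphi^{\downarrow}\le\varphi\le\varphi^{\uparrow}$ pointwise (both inequalities hold since $0\le\phi\le\|\phi\|_\infty$). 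To obtain a loop of $\{\le k\}$-sites of $\varphi$ surrounding $\Lambda_{3m/4}$ inside the annulus $\Lambda_m\setminus\Lambda_{3m/4}$: if $\|\phi\|_\infty\ge k+1$, apply \cref{lem:reaching-height-k} (first part, $\eta=f$, $j=\|\phi\|_\infty$), which is legitimate since $k\vee\|\phi\|_\infty<a_m\le m/\log m$ and $m\ge 2C_0 a_m\beta/\dl(\lambda)\ge 2C_0\|\phi\|_\infty\beta/\dlm(\lambda)$ (using $\dl\le\dlm$); this places the outermost $\{\le k\}$-loop of $\varphi^{\uparrow}$ surrounding the origin in $\Lambda_m\setminus\Lambda_{3m/4}$, and since $\varphi\le\varphi^{\uparrow}$ those same sites have height $\le k$ under $\varphi$. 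If instead $\|\phi\|_\infty\le k$, I would compare to $\mu_{f,k,\Lambda_m}$ and use the height-$k$ rigidity of \cref{lem:rigidity-height-k-contour} (inner box $\Lambda_{3m/4}$, $r=m/4$) to get a height-$k$ loop of $\varphi^{\uparrow}$ around $\Lambda_{3m/4}$, hence again a $\{\le k\}$-loop of $\varphi$. Symmetrically, for $k\ge1$ one applies \cref{lem:reaching-height-k} (second part, $\eta=f$, $j=0\le k-1$, valid since $m\ge 2C_0 k\beta/\dlp(\lambda)$ as $k<a_m$) to $\varphi^{\downarrow}$ to obtain a $\{\ge k\}$-loop of $\varphi^{\downarrow}$, hence of $\varphi$ (an increasing event), surrounding $\Lambda_{3m/4}$ in the annulus; the case $k=0$ is vacuous since every site has height $\ge0$.

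The heart of the argument is splicing these two loops into one loop at height exactly $k$. I would reveal the outermost $\{\le k\}$-loop $\cL$ of $\varphi$ surrounding the origin (measurable w.r.t.\ its exterior); on the good event it lies in $\Lambda_m\setminus\Lambda_{3m/4}$, so $\Int(\cL)\supseteq\Lambda_{3m/4}$, and the conditional law on $\Int(\cL)$ is an SOS measure with boundary condition $\le k$. Restricting to $\Lambda_{3m/4}\subset\Int(\cL)$ and using $\varphi(\Lambda_{3m/4})\succeq\mu_{f,0,\Lambda_{3m/4}}$, one more application of \cref{lem:reaching-height-k} (second part, at a suitable interior scale) produces a $\{\ge k\}$-loop $\cL'$ of $\varphi$ surrounding $\Lambda_{2m/3}$ and \emph{strictly nested inside} $\cL$. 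By the rigidity at height $k$ throughout $\lambda\in[\lambda_c^{(k)},\lambda_c^{(k-1)}]$ (\cref{eq:height-deviation-k-full-window}) one also has, with high probability, that the heights along $\cL$ and $\cL'$ stay within an $O(1)$ window of $k$. In the annular region bounded by $\cL'$ (inner, heights $\ge k$) and $\cL$ (outer, heights $\le k$), the $\{\le k-1\}$- and $\{\ge k+1\}$-sites are $*$-disconnected (single-step increments), so the standard contour surgery (of the type underlying \cref{lem:rigidity-height-k-contour}) yields, with probability $1-e^{-\beta m/C}$, a loop of sites at height exactly $k$ separating $\cL'$ from $\cL$ and hence surrounding $\Lambda_{2m/3}$. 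Summing the $O(1)$ bad events of probability $e^{-\beta m/C}$ gives the bound. For the final sentence of the corollary ($\lambda>\epsilon$, unbounded $\phi$), I would first apply \cref{lem:getting-to-finite-heights}: for $m\ge C\lambda^{-1}$, with probability $1-e^{-cm}$ the outermost loop of $\{\le C\lambda^{-2}\}$-sites surrounds $\Lambda_{(1-\epsilon')m}$; revealing it reduces, after shrinking the box by a constant factor, to the bounded case with $a_m$ bounded by a constant depending on $\epsilon$ (so $a_m\le m/\log m$ once $m\ge C(\epsilon)$).

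The step I expect to be the main obstacle is the splicing: a crossing point of the $\{\le k\}$-loop and the $\{\ge k\}$-loop is only a single height-$k$ site, and upgrading this to an honest loop at height $k$ requires both the nesting construction above and the a priori control (via rigidity) that neither loop dips far below, nor rises far above, height $k$ — equivalently, that the transition region between heights $k+1$ and $k-1$ encircles $\Lambda_{2m/3}$. The scale bookkeeping ($3m/4$ versus $2m/3$, and the choice of interior box for the nested application of \cref{lem:reaching-height-k}) must also be arranged so that $\cL'$ genuinely sits inside $\cL$, which is why I keep a fixed gap between the two radii rather than pushing both to $\partial\Lambda_m$.
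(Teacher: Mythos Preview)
Your splicing step contains a genuine error: the claim that ``the $\{\le k-1\}$- and $\{\ge k+1\}$-sites are $*$-disconnected (single-step increments)'' is false for the SOS model of~\cref{eq:SOS-measure}, where neighboring heights may differ by any integer. Consequently, having a $\{\le k\}$-loop $\cL$ with a strictly nested $\{\ge k\}$-loop $\cL'$ does \emph{not} force a height-$k$ loop between them---concentric rings alternating between heights $k+1$ and $k-1$, with no height-$k$ site at all, give a configuration with both loops and no splice. The ``standard contour surgery'' you invoke cannot repair this, and your appeal to~\cref{eq:height-deviation-k-full-window} is also problematic since that rigidity is stated for boundary conditions exactly $k$, not for the mixed $\le k$ boundary you have on $\Int(\cL)$.

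The paper's route is structurally different and sidesteps the splice entirely. It works only with the two \emph{extremal} boundary conditions $0$ and $a_m$ under the grand monotone coupling. For $\varphi^{\downarrow}$ (bc $0$), once the outermost $\{\ge k\}$-loop is revealed, every site just outside it has height $\le k-1$ while the loop carries a floor at $k$; by~\cref{eq:boundary-condition-equivalence} and~\cref{def:boundary-signing} the interior law is then \emph{exactly} $\mu_{+,k,\cdot}$, so~\cref{lem:rigidity-height-k-contour} applies verbatim and yields a genuine height-$k$ loop. The symmetric argument gives a height-$k$ loop for $\varphi^{\uparrow}$. The paper then invokes~\cref{lem:coupling-shared-k-loop}---the key lemma you never use---to upgrade these two individual height-$k$ loops to a \emph{common} height-$k$ loop for the pair $(\varphi^{\uparrow},\varphi^{\downarrow})$, and the sandwich $\varphi^{\downarrow}\le\varphi\le\varphi^{\uparrow}$ forces $\varphi=k$ on that common loop. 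Your decision to work directly with $\varphi$ (mixed bc $\phi$) forfeits the clean ``interior law is $\mu_{\pm,k,\cdot}$'' reduction, which is why you are driven to the flawed splice. Your treatment of the $\lambda>\epsilon$ case via~\cref{lem:getting-to-finite-heights} is fine and matches the paper.
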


\begin{proof}
    Consider $\varphi,\varphi'$ being drawn from the SOS measures with minimal and maximal boundary conditions $\phi \equiv a_m$ and $\phi'\equiv 0$, and couple them using the grand monotone coupling. For each of $\varphi,\varphi'$, the individual probability of not having a $\{\le k\}$ or $\{\ge k\}$-loop surrounding $\Lambda_{3m/4}$ is bounded by $e^{ - \beta m/C}$ per \cref{lem:reaching-height-k} (applicable to both since $\dl(\lambda)= \dlm(\lambda)\wedge \dlp(\lambda)$). Interior to the outermost such loop, the rigidity at height $k$ as argued in \cref{lem:rigidity-height-k-contour} implies that $\cL_k$ will in fact contain $\Lambda_{2m/3}$ in its interior, except with probability $e^{ - \beta m/C}$. Applying \cref{lem:coupling-shared-k-loop}, they will therefore have a \emph{common} $k$-loop except with probability $e^{ - \beta m/C}$ for some other $C$. Since we are using the grand monotone coupling, if $\varphi,\varphi'$ share a common $k$-loop, then so do the coupled draws from $\mu_{\rho,\Lambda_m}$ for all boundary conditions $\rho: \|\rho\|_\infty \le a_m$, as desired.   
    
    When $\lambda>0$ independently of $m$, we use the grand coupling of all possible boundary conditions, noting that if a constant boundary condition at $\phi^+$ is coupled to the one with boundary conditions at $0$, then all boundary conditions in between also share a $k$-loop. For those two, first use \cref{lem:getting-to-finite-heights} to get a $\{\le C\lambda^{-2}\}$-height loop surrounding $\Lambda_{ 4m / 5}$ uniformly over $\phi^+$, then apply the above reasoning interior to that. 
\end{proof}

With the above, we are able to deduce the claimed spatial mixing bound on boxes. 

\begin{proof}[\textbf{\emph{Proof of \cref{lem:wsm-ball}}}]
For any two boundary conditions $\phi,\phi'$ having $\|\phi\|_\infty \vee \|\phi'\|_\infty < a_m$ inducing configurations $\varphi, \varphi'$, we construct a coupling such that they agree on $\Lambda_{m/2}$ except with exponentially small probability. First, expose (under an arbitrary coupling) from the outside in, their outermost common $k$-loop surrounding $\Lambda_{m/2}$ if it exists. By \cref{cor:general-bc-k-loop} together with \cref{lem:coupling-shared-k-loop}, such a shared $k$-loop exists except with probability $e^{ - \beta m/C}$. On the event of existence of such a shared $k$-loop, this being measurable with respect to the randomness exterior to it, we can use the identity coupling of the two configurations on their interior so that they agree with probability $1$ on $\Lambda_{m/2}$. 
\end{proof}

\begin{proof}[\textbf{\emph{Proof of \cref{cor:fixed-lambda-wsm}}}]
    The proof is the same as the above proof of \cref{lem:wsm-ball} except with the application of \cref{cor:general-bc-k-loop} using its second part where we allow arbitrary, unbounded, boundary conditions. 
\end{proof}

\subsection{Spatial mixing in an annulus}
We conclude the section by using the above to also show a type of weak spatial mixing result in annuli, showing that if two boundary conditions differ on the inner boundary of an annulus, they can be coupled with high probability in an annulus of half the thickness close to the outer boundary. 
For ease of notation, let $A_{\bar m,m}$ be the annulus $\Lambda_{\bar m} \setminus \Lambda_{\bar m-m}$. 

\begin{theorem}\label{thm:wsm-annulus}
Suppose that $\frac{\bar m}{2} \ge m \ge 4C_0 \beta a_m /\dl(\lambda)$, and $\log \bar m \le a_m \le \frac{m}{\log m}$. Then for any pair of boundary conditions $\phi,\phi'$ on $\partial_i \Lambda_{\bar m-m}$, having $\|\phi\|_\infty, \|\phi'\|_\infty < a_m$, we have
\begin{align*}
    \| \mu_{(0,\phi),A_{\bar m,m}} (\varphi(A_{\bar m,m/2})\in \cdot)- \mu_{(0,\phi'),A_{\bar m,m}}(\varphi(A_{\bar m,m/2})\in \cdot)\|_\tv \le 4 \bar m (e^{ - \beta m/C} + e^{ - \beta a_m/C})\,.
\end{align*}
where $(0,\phi)$ boundary conditions are $0$ on $\partial_o \Lambda_{\bar m}$ and $\phi$ on $\partial_i \Lambda_{\bar m-m}$. 
\end{theorem}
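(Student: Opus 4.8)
The plan is to mimic the proof of \cref{lem:wsm-ball}, but now working in the annular geometry $A_{\bar m,m}$ with a box $\Lambda_{\bar m}$ replaced by the annulus and the concentric box $\Lambda_{m/2}$ replaced by the thinner annulus $A_{\bar m,m/2}$. The key structural point is that the two boundary conditions $(0,\phi)$ and $(0,\phi')$ agree on the outer boundary $\partial_o\Lambda_{\bar m}$ and differ only on the inner boundary $\partial_i\Lambda_{\bar m-m}$; therefore, to couple the two configurations on $A_{\bar m,m/2}$ it suffices to expose, from the outer boundary inward, a common loop $\cL_k$ of height-$k$ sites that separates $A_{\bar m,m/2}$ from the inner boundary $\partial_i\Lambda_{\bar m-m}$ — i.e.\ a loop living in the sub-annulus $A_{\bar m,m}\setminus A_{\bar m,m/2} = \Lambda_{\bar m - m/2}\setminus \Lambda_{\bar m -m}$ and surrounding the hole $\Lambda_{\bar m-m}$. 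Once such a common $k$-loop is exposed, the configurations interior to it (on the side containing $A_{\bar m,m/2}$, i.e.\ between the loop and $\partial_o\Lambda_{\bar m}$) have the same law by the domain Markov property, so the identity coupling succeeds with probability $1$ there, giving the total variation bound.

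The first step is to produce, for a single boundary condition $\phi$ with $\|\phi\|_\infty<a_m$, a loop of height-$k$ sites in the sub-annulus of thickness $m/2$ near the outer boundary, with failure probability at most $\bar m\, e^{-\beta m/C}$. For this I would cover the sub-annulus $\Lambda_{\bar m-m/4}\setminus\Lambda_{\bar m-3m/4}$ (say) by $O(\bar m/m)$ overlapping boxes of side-length $\Theta(m)$, each at distance $\Theta(m)$ from $\partial_i\Lambda_{\bar m-m}$ and from $\partial_o\Lambda_{\bar m}$; applying \cref{lem:reaching-height-k} (in the form already used in \cref{lem:kpm1-to-k-geenral-domain,cor:general-bc-k-loop}: lift the floor, use monotonicity to reduce from an arbitrary height $\le a_m$ down to $k\pm1$, then to $k$) inside each such box, each box contains a height-$k$ loop surrounding its concentric half-box except with probability $e^{-\beta m/C}$; here the hypothesis $m\ge 4C_0\beta a_m/\dl(\lambda)$ is exactly what \cref{lem:reaching-height-k} requires, and $a_m\ge \log\bar m$ together with $a_m\le m/\log m$ controls the combinatorial entropy factors (the $\bar m$ prefactor is the union bound over the $O(\bar m/m)\le\bar m$ boxes). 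Overlapping loops in adjacent boxes concatenate into a single loop in the sub-annulus that separates $\partial_i\Lambda_{\bar m-m}$ from $A_{\bar m,m/2}$; inside the outermost such loop the rigidity of the height-$k$ measure (\cref{eq:height-deviation-k-full-window}, or \cref{lem:rigidity-height-k-contour}) upgrades this to a loop confining the region up to $\partial_o\Lambda_{\bar m}$, at cost $e^{-\beta m/C}$. The extra error term $e^{-\beta a_m/C}$ appears because, when reducing from height $a_m$ to $k$ via the chain of applications of \cref{lem:kpm1-to-k-geenral-domain}, one needs each intermediate level to be reached; summing the $O(a_m)$ per-level failure probabilities (each exponentially small in the relevant scale) against the entropy of the intermediate contour lengths — exactly as in the proof of \cref{lem:reaching-height-k} where the factor $(m^2)^j$ is absorbed — leaves a residual $e^{-\beta a_m/C}$.

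The second step is to boost the two individual $k$-loops into a \emph{common} $k$-loop. This is \cref{lem:coupling-shared-k-loop} essentially verbatim, only with the box $\Lambda_m$ replaced by the annular domain and the nested boxes $\Lambda_{(1-\epsilon)m}\supset\Lambda_{(1-2\epsilon)m}$ replaced by two nested sub-annuli of thickness $m/2$ and $m/4$ near the outer boundary: the event that $\varphi$ has a $k$-loop and $\varphi'$ has a $k$-loop but they share no common $k$-loop in the thinner sub-annulus forces a two-colored ($\red$/$\blue$) path of outermost contours crossing the intermediate sub-annulus of width $\Theta(m)$; a monochromatic sub-path of length $\ge M/2$ then has probability $e^{-(\beta-5)M/2}$ by a Peierls map on the renormalized contour collection (valid since $\lambda\in[\lambda_c^{(k)},\lambda_c^{(k-1)}]$ so \cref{prop:non-elem-control-entire-window} gives $W^\rn_k(\gamma)\le e^{-(\beta-5)|\gamma|}$), and summing over the $O(\bar m)\cdot 3^{O(M)}$ geometric and coloring choices with $M\ge\Theta(m)$ gives $\bar m\,e^{-\beta m/C}$. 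Combining the single-boundary-condition estimate with this boosting, and then using the grand monotone coupling (so that a common $k$-loop for the extreme boundary conditions $\phi\equiv a_m$ and $\phi'\equiv 0$ is automatically a common $k$-loop for all boundary conditions in between, as in \cref{cor:general-bc-k-loop}), yields that an arbitrary pair $\phi,\phi'$ with $\|\phi\|_\infty,\|\phi'\|_\infty<a_m$ admits a common $k$-loop in the width-$m/2$ sub-annulus except with probability $O(\bar m)(e^{-\beta m/C}+e^{-\beta a_m/C})$; the identity coupling on its interior finishes the proof.

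The main obstacle I anticipate is bookkeeping the geometry and the union bounds carefully enough that the prefactor stays at $4\bar m$ rather than something larger, and in particular making sure the covering boxes in Step 1 genuinely lie inside the annulus (staying $\Theta(m)$ away from \emph{both} the inner hole and the outer boundary), which is why the hypothesis $\bar m/2\ge m$ is needed: it guarantees the sub-annulus in which we look for loops has room of the right order. A secondary subtlety, already present in \cref{lem:reaching-height-k} but worth re-checking here, is that when the boundary condition on $\partial_i\Lambda_{\bar m-m}$ is as large as $a_m$, the monotone reduction to height $k$ must be run through \emph{annular} domains (the region between the outer boundary and the current height loop), which remain of the right thickness at every stage; this is where $a_m\le m/\log m$ is used to kill the $(\bar m^2)^{O(a_m)}$ entropy, and $a_m\ge\log\bar m$ ensures the per-box failure probability $e^{-\beta m/C}$ — with $m\ge 4C_0\beta a_m/\dl(\lambda)\ge 4C_0\beta\log\bar m/\dl(\lambda)$ — dominates the $\bar m$-fold union bound. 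None of these steps require genuinely new ideas beyond those in \cref{lem:wsm-ball} and its supporting lemmas; the content is in verifying that the annular geometry does not spoil the estimates.
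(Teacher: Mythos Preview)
Your overall strategy—cover the intermediate sub-annulus by local boxes, produce a common height-$k$ loop in each, and stitch them into a global separating loop—is essentially the paper's approach (the paper uses balls $B_{v,m/4}$ around every vertex of $\partial_i\Lambda_{\bar m-3m/4}$, hence the $4\bar m$ prefactor, rather than $O(\bar m/m)$ larger boxes, but this is cosmetic). There are, however, two issues.

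The minor one is the direction of exposure. You write ``expose, from the outer boundary inward,'' but that would reveal $A_{\bar m,m/2}$ before reaching the loop, defeating the coupling. The paper exposes the \emph{innermost} common $k$-loop, which is measurable with respect to the configuration on its interior (the side toward the hole $\Lambda_{\bar m-m}$, where the boundary conditions differ); the identity coupling is then used on the exterior side, which is where $A_{\bar m,m/2}$ lies. Your intended conclusion is correct but the stated direction is backwards.

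The real gap is your account of the $e^{-\beta a_m/C}$ term. You attribute it to a residual from the level-by-level descent in \cref{lem:reaching-height-k}, but that lemma already absorbs all $O(a_m)$ levels into a single $e^{-\beta m/C}$ bound (the hypothesis $j\le m/\log m$ is exactly what kills the $(m^2)^j$ entropy there—nothing of size $e^{-\beta a_m/C}$ is left over). The actual source, which your sketch does not address, is this: to apply \cref{cor:general-bc-k-loop} inside a local box $B_{v,m/4}$ you need the boundary condition on $\partial_o B_{v,m/4}$ to be bounded by some $a_m'$, but that boundary is not $\phi$—it is the annulus configuration itself, which is \emph{not} a priori at most $a_m$. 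The paper first exposes both configurations on the complement of $B_{v,m/4}$ and uses stochastic domination by the all-$a_m$ boundary measure at $\lambda=0$ together with the crude max-height bound \cref{eq:easy-SOS-maximum} (valid because $a_m\ge\log\bar m$) to get that the exposed values on $\partial_o B_{v,m/4}$ are at most $2a_m$ except with probability $e^{-\beta a_m/C}$. Only after this can \cref{cor:general-bc-k-loop} and \cref{lem:coupling-shared-k-loop} be invoked in the box. Without this step, your premise ``an arbitrary height $\le a_m$'' on the box boundary is unjustified, and the second error term in the theorem has no origin in your argument.
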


\begin{proof}
     Let $\varphi,\varphi'$ be independent draws from the two relevant distributions. We wish to show that with high probability, there will be a common loop of height $k$ sites in the annulus $A_{\bar m,3m/4}\setminus A_{\bar m,m}$. If we expose the \emph{innermost} shared $k$-loop in the annulus, it being measurable with respect to its interior, we can use the identity coupling exterior to the common $k$-loop to bound the total variation distance by the probability of non-existence of such a common $k$-loop in $A_{\bar m,3m/4}\setminus A_{\bar m,m}$.   
    
    The existence of such a common height-$k$ loop will follow if around every vertex in $\Lambda_{\bar m-3m/4}$, there is a common $k$-loop in the ball $B_{v,m/4}$ surrounding that vertex. For any $v\in \partial_i \Lambda_{\bar m-3m/4}$, consider the probability of non-existence of a common $k$-loop in $B_{v,m/4}$ surrounding $v$.

    First, expose the two configurations $\varphi$ and $\varphi'$ on the complement of $B_{v,m/4}$. Except with probability $e^{ - \beta a_m/C}$, the maximal height of the configurations revealed on $\partial_o B_{v,m/4}$ will be $2a_m$. In order to see this, observe that the configuration is stochastically below the one that has all its boundary conditions at height $a_m$ on the annulus at $\lambda =0$, for which the tail bound on the maximum height from \cref{eq:easy-SOS-maximum} applies since $a_m \ge \log \bar m$.      
    Given those boundary conditions, we can apply \cref{cor:general-bc-k-loop} and \cref{lem:coupling-shared-k-loop} to deduce that the two configurations will share a $k$-loop surrounding $v$ in $B_{v,m/4}$ except with probability $e^{ - \beta m/C}$. 
     A union bound over the at most $4\bar m$ vertices in $\partial \Lambda_{\bar m-3m/4}$ concludes the proof.   
\end{proof}

\section{Mixing time upper bounds}\label{sec:mixing-time-upper-bounds}
We now turn to the dynamical part of the paper, starting with the proofs of our mixing time upper bounds. Our focus is on the mixing time of the Glauber dynamics on the $n\times n$ box $\Lambda_n$. In order to have a finite state space, so that the mixing time is finite, we introduce a ceiling, and as is convention for such models, we will take the ceiling height to also be $n$. The floor will always be fixed to $0$, but the ceiling will be variable in this section, and therefore it will be useful to use the notation $\Lambda_{n}^\ell$ for the domain $\Lambda_n$ with ceiling at height $\ell$.

The main content of this section is establishing the upper bound on the mixing time when $\dl(\lambda)$ is going to zero with $n$. The proof goes in two stages: 
\begin{enumerate}
    \item Reducing the mixing time on $\Lambda_n$ with ceiling $n$ to the mixing time on $\Lambda_n$ with $\log n$ ceiling.
    \item Reducing the mixing time on $\Lambda_n$ with $\log n$ ceiling to the maximum of the mixing times on boxes of size $n_0$ and an annulus of width $n_0$, with arbitrary boundary conditions, for $n_0 = \tilde \Theta( \dl(\lambda))$.
\end{enumerate}
Finally, we apply a standard bound on the mixing time on boxes/annuli with cut-width  $n_0$, by an exponential in $n_0$ (possibly times a $\log n$ factor). Together, these arguments take up \cref{subsec:reducing-to-logn-ceiling,subsec:reducing-to-n0-domains,subsec:concluding-upper-bound}. 

In \cref{subsec:SSM-upper-bound}, we justify the $O(1)$ inverse gap bound when $\dl$ is fixed independent of $n$; given the weak spatial mixing with arbitrary boundary conditions for fixed $\lambda$ in \cref{cor:fixed-lambda-wsm} this is essentially reproducing the argument of \cite{CeMa2}. In \cref{subsec:torus-upper-bounds}, we explain the minor modifications needed for these upper bounds to apply to the torus, instead of zero boundary conditions.  

\subsection{Mixing time preliminaries}
Let us recall some basics from the study of mixing times for Markov chains with finite state spaces; we refer the reader to \cite{LP} for more on the topic. 

Consider a state space $\Omega$, with a transition kernel $P(x,y)$ describing the rate at which a continuous-time Markov chain 
jumps from $x$ to $y$, reversible with respect to a stationary distribution $\mu$. Let $(X_t^{x_0})_{t\ge 0}$ be the corresponding Markov chain initialized from state $x_0$, and let $L = (I-P)$ be its infinitesimal generator.  

Let $\gap$ denote the gap in the spectrum of $-L$, i.e., the size of its smallest non-zero eigenvalue, and let $\tmix$ be its mixing time, i.e., 
\begin{align*}
    \tmix(\epsilon) = \inf \{t>0: \max_{x_0 \in \Omega} \|\mathbb P(X_t^{x_0}\in \cdot) - \mu\|_\tv <\epsilon\}\,.
\end{align*}
By convention, $\tmix = \tmix(1/4)$. The $\epsilon$-mixing time satisfies a useful sub-multiplicativity property that $\tmix(\epsilon)\le  \tmix \log_2 \frac{2}{\epsilon}$. The inverse of the spectral gap, sometimes called the relaxation time, is closely tied to the mixing time. Namely, 
\begin{align}\label{eq:tmix-gap}
	(\gap^{-1} -1) \log\big(\tfrac{1}{2\epsilon}\big) \le \tmix(\epsilon) \le \gap^{-1} \log\big(\tfrac{1}{\epsilon \mu_{\min}}\big)\,.
\end{align}
The SOS Glauber dynamics for an SOS distribution $\mu$ of the form of \cref{eq:general-SOS-measure} has transition kernel $P(\varphi,\varphi')$ which is non-zero only if $\varphi,\varphi'$ differ at exactly one vertex, say $v$, by exactly one height at that vertex. In that case, if $\varphi^{(v,\iota)}$ denotes the configuration $\varphi$ with the value at $v$ changed to $\varphi_v + \iota$, 
\begin{align*}
	P(\varphi, \varphi') = \frac{\mu(\varphi' \mid (\varphi_w)_{w\ne v})}{\sum_{\iota\in \{-1,0,1\}} \mu(\varphi^{(v,\iota)} \mid (\varphi_w)_{w\ne v})}
\end{align*}
 This is easily checked to be reversible with respect to $\mu$. 

The monotonicity of \cref{lem:SOS-monotonicity} implies monotonicity for the SOS Glauber dynamics, whereby if $x_0 \ge y_0$, then $X_t^{x_0} \succeq X_t^{y_0}$ for all $t\ge 0$. In particular, there is a \emph{grand coupling} of all the SOS dynamics chains from all the possible initializations, such that with probability $1$, $X_t^{x_0}\ge X_t^{y_0}$ for all $t\ge 0$ and all $x_0 \ge y_0$.

\subsection{Reducing the mixing time to \texorpdfstring{$\log n$}{log n} ceiling}\label{subsec:reducing-to-logn-ceiling}
Let us introduce the notation $$T_{\Lambda_n^\ell}:= \tmix(\Lambda_n^\ell)$$ for the mixing time of SOS dynamics at parameters $\beta,\lambda$ on $\Lambda_n$ with floor at height $0$, ceiling at height $\ell$, and height $0$ boundary conditions. Our reduction of the ceiling from $n$ to $\log n$ uses the general censoring inequality of~\cite{PWcensoring}. A similar argument at $\lambda =0$ can be found in~\cite[Section 6.3]{CLMST14}. The censoring inequality says that if started from a maximal initial state in a monotone dynamics, the total-variation distance of the configuration at time $t$ to stationarity can only increase if pre-specified updates are ignored at pre-specified times; furthermore this can only stochastically increase the law of the configuration at time $t$. While this usually refers to ignoring updates at certain sites for the Glauber dynamics, we apply it to censor moves that would take the SOS dynamics below or above certain heights, effectively imposing different floors/ceilings on the Glauber dynamics. This application of censoring was also used in~\cite[Theorem 2.2]{CLMST14}.

\begin{definition}\label{def:SOS-censoring}
    A censoring scheme for an SOS Glauber dynamics chain $X_t$ prescribes a sequence of times $t_0<t_1<...$, subsets $V_i$ of $\Lambda_n$, and heights $a_i<b_i$ such that between times $[t_{i-1},t_i]$, the only updates that are permitted are those that move heights of vertices in $V_i$ and only if they move them between $a_i$ and $b_i$. 
\end{definition}

    We therefore have in our context that if $X_t^\ell$ (resp.\ $X_t^0$) is the SOS Glauber dynamics on $\Lambda_n^\ell$ initialized from the maximal (resp., minimal) height of $\ell$ (resp., $0$) everywhere, and $\bar{X}_t^\ell$ (resp., $\bar{X}_t^0$) is a censoring of it per \cref{def:SOS-censoring}, then for all $t\ge 0$, 
    \begin{align}\label{eq:censoring-ineq}
        \bar{X}_t^\ell \succeq X_t^\ell\,, \qquad \text{and}\qquad \bar{X}_t^0 \preceq X_t^0\,.
    \end{align}

    Our goal in this subsection is to use censoring to show the following reduction. 
\begin{lemma}\label{lem:reducing-to-logn-ceiling}
There is an absolute constant $C$ such that for all $\beta>\beta_0$, for any $\lambda \ge 0$, we have 
\begin{align*}
    T_{\Lambda_n^n} \le C n  T_{\Lambda_n^{\log n}}\,.
\end{align*}
\end{lemma}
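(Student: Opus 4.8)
The plan is to use the censoring inequality of~\cite{PWcensoring} to compare the dynamics on $\Lambda_n^n$ against a dynamics that is run in stages, where in each stage only moves within a window of heights of width $O(\log n)$ are permitted. Concretely, I would run the chain $X_t^n$ started from the maximal configuration (identically $n$) and censor it so that for roughly $n/\log n$ successive time intervals, each of length $\asymp T_{\Lambda_n^{\log n}}\cdot(\text{const})$, the allowed updates in the $j$-th interval are those keeping all heights within $[\,n - (j+1)\log n,\; n - (j-1)\log n\,]$ (or the final interval, $[0,\log n]$). By~\cref{eq:censoring-ineq}, the censored chain $\bar X_t^n$ stochastically dominates $X_t^n$ at every time. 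The point is that within each window, by the domain Markov property the dynamics restricted to heights in an interval of width $2\log n$ is exactly an SOS Glauber dynamics on $\Lambda_n$ with a floor and ceiling that are $2\log n$ apart (with boundary conditions inherited from the frozen configuration above/below), which — after a shift — has the same law as the dynamics on $\Lambda_n^{2\log n}$ with some boundary condition. Its mixing time is comparable to $T_{\Lambda_n^{\log n}}$ up to absolute constants (replacing $\log n$ by $2\log n$ only changes the cutwidth bound used later by a constant factor, or one can simply define $T_{\Lambda_n^{\log n}}$ with the relevant window width; in any case the dependence is polynomial in the window width and we absorb it into $C$).

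\textbf{Key steps.} First I would set up the censoring scheme precisely as above and invoke~\cite{PWcensoring} to get $\bar X_t^n \succeq X_t^n$ (and symmetrically $\bar X_t^0 \preceq X_t^0$ started from all-zero; the symmetric lower statement is what forces the comparison to go through from both sides when we combine with monotonicity). Second, I would run the upper censored chain for $N \asymp n/\log n$ stages, each of duration $s := C' T_{\Lambda_n^{2\log n}}\log n$, and argue by induction on the stage that after stage $j$ the censored chain is, with high probability and in the stochastic-domination sense, essentially equilibrated ``from above'' down to height $\le n - (j-1)\log n$: within a single stage, the window dynamics is run for many mixing times of a width-$O(\log n)$ ceiling model, so it reaches (up to total variation $n^{-10}$, say) the stationary measure of that windowed model; by the maximality of the initial condition within the window and monotonicity, its bottom $\log n$ heights are then dominated by a configuration that looks stationary for the next window. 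After $N$ stages the censored chain has total-variation distance to $\mu_{n,\lambda}$ on $\Lambda_n^n$ at most $N\cdot n^{-10} + (\text{error from the top being at }n) \le 1/8$, say, using that under $\mu_{n,\lambda}$ the maximum is well below $n$ by~\cref{eq:easy-SOS-maximum}. Third, combining the upper run from the all-$n$ start and the lower run from the all-$0$ start under the grand monotone coupling, and using that $\mu_{n,\lambda}$-typical configurations are sandwiched, I conclude $T_{\Lambda_n^n} \le 2Ns = O(n \log n\cdot T_{\Lambda_n^{2\log n}}/\log n) = O(n\, T_{\Lambda_n^{\log n}})$, absorbing the window-width discrepancy and logarithmic factors into the absolute constant $C$ (the sub-multiplicativity $\tmix(\epsilon)\le \tmix\log_2(2/\epsilon)$ handles the target accuracy).

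\textbf{Main obstacle.} The delicate point is the inductive bookkeeping across stages: the windowed dynamics in stage $j$ sees boundary conditions (and a ``floor/ceiling'') determined by the \emph{random} configuration left by stage $j-1$ above and below the window, so one must ensure that (i) these induced conditions are still of the admissible form so that $T_{\Lambda_n^{2\log n}}$ (defined for \emph{arbitrary} boundary conditions, or at least worst-case ones) genuinely bounds the stage's mixing time, and (ii) the small total-variation errors do not compound badly over the $\asymp n/\log n$ stages — this is handled by making each stage last $C'\log n$ mixing times so the per-stage error is $n^{-10}$, and then a union bound over $O(n)$ stages is negligible. One also needs the elementary fact that the SOS maximum under $\mu_{n,\lambda}$ with a ceiling at $n$ is below, say, $n/2$ except with probability $e^{-\Omega(n)}$, which is immediate from~\cref{eq:easy-SOS-maximum} (monotonicity in the ceiling lets us drop it). Apart from this bookkeeping, every ingredient — censoring, monotonicity, the domain Markov property, and sub-multiplicativity of $\tmix$ — is already available, so the argument is essentially a careful staged-equilibration scheme.
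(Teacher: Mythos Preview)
Your approach---staged censoring from the maximal initialization through overlapping height windows, invoking \cref{eq:censoring-ineq} and then \cref{eq:easy-SOS-maximum} to certify the configuration has dropped into each successive window---is exactly the paper's. Two execution issues prevent the sketch from giving the lemma as stated, however.

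First, there are no ``boundary conditions inherited from the frozen configuration above/below the window.'' On the good event that the entire configuration already lies in $[a_i,b_i]$ at the start of stage $i$, the censored dynamics in that stage is ordinary Glauber dynamics on $\Lambda_n$ with floor $a_i$, ceiling $b_i$, and the \emph{original} spatial boundary $\phi\equiv 0$ on $\partial\Lambda_n$. Since $0\le a_i$, \cref{eq:boundary-condition-equivalence} makes this identical to boundary condition $a_i$, hence an exact vertical translate of the zero-boundary dynamics on $\Lambda_n^{\log n}$. So the per-stage mixing time is precisely $T_{\Lambda_n^{\log n}}$ with zero boundary (as defined just above the lemma); there is nothing random or worst-case to quantify over. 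Second, your $2\log n$ windows leave you with a bound in terms of $T_{\Lambda_n^{2\log n}}$, and the assertion $T_{\Lambda_n^{2\log n}}\le C\,T_{\Lambda_n^{\log n}}$ is unjustified (recursing the same scheme would cost an extra $\log n$ factor, since each stage must run for $\asymp T\log n$ to beat the union bound over sites). The paper uses windows of width exactly $\log n$ with overlap $\tfrac12\log n$, namely $a_i=n-\tfrac{i}{2}\log n$ and $b_i=a_i+\log n$; after equilibration in window $i$, \cref{eq:easy-SOS-maximum} gives $\max_v\varphi_v\le a_i+\tfrac12\log n=b_{i+1}$ with high probability, placing the configuration in window $i+1$ and yielding directly $T_{\Lambda_n^n}\le Cn\,T_{\Lambda_n^{\log n}}$.
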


\begin{proof}
    For a general initial state $x_0$, use the shorthand $X_t^{x_0}$ to denote the Glauber dynamics chain $X_t$ initialized from $x_0$, and use $X_t^\mu$ to denote the one initialized from the stationary distribution $\mu_{0,\Lambda_n^n}$. Under the grand coupling, by a union bound and monotonicity, 
    \begin{align}\label{eq:monotonicity-mixing-reduction}
                \max_{x_0}\|\mathbb P(X_t^{x_0} \in \cdot) - \mu_{0,\Lambda_n^n}\|_\tv \le \sum_{v\in \Lambda_n} \mathbb P(X_t^{x_0}(v) \ne X_t^\mu(v)) & \le \sum_{v\in \Lambda_n} \mathbb P(X_t^{n}(v) \ne X_t^0(v))\,. 
    \end{align}
    Writing  $\mathbb P(X_t^{n}(v) \ne X_t^0(v))  = \mathbb P(X_t^n(v) - X_t^0 (v) \ge 1)$ and using Markov's inequality, and then \cref{eq:censoring-ineq},
        \begin{align}\label{eq:tv-to-censored-reduction}
         \max_{x_0} \|\mathbb P(X_t^{x_0} \in \cdot) - \mu_{0,\Lambda_{n}^{n}}\|_\tv \le \sum_v \mathbb E[X_t^n(v)] - \mathbb E[X_t^0(v)] \le \sum_v \mathbb E[\bar X_t^n(v)]- \mathbb E[\bar X_t^0(v)]\,.
    \end{align}
    for any censoring scheme of the form of \cref{def:SOS-censoring}. The censoring scheme we use is defined as follows.      Define a sequence of epochs, each of length $\Delta t:= C_1 T_{\Lambda_n^{\log n}} \log n$ for a large absolute constant $C_1$, via 
     \begin{align*}
         t_0 = 0\,, \qquad \text{and} \qquad t_i = t_{i-1} + \Delta t\,.
     \end{align*}
    for $1\le i \le N:=\frac{2n}{\log n}$ (note that $t_N \le Cn T_{\Lambda_n^{\log n}}$). Define corresponding floor and ceiling pairs 
     \begin{align*}
         a_i = n- \tfrac{i}{2} \log n\,, \qquad \text{and} \qquad b_i = a_i + \log n\,.
     \end{align*}
     Let $\bar X_t^n$ be Glauber dynamics initialized from all-$n$, with the following censoring: 
     \begin{itemize}
    \item For each $i$, for times in $[t_{i-1},t_i]$, updates are only allowed in $\Lambda_n \times [a_i,b_i]$. 
\end{itemize}
    Let $\bar X_t^0$ be the Markov chain initialized from all-$0$, with the following censoring scheme: 
    \begin{itemize}
    \item For all $t\ge 0$, updates are only allowed in $\Lambda_n \times [a_N,b_N]= \Lambda_n \times [0,\log n]$. 
\end{itemize}
    By the definition of $T_{\Lambda_n^n}= \tmix(\Lambda_{n}^n)$ it suffices to show that at $t= t_N$, each difference in expected values on the right-hand side of \cref{eq:tv-to-censored-reduction} is within $n^{-4}$ of the stationary expected value $\mu_{0,\Lambda_n^n}[\varphi_v]$. We start with the minimal chain since that is easier; since all heights are bounded above by $n$, it suffices to show that the TV-distance between $\mathbb P(\bar X_t^0(v) \in \cdot)$ and $\mu_{0,\Lambda_n^n}(\varphi_v \in \cdot)$ are within $n^{-5}$, and the bound on the difference of expectations will follow.  Towards that, notice that since $\bar X_0^0 \le \log n$ everywhere, $\bar X_t^0$ is exactly an (uncensored) SOS Glauber dynamics on $\Lambda_n^{\log n}$ initialized from all-$0$. As such, after time $t \ge \Delta t$, by sub-multiplicativity of TV-distance to equilibrium, we have that 
    \begin{align*}
       \| \mathbb P(\bar X_t^0(v) \in \cdot) - \mu_{0,\Lambda_n^{\log n}}\|_\tv \le n^{-6}\,.
    \end{align*}
    At the same time, since the $\mu_{0,\Lambda_n^n}$-probability that $\{\max_v \varphi_v\le \log n\}$ is $1-n^{ - 6}$ per \cref{eq:easy-SOS-maximum}, we get that 
    \begin{align}\label{eq:censoring-lower-chain-bound}
         \| \mathbb P(\bar X_t^0(v) \in \cdot) - \mu_{0,\Lambda_n^n}\|_\tv \le \| \mathbb P(\bar X_t^0(v) \in \cdot) - \mu_{0,\Lambda_n^{\log n}}\|_\tv + \|\mu_{0,\Lambda_n^{\log n}} - \mu_{0,\Lambda_n^n}\|_\tv \le O(n^{-6})\,.
    \end{align}

     We now turn to controlling the chain $\bar X_t^n$. We will show that for every $i<N$, the following event holds with high probability. 
    \begin{align*}
        E_i := \bigcap_{v\in \Lambda_n} \{\bar X_{t_i}^n(v) \le b_{i+1}\}\,.
    \end{align*}
    If this event holds for all $i$, then between times $[t_{i},t_{i+1}]$, the chain $\bar X_t^n$ is exactly an SOS chain with floor $a_{i+1}$ and ceiling $b_{i+1}$ (and boundary condition $0$ which is equivalent to boundary condition $a_{i+1}$ as noted in \cref{eq:boundary-condition-equivalence}) run for a time $\Delta t$. Namely if $Y_s^i$ is a standard Glauber dynamics with floor and boundary condition $a_i$, ceiling $b_i$, initialized from $\bar X_{t_{i-1}}^n$, then on the event $E_{i-1}$, we have $\bar X_{t_{i-1}+s}^n \stackrel{d}= Y_s^{(i)}$ for $s\le \Delta t$. Note that each $Y_s^{(i)}$ is a vertical translate of Glauber dynamics on $\Lambda_n^{\log n}$ with $0$ boundary conditions, and thus has mixing time $T_{\Lambda_n^{\log n}}$. With this, we can write  
     \begin{align}\label{eq:ceiling-reduction-tv-distance}
         \|\mathbb P(\bar X_t^n \in \cdot) - \mu_{0,\Lambda_n^n}\|_\tv \le \sum_{i\le N} \mathbb P(E_{i-1},E_i^c) + \max_{\eta: \|\eta\|_\infty \le \log n} \|\mathbb P(Y_{\Delta t}^{(N)} \in \cdot \mid Y_{0}^{(N)} = \eta) - \mu_{0,\Lambda_n^n}\|_\tv\,.
     \end{align}
     The total variation distance is bounded just like the bound for $\bar X_t^0$. For the probabilities in the sum, since on $E_{i-1}$, we have $\bar X_{t_{i-1}+s}^n \stackrel{d}= Y_s^{(i)}$ for all $s\le \Delta t$, it suffices to bound the probability (maximized over its possible initializations) that $Y_{\Delta t}^{(i)}$ has maximum height $b_{i+1} = a_i + \frac{1}{2}\log n$. 
     
     By definition of $\Delta t$ and the sub-multiplicativity of TV-distance to stationarity, $Y_{\Delta t}^{(i)}$ will be within distance $n^{-6}$ of its equilibrium distribution, which we said is a vertical shift of $a_i$ above a sample from $\mu_{0,\Lambda_n^{\log n}}$. A sample from that stationary distribution, will have maximum height at most $\frac{1}{2}\log n$ except with probability $n^{-6}$ per \cref{eq:easy-SOS-maximum}. Together this implies an $O(n^{-6})$ bound on the probability that $Y_{\Delta t}^i$ will have maximum height larger than $b_{i+1}$, and thus on $\mathbb P(E_{i-1},E_i^c)$. Plugging in to \cref{eq:ceiling-reduction-tv-distance}, using $N= o(n)$, and combining with \cref{eq:censoring-lower-chain-bound} gives an $n^{-5}$ bound on $\mathbb E[\bar X_{t_N}^n(v)]- \mathbb E[\bar X_{t_N}^0(v)]$ implying that $T_{\Lambda_n^n}\le t_N$ by \cref{eq:tv-to-censored-reduction}. 
\end{proof}

\subsection{Reducing the mixing time to domains of cut-width \texorpdfstring{$n_0$}{n\_0}}\label{subsec:reducing-to-n0-domains}
Now that we have reduced the ceiling down to $\log n$, we wish to bound the mixing time on $\Lambda_n^{\log n}$ with zero boundary conditions, by the mixing time on smaller domains of side-length $n_0$, the minimal scale at which spatial mixing kicks in (depending on $\dl(\lambda)$). 

Recall the definition of the annulus $A_{n,m} = \Lambda_n \setminus \Lambda_{n-m}$, and let $A_{n,m}^\ell$ denote the annulus $A_{n,m}$ with ceiling at height $\ell$. Let 
\begin{align*}
    T_{A_{n,m}^\ell} := \tmix(A_{n,m}^\ell)\,,
\end{align*}
i.e., the mixing time on $A_{n,m}^\ell$ with $0$ boundary conditions. We also need to consider mixing times over more general boundary conditions, so we use a superscript $\phi$ to indicate a boundary condition different from $0$. 

\begin{lemma}\label{lem:localizing-dynamics}
    There exists a universal $C_0$ such that for all $n\ge m \ge \max\{C_0 \beta \log n / \dl(\lambda), \log n \log \log n\}$, 
    \begin{align*}
        T_{\Lambda_n^{\log n}} \le C \log n \cdot \max\{\max_{\phi}T^\phi_{\Lambda_m^{\log n}}, \max_{\phi}T^{\phi}_{A_{n,m}^{\log n}}\}\,.
    \end{align*}
    where the maxima are over boundary conditions having $\|\phi\|_\infty \le \log n$. 
\end{lemma}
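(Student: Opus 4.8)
\textbf{Proof proposal for \cref{lem:localizing-dynamics}.}

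The plan is to use a block-dynamics / censoring scheme that reduces mixing on $\Lambda_n^{\log n}$ to mixing on the annulus $A_{n,m}^{\log n}$ (to handle the region near the boundary, where the true boundary condition $0$ is felt) and on the inner box $\Lambda_{n-m}^{\log n}$, and then iterate the inner-box reduction down the telescoping sequence of boxes $\Lambda_{n}\supset \Lambda_{n-m}\supset \Lambda_{n-2m}\supset\ldots$ until the side-length drops to between $m$ and $2m$. The workhorse estimate is the weak spatial mixing in annuli, \cref{thm:wsm-annulus}, which (with $a_m=\log n$ there, legitimate since $m\ge C_0\beta\log n/\dl(\lambda)$ and $m\ge \log n\log\log n\ge \log n\log m$ eventually) says that two SOS measures on $A_{n,m}$ differing only on the inner boundary $\partial_i\Lambda_{n-m}$, with outer boundary $0$, couple on $A_{n,m/2}$ except with probability $\bar m(e^{-\beta m/C}+e^{-\beta a_m/C}) = O(n\, e^{-\beta\log n/C})$, which is $o(n^{-3})$ once $C_0$ is large enough. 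Symmetrically \cref{lem:wsm-ball} provides the coupling inside boxes. The ceiling $\log n$ is harmless throughout: all the equilibrium estimates were stated for floor $0$, ceiling $\infty$, and by \cref{eq:easy-SOS-maximum} the $\log n$ ceiling changes every measure by $O(n^{-6})$ in total variation, so it can be inserted for free wherever needed (the only reason for the ceiling is to keep the state space finite so mixing times are finite). Boundary conditions $\phi$ with $\|\phi\|_\infty\le\log n$ are exactly the class the spatial-mixing theorems cover, and the ceiling $\log n$ forces every configuration encountered to have heights $\le\log n$, so this class is preserved under all the conditionings we do.

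The scheme itself: set $X_t$ to be Glauber dynamics on $\Lambda_n^{\log n}$ from the maximal state $\equiv\log n$ (monotonicity plus the grand coupling let us reduce $\tmix$ to tracking the gap between the chains from $\equiv\log n$ and $\equiv 0$, as in \cref{eq:monotonicity-mixing-reduction}--\cref{eq:tv-to-censored-reduction}). Run in epochs of length $\Delta t := C'\log n\cdot\max\{\max_\phi T^\phi_{\Lambda_m^{\log n}},\max_\phi T^\phi_{A_{n,m}^{\log n}}\}$. In the odd epochs, censor all updates outside the annulus $A_{n,m}$ — so on $A_{n,m}$ the chain is genuine Glauber dynamics with outer boundary $0$ and inner boundary whatever the current values on $\partial_i\Lambda_{n-m}$ are, which is $\le\log n$; by definition of $\Delta t$ and submultiplicativity of total variation this annular piece equilibrates to within $n^{-6}$ of $\mu_{(0,\phi),A_{n,m}}$. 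In the even epochs, censor all updates outside the inner box $\Lambda_{n-m}$, equilibrating that piece (with boundary condition inherited from the annulus) to within $n^{-6}$ of its stationary law. After $O(\log n)$ such epochs, the standard block-dynamics / censoring argument (one invokes the censoring inequality \cref{eq:censoring-ineq} of \cite{PWcensoring} to pass from the censored to the uncensored chain, and \cref{thm:wsm-annulus} to argue that the law on $A_{n,m/2}$ does not depend on the inner boundary, hence on the configuration of the max vs.\ min chains, up to $o(n^{-3})$) shows the max- and min-initialized chains have coupled on the overlap region; recursing the even-epoch box reduction $n/m = O(\log n/\dl(\lambda))$ times — but grouping all the box-reductions into a single family of parallel censored pieces so that only $O(\log n)$ real time-epochs are used — drives the total-variation gap below $1/4$. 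The $\log n$ factor in the conclusion is the number of epochs needed for the alternating block scheme to propagate agreement from the boundary through the telescoping annuli to the center; a cleaner way to see it is to note that we only need $O(1)$ epochs per "layer" of thickness $m/2$, there are $O(n/m)$ layers, and $n/m\le O(\log n/\beta)\cdot\dl(\lambda)^{-1}$ — but absorbing $\dl(\lambda)^{-1}$ into the mixing times on the sub-domains (which themselves carry the $\exp(\tilde\Theta(1/\dl(\lambda)))$ cost) leaves just the $O(\log n)$ prefactor from iterating a recursion of depth $O(\log n)$, exactly as in reductions to weak spatial mixing such as \cite{MaOl1}.

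The main obstacle, and the reason one cannot simply quote the classical WSM-to-mixing reduction of \cite{MaOl1}, is that we only have \emph{weak} spatial mixing, not \emph{strong} spatial mixing, at scale $n_0=m$: the error in coupling two boxes is $e^{-\beta m/C}$ in total variation of the \emph{whole} inner configuration, but it is not exponentially small in the distance to the place where the boundary conditions differ. This is exactly why the annulus estimate \cref{thm:wsm-annulus} — coupling in an annulus of \emph{half} the thickness near the \emph{outer} boundary when the boundary conditions differ only on the inner boundary — is indispensable: it lets us peel off a layer near the true ($0$) boundary where we have genuine control, and then treat the inner box as a fresh problem with an essentially arbitrary (but bounded by $\log n$) boundary condition. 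Managing the bookkeeping so that (i) the censoring scheme is a legitimate instance of \cref{def:SOS-censoring} applied simultaneously from the max and the min states, (ii) the accumulated $o(n^{-3})$ errors over $O(\log n)$ epochs and $n^2$ vertices still sum to $o(1)$, and (iii) the hypotheses $m\ge C_0\beta\log n/\dl(\lambda)$ and $m\ge\log n\log\log n$ are exactly what \cref{thm:wsm-annulus} and \cref{lem:wsm-ball} require (the latter needs $a_m\le m/\log m$, i.e.\ $\log n\le m/\log m$, which is the $\log n\log\log n$ condition), is the technical heart of the argument; the probabilistic content is entirely in the already-established spatial mixing statements.
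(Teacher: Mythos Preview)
Your censoring/peeling scheme is a genuinely different route from the paper's, and as written it has a real gap in the epoch count. You propose to peel off annuli of thickness $m/2$ from the outside in; that is $\Theta(n/m)$ layers, and your algebra for $n/m$ is wrong: from $m\ge C_0\beta\log n/\dl(\lambda)$ one gets $n/m\le n\,\dl(\lambda)/(C_0\beta\log n)$, which can be as large as $n/\log n$ (take $\dl(\lambda)$ of constant order, or even $1/\log\log n$). The sentence ``$n/m\le O(\log n/\beta)\cdot\dl(\lambda)^{-1}$'' is simply incorrect, and the attempt to ``absorb $\dl(\lambda)^{-1}$ into the mixing times on the sub-domains'' conflates the \emph{number of epochs} (a multiplicative prefactor) with the \emph{length of each epoch}; these are separate and you cannot trade one for the other. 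With an honest $\Theta(n/m)$ epochs your bound would read $T_{\Lambda_n^{\log n}}\le C(n/m)\cdot\max\{\ldots\}$, which is too weak for the statement.

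The paper avoids iteration entirely. For each vertex $v$ it introduces a \emph{single} localized chain $X_{t,v}$ that only updates inside one block $B_v$ (the ball $B_{v,m}$ if $v\in\Lambda_{n-m}$, the annulus $A_{n,2m}$ if $v\in A_{n,m}$), initialized from the global extremal states. By monotonicity of the grand coupling, censoring all updates outside $B_v$ can only increase the gap at $v$, so $\mathbb P(X_t^{\log n}(v)\ne X_t^0(v))\le \mathbb P(X_{t,v}^{\log n}(v)\ne X_{t,v}^0(v))$. Now $X_{t,v}^{\log n}$ and $X_{t,v}^0$ are honest Glauber dynamics on $B_v$ with \emph{fixed} boundary conditions $\log n$ and $0$ respectively; after time $C_1\log n\cdot\max_\phi T^\phi_{B_v}$ each is within $n^{-10}$ of its stationary law, and \cref{lem:wsm-ball} or \cref{thm:wsm-annulus} couples those two stationary laws at $v$. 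Summing over $v$ finishes. The $\log n$ prefactor is just the boosting $\tmix(n^{-10})\le C_1\log n\cdot\tmix$; there is no recursion and no telescoping. This per-vertex single-block comparison is the missing idea in your proposal.
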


\begin{proof}
    Similar to \cref{eq:monotonicity-mixing-reduction}, under the grand coupling, 
    \begin{align*}
       \max_{x_0} \mathbb P(X_t^{x_0} \ne X_t^\mu) \le \sum_{v\in \Lambda_n} \mathbb P(X_t^{\log n}(v) \ne X_t^0(v))\,,
    \end{align*}    
    where the maximal initialization is now identically $\log n$ since that is the ceiling height. 
    We aim to localize the dynamics by introducing Markov chains that only make updates in local neighborhoods around $v$. 
    For every $v\in \Lambda_{n-m}$, let $B_{v,m}$ be the ball of radius $m$ around $v$, and let $X_{t,v}^{\log n}$ and $X_{t,v}^0$ be the Glauber dynamics chains that only make updates inside $B_{v,m}$ (still with the same floors and ceilings). In particular, these will be SOS chains on $B_{v,m}$ with boundary conditions $\log n$, and $0$ respectively. 
    For $v\in A_{n,m}$, use the annulus $A_{n,2m}$ as its corresponding block, and (overloading notation slightly), let $X_{t,v}^{\log n},X_{t,v}^{0}$ be the SOS Glauber dynamics chains that only make updates inside $A_{n,2m}$; these will be SOS chains on $A_{n,2m}$ with boundary conditions that are $0$ on the outer boundary, and $\log n$ or $0$ respectively on the inner boundary.  

    Fix any $v\in \Lambda_n$, and notice that by monotonicity, under the grand coupling, 
    \begin{align*}
        \mathbb P(X_t^{\log n}(v) \ne X_t^{0}(v)) \le \mathbb P(X_{t,v}^{\log n}(v) \ne X_{t,v}^{0}(v))\,.
    \end{align*}
    This latter probability is bounded by 
    \begin{align}\label{eq:localization-mixing-time-split}
        \|\mathbb P(X_{t,v}^{\log n}(v)\in \cdot) - \mu_{\log n,B_v}\|_\tv + \|\mathbb P(X_{t,v}^{0}(v)\in \cdot) - \mu_{0,B_v}\|_\tv + \|\mu_{\log n,B_v}(\varphi_v\in \cdot) - \mu_{0,B_v}(\varphi_v\in \cdot)\|_\tv\,,
    \end{align}
    where we used $B_v$ as a generic stand-in for the block of $v$, be it the ball $B_{v,m}$ or the annulus $A_{n,2m}$. If $v\in \Lambda_{n-m}$ so that $B_v = B_{v,m}$, as 
     long as $t$ is at least $C_1 \log n$ (for a large absolute constant $C_1$)  times $T_{\Lambda_m^{\log n}}^\phi$ with boundary conditions $\phi \equiv \log n$, the first of these is at most $n^{-10}$, and as long as it is at least $C_1 \log n$ times the mixing time on $T_{\Lambda_m^{\log n}}^\phi$ with $\phi \equiv 0$ the second of these is at most $n^{-10}$. Similarly when $v\in A_{n,m}$ as long as $t$ is at least $C_1 \log n$ times $T_{A_{n,m}^{\log n}}^{(0,\phi)}$ with $\phi \equiv \log n$ or with $\phi \equiv 0$. 

     Finally, the third term in \cref{eq:localization-mixing-time-split} is governed by the spatial mixing estimates we established in the previous section. Namely, if $v\in \Lambda_{n-m}$ so that $B_v = B_{v,m}$, then by \cref{lem:wsm-ball} with $a_m = \log n$, it is at most $e^{ - \beta m/C}$ which is at most $n^{-10}$, for large $\beta$. Similarly, if $v\in A_{n,m}$ so that $B_v = A_{n,2m}$, then by \cref{thm:wsm-annulus} with the choices $\bar m = n$ and $a_m = \log n$, this is at most $n^{-10}$ as well. 

    Altogether, we get that so long as $t$ is at least $C_1$ times $\log n$ times $\max\{\max_{\phi}T^\phi_{\Lambda_m^{\log n}}, \max_{\phi}T^{\phi}_{A_{n,m}^{\log n}}\}$ for some absolute constant $C_1$, we have for all $v\in \Lambda_n$, 
    \begin{align*}
        \max_{v\in \Lambda_n} \mathbb P(X_t^{\log n}(v)  \ne X_t^{0}(v)) \le O(n^{-10})\,,
    \end{align*}
    which when summed over $v\in \Lambda_n$, yields the claimed bound on the mixing time.
\end{proof}

\subsection{Concluding the upper bound}\label{subsec:concluding-upper-bound}
We are now in position to conclude the upper bounds on the inverse spectral gap in \cref{mainthm:dynamical} when $\lambda$ approaches a critical $\lambda_c^{(k)}$ and/or $0$. 

\begin{proof}[\textbf{\emph{Proof of \cref{it-thm-1-O(1/f)} of \cref{mainthm:dynamical}: upper bound}}]
By \cref{lem:reducing-to-logn-ceiling} and \cref{lem:localizing-dynamics}, with  
\begin{align*}
    m = \max\{C_0 \beta \log n/\dl(\lambda), \log n\log \log n\}\,,
\end{align*}
(which is less than $n$ as required so long as $\dl(\lambda)\ge \frac{C_0 \beta \log n}{n}$ as assumed in \cref{it-thm-1-O(1/f)} of \cref{mainthm:dynamical})  
we obtain for some absolute constant $C$, the bound 
\begin{align}\label{eq:tmix-reduction}
    T_{\Lambda_n^n} \le C n\log n \cdot \big(\max_{\phi}T_{\Lambda_m^{\log n}}^\phi \vee \max_{\phi} T_{A_{n,m}^{\log n}}^\phi\big)\,.
\end{align}
The following is an upper bound on the mixing times on the right-hand side of the above that are exponential in $m \log n$, and essentially conclude the proof.  

\begin{lemma}\label{lem:canonical-path-bound-sos}
    There is an absolute constant $C$, such that for all $\beta>0$, all $\lambda\ge 0$,  
    \begin{align*}
        \max_{\phi: \|\phi\|_\infty\le \ell} T_{\Lambda_{m}^{\ell}}^\phi \le e^{ C\beta m\ell + \lambda}\,, \qquad \text{and}\qquad  \max_{\phi: \|\phi\|_\infty\le \ell} T_{A_{n,m}^\ell}^\phi \le n e^{C \beta  m \ell + \lambda}\,.
    \end{align*}
\end{lemma}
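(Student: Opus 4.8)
The plan is to bound the mixing time on a domain $D$ (which will be either the box $\Lambda_m^\ell$ or the annulus $A_{n,m}^\ell$) by the inverse spectral gap, and to bound the inverse spectral gap by a standard canonical-paths / cut-width argument. Recall that $\gap^{-1} \le \tmix$ up to the factor $\log(1/(\epsilon\mu_{\min}))$ in \cref{eq:tmix-gap}, and conversely that $\tmix \le \gap^{-1}\log(1/(\epsilon\mu_{\min}))$; here $\log(1/\mu_{\min}) = O(\beta m \ell \log \ell + \lambda m \ell)$ by bounding the maximal energy of any configuration on $|D| \le m^2$ sites with heights in $[0,\ell]$ (or $|D| = O(nm)$ sites in the annulus case), so it suffices to produce the claimed exponential bound on $\gap^{-1}$ and absorb this polynomial-in-$m,\ell$ correction into the constant $C$ in the exponent. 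So the whole game reduces to: $\gap^{-1}\le e^{C\beta m\ell}$ for the box, and $\gap^{-1} \le n\,e^{C\beta m\ell}$ for the annulus (with an additive $\lambda$ that is harmless since $\lambda \le 1$).

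For the box $\Lambda_m^\ell$, I would use the classical canonical-paths bound (Jerrum--Sinclair / Diaconis--Stroock) or, more conveniently, the observation that any single-site Glauber chain on a product-type state space $[0,\ell]^{V}$ with $|V| \le m^2$ can be compared to the chain that resamples one site at a time along a fixed linear order, and the congestion of the induced canonical paths is controlled by the cut-width of $\Lambda_m$, which is $O(m)$. Concretely: order the vertices of $\Lambda_m$ column by column, so that the "boundary" between revealed and unrevealed vertices always has size $O(m)$; to build a canonical path from $\varphi$ to $\psi$, change the height at each vertex one unit at a time, in this order, from $\varphi_v$ up/down to $\psi_v$. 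Each edge traversed flips at most one site by one unit, the length of each path is at most $\ell m^2$, and the number of paths through any given edge is controlled by a bijection argument whose Radon--Nikodym cost is $e^{O(\beta m \ell)}$ (the $O(m)$ from the interface between already-updated and not-yet-updated columns, the $O(\ell)$ from the range of heights, times $\beta$ from the interaction strength). Feeding this into the canonical-paths bound $\gap^{-1} \le \max_e \frac{1}{Q(e)}\sum_{\gamma \ni e} \mu(\varphi)\mu(\psi)|\gamma|$ yields $\gap^{-1} \le e^{C\beta m\ell}$. This is really the same bound used implicitly throughout the SOS literature (e.g.\ the $\exp[\Theta(n)]$ a priori bound for $\Lambda_n$); here it is applied to a strip of width $m$ and height $\ell$.

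For the annulus $A_{n,m}^\ell$, the subtlety is that the annulus is not simply connected, so one cannot order the vertices with $O(m)$ cut-width globally; instead one pays an extra factor of $n$. I would decompose: cut the annulus at one place into a simply-connected strip of length $O(n)$ and width $m$; this strip has cut-width $O(m)$ and the same canonical-paths argument gives inverse gap $e^{C\beta m\ell}$ for it. Restoring the single cut edge-set (of size $O(m)$) changes the measure by at most a multiplicative $e^{O(\beta m\ell)}$ per edge, i.e.\ a bounded-in-$n$ factor, so that comparison is harmless. The factor of $n$ enters from the length of the canonical paths (now $O(n m \ell)$ rather than $O(m^2\ell)$) and/or from the number of paths through a given edge once we allow the wrap-around; carrying the bookkeeping gives $\gap^{-1} \le n\, e^{C\beta m\ell}$, hence $T_{A_{n,m}^\ell}^\phi \le n\,e^{C\beta m\ell + \lambda}$ after the $\log(1/\mu_{\min})$ correction.

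The main obstacle — though it is more bookkeeping than conceptual — is getting the exponent to be genuinely $O(\beta m \ell)$ rather than $O(\beta m \ell \log \ell)$ or $O(\beta m^2)$: one must be careful that the canonical path only ever disturbs an $O(m)$-sized interface (so the cut-width, not the area, governs the congestion) and that updating a site by a full $\ell$ units is done one unit at a time so the per-edge energy change stays $O(1)$ while the path length stays $O(\ell m^2)$ (resp.\ $O(\ell m n)$), whose logarithm contributes only $O(\log(\ell m n))$ to the exponent and is absorbed. A clean way to organize this is to invoke a black-box comparison theorem bounding $\gap^{-1}$ of Glauber dynamics on a graph $G$ of cut-width $w$ with spin-space $[0,\ell]$ by $e^{C\beta w \ell}$ times a polynomial factor, which is exactly the kind of estimate used in \cite{MaOl1, CLMST14}; with such a statement in hand the box case is immediate with $w = O(m)$, and the annulus case follows by the cut-and-compare step above with the extra $n$ coming from $|A_{n,m}| = O(nm)$ entering the polynomial prefactor (which, being sub-exponential in $n$, we simply keep explicit as the stated factor of $n$).
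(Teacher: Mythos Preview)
Your proposal is correct and, for the box $\Lambda_m^\ell$, essentially identical to the paper's proof: both use the canonical-paths/congestion bound with the lexicographic (column-by-column) ordering, the standard complementary-configuration injection, and the observation that the energy discrepancy between $(w,z)$ and $(\tilde w,\tilde z)$ is supported on the $O(m)$ edges crossing the cut, each contributing at most $\beta\ell$. (A small slip: your $\log(1/\mu_{\min})$ should be $O(\beta m^2\ell + \lambda m^2\ell)$, not $O(\beta m\ell\log\ell)$, but this is still polynomial and absorbed into the exponent as you say.)

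For the annulus the paper takes a more direct route than you do. Your belief that ``one cannot order the vertices with $O(m)$ cut-width globally'' is mistaken: the paper simply orders the annulus by processing one $m\times m$ corner block lexicographically and then sweeping around the annulus one width-$m$ row/column at a time; at every stage the processed/unprocessed interface has size at most $2m$, so the same congestion argument runs \emph{directly} on $A_{n,m}$ with cut-width $2m$, no comparison step needed. The single factor of $n$ then arises from $\log(1/\mu_{\min})$ (equivalently from $|A_{n,m}|=O(nm)$ entering the path length), with the $m\ell$-polynomial pieces absorbed into the exponent. Your cut-and-compare detour (sever $O(m)$ edges, bound the gap on the resulting strip, then undo the cut via a Dirichlet-form comparison costing $e^{O(\beta m\ell)}$) is also valid and yields the same bound, but it is an unnecessary extra step once you realize the annulus already has $O(m)$ cut-width.
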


\cref{lem:canonical-path-bound-sos} is a by-now standard estimate using the canonical path method (\cite[Corollary 13.21]{LP}) to bound the mixing time by the cut-width of the underlying graph, a technique first developed in~\cite{JS} and applied in the context of the Ising model~\cite{Martinelli-phase-coexistence}. We could not find a statement at the requisite level of generality for the SOS dynamics with general field, floors, and ceilings, (without a field, a similar bound is in \cite[Proposition 2.3]{CLMST14})
so we have included a proof for completeness in \cref{sec:canonical-path-SOS}. Plugging the bounds of \cref{lem:canonical-path-bound-sos} into \cref{eq:tmix-reduction}, 
\begin{align*}
    T_{\Lambda_n^n} \le \exp\Big(C_\beta (\log n)^2 \max \Big\{\frac{1}{\dl(\lambda)}, \log \log n\Big\}\Big)\,,
\end{align*}
and the analogous bound on the inverse spectral gap per \cref{eq:tmix-gap}. 
Notice that if we simply assume $\lambda \ge 0$, by \cref{lem:reducing-to-logn-ceiling} and \cref{lem:canonical-path-bound-sos}, we also always have an upper bound of $\exp( C_\beta n \log n)$. 
\end{proof}

\begin{remark}\label{rem:getting-rid-of-the-log}
    The upper bound of \cref{lem:canonical-path-bound-sos} can be improved to a bound of $e^{ C \beta m}$ (without the dependence on $\ell$ in the exponent when $\lambda$ is kept away from zero, say $\lambda>\epsilon$ uniformly, since the cost to force a set of sites to all take value $1$ is only exponential in the number of sites: see the argument in \cite[Theorem 4.1]{CeMa2}.  
\end{remark}

\subsection{Sharp upper bounds using strong spatial mixing}\label{subsec:SSM-upper-bound}
When $\dl(\lambda)$ is fixed positive, independent of $n$, the above reasoning gives an upper bound of $\exp(n^{o(1)})$. In this section, we give a better upper bound using the exponential-rate spatial mixing properties of the model, using strong spatial mixing which controls decay of correlations in the presence of nearby boundary conditions. 

\begin{proof}[\textbf{\emph{Proof of \cref{it-thm-1-O(1)} of \cref{mainthm:dynamical}}}]
Let us first define the notion of strong spatial mixing we will use. 

\begin{definition}\label{def:SSM-SOS}
    The SOS model has SSM with constant $C$ above scale $n_0$ if for all $n\ge n_0$, for all $y\in \partial \Lambda_n$, and any subset $A\subset \Lambda_n$
    \begin{align*}
        \max_{\phi,\phi': \phi_x = \phi'_x  \forall x\ne y} \|\mu_{\phi,\Lambda_n}(\varphi(A)\in \cdot) - \mu_{\phi',\Lambda_n}(\varphi(A)\in \cdot)\|_\tv \le e^{ - d(A,y)/C}\,.
    \end{align*}
\end{definition}

It is a classical result of~\cite{MOS-WSM-SSM} that weak spatial mixing implies strong spatial mixing (above a certain scale) for finite-range 2D spin systems with finite state space per spin. As noted in \cite[Section 2.1]{CeMa2}, in our context with infinitely-many possible heights available to each site, the strong spatial mixing still follows from weak spatial mixing so long as $\lambda>0$ uniformly in $n$. This is because as long as $n_0 > \lceil 8\beta / \lambda \rceil$, there is a constant probability that a $n_0 \times n_0$ box entirely takes height $1$, say, uniformly over its boundary conditions (\cite[Eq.~(2.1)]{CeMa2}), whence the rest of the boosting of weak spatial mixing to strong spatial mixing can be carried out, resulting in the following consequence of \cref{cor:fixed-lambda-wsm}. 

\begin{lemma}\label{lem:SSM}
    For every $\epsilon>0$, there exists a $C_0(\epsilon,\beta)>0$ such that for all $\lambda$ having $\dl(\lambda)>\epsilon$, the SOS model has SSM with constant $C_0$ above scale $C_0$. 
\end{lemma}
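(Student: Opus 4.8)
\textbf{Proof proposal for \cref{lem:SSM}.}
The plan is to invoke the general machinery of Martinelli--Olivieri--Schonmann~\cite{MOS-WSM-SSM}, which shows that for a 2D finite-range spin system, weak spatial mixing (on boxes, uniformly over boundary conditions) self-improves to strong spatial mixing above some scale, provided one has a uniform lower bound on the probability that a box takes a fixed ``ground'' configuration. The subtlety here, as in~\cite[\S2.1]{CeMa2}, is that the single-site state space $\Z_{\ge 0}$ is infinite, so the classical statement does not apply verbatim; the fix is that $\lambda>0$ uniformly in $n$ supplies the needed compactness. Concretely, I would first record the input: by \cref{cor:fixed-lambda-wsm}, since $\dl(\lambda)>\epsilon$ forces $\lambda$ bounded away from $0$ (the first critical point $\lambda_c^{(0)}$ is a positive constant at large $\beta$, and $\dl(\lambda)>\epsilon$ together with $\lambda\le 1$ pins $\lambda$ into a compact subset of $(0,1]$ — more precisely $\lambda > \lambda_c^{(0)} - \text{const}$ or $\lambda$ is $\epsilon$-away from every $\lambda_c^{(k)}$, in either case $\lambda \ge \lambda_0(\epsilon,\beta) > 0$), the SOS model on $\Lambda_m$ has weak spatial mixing with rate $e^{-\beta m/C}$ uniformly over \emph{all} (unbounded) boundary conditions, for $m\ge C(\beta,\epsilon)$.

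The second ingredient is the uniform ``finite-energy'' bound: for $n_0 \ge \lceil 8\beta/\lambda_0 \rceil$, uniformly over boundary conditions $\phi$ on $\partial_o\Lambda_{n_0}$, one has $\mu_{\phi,\Lambda_{n_0}}(\varphi\equiv 1 \text{ on } \Lambda_{n_0}) \ge c(\beta,\epsilon) > 0$; this is exactly~\cite[Eq.~(2.1)]{CeMa2} and follows from the one-site moment bound cited in the proof of \cref{lem:getting-to-finite-heights} together with a crude Peierls/union-bound argument. Having both the weak spatial mixing rate and this ``bounded local energy'' property, the third step is to feed them into the MOS boosting scheme: one tiles $\Lambda_n$ by boxes of a fixed (large, $\epsilon$- and $\beta$-dependent) side-length, uses the finite-energy bound to guarantee that with good probability each box is insulated by an all-$1$ (or all-low) layer so that, conditionally, the measure on the box is a bounded-boundary-condition SOS measure to which weak spatial mixing genuinely applies, and then runs the standard recursive argument of~\cite{MOS-WSM-SSM} that converts ``WSM + insulation'' into an exponentially decaying influence of a single boundary site on a distant region. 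The output is precisely \cref{def:SSM-SOS} with some constant $C_0 = C_0(\epsilon,\beta)$ above scale $C_0$.

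The step I expect to be the main obstacle — or at least the one requiring the most care — is the adaptation of the MOS argument to the infinite local state space. In the usual setting the WSM hypothesis is quantified over all boundary conditions trivially because there are finitely many; here one must argue at each stage of the recursion that the ``effective'' boundary conditions seen by a sub-box, after conditioning on the revealed insulating layers, are either genuinely bounded (so \cref{lem:wsm-ball}/\cref{cor:fixed-lambda-wsm} applies with a controlled $a_m$) or else tall events are themselves exponentially rare by the maximum bound \cref{eq:easy-SOS-maximum} and hence absorbed into the error term. This bookkeeping — separating the ``typical, bounded'' event where WSM is invoked from the ``atypically tall'' event handled by monotonicity and the crude tail bound — is where the work lies, but it is entirely parallel to the discussion in~\cite[\S2.1]{CeMa2}, and since we have already proven the requisite WSM statement uniformly over unbounded boundary conditions (\cref{cor:fixed-lambda-wsm}), the remaining argument is essentially a black-box application of~\cite{MOS-WSM-SSM}. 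I would therefore present this as: state the two inputs as displayed facts, cite~\cite{MOS-WSM-SSM} and~\cite[\S2.1]{CeMa2} for the boosting mechanism, and spell out only the minor points where the infinite state space enters.
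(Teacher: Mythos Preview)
Your proposal is correct and follows exactly the approach the paper takes: the paper presents \cref{lem:SSM} as a direct consequence of \cref{cor:fixed-lambda-wsm} via the MOS boosting mechanism, with the adaptation to infinite local state space handled as in~\cite[\S2.1]{CeMa2} using the uniform lower bound on the probability that an $n_0\times n_0$ box with $n_0>\lceil 8\beta/\lambda\rceil$ is identically height~$1$. Your write-up is in fact more detailed than the paper's, which essentially cites the two references and moves on; your observation that $\dl(\lambda)>\epsilon$ forces $\lambda$ bounded away from zero (since the $\lambda_c^{(k)}$ accumulate at $0$ with shrinking gaps) is the one point the paper leaves implicit.
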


To go from \cref{lem:SSM} to an $O(1)$ inverse spectral gap is quite standard. We sketch the steps below with relevant references to demonstrate that so long as $\lambda>0$ uniformly in $n$, there is no issue with this step: we refer to~\cite[Section 2.2]{CeMa2} for further details. Let $L_0 = C_1 C_0$ where $C_0$ is given by \cref{lem:SSM} and $C_1$ is a constant depending on $C_0$ to be chosen. 
\begin{enumerate}
    \item Cover the box by overlapping $\ell^\infty$ balls (called blocks) of side-length $L_0$; 
    \item The corresponding block dynamics that assigns blocks independent Poisson clocks and performs a heat-bath update on a block when its clock rings is contractive as long as $L_0$ is sufficiently large as a function of $C_0$ (this follows from path coupling with the Hamming distance, and the amenability of $\mathbb Z^2$); in particular, the block dynamics has an $O(1)$ inverse spectral gap \cite[Eq. (2.3)]{CeMa2};
    \item When $\lambda>0$, each individual block has an $O(1)$ inverse spectral gap (though blowing up as $\lambda \downarrow 0$), since its base side-lengths are $L_0 = O(1)$: see \cite[Proposition 2.2]{CeMa2}. 
\end{enumerate}
The $O(1)$ inverse spectral gap for the SOS Glauber dynamics follows from these by the classical block dynamics bound on the inverse spectral gap \cite[Proposition 3.4]{MartinelliLectureNotes}. 
\end{proof}

\begin{remark}
    The dependence of $n_0$ for which \cref{def:SSM-SOS} holds depends exponentially on the $m_0$ at which the weak spatial mixing \cref{lem:wsm-ball} kicks in, so using the above approach when the weak spatial mixing only holds above a diverging scale, e.g., above $1/\dl(\lambda)$, would give very sub-optimal mixing time upper bounds, namely double exponential in $1/\dl(\lambda)$, rather than simply exponential in $1/\dl(\lambda)$. 
\end{remark}

\subsection{The case of the torus}\label{subsec:torus-upper-bounds}
We describe the modifications to get the upper bounds for the torus. 

\begin{proof}[\textbf{\emph{Proof of \cref{it-torus-O(1/f)} of \cref{mainthm:dynamical-torus}: upper bound}}]
In the bounds of \cref{subsec:reducing-to-logn-ceiling}, the key role played by the boundary conditions was in enabling the a priori bound ensuring that the maximum height of the interface at equilibrium is at most $\frac{1}{2}\log n$ except with probability $n^{-5}$, say. Such a bound evidently does not hold at $\lambda =0$ as there will be no force keeping the interface pinned close to height $0$, but as long as we assume $\lambda \ge n^{-1+o(1)}$ say, the following lemma replaces that estimate. 

\begin{lemma}\label{lem:torus-max-oscillation}
    Suppose $\lambda \ge \frac{e^{8\beta}\log n}{n}$. For $\beta>\beta_0$, the SOS model on $\Lambda_n^{\log n}$ with periodic boundary conditions, which we denote using the shorthand $\phi = p$,  has 
    \begin{align*}
        \mu_{p,\Lambda_{n}^{\log n}}(\max_v \varphi_v \ge \tfrac{1}{2}\log n) \le n^{-5}\,.
    \end{align*}
\end{lemma}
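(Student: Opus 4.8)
The statement bounds the maximum of the SOS surface on the torus with a ceiling at $\log n$, under the assumption $\lambda \ge e^{8\beta}\log n / n$. The natural approach is a union bound over the $n^2$ sites, so it suffices to show that for each fixed vertex $v$, $\mu_{p,\Lambda_n^{\log n}}(\varphi_v \ge \tfrac12 \log n) \le n^{-7}$. The mechanism that keeps the surface low here is the external field $\lambda$ rather than boundary conditions: each site at height $h$ costs $\lambda h$ in the Hamiltonian, so raising a large set of sites to height $\tfrac12\log n$ is penalized by a factor $e^{-\lambda \cdot \frac12 \log n \cdot |S|}$ for the raised set $S$, while the entropic repulsion (downward spikes) that would otherwise propel the surface up is too weak to compensate once $\lambda$ is as large as $e^{8\beta}\log n/n$.

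Concretely, I would argue via the contour representation at reference height $0$. The event $\{\varphi_v \ge \tfrac12\log n\}$ forces the existence of an up-$1$-contour $\gamma$ (indeed a nested stack of $\tfrac12\log n$ up-contours) enclosing $v$. If $\gamma$ is small, say $\diam(\gamma) \le \lambda^{-1} \wedge e^{3\beta}$, then $\gamma$ is $0$-elementary, and by \cref{it:contour-weights} of \cref{thm:main-elementary-general-bc} its renormalized weight is at most $e^{-(\beta-2)|\gamma|}$; but a stack of $\tfrac12\log n$ such contours would then have total probability at most $(e^{-(\beta-2)\cdot 4})^{\frac12\log n} = n^{-2(\beta-2)} \le n^{-7}$ for $\beta$ large. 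If $\gamma$ is not $0$-elementary, then $|\gamma| \ge e^{3\beta}$ at minimum, but more importantly one can bound $W_0^{\rn}(\gamma) = e^{-\beta|\gamma|}\frac{Z_{+,1,\Int(\gamma)}}{Z_{+,0,\Int(\gamma)}} \le e^{-\beta|\gamma|} \cdot e^{\lambda|\Int(\gamma)|}$ using \cref{cor:preliminary-partition-function-bound}, which is already a gain since raising the interior by one costs the field. Iterating, the probability of a stack of $\tfrac12\log n$ up-contours all enclosing $v$ is at most $\exp(-\beta \sum|\gamma_i| + \lambda \sum |\Int(\gamma_i)|)$; since all these contours are nested, $\sum|\Int(\gamma_i)| \le \tfrac12\log n \cdot |\Int(\gamma_{\text{out}})| \le \tfrac12\log n \cdot |\Int(\gamma_{\text{out}})|$ and the field term is controlled by $\lambda \le \frac{4\beta}{\log n}$ (valid since $\lambda \le 1$ and we are at scale $\log n$, using the ceiling) — actually the cleanest route is: either the outermost up-contour $\gamma_{\text{out}}$ has $|\gamma_{\text{out}}| \ge \sqrt n$, handled by a direct Peierls estimate $\sum_{r \ge \sqrt n} 4^r e^{-(\beta - C)r} \le e^{-\sqrt n}$; or $|\gamma_{\text{out}}| \le \sqrt n$, in which case $|\Int(\gamma_{\text{out}})| \le n$, and the stack of $\tfrac12\log n$ up-contours is penalized by at least $e^{-\beta \cdot 4 \cdot \frac12 \log n}$ from the lengths alone, giving $n^{-2\beta}$, against at most $e^{+\lambda n \cdot \frac12 \log n}$... which diverges, so this crude bound is insufficient and one must instead track the field gain from each raised layer.

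The correct accounting: conditioning on the outermost up-$1$-contour $\gamma_{\text{out}}$ enclosing $v$, the conditional measure in $\Int(\gamma_{\text{out}})$ is $\mu_{+,1,\Int(\gamma_{\text{out}})}$, which is an SOS measure with field $\lambda$ and boundary at height $1$; raising to height $\tfrac12\log n$ from there requires another up-contour, and at each level $j$ the relevant ratio is $Z_{+,j+1,\cdot}/Z_{+,j,\cdot}$, which by monotonicity and \cref{cor:preliminary-partition-function-bound} is at most $e^{\lambda|\Int|}$ — wait, that's the wrong direction. The point is that $\mu_{+,j,\Int(\gamma)}(\varphi \ge j+1 \text{ somewhere enclosing } v)$ is small because creating an up-contour costs $e^{-\beta|\gamma'|}$ in length and only gains $e^{-\lambda|\Int(\gamma')|} \le 1$ from the field (up-contours pay the field, they don't gain from it). So each additional layer multiplies the probability by at most $\sum_{\gamma' \ni v} e^{-\beta|\gamma'| - \lambda|\Int(\gamma')|} \le \sum_{\gamma'\ni v} e^{-\beta|\gamma'|} \le e^{-\beta/2}$ for $\beta$ large (using $\sum_{r\ge 4} r 4^r e^{-\beta r}$, the standard bound on contours through a point). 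Hence $\mu_{p,\Lambda_n^{\log n}}(\varphi_v \ge \tfrac12\log n) \le (e^{-\beta/2})^{\frac12\log n} = n^{-\beta/4} \le n^{-7}$ for $\beta > \beta_0$ large enough, and a union bound over $n^2$ sites gives $n^{-5}$.

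\textbf{Main obstacle.} The subtlety — and where I expect to spend the most care — is justifying that the field really does not help the surface go \emph{up}: in the torus with $\lambda>0$, the competition is between the field (pushing down) and entropic repulsion off the floor (pushing up, via downward spikes). The clean Peierls bound above secretly uses that up-contours are penalized, not rewarded, by $\lambda$; but the role of the hypothesis $\lambda \ge e^{8\beta}\log n/n$ is actually to rule out the \emph{entropic} mechanism by which the typical height is $\Theta(\tfrac1{4\beta}\log\tfrac1\lambda)$ — with this lower bound on $\lambda$, that typical height is $O(1)$, well below $\tfrac12\log n$. So the real content is that the equilibrium height of the torus SOS model is $O(1)$ under this field, which should follow from comparison (via monotonicity, \cref{lem:SOS-monotonicity}) to the case $\lambda \asymp \frac{\log n}{n}$ combined with the fact that all the critical points $\lambda_c^{(k)}$ with $k \gtrsim \frac1{4\beta}\log\frac{n}{\log n}$ lie below our $\lambda$; the cleanest implementation is probably to invoke the equilibrium description (the torus analogue of \cref{mainthm:equilibrium}, or directly the $h$-renormalized weight bounds of \cref{thm:main-elementary-general-bc} and \cref{prop:non-elem-control-entire-window}) to say the surface is rigid about some height $k_n = O(\log\log n)$ at most, and then the tail above $k_n + \tfrac12\log n - k_n$ is the pure Peierls estimate. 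I would structure the final proof as: (i) reduce to a single vertex by union bound; (ii) reveal the outermost up-$(\lceil\log n\rceil/2)$-contour through $v$ and note it forces a nested family of up-contours; (iii) bound the probability of this nested family by $\prod$ of per-layer factors each $\le e^{-\beta/2}$, using \cref{it:contour-weights} of \cref{thm:main-elementary-general-bc} for elementary layers and the crude $W_h^{\rn}(\gamma) \le e^{-\beta|\gamma|+\lambda|\Int(\gamma)|}$ together with the ceiling constraint $|\Int(\gamma)| \le n^2$ and $\lambda \le 4\beta/\log n$... and here is precisely the gap that the strong hypothesis $\lambda \ge e^{8\beta}\log n/n$ must close, by ensuring the field term over a single enclosed region of area $\le n^2$ contributes at most $\lambda n^2 \le e^{8\beta} n \log n$ which is still too big — so in fact one cannot allow $\Int(\gamma)$ to be as large as $n^2$, and the argument must first establish (via the spatial mixing / rigidity results of \cref{sec:spatial-mixing}) that no up-contour of diameter $\gtrsim 1/\dlp(\lambda)$ appears, i.e. the interface is confined to small droplets. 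That confinement step, combining \cref{cor:non-elem-control-smaller-than-delta-tr} with the isoperimetric arguments of \cref{lem:height-pm1-wsm}, is the technical heart; once the interface is known to live at $O(\log\log n)$ height with only microscopic excursions, the max bound is a routine union bound over sites and over the $\le \tfrac12\log n$ layers.
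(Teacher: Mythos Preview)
Your proposed direct Peierls argument --- that $\{\varphi_v \ge \frac12\log n\}$ forces a stack of $\frac12\log n$ nested up-contours enclosing $v$, each costing a factor $\le e^{-\beta/2}$ --- has two genuine gaps.

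First, on the torus there is no reference boundary height, so the contour representation of \cref{subsec:contour-representation-defs} is simply not available: the flat configuration $\varphi \equiv \lceil\tfrac12\log n\rceil$ realizes the event with no contours whatsoever, and your per-layer sum $\sum_{\gamma'\ni v} e^{-\beta|\gamma'|}$ has nothing to sum over. The paper resolves this by a monotonicity step you never mention: since there is a ceiling at $\log n$, the periodic measure is stochastically dominated by the measure with constant boundary conditions $\log n$ (\cref{lem:SOS-monotonicity}), which converts the problem to a fixed-boundary one.

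Second, even after that reduction, your per-layer factor is wrong. The conditional probability of an up-$(j{+}1)$-contour enclosing $v$ given an up-$j$-contour is governed by the \emph{renormalized} weight $W_j^{\rn}(\gamma')$, not the bare weight $e^{-\beta|\gamma'|-\lambda|\Int(\gamma')|}$; and $W_j^{\rn}$ is not uniformly small in $j$ --- it is precisely the content of entropic repulsion that for $j$ below the equilibrium height $k$ (where $\lambda\in[\lambda_c^{(k)},\lambda_c^{(k-1)}]$), large up-contours are favorable, so your product $(e^{-\beta/2})^{\frac12\log n}$ is unjustified. The paper instead, after a further monotone decrease of $\lambda$ into $I_{k+1}$ (so that $\dl(\lambda)\ge e^{4\beta}\log n/n$; this is exactly where the hypothesis $\lambda\ge e^{8\beta}\log n/n$ is used), applies \cref{cor:general-bc-k-loop} with $a_m=\log n$ to produce a height-$k$ loop around the origin except with probability $e^{-n}$, and only then invokes the rigidity tail \eqref{eq:height-deviation-k-full-window} from height $k$ upward, where $k\le (C/\beta)\log n \ll \tfrac12\log n$ for $\beta$ large. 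Your closing paragraph correctly senses that the spatial-mixing machinery of \cref{sec:spatial-mixing} is the technical heart, but the route is via these two monotonicity reductions followed by a single application of \cref{cor:general-bc-k-loop}, not a direct layer-by-layer contour bound.
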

\begin{proof}
     Fix a vertex $v$, the transitivity of the torus implying we can simply fix the origin vertex $o$, and consider $\mu_{p,\Lambda_n^{\log n}}(\varphi_o \ge \frac{1}{2}\log n)$. 
     Due to the ceiling at height $\log n$, we have $\mu_{p,\Lambda_n^{\log n}}$ is stochastically dominated by $\mu_{\log n, \Lambda_n^{\log n}}$, i.e, with maximal boundary conditions, so it suffices to bound $\mu_{\log n, \Lambda_n^{\log n}}(\varphi_o \ge \frac{1}{2}\log n)$.

    Let $k$ be the $k$ for which $\lambda \in (\lambda_c^{(k)}, \lambda_c^{(k-1)})$ and using monotonicity, lower $\lambda$ down to $I_{k+1}$, whence $\dl(\lambda) \ge \frac{e^{4\beta}\log n}{n}$. Moreover, by~\cref{eq:I-i} and the fact that $\lambda \le \frac{n}{\log n}$ we know $k+1$ to be at most $(C/\beta) \log n$.
    By \cref{cor:general-bc-k-loop} with $a_m = \log n$ (applicable since $\dl(\lambda)\ge \frac{n}{e^{4\beta} \log n}$), there will be a height $k$ loop surrounding $\Lambda_{n/2}$ except with probability $e^{ - n}$. Interior to that height $k$-loop, the rigidity results at height $k$ from \cref{eq:height-deviation-k-full-window} imply that the probability that $\varphi_0$ exceeds $k + r$ is at most $e^{ - 4(\beta - C)r}$. Thus, for $\beta$ large, for every vertex $v$ in the torus, 
     $$\mu_{p, \Lambda_n^{\log n}}(\varphi_v \ge \tfrac{1}{2}\log n) \le n^{-7}\,,$$
     whence a union bound over the $n^2$ many such vertices implies the bound on their maximum. 
\end{proof}

With this upper bound in hand, the remainder of the proof of \cref{eq:ceiling-reduction-tv-distance} goes through unchanged when $\dl(\lambda)\ge \frac{c\beta \log n}{n}$. The second reduction of \cref{lem:localizing-dynamics} works as stated on the torus, though it makes more sense in the torus to only use blocks of the form $B_v = B_{v,m}$ ($\ell^\infty$-balls of radius $m$ about each vertex $v$ since the graph is vertex transitive and our use of the annulus was to deal with behavior near the boundary).  On the other hand, when $\dl(\lambda)\le \frac{c\beta \log n}{n}$, the reduction of \cref{lem:reducing-to-logn-ceiling} together with the canonical paths bound of \cref{lem:canonical-path-bound-sos} gives an upper bound of $\exp(C \beta n\log n)$ on the torus. 
\end{proof}

\begin{proof}[\textbf{\emph{Proof of \cref{it-torus-O(1)} of \cref{mainthm:dynamical-torus}}}]
Regarding the $O(1)$ inverse gap estimates when $\dl(\lambda)>\epsilon$ uniformly in $n$, the proof sketched in \cref{subsec:SSM-upper-bound} is easily checked to go through without any modifications. 
\end{proof}

\section{Mixing time lower bounds}\label{sec:mixing-time-lower-bounds}
Our aim in this section is to establish the complementary lower bounds on the mixing times on $\Lambda_n^n$ with zero and periodic boundary conditions. Our lower bounds are based on \emph{bottlenecks} in the state space. Let us recall the basic Cheeger inequality lower bound for Markov chains. The Cheeger constant of a Markov chain with state space $\Omega$ and stationary distribution $\mu$ is 
\begin{align}\label{eq:bottleneck-ratio}
    \Phi_\star = \min_{A\subset \Omega} \frac{Q(A,A^c)}{\mu(A)\mu(A^c)}\,, \quad  \text{where} \quad Q(A,A^c) = \sum_{\omega\in A} \mu(\omega) \sum_{\omega'\in A^c} P(\omega,\omega')\,.
\end{align}
Cheeger's inequality then states that the inverse spectral gap has
\begin{align}\label{eq:Cheeger-inequality}
    \gap^{-1} \ge \frac{1}{2}\Phi_\star^{-1}\,.
\end{align}

One can easily see that $Q(A,A^c)\le \mu(\partial A)$ where $\partial A$ indicates all configurations in $A$ having positive transition rates to $A^c$. It therefore suffices to construct a set $A$ having small conditional probability $\mu(\partial A)/\mu(A) = \mu(\partial A \mid A)$ to lower bound the inverse spectral gap. 

\subsection{Height-zero boundary conditions}
We show a bottleneck between configurations that are predominantly at height $k-1$ vs.\ predominantly at height $k$, when $\lambda$ is close to $\lambda_c^{(k-1)}$, but in $(\lambda_c^{(k)},\lambda_c^{(k-1)})$. This forms a bottleneck because any contour collection whose diameter is too small (depending on $\dlp(\lambda)$) would prefer to remain at height $k-1$ than rise up to $k$, even if the equilibrium measure prefers height $k$. 

\begin{proof}[\textbf{\emph{Proof of \cref{it-thm-1-O(1/f)} of \cref{mainthm:dynamical}: lower bound}}]
For a configuration $\varphi$, let $\cV_{\ge k}\subset \Lambda_n$ denote its collection of all $\{\ge k\}$ sites. Let 
\begin{align*}
    A_{r} = \{\text{all connected components of $\cV_{\ge k}$ have size at most $r$}\}\,.
\end{align*}
Letting $o= (0,0)$ denote the origin, our choice of bottleneck set will be 
\begin{align*}
    A = A_r\,, \qquad \text{for} \qquad r = \min\{\tfrac{n}{8},\tfrac{1}{8\dlp(\lambda)}\}\,.
\end{align*}
We firstly claim that for an absolute constant $C_0$, as long as $n\ge C_0 \beta k/\dlp(\lambda)$, 
\begin{align}\label{eq:bottleneck-small-mass}
    \mu_{0,\Lambda_n^n}(A^c) \ge 1/2\,.
\end{align}
Indeed, using \cref{lem:reaching-height-k}, we get a loop of height-$k$ sites surrounding $\Lambda_{n/2}$, say, except with probability $e^{ - \beta n/C}$, and by definition such a loop must have size at least $\frac{n}{2}\ge r$. 

In order for a configuration to belong to $\partial A$, there must be a site $v\in \Lambda_n$, such that if its height is changed the configuration leaves $A$; this requires that $v$ be adjacent to some connected component of $\cV_{\ge k}$ having size at least $r/2$. In particular, 
\begin{align*}
    \partial A \subset \bigcup_{v} A \cap A_{r/2,v}^c\,,
\end{align*}
where $A_{r/2,v}^c$ is the event that there is a connected component of $\cV_{\ge k}$ of size at least $r/2$ incident to $v$. By a  union bound and \cref{eq:bottleneck-small-mass}, 
\begin{align*}
    \Phi_\star \le 2n^2 \max_{v} \mu_{0,\Lambda_n^n}(A_{r/2,v}^c \mid A)\,.
\end{align*}
On the event $A$, there must be a loop (possibly using boundary sites which all have height $0$) of height-$\{\le k-1\}$ sites interior to $B_v$, the $\ell^\infty$ ball of radius $r$ about $v$ in $\Lambda_n$ (otherwise there would be a $\{\ge k\}$ path from $v$ to $\partial_o B_v \setminus \partial_o \Lambda_n$, which would necessarily have length at least $r$, violating $A$). Exposing the outermost such loop, calling its strict interior $\cC_{k-1}\subset B_v$, this bounds the desired probability by 
\begin{align*}
    \mu_{0,\Lambda_n^n}(A_{r/2,v}^c\mid A) \le \max_{\cC_{k-1}\subset B_v}\max_{\phi:\|\phi\|_\infty \le k-1}\mu_{\phi,\cC_{k-1}}(A_{r/2,v}^c\mid A)\,.
\end{align*}
The events $A, A_{r/2,v}$ are both decreasing on this state space, and so by the FKG inequality, the probability in question is at most  $\mu_{\phi,\cC_{k-1}}(A_{r/2,v}^c)$ (without the conditioning on $A$). By monotonicity, this probability is maximized by the maximal boundary conditions $\phi$ which would be $k-1$, so we get 
\begin{align*}
    \Phi_\star \le 2n^2 \max_{v} \max_{\cC_{k-1}\subset B_v} \mu_{k-1,\cC_{k-1}}(A_{r/2,v}^c)\,.
\end{align*}
We control this last probability via a Peierls map. The event $A_{r/2,v}^c$ requires there to be an outermost contour of diameter at least $r/2$ (but at most $2r$ since $\cC_{k-1}\subset B_v$) confining $v$ in its interior. Considering the renormalized contour representation and performing the Peierls operation that deletes the outermost contour $\gamma_v$ confining $v$ in its interior, the weight change under application of this map is $W_{k-1}^\rn(\gamma_v)$ which by \cref{cor:non-elem-control-smaller-than-delta-tr} is at most $e^{ - (\beta - 5)|\gamma_v|}$ because $\diam(\gamma_v)\le 2r \le 1/(4\dlp(\lambda))$, which in turn is at most $(f^\tr_{k} - f^\tr_{k-1})^{-1}$ per \cref{cor:delta-f-delta-l-relation}.  Summing over the possible choices of $\ell$, we arrive at  
\begin{align*}
    \Phi_\star \le 2n^2 \sum_{\ell \ge r} 4^\ell e^{ - (\beta - 5)\ell}\le n^2 e^{ - (\beta - C)r}\,.
\end{align*}
This then implies the claimed bound on the inverse gap per \cref{eq:Cheeger-inequality}. 
    \end{proof}

\subsection{The case of the torus}
 
We describe how to get the analogous lower bound on the torus. 

\begin{proof}[\textbf{\emph{Proof of \cref{it-torus-O(1/f)} of \cref{mainthm:dynamical-torus}: lower bound}}]
Let $k$ be such that $\dl(\lambda)$ is attained by $\lambda_c^{(k-1)}$, whether by $\dl(\lambda) = \dlp(\lambda)$, in which case $\lambda =  \lambda_c^{(k-1)}- \dl(\lambda)$, or if $\dl(\lambda) = \dlm(\lambda)$ in which case $\lambda = \lambda_c^{(k-1)} + \dl(\lambda)$. Consider the following two events: 
\begin{align*}
    A_{r} & = \{\text{all connected components of $\cV_{\ge k}$ have size at most $r$}\}\,, \\ 
    A_{r}' & = \{\text{all connected components of $\cV_{\le k-1}$ have size at most $r$}\}\,.
\end{align*}
Similar to the proof with zero boundary conditions, fix $r = \min\{\frac{n}{8}, \frac{1}{\dl(\lambda)}\}$. Notice that $A_{r}$ and $A'_{r}$ are disjoint events because on $A_{r}$, the boundary in $\Lambda_{2r}$ of the $\{\ge k\}$ components intersecting $\partial_i \Lambda_{2r}$, say, forms a $\{\le k-1\}$ path of diameter at least $r$. As such, for every $\lambda$, one of these two events must have probability less than $1/2$. Thus, the bound of \cref{eq:bottleneck-small-mass} is ensured by the choice of $A = A_{r}$ vs.\ $A_{r}'$. 

From that point on, the argument is essentially identical to the argument in the previous subsection if $A= A_r$ with the balls $B_v$ now all being identical translates of one another. We had used that $\diam(\gamma_v)\le 2r\le 1/(4\dl(\lambda))$ in order to reason that $W_{k-1}^\rn(\gamma_v)$ had an exponentially small weight. This holds still if $\dl(\lambda) = \dlp(\lambda)$, while if $\dl(\lambda) = \dlm(\lambda)$ then $\lambda \in (\lambda_c^{(k-1)},\lambda_c^{(k)})$ whence the bound on the diameter isn't even needed for the exponentially small weight per \cref{prop:non-elem-control-entire-window}. The reasoning if $A = A_r'$ is symmetrical. 
\end{proof}

\appendix

\section{Monotonicity with generic fields and floors/ceilings}\label{sec:general-monotonicity-fkg}

Recall the general form of the SOS measure with arbitrary floors $\mathbf{a} = (a_v)_{v}$ and ceilings $\mathbf{b}= (b_v)_v$, boundary conditions $\phi$, and external field $\lambda$ from \cref{eq:general-SOS-measure}. 

\begin{proof}[\textbf{\emph{Proof of \cref{lem:SOS-monotonicity}}}]
It in fact suffices to check this for $V$ being a single vertex, say the origin, because given that, one can run a column Glauber dynamics (fully resampling the height at a vertex), whose $t\to\infty$ limit serves as the coupling attaining the stochastic domination expressed above. 
At a single vertex, we check the stochastic domination between the two distributions. The distribution can be expressed as 
\begin{align*}
    p(\varphi) := e^{ - \lambda \varphi_v} \big(\prod_{w\sim v} e^{ - \beta |\varphi_v - \phi_w|} \big) \mathbf{1}_{\varphi_v \ge a_v} \mathbf{1}_{\varphi_v \le b_v}\,,
\end{align*}
and $p'(\varphi)$ is defined analagously with the primed parameters. 
The ratio $p'(\varphi)/p(\varphi)$ can be broken up into the individual ratios in the above product. We claim that each of these are individually increasing in $\varphi_v$. 
 Since the product of increasing functions will also be increasing, that gives us that the ratio of the mass functions is increasing, from which the stochastic domination follows. 
 
 The ratio $e^{-(\lambda'- \lambda)\varphi_v}$ is increasing in $\varphi_v$ because $\lambda' - \lambda \le 0$. The ratios of the indicator functions are clearly increasing in $\varphi_v$ since 
\begin{align*}
    \frac{\mathbf{1}_{\varphi_v\ge a_v'}}{\mathbf{1}_{\varphi_v\ge a_v}} = \begin{cases}
        0 &  \varphi_v \in [a_v, a'_v] \\ 1 & \varphi_v \in [a_v',\infty)
    \end{cases}\,, \qquad \text{and}\qquad     \frac{\mathbf{1}_{\varphi_v\le b_v'}}{\mathbf{1}_{\varphi_v\le b_v}} = \begin{cases}
        1 &  \varphi_v \in (-\infty, b_v] \\ \infty & \varphi_v \in [b_v,b_v') 
    \end{cases}\,,
\end{align*}
(both measures are supported on $[a_v,b_v']$ so we don't care about the cases where it is $0$ divided by $0$). Finally,
\begin{align*}
    \frac{e^{ - | \varphi_v - \phi'_w|}}{e^{ - | \varphi_v - \phi_w|}}  = e^{|\varphi_v - \phi_w| - | \varphi_v - \phi'_w|} = \exp\left(\begin{cases}
      \phi_w - \phi'_w  &  \varphi_v \le \phi_w \\ (\varphi_v - \phi_w) - (\phi_w' - \varphi_v) & \varphi_v \in (\phi_w,\phi_w') \\  \phi_w' - \phi_w  &  \varphi_v \ge \phi_w'
    \end{cases} \right)\,.
\end{align*}
The middle term can be re-expressed as $2(\varphi_v - \frac{1}{2}(\phi_w + \phi'_w))$. It is easy to see with this writing that for $\varphi_v \in (\phi_w,\phi_w')$, this is in absolute value less than $\phi_w' - \phi_w$, so it is also increasing in $\varphi_v$. 
\end{proof}

\section{Exponential tails on non-elementary renormalized weights}\label{sec:exp-tails-nonelem-renormalized-weights}

In this section we follow the strategy of \cite{DiMa94} adapted to contours to provide the proof of \cref{lem:non-elem-contour-bound}. 

\subsection{Clusters of non-elementary contours}

We begin by canonically splitting any contour collection in $\mathscr{G}_{\eta,h,V}$ up into clusters of contours $\gamma$ that are non-elementary at $h(\gamma)$, and in between regions where the contributions are given by corresponding renormalized elementary partition functions. This follows the exposition leading up to Proposition 2.6 in \cite{CeMa1} (also \cite[Eq.~(2.33)]{DiMa94}). 

\begin{definition}\label{def:cluster-of-nonelems}
    [Cluster of non-elementary contours] A set of contours $\cC$ is called a \emph{non-elementary cluster} if one contour in $\cC$ nests all the others, along each nesting path ($\gamma_1 \subset \gamma_2 \subset ...$) only the heights exterior to the outermost contour and interior to the innermost contour are $h$, and every $\gamma\in \cC$ is $h(\gamma)$-non-elementary. 

    Any collection of contours $(\gamma_i)$ each of which are $h(\gamma_i)$-non-elementary can be uniquely decomposed into a set of clusters of non-elementary contours. 
\end{definition}

In words, given a set of $\gamma$ each of which is $h(\gamma)$-non-elementary, a cluster of non-elementary contours is an excursion away from height $h$. 
For a cluster of non-elementary contours $\mathcal C$, and each $\gamma\in \cC$, let
\begin{align*}
\mathsf{Ann}(\gamma;\mathcal C) &  = \Int(\gamma) \setminus \bigcup_{\gamma'\in \mathcal C: \gamma' \subsetneq \gamma} \Int(\gamma')\,,
\end{align*}
if $\exists \gamma'\in \mathcal C : \gamma'\subsetneq \gamma$, and let $\mathsf{Ann}(\gamma;\cC) = \emptyset$ otherwise. 
We can then define 
\begin{align*}
    \mathsf{Ann}(\cC) & = \bigcup_{\gamma} \mathsf{Ann}(\gamma;\cC)\,.
\end{align*}

\begin{definition}
    If $\mathcal C$ is a cluster of non-elementary contours, its $h$-renormalized weight is given by
    \begin{align}\label{eq:cluster-weight}         W^\rn_h(\mathcal C) = e^{ - \beta \sum_{\gamma \in \mathcal C} |\gamma|} \frac{\prod_{\gamma \in \mathcal C} Z^\elem_{S(\gamma),h(\gamma),\mathsf{Ann}(\gamma;\mathcal C)}}{Z^\rnelem_{h,\mathsf{Ann}(\mathcal C)}}\,,
    \end{align}
    where we recall $S(\gamma)=+$ if $\gamma$ is an up contour and $S(\gamma) = -$ if $\gamma$ is a down contour. 
    \end{definition}

\begin{remark}
        Notice that on the numerator of \cref{eq:cluster-weight}, it does not matter that we are writing $Z^\elem$ rather than $Z^\rnelem$ for the not-necessarily simply connected domain $\mathsf{Ann}(\gamma;\cC)$ since all its holes are necessarily $h(\gamma)$-non elementary and therefore cannot be confined by $h(\gamma)$-elementary contours.
\end{remark}

The following shows that these renormalized weights together with the renormalized weights of the elementary contours give a different way of writing the renormalized partition function (c.f., \cref{lem:part-function-renormalized-weights}). Let $\mathscr{C}_{\eta,h,V}$ be the set of all admissible collections of clusters of $h$-non-elementary contours. When we write the pair $(C,\Gamma)\in (\mathscr{C}_{\eta,h,V},\mathscr{G}^\rnelem_{\eta,h,V})$, we further impose that in each $\cC$, the contours of $\Gamma$ are compatible with the outermost and innermost contours of $\cC$ (i.e., the boundary of $\mathsf{Ann}(\cC)$).   

\begin{lemma}\label{lem:partition-function-non-elem-elem}
For every simply-connected $V$, every $h$, and every boundary signing $\eta$, 
    \begin{align*}
        Z_{\eta, h,V}^\rn = e^{ - \lambda h |V|} \sum_{(C,\Gamma) \in (\mathscr C_{\eta, h,V},\mathscr G_{\eta, h,V}^\elem)} \prod_{\cC\in C}  W^\rn_h(\mathcal C) \prod_{\gamma \in \Gamma}  W^\rn_h(\gamma)\,.
    \end{align*}
    \end{lemma}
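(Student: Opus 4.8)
The plan is to prove this by the same ``unfolding'' argument used to establish \cref{lem:part-function-renormalized-weights}, but now iterated according to the canonical splitting into non-elementary clusters and elementary contours. First I would observe that any admissible contour collection $\Gamma_{\mathrm{full}} \in \mathscr{G}_{\eta,h,V}$ (equivalently any $\Gamma \in \mathscr{G}_{\eta,h,V}^\rn$, remembering that on a simply-connected domain these give the same partition function by \cref{lem:part-function-renormalized-weights}) decomposes uniquely: reveal the outermost contours; those that are $h$-elementary become members of $\Gamma$ (the elementary part), while those that are $h(\gamma)$-non-elementary are the tops of non-elementary clusters per \cref{def:cluster-of-nonelems}. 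Interior to an elementary outermost contour $\gamma$ one recurses on $\Int(\gamma)$ with boundary height $h\pm1$; inside a non-elementary cluster $\cC$ one recurses on each annular region $\mathsf{Ann}(\gamma;\cC)$ with boundary height $h(\gamma)$, and on the innermost interior region with boundary height $h$ again.

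Second, I would carry out the algebra of re-summing. Starting from \cref{eq:contour-rep-partition-function}, write $e^{\lambda h|V|} Z_{\eta,h,V}$ as a sum over the outermost layer: an admissible configuration of mutually-external outermost contours, partitioned into the elementary ones $\gamma\in\Gamma_{\out}$ and the cluster-tops. For each elementary outermost contour, the factor $W(\gamma)\, e^{\lambda h|\Int(\gamma)|}\, Z_{S(\gamma),h\pm1,\Int(\gamma)}$ is rewritten as $W^\rn_h(\gamma)\, e^{\lambda h|\Int(\gamma)|}\, Z_{S(\gamma),h,\Int(\gamma)}$ exactly as in the proof of \cref{lem:part-function-renormalized-weights}. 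For each non-elementary cluster $\cC$, the product of contour weights $\prod_{\gamma\in\cC} e^{-\beta|\gamma|}$ times the genuine (non-renormalized) partition functions on each $\mathsf{Ann}(\gamma;\cC)$ at height $h(\gamma)$ — which, since those annuli contain only $h(\gamma)$-non-elementary holes, equal $Z^\elem_{S(\gamma),h(\gamma),\mathsf{Ann}(\gamma;\cC)}$ — together with the factor $e^{\lambda h|\mathsf{Ann}(\cC)|}\,Z^\rnelem_{h,\mathsf{Ann}(\cC)}$ carried from the ambient height-$h$ region, reproduces exactly $W^\rn_h(\cC)\cdot e^{\lambda h|\mathsf{Ann}(\cC)|}\,Z^\rnelem_{h,\mathsf{Ann}(\cC)}$ by the definition \cref{eq:cluster-weight} (here one uses $Z^\rnelem = Z^\elem$ on simply-connected pieces via \cref{lem:elem-part-function-renormalized-weights}, and the remark that $Z^\elem=Z^\rnelem$ on the annuli since their holes are non-elementary). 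Recursing into the height-$h$ regions — the interiors of elementary outermost contours, the innermost interiors of clusters, and the $\mathsf{Ann}(\cC)$'s where one still sums over compatible elementary sub-contours — exactly as in \cref{lem:part-function-renormalized-weights}, every factor $e^{\lambda h|\cdot|}Z_{\cdot,h,\cdot}$ is peeled off until nothing remains, yielding the claimed identity.

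Third, I would need to check bookkeeping: that the decomposition $(C,\Gamma)$ is a genuine bijection with $\mathscr{G}_{\eta,h,V}^\rn$ subject to the stated compatibility constraints (contours of $\Gamma$ compatible with the boundary of each $\mathsf{Ann}(\cC)$), and that the total exponential prefactor reassembles correctly — every region of $V$ is assigned height $h$ a net number of times so that the accumulated $e^{\lambda h|\cdot|}$ factors multiply to $e^{\lambda h|V|}$, which cancels the $e^{-\lambda h|V|}$ in front. This is the same kind of telescoping already verified for \cref{lem:part-function-renormalized-weights} and \cref{lem:elem-part-function-renormalized-weights}.

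The main obstacle is not any single estimate — there is no inequality here, only an identity — but rather making the recursion airtight: one must be careful that the canonical SW/NE splitting convention makes the cluster decomposition of \cref{def:cluster-of-nonelems} genuinely unique, that overlaid (coincident) contours are handled consistently with the ``arbitrarily designate an outermost one'' convention from \cref{lem:part-function-renormalized-weights}, and that the renormalized-weight bookkeeping on the non-simply-connected annuli $\mathsf{Ann}(\gamma;\cC)$ matches the $Z^\elem$ versus $Z^\rnelem$ distinction exactly (which is why the cited remark after \cref{def:cluster-of-nonelems} is invoked). Once the decomposition is pinned down, the weight identity follows by exactly the term-by-term matching sketched above, so I would present the proof as a careful induction on $|V|$ mirroring \cref{lem:part-function-renormalized-weights}, inserting the cluster factors at each outermost layer.
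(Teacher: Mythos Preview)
Your proposal is correct and follows essentially the same approach as the paper. The paper's proof is more streamlined: rather than recursing layer by layer, it directly writes $Z_{\eta,h,V}$ as a sum over the full collection $C\in\mathscr{C}_{\eta,h,V}$ of non-elementary clusters, with the complement (exterior of $C$ and each $\mathsf{Ann}(\cC)$) contributing $Z^{\rnelem}$ factors, then observes that the definition of $W^\rn_h(\cC)$ is exactly what converts the product $e^{-\beta\sum|\gamma|}\prod Z^\elem_{S(\gamma),h(\gamma),\mathsf{Ann}(\gamma;\cC)}$ into $W^\rn_h(\cC)\,Z^\rnelem_{h,\mathsf{Ann}(\cC)}$, after which expanding each $Z^{\rnelem}$ via~\cref{eq:elem-rnelem-partition-functions} gives the claim in two lines. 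Your inductive peeling arrives at the same place but with more bookkeeping than necessary.
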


\begin{proof}
    We can express the full SOS partition function as 
    \begin{align*}
        Z_{\eta,h,V} & = \sum_{C\in \mathscr{C}_{\eta,h,V}} Z^\rnelem_{h,\Ext(C)}  \prod_{\cC\in C} e^{ - \beta \sum_{\gamma \in \cC} |\gamma|} \prod_{\gamma \in \cC}  Z_{S(\gamma),h(\gamma),\mathsf{Ann}(\gamma;\cC)}^\elem \\ 
        & = \sum_{C\in \mathscr{C}_{\eta,h,V}} Z_{h,\Ext(\cC)}^\rnelem \prod_{\cC\in C} W_h^\rn(\cC) Z_{h,\mathsf{Ann}(\cC)}^\rnelem\,.
    \end{align*}
    Expanding out each $Z^\rnelem$ term per \cref{eq:elem-rnelem-partition-functions} yields the claim. 
    \end{proof}

Using our understanding of the elementary partition functions in the windows $I_h$ from \cref{eq:I-i}, we get the following upper bound on the renormalized weights of clusters of non-elementary contours. 

\begin{lemma}\label{lem:first-non-elem-bound}
    If $\lambda \in I_h$, for every cluster $\cC$ of non-elementary contours, 
    \begin{align*}
         W^\rn_h(\mathcal C) \le \exp\Big(- (\beta-1) \sum_{\gamma \in \cC} |\gamma| - \sum_{\gamma \in \cC} e^{ - 4\beta (h \wedge (h(\gamma)+1))-3\beta} |\mathsf{Ann}(\gamma;\cC)|\Big)\,.
    \end{align*}
\end{lemma}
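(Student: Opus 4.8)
The plan is to start from the product formula for $W_h^\rn(\mathcal C)$ in \cref{eq:cluster-weight} and estimate the two ingredients separately: the numerator $\prod_{\gamma\in\mathcal C} Z^\elem_{S(\gamma),h(\gamma),\mathsf{Ann}(\gamma;\mathcal C)}$, and the denominator $Z^\rnelem_{h,\mathsf{Ann}(\mathcal C)}$. For the denominator, since $\mathsf{Ann}(\mathcal C)$ need not be simply connected but all its holes are bounded by non-elementary contours (hence cannot be confined by $h$-elementary contours), $Z^\rnelem$ on $\mathsf{Ann}(\mathcal C)$ agrees with the corresponding $Z^\elem$ on the simply-connected pieces, and we can compare it to $Z^\elem_{\eta,h(\gamma),\,\cdot\,}$ using \cref{it:Z-elem-ratio-general-bc} of \cref{thm:main-elementary-general-bc}. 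The key point is that on each annular piece $\mathsf{Ann}(\gamma;\mathcal C)$ the denominator contributes $Z^\elem_{h,\mathsf{Ann}(\gamma;\mathcal C)}$ (at reference height $h$) while the numerator contributes $Z^\elem_{S(\gamma),h(\gamma),\mathsf{Ann}(\gamma;\mathcal C)}$ (at the contour's own height $h(\gamma)$), so their ratio on that piece is exactly of the form controlled by \cref{eq:ratio-of-Z}: with $k = h\wedge(h(\gamma)+1)$ and using $\lambda\in I_h$, the upper bound $2e^{-\frac12|\mathring V|e^{-(4\beta-\lambda)(k+1)}}$ gives a factor of the form $\exp(-e^{-4\beta(h\wedge(h(\gamma)+1))-3\beta}|\mathsf{Ann}(\gamma;\mathcal C)|)$ after absorbing $\lambda$ into the exponent and shrinking the constant (exactly mirroring the computation in the proof of \cref{lem:free-energy-lower-bound-in-windows}, which is why $I_h$ and the $-3\beta$ appear). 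The $e^{-\beta\sum|\gamma|}$ prefactor from \cref{eq:cluster-weight} supplies the $-(\beta-1)\sum_{\gamma\in\mathcal C}|\gamma|$ term, the spare $1$ per unit length being there to swallow the $2$'s and the $\log 2$'s and the $|\mathring V|$ vs.\ $|V|$ discrepancies coming from the boundary layer.

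The bookkeeping step is to telescope the product $\frac{\prod_\gamma Z^\elem_{S(\gamma),h(\gamma),\mathsf{Ann}(\gamma;\mathcal C)}}{Z^\rnelem_{h,\mathsf{Ann}(\mathcal C)}}$ annulus by annulus. Write $Z^\rnelem_{h,\mathsf{Ann}(\mathcal C)} = \prod_\gamma Z^\rnelem_{h,\mathsf{Ann}(\gamma;\mathcal C)}$ (valid because the $\mathsf{Ann}(\gamma;\mathcal C)$ are disjoint and the holes are non-elementary, as in the Remark after \cref{def:cluster-of-nonelems}), identify $Z^\rnelem_{h,\mathsf{Ann}(\gamma;\mathcal C)}$ with $Z^\elem_{h,\mathsf{Ann}(\gamma;\mathcal C)}$ piece by piece via \cref{lem:elem-part-function-renormalized-weights}, and then bound each ratio
$$
\frac{Z^\elem_{S(\gamma),h(\gamma),\mathsf{Ann}(\gamma;\mathcal C)}}{Z^\elem_{h,\mathsf{Ann}(\gamma;\mathcal C)}}\,.
$$
When $h(\gamma)\ge h$ (up-excursion relative to $h$) apply the upper bound of \cref{eq:ratio-of-Z} with $k=h$, $m=h(\gamma)$; when $h(\gamma)<h$ (down-excursion) apply it with $k=h(\gamma)+1$... roughly speaking $k = h\wedge(h(\gamma)+1)$ in both cases after accounting for the $\pm1$ shift in the base height of $\gamma$ (its base height is $h(\gamma)+1$ if $S(\gamma)=-$ and $h(\gamma)-1$ if $S(\gamma)=+$; note a contour being non-elementary at $h(\gamma)$ forces $|\gamma|\ge e^{3\beta h_b(\gamma)}$, the exponential area-to-perimeter ratio that makes all the area terms dominate). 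There will also be a factor coming from the difference between the boundary signing $\eta$ on the outermost contour of the cluster and the induced signings inside; but since \cref{eq:ratio-of-Z} holds for \emph{every} boundary signing $\eta$ uniformly, this causes no trouble.

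The main obstacle, and the reason the excerpt defers to \cref{sec:exp-tails-nonelem-renormalized-weights}, is not this lemma \emph{per se} but the next step (the proof of \cref{lem:non-elem-contour-bound}): one must sum the cluster weights $W_h^\rn(\mathcal C)$ over all clusters $\mathcal C$ containing a prescribed outermost contour $\gamma$, and show that the entropy of the possible nested non-elementary contours inside $\gamma$ — encoded via weighted trees as in \cite[Lemmas 2.13--2.14]{DiMa94} — is beaten by the area terms $\sum_{\gamma'\in\mathcal C}e^{-4\beta(h\wedge(h(\gamma')+1))-3\beta}|\mathsf{Ann}(\gamma';\mathcal C)|$ in \cref{lem:first-non-elem-bound}, together with the surplus $e^{-\beta|\gamma'|}$ per contour. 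For \cref{lem:first-non-elem-bound} itself the only real subtlety is keeping track of which reference height governs each annular ratio and checking that $\lambda\in I_h$ makes the relevant exponents $e^{-(4\beta-\lambda)(k+1)}$ comparable to $e^{-4\beta k - 3\beta}$ with room to spare — exactly the estimate already carried out in \cref{lem:free-energy-lower-bound-in-windows}, so I would cite that computation rather than redo it. I would close by noting that the $|\mathsf{Ann}(\gamma;\mathcal C)|=\emptyset$ case (innermost contours) contributes nothing to the area sum, consistent with the stated bound.
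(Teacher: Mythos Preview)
Your overall strategy---factor the cluster weight over the annular pieces and bound each ratio $Z^{\elem}_{h(\gamma)}/Z^{\elem}_h$---is the right one, and is what the paper does. But two concrete things go wrong in your execution.

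First, the factorization $Z^\rnelem_{h,\mathsf{Ann}(\mathcal C)} = \prod_\gamma Z^\rnelem_{h,\mathsf{Ann}(\gamma;\mathcal C)}$ is not an equality: an $h$-elementary contour in $\mathsf{Ann}(\mathcal C)$ can in principle encircle an intermediate $\gamma'\in\mathcal C$ (that $\gamma'$ is $h(\gamma')$-non-elementary does not force it to be $h$-non-elementary). The paper instead writes the inequality $Z^\rnelem_{h,\mathsf{Ann}(\mathcal C)} \ge \prod_\gamma Z^\rnelem_{S(\gamma),h,\mathsf{Ann}(\gamma;\mathcal C)}$, observing that the product is strictly more restrictive; since this sits in the denominator, $\ge$ is the direction you want. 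Note also the signing $S(\gamma)$ on each piece, chosen to match the numerator so the ratios have a common $\eta$.

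Second---and this is the real gap---\cref{eq:ratio-of-Z} is stated only for simply-connected $V$ (this hypothesis is explicit in \cref{thm:main-elementary-general-bc}), and the pieces $\mathsf{Ann}(\gamma;\mathcal C)$ are annuli. Likewise \cref{lem:elem-part-function-renormalized-weights} requires simply-connectedness, so you cannot identify $Z^\rnelem$ with $Z^\elem$ on these pieces at reference height~$h$. The paper's route avoids this entirely: after the factorization inequality, it bounds each ratio
\[
\frac{Z^\rnelem_{S(\gamma),h(\gamma),\mathsf{Ann}(\gamma;\mathcal C)}}{Z^\rnelem_{S(\gamma),h,\mathsf{Ann}(\gamma;\mathcal C)}}
\]
via \cref{lem:elementary-free-energies} (which gives $|\log Z^\rnelem_{\eta,i,U} - |U|f_i^\elem| \le e^{-\beta/2}|\partial_e U|$ for \emph{arbitrary} $U$) and then invokes \cref{lem:free-energy-lower-bound-in-windows} for the gap $f_h^\elem - f_{h(\gamma)}^\elem \ge e^{-4\beta(h\wedge(h(\gamma)+1))-3\beta}$. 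The boundary errors sum to at most $2e^{-\beta/2}\sum_\gamma|\gamma|$ since $\sum_\gamma|\partial_e\mathsf{Ann}(\gamma;\mathcal C)|\le 2\sum_\gamma|\gamma|$, and are absorbed into the $-(\beta-1)\sum|\gamma|$. That is the entire proof; the detour through \cref{eq:ratio-of-Z} on non-simply-connected sets is neither needed nor justified as stated. (Incidentally, for the up-excursion case $h(\gamma)>h$ you would need the \emph{lower} bound in \cref{eq:ratio-of-Z}, not the upper, since the ratio to be bounded is $Z_m/Z_k$ with $m=h(\gamma)>h=k$.)
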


\begin{proof}
    Notice first that 
    \begin{align*}
        Z_{h,\mathsf{Ann}(\cC)}^\rnelem \ge \prod_{\gamma \in \mathcal C} Z^\rnelem_{S(\gamma), h,\mathsf{Ann}(\gamma;\cC)}\,.
    \end{align*}
    since the latter is only more restrictive in terms of which contours are permitted in $\mathsf{Ann}(\cC)$. 
    Therefore, 
    \begin{align*}
         W^\rn_h(\cC) \le e^{ - \beta \sum_{\gamma\in \cC} |\gamma|}  \prod_{\gamma \in \cC} \frac{Z^\rnelem_{S(\gamma),h(\gamma),\mathsf{Ann}(\gamma;\cC)}}{Z^\rnelem_{S(\gamma),h,\mathsf{Ann}(\gamma;\cC)}}\,.
    \end{align*}
    Using \cref{lem:elementary-free-energies} and the lower bounds on the elementary free energies from \cref{lem:free-energy-lower-bound-in-windows} when $\lambda \in I_h$ on each of the terms in the product, we conclude (noting that $\sum_{\gamma \in \cC} |\partial_e \mathsf{Ann}(\gamma;\cC)|\le 2 \sum_{\gamma \in \cC} |\gamma|$). 
\end{proof}

\subsection{Weights of non-elementary contours}
For a cluster of non-elementary contours $\cC$, let $\gamma^\out$ be its outermost contour (there being only one by the definition of such clusters). 
We begin with the following bound on the total contribution from all clusters of non-elementary contours nesting a point $x\in V$, and having $|\gamma^\out|\ge r$. 
\begin{lemma}\label{lem:total-contribution-large-non-elem-in-I-i}
If $\lambda \in I_h$, then for every $r$, 
    \begin{align*}
        \sum_{\substack{\cC: x\in \Int(\gamma^\out) \\  |\gamma^\out| \ge r}}  W^\rn_h(\cC) \le \exp(- (\beta -4)r)\,.
    \end{align*}    
\end{lemma}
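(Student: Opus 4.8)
The goal is to bound the total $h$-renormalized weight contributed by all non-elementary clusters nesting a fixed vertex $x$ whose outermost contour is large. The plan is to combine the per-cluster bound of \cref{lem:first-non-elem-bound} (valid when $\lambda\in I_h$) with an entropy count over the possible geometric shapes of the cluster. First I would observe that a non-elementary cluster $\cC$ with outermost contour $\gamma^\out$ is determined by (a) the geometric contour $\gamma^\out$ itself, (b) the (signed, non-elementary) contours lying inside it, organized by nesting. Since the weight bound in \cref{lem:first-non-elem-bound} gives a factor $e^{-(\beta-1)|\gamma|}$ per contour $\gamma\in\cC$, the entropy of choosing each inner contour (rooted anywhere in its parent's interior, at most $4^{|\gamma|}$ shapes, and $|\mathsf{Ann}|$ choices of root) is readily dominated by this exponential factor for $\beta$ large; the area factors $e^{-(\cdots)|\mathsf{Ann}(\gamma;\cC)|}$ in \cref{lem:first-non-elem-bound} are extra savings one can simply discard (they are $\le 1$). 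Concretely, I would set up a recursion: writing $g(r)$ for the total weight of non-elementary clusters whose outermost contour has a fixed rooting vertex and length exactly $r$, the contributions from the interior decompose into independent sub-clusters and elementary pieces, each of which contributes at most $1$ after renormalization; so $g(r)\le \sum_{\gamma^\out: |\gamma^\out|=r} e^{-(\beta-1)r}\cdot(\text{interior entropy})$. The interior entropy is at most $\exp(C|\gamma^\out|)$ because all inner contours fit in $\Int(\gamma^\out)$ whose area is at most $|\gamma^\out|^2$, hence the number of possible inner-contour collections of total length $L$ is at most $\binom{|\gamma^\out|^2}{\le L}C^L\le e^{C L}$, and the per-contour weight $e^{-(\beta-1)L}$ beats this.

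So the main steps, in order, are: (1) recall that for $\lambda\in I_h$, every $h(\gamma)$-non-elementary contour $\gamma$ has $|\gamma|\ge e^{3\beta}$ (by definition of $h$-elementary, since $h(\gamma)\ge 0$ forces $\diam(\gamma)\le\lambda^{-1}\wedge e^{3\beta(h(\gamma)+1)}$, and being non-elementary violates this), so non-elementary contours are automatically large — this lets us absorb polynomial-in-$|\gamma|$ factors; (2) apply \cref{lem:first-non-elem-bound} to bound $W^\rn_h(\cC)\le\exp(-(\beta-1)\sum_{\gamma\in\cC}|\gamma|)$, discarding the area terms; (3) count clusters: fix the outermost contour $\gamma^\out$ (at most $4^{|\gamma^\out|}$ choices for its shape once rooted, and $|V|$ or just the constraint $x\in\Int(\gamma^\out)$ bounds the root choices — rooting at the lowest-leftmost dual vertex of the contour, the number of geometric contours of length $\ell$ surrounding $x$ is at most $\ell\cdot 3^\ell$ or so), then recursively the inner structure, using that each inner contour of length $\ell'$ contributes entropy $\le|\Int(\text{parent})|\cdot 4^{\ell'}\le |\gamma^\out|^2 4^{\ell'}$ which combined with $e^{-(\beta-1)\ell'}$ gives a convergent sum at large $\beta$; (4) sum the resulting geometric-type series over $|\gamma^\out|\ge r$ to get $\exp(-(\beta-4)r)$.

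The cleanest way to organize step (3) is probably to note that $\sum_\cC W^\rn_h(\cC)\mathbf 1\{x\in\Int(\gamma^\out)\}$, summed over \emph{all} clusters and then restricted by the length of $\gamma^\out$, is exactly (a piece of) the ratio $Z^\rn_{\eta,h,V}(\text{contains such a cluster})/Z^\rn_{\eta,h,V}$-type quantity from \cref{lem:partition-function-non-elem-elem}, so one can alternatively phrase this as a Peierls-type estimate: the map deleting the outermost non-elementary cluster surrounding $x$ from the $(\cC,\Gamma)$-representation is an injection into $\mathscr C_{\eta,h,V}\times\mathscr G^\rnelem_{\eta,h,V}$, and the relative weight is $W^\rn_h(\cC)$, so $\sum_{\cC:x\in\Int(\gamma^\out),|\gamma^\out|\ge r}W^\rn_h(\cC)$ is bounded by the $\mu^\rn$-probability of having such a cluster, which in turn is at most $\sum_\cC W^\rn_h(\cC)$ over the relevant range — i.e., the sum telescopes and one only needs the per-cluster bound plus entropy, exactly as in the standard cluster-expansion tail estimate (cf.\ the Kotecky--Preiss condition verified in \cref{lem:elementary-free-energies}).

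The main obstacle I anticipate is the bookkeeping of the entropy over \emph{clusters} rather than single contours: a cluster can contain many nested contours, so one must be careful that the product $\prod_{\gamma\in\cC}4^{|\gamma|}$ of shape-choices is genuinely controlled by $\prod_{\gamma\in\cC}e^{-(\beta-1)|\gamma|}$ while also accounting for the $|\mathsf{Ann}(\gamma;\cC)|$-many rooting choices for each inner contour (these are only $\le|\gamma^\out|^2$ each, but there could be up to $|\gamma^\out|/e^{3\beta}$ of them, giving a $(|\gamma^\out|^2)^{|\gamma^\out|/e^{3\beta}}=e^{O(|\gamma^\out|\log|\gamma^\out|/e^{3\beta})}$ factor, which at large $\beta$ is still beaten by $e^{(\beta-1)|\gamma^\out|}$). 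Getting the constants so that the final exponent is $(\beta-4)$ rather than merely $(\beta-C)$ for some uncontrolled $C$ requires tracking these carefully, but it is routine — this is precisely the adaptation of \cite[Lemmas 2.13--2.14]{DiMa94} from cylinders to contours that the paper flags as needing "only minor modifications," and I would cite that for the detailed accounting while presenting the structure above.
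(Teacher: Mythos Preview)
Your plan has a genuine gap: you cannot discard the area factors $e^{-(\cdots)|\mathsf{Ann}(\gamma;\cC)|}$ from \cref{lem:first-non-elem-bound}. They are not ``extra savings''; they are what makes the entropy count close. Concretely, fix $|\gamma^\out|=\ell$ and consider adding a single child contour $\gamma'$ of the minimal non-elementary length $L_0\asymp e^{3\beta}$. Its rooting entropy inside $\Int(\gamma^\out)$ is $\lesssim\ell^2$, its shape entropy is $\le 4^{L_0}$, and the per-contour weight you retain is $e^{-(\beta-1)L_0}$. The single-child contribution is thus $\ell^2 e^{-(\beta-3)L_0}$, which exceeds $1$ once $\ell>e^{(\beta-3)L_0/2}$, and then the sum over the number of children diverges. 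Your claimed bound $\binom{|\gamma^\out|^2}{\le L}C^L\le e^{CL}$ is simply false for $|\gamma^\out|$ large relative to $L$, and your assertion that there are at most $|\gamma^\out|/e^{3\beta}$ inner contours has no justification (mutually external children can number up to $\sim\ell^2/L_0^2$).

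The paper's proof (following \cite{DiMa94}) uses the area term essentially: it encodes each cluster by a tree-witness in which every child is connected to its parent by a straight ``blue'' segment of faces lying in $\mathsf{Ann}(\gamma;\cC)$. The area term then charges each blue face a cost $e^{-e^{-4\beta(h\wedge h(\gamma))-3\beta}}$, so the sum over the segment length is $O(e^{4\beta h(\gamma^\out)+3\beta})$, a constant independent of $\ell$. The starting point of each blue segment lies on $\gamma^\out$, giving only $\ell$ (not $\ell^2$) starting choices, and the whole interior contribution becomes $(1+\epsilon_\beta)^\ell\le e^{\epsilon_\beta\ell}$, which is absorbed by $e^{-(\beta-1)\ell}$. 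This reduction of placement entropy from $\ell^2$ to $\ell\cdot O_h(1)$ is the content of the lemma, and it hinges on the area term you dropped. Your alternative Peierls/Kotecky--Preiss route is circular: verifying the Kotecky--Preiss condition for the cluster polymers is exactly the bound you are trying to prove.
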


\begin{proof}

    We follow the general approach of~\cite[Lemmas 2.13--2.14]{DiMa94} to perform the sum by associating to each cluster of non-elementary contours a corresponding tree-like object. We associate to $\cC$ a \emph{witness} as follows:
    \begin{itemize}
        \item Take the set of (dual) edges associated to the collection of contours rooted at their respective heights, and color all such edges $\red$. 
        \item For each contour $\gamma$, and each contour $\gamma'$ nested in $\gamma$, add a straight line in the $e_1$ direction of $\blue$ faces at $h(\gamma)$ connecting $\gamma$ to $\gamma'$. Then for each $\gamma'$ nested in $\gamma$, erase all $\blue$ faces in their interiors, and continue the procedure for those $\gamma'$. 
    \end{itemize}
    This gives a treelike structure on contours, where a contour $\gamma$ is the parent of $\gamma'$ if they are connected by $\blue$ faces, and either $\gamma$ nests $\gamma'$ or $\gamma$ is closer (in this ordering) to their common nesting contour. For each $\gamma$, let $F_B(\gamma)$ be the set of $\blue$ faces between $\gamma$ and its children (these being distinct). We can upper bound the renormalized weight $W^\rn_h(\cC)$ associated to such a $\cC$ by the following: 
    \begin{align*}
         W^\rn_h(\cC) \le \prod_{\gamma} e^{ - (\beta - 1) |\gamma| - |F_B(\gamma)| e^{ - 4\beta(h\wedge h(\gamma)) - 3\beta}}\,.
    \end{align*}
    In order to see this, recall \cref{lem:first-non-elem-bound} and notice that for each $\gamma$ that is not a leaf in the tree construction, either the $\blue$ faces constituting $F_B(\gamma)$ are part of $\textrm{Ann}(\gamma;\cC)$, or they are part of $\textrm{Ann}(\gamma';\cC)$ where $\gamma'$ is the innermost nesting contour of $\gamma$. that is to say that they are either at height $h(\gamma)$ or at height $\max\{0,h(\gamma) \pm 1\}$. 

    At the same time, any child of $\gamma$ must be either $h(\gamma)$-non elementary or $h(\gamma')$-non-elementary where $\gamma'$ is innermost nesting contour. so in particular, it must have size at least $L_0:= e^{ 3\beta \max\{1, h(\gamma^\out)\}}$. 

    We now perform the count one generation of the tree corresponding to $\cC$, by enumerating over $\gamma^\out$ of size at least $r$, then for each point along $\gamma^\out$, deciding whether to start a blue path or not, enumerating over the length of the blue path $k$, then picking up a weight corresponding to all the possible choices of subtree to place there. Assuming (inductively) that the total weight of all subtrees whose outermost contour has length at least $L_0$ has a bound of $\exp( - (\beta - 4)L_0)$, we get 
    \begin{align*}
      \sum_{\substack{\cC: x\in \Int(\gamma^\out) \\ |\gamma^\out|\ge r}} W_h^\rn(\cC)  \le  \sum_{\ell \ge r} 4^\ell e^{ - (\beta -1)\ell } \big(1+ e^{ - (\beta -4) L_0}\sum_{k =1}^{\infty} e^{ - k e^{ - 4\beta (h\wedge  h(\gamma^\out)) - 3\beta}}\big)^{\ell}\,.
    \end{align*}
    The series in $k$ sums up to at most $e^{ 4\beta  h(\gamma^\out) + 3\beta}$ so this is at most 
    \begin{align*}
        \sum_{\ell \ge r} e^{ - (\beta -3) \ell} \exp\big(\ell e^{ - (\beta -4)e^{ 3\beta \max\{1,h(\gamma^\out)\}}} e^{ 4\beta h(\gamma^\out)+3\beta}\big)\,.
    \end{align*}
    For $\beta$ large, the second exponential is easily seen to be at most $\exp( \epsilon_\beta \ell)$, whence the sum is easily seen to be at most $\exp(- (\beta -4)r)$ as claimed. 
    \end{proof}

\begin{proof}[\textbf{\emph{Proof of \cref{lem:non-elem-contour-bound}}}]
If $\gamma$ is elementary, the bound follows from \cref{thm:main-elementary-general-bc}. Suppose now that $\gamma$ is an up non-elementary contour. Then,
\begin{align*}
     W^\rn_h(\gamma) = e^{-\beta|\gamma|} \frac{Z_{\eta,h+1,\Int(\gamma)}}{Z_{\eta,h,\Int(\gamma)}} = \mu_{+,h,\Int(\gamma)}\Big(\bigcap_{x\in \partial_i \Int(\gamma)} \{\varphi(x)\ge h+1\} \Big)\,.
\end{align*}
In order for $\varphi_x \ge h+1$ for all $x\in \partial_i \Int(\gamma)$, then there must be a positive contour $\gamma'$ in $\Gamma(\varphi)$ that lines up exactly with $\gamma$; 
 If $\gamma$ is not $h$-elementary, then by \cref{lem:partition-function-non-elem-elem}, this probability is bounded by the Peierls map that deletes its entire cluster $\cC$ (whose weight is $ W^\rn_h(\cC)$), whence \cref{lem:total-contribution-large-non-elem-in-I-i} performs the summation over the choice of the cluster and concludes the proof.
 The case where $\gamma$ is a down non-elementary contour is analogous. 
 \end{proof}

\section{Canonical paths bounds for SOS models}\label{sec:canonical-path-SOS}
We show that the inverse gap on domains of cut-width $m$ and ceiling $\ell$ is at most exponential in $m\ell$.  

\begin{proof}[\textbf{\emph{Proof of \cref{lem:canonical-path-bound-sos}}}]
    Recall (e.g.,~\cite[Corollary 13.21]{LP}) that for a reversible Markov chain with a finite state space $\Omega$, transition matrix $P$, and stationary distribution $\mu$, its spectral gap is lower bounded by the congestion
    \begin{align}\label{eq:congestion}
        \min_{\Gamma= (\gamma_{ab})_{a,b\in \Omega}} \max_{x,y\in \Omega:P(x,y)>0} \frac{1}{\mu(x)P(x,y)} \sum_{w,z: (x,y)\in \gamma_{w,z}}\mu(w)\mu(z)|\gamma_{w,z}|\,,
    \end{align}
    where each $\gamma_{ab}$ is a sequence of positive rate transitions starting at $a$ and ending at $b$. We first do the argument for $\Lambda_m$, then describe what changes for the annulus $A_{n,m}$. 
    
    Enumerate the vertices of $\Lambda_m$ in lexicographic order, denoted $v_1,v_2,...$. For two configurations $a,b$, the path $\gamma_{ab}$ is described by the updates where each of these vertices is sequentially processed, and by processed we mean that if $a_v\ne b_v$, then there is a minimal sequence of transitions to take $a_v$ to $b_v$. Evidently, $|\gamma_{a,b}|\le m^2\ell$ for all $a,b$. Further, for fixed $(x,y)$, there is an injection between pairs of configurations $(w,z)$ such that $(x,y)\in \gamma_{w,z}$ and $\Omega$, defined as follows. If $(x,y)$ entails a transition at site $v_i$, let $\tilde w$ agree with $w$ on $(v_j)_{j>i}$ and let $\tilde w$ agree with $z$ on $(v_j)_{j<i}$; finally let it take the value of $x$ on $v_i$. Similarly, let $\tilde z$ agree with $z$ on $(v_j)_{j>i}$, agree with $w$ on $(v_j)_{j<i}$, and take the value of $y$ at $v_i$. Given $i$, we can evidently recover $(w,z)$ from $(\tilde w,\tilde z)$. Moreover, the configuration $x$ is exactly $\tilde w$, so given $x$ and $i$ which we can read off from $(x,y)$, this gives an injection from $(w,z):(x,y)\in \gamma_{wz}$ to $\tilde z$, and the congestion can be rewritten as 
    \begin{align*}
        \frac{m^2 \ell}{P(x,y)} \sum_{\tilde z\in \Omega}\frac{\mu(w)\mu(z)}{\mu(\tilde w)\mu(\tilde z)} \mu(\tilde z)\,.
    \end{align*}
    For any valid transition, $P(x,y)$ is clearly lower bounded by $e^{ - 4\beta \ell -\lambda}$. In the ratio of weights between $w,z$ and $\tilde w,\tilde z$ the external field quantities cancel out exactly, as do all gradients except those along the cut between $(v_j)_{j<i}$ and $(v_j)_{j>i}$. In the lexicographic ordering on $\Lambda_m$, this cut is at most $m$ for every $i$, so the largest that ratio can be is $e^{ \beta m\ell}$. The sum over $\tilde z$ then gives a $1$ since $\mu$ is a probability measure, altogether yielding the desired. 

    We can absorb the $m^2$ and $\ell$, and the polynomial in $m^2\ell$ factor that comes from translating spectral gap into mixing time, by adjusting the constant in the exponent, getting the desired. 

    In order to get the claimed bound for the annulus, follow the same reasoning, but instead of using the lexicographic ordering, we use one that attains a cut-width of at most $2m$. This ordering will be obtained by starting from the $m\times m$ block in one corner, performing the lexicographic order there, then processing around the annulus one row/column (whichever is thinner, depending on the orientation) one at a time. The extra factor of $n$ in the bound here comes from the translation of spectral gap to mixing time. 
\end{proof}

\subsection*{Acknowledgements}
The authors thank the anonymous referees for their careful reading of the manuscript.
R.G.\ acknowledges the support of NSF DMS-2246780 and E.L.\ acknowledges the support of NSF DMS-2054833.

\bibliographystyle{abbrv}
\bibliography{references}

\providecommand{\noopsort}[1]{}
\begin{thebibliography}{10}

\bibitem{Alexander11}
K.~S. Alexander, F.~Dunlop, and S.~Miracle-Sol{\'e}.
\newblock Layering and wetting transitions for an {SOS} interface.
\newblock {\em J. Statist. Phys.}, 142(3):524--576, 2011.

\bibitem{BW82}
R.~Brandenberger and C.~E. Wayne.
\newblock Decay of correlations in surface models.
\newblock {\em J. Statist. Phys.}, 27(3):425--440, 1982.

\bibitem{BEF86}
J.~Bricmont, A.~El~Mellouki, and J.~Fr\"{o}hlich.
\newblock Random surfaces in statistical mechanics: roughening, rounding,
  wetting,{$\ldots\,$}.
\newblock {\em J. Statist. Phys.}, 42(5-6):743--798, 1986.

\bibitem{BCF51}
W.~K. Burton, N.~Cabrera, and F.~C. Frank.
\newblock The growth of crystals and the equilibrium structure of their
  surfaces.
\newblock {\em Philos. Trans. Roy. Soc. London Ser. A}, 243:299--358, 1951.

\bibitem{CG23}
P.~Caputo and S.~Ganguly.
\newblock Uniqueness, mixing, and optimal tails for {B}rownian line ensembles
  with geometric area tilt.
\newblock 2023.
\newblock Preprint, \texttt{arXiv:2305.18280}.

\bibitem{CIW19a}
P.~Caputo, D.~Ioffe, and V.~Wachtel.
\newblock Confinement of {B}rownian polymers under geometric area tilts.
\newblock {\em Electron. J. Probab.}, 24:Paper No. 37, 21, 2019.

\bibitem{CIW19b}
P.~Caputo, D.~Ioffe, and V.~Wachtel.
\newblock Tightness and line ensembles for {B}rownian polymers under geometric
  area tilts.
\newblock In {\em Statistical mechanics of classical and disordered systems},
  volume 293 of {\em Springer Proc. Math. Stat.}, pages 241--266. Springer,
  Cham, 2019.

\bibitem{CLMST12}
P.~Caputo, E.~Lubetzky, F.~Martinelli, A.~Sly, and F.~L. Toninelli.
\newblock The shape of the {$(2+1)D$} {SOS} surface above a wall.
\newblock {\em C. R. Math. Acad. Sci. Paris}, 350(13-14):703--706, 2012.

\bibitem{CLMST14}
P.~Caputo, E.~Lubetzky, F.~Martinelli, A.~Sly, and F.~L. Toninelli.
\newblock Dynamics of {$(2+1)$}-dimensional {SOS} surfaces above a wall: {S}low
  mixing induced by entropic repulsion.
\newblock {\em Ann. Probab.}, 42(4):1516--1589, 2014.

\bibitem{CLMST16}
P.~Caputo, E.~Lubetzky, F.~Martinelli, A.~Sly, and F.~L. Toninelli.
\newblock Scaling limit and cube-root fluctuations in {SOS} surfaces above a
  wall.
\newblock {\em J. Eur. Math. Soc. (JEMS)}, 18(5):931--995, 2016.

\bibitem{CeMa1}
F.~Cesi and F.~Martinelli.
\newblock On the layering transition of an {SOS} surface interacting with a
  wall. {I}. {E}quilibrium results.
\newblock {\em J. Statist. Phys.}, 82(3):823--913, 1996.

\bibitem{CeMa2}
F.~Cesi and F.~Martinelli.
\newblock On the layering transition of an {SOS} surface interacting with a
  wall. {II}. {T}he {G}lauber dynamics.
\newblock {\em Comm. Math. Phys.}, 177(1):173--201, 1996.

\bibitem{Chalker}
J.~T. Chalker.
\newblock The pinning of an interface by a planar defect.
\newblock {\em J. Phys. A}, 15(9):L481--L485, 1982.

\bibitem{DLZ23}
A.~Dembo, E.~Lubetzky, and O.~Zeitouni.
\newblock On the limiting law of line ensembles of {B}rownian polymers with
  geometric area tilts.
\newblock {\em Annales de l'I.H.P. Probabilit\'es et statistiques}.
\newblock To appear.

\bibitem{DiMa94}
E.~I. Dinaburg and A.~E. Mazel.
\newblock Layering transition in {SOS} model with external magnetic field.
\newblock {\em J. Statist. Phys.}, 74(3):533--563, 1994.

\bibitem{DKS}
R.~Dobrushin, R.~Koteck{\'y}, and S.~Shlosman.
\newblock {\em Wulff construction}, volume 104 of {\em Translations of
  Mathematical Monographs}.
\newblock American Mathematical Society, Providence, RI, 1992.

\bibitem{FY23}
N.~Feldheim and S.~Yang.
\newblock Typical height of the (2+1)-{D} {S}olid-on-{S}olid surface with
  pinning above a wall in the delocalized phase.
\newblock {\em Stochastic Processes and their Applications}, 165:168--182,
  2023.

\bibitem{FS05}
P.~L. Ferrari and H.~Spohn.
\newblock Constrained {B}rownian motion: fluctuations away from circular and
  parabolic barriers.
\newblock {\em Ann. Probab.}, 33(4):1302--1325, 2005.

\bibitem{FreidliVelenik}
S.~Friedli and Y.~Velenik.
\newblock {\em Statistical Mechanics of Lattice Systems: A Concrete
  Mathematical Introduction}.
\newblock Cambridge University Press, 2017.

\bibitem{FrSp81a}
J.~Fr\"{o}hlich and T.~Spencer.
\newblock Kosterlitz-{T}houless transition in the two-dimensional plane rotator
  and {C}oulomb gas.
\newblock {\em Phys. Rev. Lett.}, 46(15):1006--1009, 1981.

\bibitem{FrSp81b}
J.~Fr\"{o}hlich and T.~Spencer.
\newblock The {K}osterlitz-{T}houless transition in two-dimensional abelian
  spin systems and the {C}oulomb gas.
\newblock {\em Comm. Math. Phys.}, 81(4):527--602, 1981.

\bibitem{GaGh20}
S.~Ganguly and R.~Gheissari.
\newblock {Local and global geometry of the 2{D} {I}sing interface in critical
  prewetting}.
\newblock {\em Ann. Probab.}, 49(4):2076--2140, 2021.

\bibitem{GL20}
R.~Gheissari and E.~Lubetzky.
\newblock Approximate domain markov property for rigid {I}sing interfaces.
\newblock {\em J. Statist. Phys.}, 190(5):99, 2023.

\bibitem{GL21}
R.~Gheissari and E.~Lubetzky.
\newblock {Entropic repulsion of 3{D} {I}sing interfaces conditioned to stay
  above a floor}.
\newblock {\em Electron. J. Probab.}, 28:1--44, 2023.

\bibitem{FisherHuse}
D.~A. Huse and D.~S. Fisher.
\newblock Dynamics of droplet fluctuations in pure and random {I}sing systems.
\newblock {\em Phys. Rev. B}, 35:6841--6846, May 1987.

\bibitem{IOSV21}
D.~Ioffe, S.~Ott, S.~Shlosman, and Y.~Velenik.
\newblock {Critical prewetting in the 2{D} {I}sing model}.
\newblock {\em Ann. Probab.}, 50(3):1127--1172, 2022.

\bibitem{IV18}
D.~Ioffe and Y.~Velenik.
\newblock Low-temperature interfaces: prewetting, layering, faceting and
  {F}errari-{S}pohn diffusions.
\newblock {\em Markov Process. Related Fields}, 24(3):487--537, 2018.

\bibitem{JS}
M.~Jerrum and A.~Sinclair.
\newblock Approximating the permanent.
\newblock {\em SIAM J. Comput.}, 18(6):1149--1178, 1989.

\bibitem{Kotecky-Preiss}
R.~Koteck{\'y} and D.~Preiss.
\newblock {Cluster expansion for abstract polymer models}.
\newblock {\em Comm. Math. Phys.}, 103(3):491 -- 498, 1986.

\bibitem{Lacoin18}
H.~Lacoin.
\newblock Wetting and layering for solid-on-solid {I}: {I}dentification of the
  wetting point and critical behavior.
\newblock {\em Comm. Math. Phys.}, 362(3):1007--1048, 2018.

\bibitem{Lacoin20}
H.~Lacoin.
\newblock Wetting and layering for solid-on-solid {II}: {L}ayering transitions,
  {G}ibbs states, and regularity of the free energy.
\newblock {\em J. \'{E}c. polytech. Math.}, 7:1--62, 2020.

\bibitem{Lammers22}
P.~Lammers.
\newblock A dichotomy theory for height functions.
\newblock Preprint, \texttt{arXiv:2211.14365}.

\bibitem{LeMa96}
J.~L. Lebowitz and A.~E. Mazel.
\newblock A remark on the low-temperature behavior of the {SOS} interface in
  half-space.
\newblock {\em J. Statist. Phys.}, 84(3-4):379--397, 1996.

\bibitem{LP}
D.~Levin and Y.~Peres.
\newblock {\em Markov Chains and Mixing Times (2nd ed.)}.
\newblock American Mathematical Society, Providence, RI, 2017.

\bibitem{LSS}
T.~M. Liggett, R.~H. Schonmann, and A.~M. Stacey.
\newblock {Domination by product measures}.
\newblock {\em Ann. Probab.}, 25(1):71 -- 95, 1997.

\bibitem{LMST}
E.~Lubetzky, F.~Martinelli, A.~Sly, and F.~L. Toninelli.
\newblock Quasi-polynomial mixing of the 2{D} stochastic {I}sing model with
  ``plus'' boundary up to criticality.
\newblock {\em J. Eur. Math. Soc. (JEMS)}, 15(2):339--386, 2013.

\bibitem{Martinelli-phase-coexistence}
F.~Martinelli.
\newblock On the two-dimensional dynamical {I}sing model in the phase
  coexistence region.
\newblock {\em J. Statist. Phys.}, 76(5):1179--1246, 1994.

\bibitem{MartinelliLectureNotes}
F.~Martinelli.
\newblock Lectures on {G}lauber dynamics for discrete spin models.
\newblock In {\em Lectures on probability theory and statistics
  ({S}aint-{F}lour, 1997)}, volume 1717 of {\em Lecture Notes in Math.}, pages
  93--191. Springer, Berlin, 1999.

\bibitem{MaOl1}
F.~Martinelli and E.~Olivieri.
\newblock Approach to equilibrium of {G}lauber dynamics in the one phase
  region. {I}. {T}he attractive case.
\newblock {\em Comm. Math. Phys.}, 161:447--486, 1994.

\bibitem{MOS-WSM-SSM}
F.~Martinelli, E.~Olivieri, and R.~H. Schonmann.
\newblock For 2-{D} lattice spin systems weak mixing implies strong mixing.
\newblock {\em Comm. Math. Phys.}, 165(1):33--47, 1994.

\bibitem{PWcensoring}
Y.~Peres and P.~Winkler.
\newblock Can extra updates delay mixing?
\newblock {\em Comm. Math. Phys.}, 323(3):1007--1016, 2013.

\bibitem{PV97}
C.-E. Pfister and Y.~Velenik.
\newblock Large deviations and continuum limit in the {$2$}{D} {I}sing model.
\newblock {\em Probab. Theory Related Fields}, 109(4):435--506, 1997.

\bibitem{Sch-Ising-field-dynamics}
R.~H. Schonmann.
\newblock Slow droplet-driven relaxation of stochastic {I}sing models in the
  vicinity of the phase coexistence region.
\newblock {\em Comm. Math. Phys.}, 161(1):1--49, 1994.

\bibitem{Sch-Ising-field-dynamics-survey}
R.~H. Schonmann.
\newblock Theorems and conjectures on the droplet-driven relaxation of
  stochastic {I}sing models.
\newblock In {\em Probability and phase transition ({C}ambridge, 1993)}, volume
  420 of {\em NATO Adv. Sci. Inst. Ser. C: Math. Phys. Sci.}, pages 265--301.
  Kluwer Acad. Publ., Dordrecht, 1994.

\bibitem{SchShl-Ising-field-dynamics-2D}
R.~H. Schonmann and S.~B. Shlosman.
\newblock Wulff droplets and the metastable relaxation of kinetic {I}sing
  models.
\newblock {\em Comm. Math. Phys.}, 194(2):389--462, 1998.

\bibitem{Temperley52}
H.~N.~V. Temperley.
\newblock Statistical mechanics and the partition of numbers. {II}. {T}he form
  of crystal surfaces.
\newblock {\em Proc. Cambridge Philos. Soc.}, 48:683--697, 1952.

\bibitem{Velenik04}
Y.~Velenik.
\newblock Entropic repulsion of an interface in an external field.
\newblock {\em Probability Theory and Related Fields}, 129(1):83--112, May
  2004.

\end{thebibliography}
\end{document}